\renewcommand*{\backrefalt}[4]{%
    \ifcase #1 \footnotesize{(Not cited.)}%
    \or        \footnotesize{(Cited on page~#2.)}%
    \else      \footnotesize{(Cited on pages~#2.)}%
    \fi}
\long\def\comment#1{}
\definecolor{battleshipgrey}{rgb}{0.52, 0.52, 0.51}
\definecolor{darkgray}{rgb}{0.66, 0.66, 0.66}
\definecolor{darkgreen}{rgb}{0.0, 0.2, 0.13}
\definecolor{darkspringgreen}{rgb}{0.09, 0.45, 0.27}
\definecolor{dukeblue}{rgb}{0.0, 0.0, 0.61}
\definecolor{olivedrab7}{rgb}{0.24, 0.2, 0.12}
\definecolor{darkblue}{rgb}{0.0, 0.0, 0.55}
\definecolor{darkscarlet}{rgb}{0.34, 0.01, 0.1}
\definecolor{candyapplered}{rgb}{1.0, 0.03, 0.0}
\definecolor{ao(english)}{rgb}{0.0, 0.5, 0.0}
\definecolor{applegreen}{rgb}{0.55, 0.71, 0.0}
\renewcommand\vec[1]{\ensuremath\boldsymbol{#1}}
\newcommand{\Ocal}{\ensuremath{\mathcal{O}}}
\newcommand{\Ecal}{\ensuremath{\mathcal{E}}}
\newcommand{\Gcal}{\ensuremath{\mathcal{G}}}
\newcommand{\Pcal}{\ensuremath{\mathcal{P}}}
\newcommand{\Mcal}{\ensuremath{\mathcal{M}}}
\newcommand{\hba}{\ensuremath{\overline{h}}}
\newcommand{\vba}{\ensuremath{\overline{V}}}
\newcommand{\Psiba}{\ensuremath{\overline{\Psi}}}
\newcommand{\Scal}{\ensuremath{\mathcal{S}}}
\newcommand{\norm}[1]{\left\lVert#1\right\rVert}
\newcommand{\firstmer}{\ensuremath{G'}}
\newcommand{\trunc}{\ensuremath{G''}}
\newcommand{\secmer}{\ensuremath{\widetilde{G}}}
\DeclareMathOperator*{\argmin}{arg\,min}
\theoremstyle{plain}
\newtheorem{theorem}{Theorem}
\numberwithin{theorem}{section}
\newtheorem{proposition}{Proposition}
\numberwithin{proposition}{section}
\newtheorem{lemma}{Lemma}
\numberwithin{lemma}{section}
\newtheorem{definition}{Definition}
\numberwithin{definition}{section}
\numberwithin{condition}{section}
\numberwithin{problem}{section}
\newtheorem{corollary}{Corollary}
\numberwithin{corollary}{section}
\numberwithin{assumption}{section}
\numberwithin{example}{section}
\numberwithin{conjecture}{section}
\theoremstyle{definition}
\numberwithin{remark}{section}
\renewenvironment{abstract}
 {\small
  \begin{center}
  \bfseries \abstractname\vspace{-.5em}\vspace{0pt}
  \end{center}
  \list{}{%
    \setlength{\leftmargin}{15mm}
    \setlength{\rightmargin}{\leftmargin}%
  }%
  \item\relax}
 {\endlist}
\date{\normalsize\today} 
\begin{document}
 
\begin{center}

{\bf{\LARGE{
On posterior contraction of parameters and \\
interpretability in Bayesian mixture modeling
}}}

\vspace*{.2in}
 {\large{
 \begin{tabular}{ccc}
  Aritra Guha$^{\star}$ & Nhat Ho$^{\dagger}$ &  XuanLong Nguyen$^{\star}$
 \end{tabular}
}}

 \vspace*{.2in}

 \begin{tabular}{c}
 Department of Statistics, University of Michigan$^\star$\\
 Department of EECS, University of California, Berkeley$^\dagger$
 \end{tabular}

\vspace*{.2in}

\today

\vspace*{.2in}

\begin{abstract}
We study posterior contraction behaviors for parameters of interest in the context of Bayesian mixture modeling, where the number of mixing components is unknown while the model itself may or may not be correctly specified. Two representative types of prior specification will be considered: one requires explicitly a prior distribution on the number of mixture components, while the other places a nonparametric prior on the space of mixing distributions. The former is shown to yield an optimal rate of posterior contraction on the model parameters under minimal conditions, while the latter can be utilized to consistently recover the unknown number of mixture components, with the help of a fast probabilistic post-processing procedure. We then turn the study of these Bayesian procedures to the realistic settings of model misspecification.  It will be shown that the modeling choice of kernel density functions plays perhaps the most impactful roles in determining the posterior contraction rates in the misspecified situations. Drawing on concrete posterior contraction rates established in this paper we wish to highlight some aspects about the interesting tradeoffs between model expressiveness and interpretability that a statistical modeler must negotiate in the rich world of mixture modeling.

\comment{
In Bayesian estimation of finite mixture models with unknown number of components, it is a common practice to use an infinite mixture model with Dirichlet process prior for the mixing component weights. However, with this prior, the convergence rate of the posterior distribution for finite mixtures is far from optimal. Also, the number of components cannot be estimated consistently. In this paper, we consider schemes which can overcome these limitations. We discuss a posterior-processing scheme which estimates the number of components consistently. In addition, we also show that the prior which uses symmetric Dirichlet weights for the mixing component weights of finite mixture models, and puts a discrete measure on the number of components obtains an optimal $\sqrt{n}$ convergence rate for the estimation of mixing measure relative to the Wasserstein metric. Furthermore, we also obtain convergence rates for the posterior with the above mentioned prior on mixtures in misspecified regimes.}

\end{abstract}
\end{center}

\section{Introduction}
\label{Section:introduction}
Mixture models are one of the most useful tools in a statistician's toolbox for analyzing heterogeneous data populations. They can be a powerful black-box modeling device to approximate the most complex forms of density functions. Perhaps more importantly, they help the statistician  express the data population's heterogeneous patterns and interpret them in a useful way~\cite{McLachlan-Basford-88,Lindsay-1995,Mengersen-etal-2001}. The following are common, generic and meaningful questions a practitioner of mixture modeling may ask: (I) how many mixture components are needed to express the underlying latent subpopulations, (II) how efficiently can one estimate the parameters representing these components and, (III) what happens to a mixture model based statistical procedure when the model is actually misspecified? 



How to determine the number of mixture components is a question that has long fascinated mixture modelers. Many proposed solutions approached this as a model selection problem. The number of model parameters, hence the number of mixture components, may be selected by optimizing with respect to some regularized loss function; see, e.g., \cite{Lindsay-1995,Kass-Raftery-bayes_factor-95,Dacunha_order_97} and the references therein. A Bayesian approach to regularization is to place explicitly a prior distribution on the number of mixture components~\cite{Nobile-94,Green-Richardson-97,Nobile-Fearnside-07,Miller-2016}. A convenient aspect of separating out the modeling and inference questions considered in (I) and (II) is that once the number of parameters is determined, the model parameters concerned by question (II) can be estimated and assessed via any standard parametric estimation methods. 

In a number of modern applications of mixture modeling to heterogeneous data, such as in topic modeling, the number of mixture components (the topics) may be very large and not necessarily a meaningful quantity~\cite{Blei-etal-03,Tang-etal-icml14}. In such situations, it may be appealing for the modeler to consider a nonparametric approach, where both (I) and (II) are considered concurrently. The object of inference is now the mixing measure which encapsulates all unknowns about the mixture density function. There were numerous works exemplifying this approach~\cite{Leroux-92,Figuereido-93,Ishwaran-James-Sun-01}. In particular, the field of Bayesian nonparametrics (BNP) has offered a wealth of prior distributions on the mixing measure based on which one can arrive at the posterior distribution of any quantity of interest related to the mixing measure~\cite{Hjort-etal-10}.

A common choice of such priors is the Dirichlet process~\cite{Ferguson-73,Blackwell-MacQueen,Sethuraman}, resulting in the famous Dirichlet process mixture models~\cite{Antoniak-74,Lo-84,Escobar-West}. Dirichlet process (DP) and its variants have also been adopted as a building block for more sophisticated hierarchical modeling, thanks to the ease with which computational procedures for posterior inference via Markov Chain Monte Carlo can be implemented~\cite{Teh-etal-06,Rodriguez-etal-08}. Moreover, there is a well-established asymptotic theory on how such Bayesian nonparametric mixture models result in asymptotically optimal estimation procedures for the population density. See, for instance,~\cite{Ghosal-Ghosh-Ramamoorthi-99,Ghosal-vanderVaart-07b,Shen-etal-13} for theoretical results specifically on DP mixtures, and~\cite{Ghosal-Ghosh-vanderVaart-00,Shen-Wasserman-01,Walker-Lijoi-Prunster-07} for general BNP models. The rich development in both algorithms and theory in the past decades has contributed to the widespread adoption of these models in a vast array of application domains.

For quite some time there was a misconception among quite a few practitioners in various application domains, a misconception that may have initially contributed to their enthusiasm for Bayesian nonparametric modeling, that the use of such nonparametric models eliminates altogether the need for determining the number of mixture components, because the learning of such a quantity is "automatic" from the posterior samples of the mixing measure. The implicit presumption here is that a consistent estimate of the mixing measure may be equated with a consistent estimate of the number of mixture components. This is not correct, as has been noted, for instance, by~\cite{Leroux-92} in the context of mixing measure estimation. More recently,~\cite{Miller-2014} explicitly demonstrated that the common practice of drawing inference about the number of mixture components via the DP mixture, specifically by reading off the number of support points in the Dirichlet's posterior sample, leads to an asymptotically inconsistent estimate. 

Despite this inconsistency result, it will be shown in this paper that it is still possible to obtain a consistent estimate of the number of mixture components using samples from a Dirichlet process mixtures, or any Bayesian nonparametric mixtures, by applying a simple and fast post-processing procedure on samples drawn from the DP mixture's posterior. On the other hand, the parametric approach of placing an explicit prior on the number of components yields both a consistent estimate of the number mixture component, and more notably, an optimal posterior contraction rate for component parameters, under a minimal set of conditions. It is worth emphasizing that all these results are possible only under the assumption that the model is well-specified, i.e., the true but unknown population density lies in the support of the induced prior distribution on the mixture densities.

As George Box has said, "all models are wrong", but more relevant to us, all mixture models are misspecified in some way. The statistician has a number of modeling decision to make when it comes to mixture models, including the selection of the class of kernel densities, and the support of the space of mixing measures. The significance of question (III) comes to the fore, because if the posterior contraction behavior of model parameters is very slow due to specific modeling choices, one has to be cautious about the interpretability of the parameters of interest. A very slow posterior contraction rate in theory implies that a given data set probably has relatively very slow influence on the movement of mass from the prior to the posterior distribution.

In this paper we study Bayesian estimation of model parameters with both well-specified and misspecified mixture models.
There are two sets of results. The first results resolve several outstanding gaps that remain in the existing theory and current practice of Bayesian parameter estimation, given that the mixture model is well-specified. 
The second set of results describes posterior contraction properties of such procedures when the mixture model is misspecified. We proceed to describe these results, related works and  implications to the mixture modeling practice.

\subsection{Well-specified regimes}
Consider discrete mixing measures $G=\sum_{i=1}^k p_i \delta_{\theta_i}$. 
Here, $\vec{p}=(p_1,\dots,p_k)$ is a vector of mixing weights, while atoms $\{\theta_i\}_{i=1}^k$ are elements in a given compact space $\Theta \in \mathbb{R}^d$. Mixing measure $G$ is combined with a likelihood function $f(\cdot|\theta)$ with respect to Lebesgue measure $\mu$ to yield a mixture density: $p_G(\cdot)=\int f(\cdot|\theta)\mathrm{d}G(\theta)=\sum_{i=1}^k p_i f(\cdot|\theta_i)$.
When $k<\infty$, we call this a \textit{finite mixture model} with $k$ components. We write $k=\infty$ to denote an \textit{infinite mixture model}. The atoms $\theta_i$'s are representatives of the underlying subpopulations.

Assume that $X_1,\ldots, X_n$ are i.i.d. samples from a mixture density $p_{G_0}(x) = \int f(x|\theta) \textrm{d}G_0(\theta)$, where $G_0$ is a discrete mixing measure with \emph{unknown} number of support points $k_0 <\infty$ residing in $\Theta$. In the overfitted setting, i.e., an upper bound $k_0 \leq \overline{k}$ is given so that one may work with an overfitted mixture with $\overline{k}$ mixture components, Chen~\cite{Chen-95} showed that the mixing measure $G_0$ can be estimated at a rate $n^{-1/4}$ under the $L_1$ metric, provided that the kernel $f$ satisfies a second-order identifiability condition -- this is a linear independence property on the collection of kernel function $f$ and its first and second order derivatives with respect to $\theta$. 

Asymptotic analysis of Bayesian estimation of the mixing measure that arises in both finite and infinite mixtures, where the convergence is assessed under Wasserstein distance metrics, was first investigated by Nguyen~\cite{Nguyen-13}. Convergence rates of the mixing measure under a Wasserstein distance can be directly translated to the convergence rates of the parameters in the mixture model. Under the same (second-order) identifiability condition, it can be shown that either maximum likelihood estimation method or a Bayesian method with a non-informative (e.g., uniform) prior yields a $(\log n/n)^{1/4}$ rate of convergence~\cite{Ho-Nguyen-EJS-16,Nguyen-13,Ishwaran-James-Sun-01}. Note, however, that $n^{-1/4}$ is not the optimal \emph{pointwise} rate of convergence. Heinrich and Kahn~\cite{Kahn_2018} showed that a distance based estimation method can achieve $n^{-1/2}$ rate of convergence under $W_1$ metric, even though their method may not be easy to implement in practice.~\cite{Ho-Nguyen-Ritov} described a minimum Hellinger distance estimator that achieves the same optimal rate of parameter estimation.

An important question in the Bayesian analysis is whether there exists a suitable prior specification for mixture models according to which the posterior distribution on the mixing measure can be shown to contract toward the true mixing measure at the same fast rate $n^{-1/2}$. Rousseau and Mengersen~\cite{Rousseau-Mengersen-11} provided an interesting result in this regard, which states that for overfitted mixtures with a suitable Dirichlet prior on the mixing weights $\vec{p}$, assuming that an upper bound to the number of mixture component is given, in addition to a second-order type identifiability condition, then the posterior contraction to the true mixing measure can be established by the fact that the mixing weights associated with all redundant atoms of mixing measure $G$ vanish at the rate close to the optimal $n^{-1/2}$. 

In our first main result given in Theorem~\ref{theorem:convergence_rate_well_specified}, we show that an alternative and relatively common choice of prior also yields optimal rates of convergence of the mixing measure (up to a logarithmic term), in addition to correctly recovering the number of mixture components, under considerably weaker conditions. In particular, we study the mixture of finite mixture (MFM) prior, which places an explicit prior distribution on the number of components $k$ and a (conditional) Dirichlet prior on the weights $\vec{p}$, given each value of $k$. This prior has been investigated by Miller and Harrison~\cite{Miller-2016}. Compared to the method of~\cite{Rousseau-Mengersen-11}, no upper bound on the true number of mixture components is needed. In addition, only first-order identifiability condition is required for the kernel density $f$, allowing our results to apply to popular mixture models such as location-scale Gaussian mixtures. We also note that the MFM prior is one instance in a class of modeling proposals, e.g.,~\cite{Nobile-94,Green-Richardson-97,Nobile-Fearnside-07} for which the established convergence behavior continues to hold. In other words, from an asymptotic standpoint, all is good on the parametric Bayesian front.

\comment{
There have been a large number of works proposing inference methods for this type of model ~\cite{Nobile-94,Nobile-Fearnside-07,Green-Richardson-97}. Miller \& Harrison ~\cite{Miller-2016} device an inference scheme using ideas analogous to the Stick-breaking construction of Sethuraman-Tiwari ~\cite{Sethuraman-Tiwari-81}. In Section~\ref{Well-specified MTM contraction} of this paper, we show that this scheme not only achieves contraction properties in the number of components, but also achieves optimal contraction rates for parameter estimation, for finite mixture models.
}

Our second main result, given in Theorem~\ref{theorem: merge-truncate-merge consistency}, is concerned with a Bayesian nonparametric modeling practice. A Bayesian nonparametric prior on mixing measures places zero mass on measures with finite support points, so the BNP model is misspecified with respect to the number of mixture components. Indeed, when $G_0$ has only finite support the true density $p_{G_0}$ lies at the boundary of the support of the class of densities produced by the BNP prior. Despite the inconsistency results mentioned earlier on the number of mixture components produced by Dirichlet process mixtures, we will show that this situation can be easily corrected by applying a post-processing procedure to the samples generated from the posterior distribution arising from the DP mixtures, or any sufficiently well-behaved Bayesian nonparametric mixture models. By "well-behaved" we mean any BNP mixtures under which the posterior contraction rate on the mixing measure can be guaranteed by an upper bound using a Wasserstein metric~\cite{Nguyen-13}.

Our post-processing procedure is simple, and motivated by the observation that a posterior sample of the mixing measure tends to produce a large number of atoms with very small and vanishing weights~\cite{Green-Richardson-clustering,Miller-2014}. Such atoms can be ignored by a suitable truncation procedure. In addition, similar atoms in the metric space $\Theta$ can also be merged in a systematic and probabilistic way. Our procedure, named Merge-Truncate-Merge algorithm, is guaranteed to not only produce a consistent estimate of the number of mixture components but also retain the posterior contraction rates of the original posterior samples for the mixing measure. Theorem~\ref{theorem: merge-truncate-merge consistency} provides a theoretical basis for the heuristics employed in practice in dealing with mixtures with unknown number of components \cite{Green-Richardson-clustering,Nobile-Fearnside-07}.

\subsection{Misspecified regimes} 
There are several ways a mixture model can be misspecified: either in the kernel density function $f$, or the mixing measure $G$, or both. Thus, in the misspecified setting, we assume that the data samples $X_1,\ldots,X_n$ are i.i.d. samples from a mixture density $p_{G_0,f_0}$, namely,
$p_{G_0,f_0}(x) = \int f_0(x|\theta) G_0(\textrm{d}\theta)$, where both $G_0$ and $f_0$ are unknown. The statistician draws inference from a mixture model $p_{G,f}$, still denoted by $p_{G}$ for short, where $G$ is a mixing measure with support on compact $\Theta$, and $f$ is a chosen kernel density function. In particular, a Bayesian procedure proceeds by placing a prior on the mixing measure $G$ and obtaining the posterior distribution on $G$ given the $n$-data sample. In general, the true data generating density $p_{G_0}$ lies outside the support of the induced prior on $p_G$. We study  the posterior behavior of $G$ as the sample size $n$ tends to infinity.


The behavior of Bayesian procedures under model misspecification has been investigated in the foundational work of~\cite{Kleijn-2006, Kleijn-Bernstein-Von-mises-2012}. 
This body of work focuses primarily on density estimation. In particular, assuming that the true data generating distribution's density lies outside the support of a Bayesian prior, then the posterior distribution on the model density can be shown to contract to an element of the prior's support, which is obtained by a Kullback-Leibler (KL) projection of the true density into the prior's support~\cite{Kleijn-2006}. 

It can be established that the posterior of $p_G$ contracts to a density $p_{G_*}$, where $G_*$ is a probability measure on $\Theta$ such that $p_{G_*}$ is the (unique) minimizer of the Kullback-Leilber distance $K(p_{G_0, f_0}, p_{G})$ among all probability measure $G$ on $\Theta$. This mere fact is readily deduced from the theory of~\cite{Kleijn-2006}, but the outstanding and relevant issue is whether the posterior contraction behavior carries over to that of $G$, and if so, at what rate. In general, $G_*$ may not be unique, so posterior contraction of $G$ cannot be established. Under identifiability, $G_*$ is unique, but still $G_* \neq G_0$.

This leads to the question about interpretability when the model is misspecified. Specifically, when $f\neq f_0$, it may be unclear how one can interpret the parameters that represent mixing measure $G$, unless $f$ can be assumed to be a reasonable approximation of $f_0$. Mixing measure $G$, too, may be misspecified, when the true support of $G_0$ may not lie entirely in $\Theta$. In practice, it is a perennial challenge to explicate the relationship between $G_*$ and the unknown $G_0$. In theory, it is mathematically an interesting question to characterize this relationship, if some assumption can be made on the true $G_0$ and $f_0$, but this is beyond the scope of this paper. Regardless of the truth about this relationship, it is important for the statistician to know how impactful a particular modeling choice on $f$ and $G$ can affect the posterior contraction rates of the parameters of interest.

The main results that we shall present in Theorem~\ref{theorem:posterior_rate_Gaussian} and Theorem~\ref{theorem:posterior_rate_Laplace} are on the posterior contraction rates of the mixing measure $G$ toward the limit point $G_*$, under very mild conditions on the misspecification of $f$. In particular, we shall require that the tail behavior of function $f$ is not much heavier than that of $f_0$ (cf. condition (P.5) in Section~\ref{section:Posterior Contraction:mis-specified}). Specific posterior contraction rates of contraction for $G$ are derived when $f$ is either Gaussian or Laplace density kernel, two representatives for supersmooth and ordinary smooth classes of kernel densities~\cite{Fan-91}. A key step in our proofs lies in several inequalities which provide upper bound of Wasserstein distances on mixing measures in terms of weighted Hellinger distances, a quantity that plays a fundamental role in the asymptotic characterization of misspecified Bayesian models~\cite{Kleijn-2006}. 

It is interesting to highlight that the posterior contraction rate for the misspecified Gaussian location mixture is the same as that of well-specified setting, which is nonetheless extremely slow, in the order of $(1/\log n)^{1/2}$. On the other hand, using a misspecified Laplace location mixture results in some loss in the exponent $\gamma$ of the polynomial rate $n^{-\gamma}$. Although the contrast in contraction rates for the two families of kernels is quite similar to what is obtained for well-specified deconvolution problems for both frequentist methods~\cite{Fan-91,Zhang-90} and Bayesian methods~\cite{Nguyen-13,Gao-vdV-2016}, our results are given for misspecified models, which can be seen in a new light:
since the model is misspecified anyway, the statistician should be "free" to choose the kernel that can yield the most favorable posterior contraction for the parameters of his/ her model. In that regard, Laplace location mixtures should always be preferred to a Gaussian location mixtures. Of course it is not always advisable to use a heavy-tailed kernel density function, as dictated by condition (P.5).

The relatively slow posterior contraction rate for $G$ is due to the fact that the limiting measure $G_*$ in general may have infinite support, regardless of whether the true $G_0$ has finite support or not. From a practical standpoint, it is difficult to interpret the estimate of $G$ if $G_*$ has infinite support. However, if $G_*$ happens to have a finite number of support points, which is bounded by a known constant, say $\overline{k}$, then by placing a suitable prior on $G$ to reflect this knowledge we show that the posterior of $G$ contracts to $G_*$ at a relatively fast rate $(\log n/n)^{1/4}$. This is the same rate obtained under the well-identified setting for overfitted mixtures.

\subsection{Further remarks}
The posterior contraction theorems in this paper provide an opportunity to re-examine several aspects of the fascinating picture about the tension between a model's expressiveness and its interpretability. They remind us once again about the tradeoffs a modeler must negotiate for a given inferential goal and the information available at hand. We enumerate a few such insights:

\begin{itemize}
\item [(1)] "One size does not fit all": Even though the family of mixture models as a whole can be excellent at inferring about population heterogeneity and at density estimation as a black-box device, a specific mixture model specification cannot do a good job at both. For instance, a Dirichlet process mixture of Gaussian kernels may yield an asymptotically optimal density estimation machine but it performs poorly when it comes to learning of parameters. 

\item [(2)] "Finite versus infinite": If the number of mixture components is known to be small and an object of interest, then employing an explicit prior on this quantity results in the optimal posterior contraction rate for the model parameters and thus is a preferred method. When this quantity is known to be high or not a meaningful object of inference, Bayesian nonparametric mixtures provide a more attractive alternative as it can flexibly adapt to complex forms of densities. Regardless, one can still consistently recover the true number of mixture components using a nonparametric approach.

\item [(3)] "Some forms of misspecification are more useful than others". When the mixture model is misspecified, careful design choices regarding the (mispecified) kernel density and the support of the mixing measure can significantly speed up the posterior contraction behavior of model parameters. For instance, a heavy-tailed and ordinary smooth kernel such as the Laplace, instead of the Gaussian kernel, is shown to be especially amenable to efficient parameter estimation.

\end{itemize}

\comment{Almost all of the well-known posterior contraction results need to make an assumption on the compactness of the parameter space to obtain suitable rates. Under these conditions, it is assumed that the user places a prior on this compact space. The choice of the parameter space is usually left at the discretion of the user and can easily give rise to a source of misspecification. If the number of samples is not too large there can occur components of the mixing measure which are under-represented in the data itself. In that case, the true mixing measure may not have all of its components in the parameter space chosen by the user. It is not expected that the posterior, in that case and in other cases of misspecifications, contracts to the truth.  Kleijn and van der Vaart ~\cite{Kleijn-2006} have shown that the posterior does not contract to the true distribution in scenarios when the true distribution does not lie in the support of the prior. But it does contract to an approximation to the truth, obtained via Kullback -Leibler projections of the truth on the support of the prior. The interested reader is referred to ~\cite{ Kleijn-Bernstein-Von-mises-2012,Kleijn-2006} for more on this.}


\comment{
Despite its usefulness in non-parametric modeling, recent work 
by Miller et al.~\cite{Miller-2014} has shown that DPMM are 
inconsistent for the true number of components. 
The goal of this paper is to study alternate strategies which 
eliminate this drawback.

In that regard, we consider discrete mixing measures $G=\sum_{i=1}^k p_i \delta_{\theta_i}$. Here, $\vec{p}=(p_1,\dots,p_k)$ is a probability vector, while atoms $\{\theta_i\}_{i=1}^k$ are elements in an ambient space $\Theta$. In the above scenario, we say that the mixing measure has $k$ components. Under the mixture model setting $G$ is combined with a likelihood function $f(\cdot|\theta)$ with respect to a dominating measure $\mu$ to yield a mixture density: $P_G(\cdot)=\int f(\cdot|\theta)\mathrm{d}G(\theta)=\sum_{i=1}^k p_i f(\cdot|\theta_i)$. Note that, when $k<\infty$, we call it a finite mixture model, while $k=\infty$ symbolises an infinite mixture model. The atoms $\theta_i$'s are representative of distinct behavior in the population. Chen\cite{Chen-95} talks about assessment of the quality of mixtures using $L_1$- metric. 

In the Bayesian regime, \cite{Ghosal-Ghosh-vanderVaart-00,Ghosal-vanderVaart-01,Ghosal-vanderVaart-07,Schwartz-65,Shen-Wasserman-01,Walker-Lijoi-Prunster-07} are important works that discuss contraction properties of the posterior distribution for densities $p_G$. More specific to the Dirichlet process, \cite{Ghosal-09,Ghosal-Ghosh-Ramamoorthi}, explore results on posterior asymptotics. Nguyen ~\cite{Nguyen-13} generalizes the approach in \cite{Chen-95} to extend it to higher dimensional regimes and infinite mixture component scenarios, and also studies Bayesian parameter estimation for finite and infinite Bayesian mixtures, especially Dirichlet mixtures, for a wide range of kernel functions $f(\cdot|\theta)$.

However, despite such encouraging results in parameter estimation with the Dirichlet process, the estimation of number of components still remains an issue. Rousseau \& Mengersen ~\cite{Rousseau-2011} provided an interesting result in this regard, which states that for overfitted mixtures with a Dirichlet prior on the weights, the sum of excess weights goes to $0$. Roughly speaking, the overfitted Dirichlet weight prior can be treated as an approximation to Dirichlet Process Mixtures. More recently, however, Miller \& Harrison~\cite{Miller-2014} have shown that for a Dirichlet Process prior the excess weights do not go to $0$ with probability $1$, i.e., they show,
\begin{eqnarray}
\limsup_{n\rightarrow \infty}P(K_n=k|X_1,\dots,X_n) <1
\end{eqnarray}
with probability $1$, for any $k\in \mathbb{N}$. Here, $K_n$ denotes the number of components in the posterior sample. This somewhat surprising and contrasting result is an outcome of the fact that the Dirichlet process prior places negligible mass on finite mixing measures. 

In this paper, we show that even though the Dirichlet Process Mixtures do not seem amenable to consistency, there are ways to surpass this shortcoming. An alternative in this regard is using Mixture of Finite Mixtures prior, which places a prior distribution on the number of components and for each fixed number $k$(the number of components) places a Dirichlet prior on the weights. This prior has been discussed in detail by \cite{Miller-2016}. There have been a large number of works proposing inference methods for this type of model ~\cite{Nobile-94,Nobile-Fearnside-07,Green-Richardson-97}. Miller \& Harrison ~\cite{Miller-2016} device an inference scheme using ideas analogous to the Stick-breaking construction of Sethuraman-Tiwari ~\cite{Sethuraman-Tiwari-81}. In Section~\ref{Well-specified MTM contraction} of this paper, we show that this scheme not only achieves contraction properties in the number of components, but also achieves optimal contraction rates for parameter estimation, for finite mixture models.

Another practical scheme for detecting the true number of components involves taking samples generated from any posterior sampling scheme which is sufficiently well-behaved. Subsequently, the components with atoms close to one another are then merged to form one component, and then the components with sufficiently small weights are removed. This algorithm is amenable to practical usage for infering the true number of components. In Section \ref{MTM Procedure} we formalize this algorithm and introduce a fine-tuning step after the previously discussed merge and truncate steps. We show that not only does this scheme attain consistency in number of components but also retains the posterior contraction rates of the preceeding sample. 

Yet another alternative to the  Dirichlet Process prior consists of replacing the iid scheme of Dirichlet Process Prior by a repulsive prior for generating atoms sequentially, that ensures the atoms generated are well-separated from one another. Bayesian Repulsive priors have been discussed in more detail by \cite{Repulsive-Petralia-Dunson-12,Xu-Xie-Repulsive-17}.  Repulsive mixtures are however, beyond the scope of this paper. For contraction results in connection to repulsive priors we  defer the reader to \cite{Xu-Xie-Repulsive-17}.  

}

\comment{
Throughout this paper, we make use of the Wasserstein distance metric to assess the quality of contraction for different schemes. Wasserstein distances arose primarily from optimal transport problems ~\cite{Villani-03}. Since then it has seen much use in Statistics \cite{delBarrio-etal-99}. For discrete probability measures, Wasserstein distances can be computed by means of a minimum matching (or moving) procedure between the sets of atoms that provide support for the measures under comparison. Cuturi ~\cite{Cuturi-Sinkhorn-13} provides a fast efficient method to compute wasserstein distances between discrete measures by means of using an entropy regularizer. Suppose $\Theta$ is equipped with the $l_2$ Euclidean metric $d(\cdot,\cdot)$. Assume $G'=\sum_{i=1}^{k'} p'_i \delta_{\theta'_i}$, and $G=\sum_{i=1}^{k} p_i \delta_{\theta_i}$. Then for given $r\geq 1$, the $r^{th}$ order Wasserstein distance on the space of probability measures on $(\Theta,d)$ is given by:
\begin{eqnarray}
W_r(G,G')= ((\inf_{\vec{q}} \sum_{i,j} q_{ij}d(\theta_i,\theta'_{j})^r)^{1/r},
\end{eqnarray}
where the infimum is taken over all joint probability distributions over $[1,\dots,k]\times[1,\dots,k']$ such that $\sum_i q_{ij}=p'_j$ and $\sum_j q_{ij}=p_i$. Here $d$ is a metric on $\Theta$. The Wasserstein distance between two discrete distributions has a notion of optimal transport cost associated with it. It is the minimum cost of transporting mass from atoms of $G$ to atoms of $G'$ and vice versa, when the cost of transporting a unit mass from one atom to another is equal to the $d$-distance between them. Convergence in Wasserstein distance also implies a notion of convergence of the associated atoms as well as the probability masses associated with each. In particular it can be shown that if a mixing measure $G_n$ converges at rate $\omega_n$ in $W_r$-distance to a fixed mixing measure $G_0$, then the atoms of $G_n$ converge to that of the atoms of $G_0$ at the same rate, while the atomic masses converge at rate $\omega_n^r$.
}

The remainder of the paper is organized as follows. Section \ref{section:preliminary} provides necessary backgrounds about mixture models, Wasserstein distances and several key notions of strong identifiability. 
Section \ref{section:Posterior Contraction:well-specified} presents posterior contraction theorems for well-mispecified mixture models for both parametric and nonparametric Bayesian models. 
Section \ref{section:Posterior Contraction:mis-specified} presents posterior contraction theorems when the mixture model is misspecified. In Section \ref{section:simulation}, we provide illustrations of the Merge-Truncate-Merge algorithm via a simulation study. Proofs of key results are provided in Section \ref{section:proofs} while the remaining proofs are deferred to the Appendices.

\comment{
In Section \ref{section:Posterior Contraction:well-specified} we prove contraction results on the alternatives to Dirichlet Process mixtures in well-specified parameter space regime. In particular we prove that, for the mixture of finite mixtures prior (which was discussed in detail in \cite{Miller-2016} optimal convergence rates at $n^{-1/2}$ rate can be obtained while also guaranteeing consistency of number of components. In the later part of Section \ref{section:Posterior Contraction:well-specified}we propose a Merge-Truncate-Merge Algorithm and show that it guarantees consistency of the number of components. The contraction results in Section \ref{section:Posterior Contraction:well-specified} require the assumption of $\Theta$ being compact. In Section \ref{section:Posterior Contraction:mis-specified}, we treat the case that this parameter space $\Theta$ is misspecified in the sense that at least one component $\theta^0_i$, of the truth $G_0=\sum_{i=1}^{k_0}p^0_i\delta_{\theta_i^0}$, is such that $\theta_i^0 \not\in \Theta$, for some $i$. We show that even with misspecified parameter we obtain suitable parameter contraction properties with the mixture of finite mixtures prior. Section \ref{section:simulation} contains illustrations of the Merge-Truncate-Merge algorithm via simulations. Proofs of certain key theorems are provided in Section \ref{section:proofs}, while other proofs are defered to Sections \ref{section:appendix_A}, \ref{section:appendix_B} and \ref{section:appendix_C}. Next, we introduce some notations which we repeatedly use throughout this paper.
}

\paragraph{Notation} Given two densities $p, q$ (with respect to the Lebesgue measure $\mu$), the total variation distance is given by $V(p,q)={\displaystyle (1/2)\int {|p(x)-q(x)|}\textrm{d}\mu(x)}$.
Additionally, the squared Hellinger distance is given by  $h^{2}(p,q)=\\
{\displaystyle (1/2) \int (\sqrt{p(x)}-\sqrt{q(x)})^{2}\textrm{d}\mu(x)}$. Furthermore, the Kullback-Leibler (KL) divergence is given by $K(p,q)={\displaystyle \int \log(p(x)/q(x))p(x)\textrm{d}\mu(x)}$ and the squared KL divergence is given by $K_{2}(p,q)={\displaystyle \int \log(p(x)/q(x))^{2}p(x)\textrm{d}\mu(x)}$.
For a measurable function $f$, let $Q f$ denote the integral $\int f dQ$. For any $\kappa=(\kappa_{1},\ldots,\kappa_{d}) \in \mathbb{N}^{d}$, we 
denote $\dfrac{\partial^{|\kappa|}{f}}{\partial{\theta^{\kappa}}}(x|\theta) =
\dfrac{\partial^{|\kappa|}{f}}{\partial{\theta_{1}^{\kappa_{1}}\ldots
\partial{\theta_{d}^{\kappa_{d}}}}}(x|\theta)$ where $\theta=(\theta_{1},
\ldots,\theta_{d})$. For any metric $d$ on $\Theta$, we define the open ball of $d$-radius $\epsilon$ around $\theta_0 \in \Theta$ as $B_d(\epsilon,\theta_0)$. 
We use  $D(\epsilon, \Omega, \tilde{d})$ to denote the maximal $\epsilon$-packing number for a general set $\Omega$ under a general metric $\tilde{d}$ on $\Omega$. 
Additionally, the expression $a_{n} \gtrsim b_{n}$ will be used 
to denote the inequality up to a constant multiple where the value of the constant is 
independent of $n$. We also denote $a_{n} \asymp b_{n}$ if both $a_{n} \gtrsim b_{n}$ 
and $a_{n} \lesssim b_{n}$ hold. Furthermore, we denote $A^{c}$ as the complement of set $A$ for any set $A$ while $B(x,r)$ denotes the ball, with respect to the $l_2$ norm, of radius $r > 0$ centered at $x \in \mathbb{R}^{d}$. Finally, we use $\text{Diam}(\Theta)= \sup \{\|\theta_1-\theta_2\|: \theta_1,\theta_2 \in \Theta\}$  to denote the diameter of a given parameter space $\Theta$ relative to the $l_{2}$ norm, $\|\cdot\|$, for elements in $\mathbb{R}^{d}$.


\section{Preliminaries} 
\label{section:preliminary}
We recall the notion of Wasserstein distance for mixing measures, along with the notions of strong identifiability and uniform Lipschitz continuity conditions that prove useful in Section \ref{section:Posterior Contraction:well-specified}.

\paragraph{Mixture model}
Throughout the paper, we assume that $X_{1}, \ldots, X_{n}$ are i.i.d. samples from a true but unknown distribution $P_{G_0}$ with given density function 
 \begin{align}
p_{G_{0}} := \int f(x|\theta) dG_{0}(\theta) = \sum \limits_{i=1}^{k_{0}}{p_{i}^{0}f(x|\theta_{i}^{0})} \nonumber
 \end{align}
where $G_{0}=\sum \limits_{i=1}^{k_{0}}{p_{i}^{0}\delta_{\theta_{i}^{0}}}$ is a true but unknown mixing distribution with exactly $k_{0}$ number of support points, for some unknown $k_0$. Also, $\left\{f(x|\theta),\theta \in \Theta \subset \mathbb{R}^{d} \right\}$ is a given family of probability densities (or equivalently kernels) with respect to a sigma-finite measure $\mu$ on $\mathcal{X}$ where $d \geq 1$. Furthermore, $\Theta$ is a chosen parameter space, where we empirically believe that the true parameters belong to. In a well-specified setting, all support points of $G_0$ reside in $\Theta$, but this may not be the case in a misspecified setting. 
 
Regarding the space of mixing measures, let $\Ecal_{k}:=\Ecal_{k}(\Theta)$ and $\Ocal_{k}:=\Ocal_{k}(\Theta)$ respectively denote the space of all mixing measures with exactly and at most $k$ support points, all in $\Theta$. Additionally, denote $\Gcal : = \Gcal(\Theta) = \mathop{\cup} \limits_{k \in \mathbb{N}_{+}}{\Ecal_{k}}$ the set of all discrete measures with finite supports on $\Theta$. Moreover, $\overline{\Gcal}(\Theta)$ denotes the space of all discrete measures (including those with countably infinite supports) on $\Theta$. Finally, $\Pcal(\Theta)$ stands for the space of all probability measures on $\Theta$.

\paragraph{Wasserstein distance}
As in~\cite{Nguyen-13,Ho-Nguyen-EJS-16} it is useful to analyze the identifiability and convergence
of parameter estimation in mixture models using the notion of Wasserstein distance, which
can be defined as the optimal cost of moving masses transforming one probability
measure to another~\cite{Villani-09}. Given two discrete measures $G=\mathop {\sum }\limits_{i=1}^{k}{p_{i}\delta_{\theta_{i}}}$ 
and
$G' = \sum_{i=1}^{k'}p'_i \delta_{\theta'_i}$, a coupling between $\vec{p}$ and
$\vec{p'}$ is a joint distribution $\vec{q}$ on $[1\ldots,k]\times [1,\ldots, k']$, which
is expressed as a matrix
$\vec{q}=(q_{ij})_{1 \leq i \leq k,1\ \leq j \leq k'} \in [0,1]^{k \times k'}$
with marginal probabilities
$\mathop {\sum }\limits_{i=1}^{k}{q_{ij}}=p_{j}'$ and $\mathop {\sum  }\limits_{j=1}^{k'}{q_{ij}}=p_{i}$ for any $i=1,2,\ldots,k$ and $j=1,2,\ldots,k'$.
We 
use $\mathcal{Q}(\vec{p},\vec{p'})$ to denote the space of all such couplings. For any $r \geq 1$, the $r$-th order Wasserstein distance between
$G$ and $G'$ is given by
\begin{eqnarray}
W_{r}(G,G') & = &
 \inf_{\vec{q} \in \mathcal{Q}(\vec{p},\vec{p'})}\biggr ({\mathop {\sum }\limits_{i,j}{q_{ij}\|\theta_{i}-\theta_{j}'\|^{r}}}
\biggr )^{1/r}, \nonumber
\end{eqnarray}
where $\|\cdot\|$  denotes the $l_{2}$ norm for
elements in $\mathbb{R}^{d}$. It is simple to see that if a sequence of probability measures $G_{n} \in \Ocal_{k_{0}}$
converges to $G_{0} \in \mathcal{E}_{k_{0}}$ under the $W_{r}$ metric
at a rate $\omega_{n} = o(1)$ for some $r \geq 1$ then there exists a subsequence of $G_{n}$ such that the set of
atoms of $G_{n}$ converges to the $k_{0}$ atoms of $G_{0}$, up to a permutation of the atoms, at the same rate $\omega_{n}$.

\paragraph{Strong identifiability and uniform Lipschitz continuity} 
The key assumptions that will be used to analyze the posterior contraction of mixing measures include uniform Lipschitz condition and strong identifiability condition. The uniform Lipschitz condition can be formulated as follows~\cite{Ho-Nguyen-EJS-16}.

\begin{definition} \label{definition:uniform_Lipschitz_condition}
We say the family of densities $\left\{f(x|\theta), \theta \in \Theta \right\}$ is uniformly Lipschitz up to
the order $r$, for some $r \geq 1$, if $f$ as a function of $\theta$ is differentiable up to the order $r$ and its partial derivatives with respect to $\theta$
satisfy the following inequality
\begin{eqnarray}
\sum_{|\kappa| = r} \biggr | \bigg (\frac{\partial^{|\kappa|} f}{\partial \theta^\kappa}
(x|\theta_1) -
 \frac{\partial^{|\kappa|} f}{\partial \theta^\kappa}(x|\theta_2) \biggr ) \gamma^\kappa\biggr |
\leq C \|\theta_1-\theta_2\|^\delta \|\gamma\| \nonumber
\end{eqnarray}
for any $\gamma \in \mathbb{R}^{d}$ and for some positive constants $\delta$ and $C$ independent of $x$ and $\theta_{1},\theta_{2} \in \Theta$. Here,  $\gamma^\kappa=\prod \limits_{i=1}^{d}{\gamma_{i}^{\kappa_{i}}}$ where $\kappa=(\kappa_{1},\ldots,\kappa_{d})$.
\end{definition}
The first order uniform Lipschitz condition is satisfied by many popular classes of density functions, including Gaussian, Student's t, and skew-normal family. Now, strong  identifiability condition of the $r^{th}$ order is formulated as follows, 
\begin{definition} \label{definition:strong_order_identifiability}
For any $r \geq 1$, we say that the family $\left\{f(x|\theta), \theta \in \Theta \right\}$ (or in short, $f$) is
\emph{identifiable in the order $r$}, for some $r \geq 1$, if $f(x|\theta)$ is differentiable up to the order $r$ in $\theta$
and the following holds
\begin{itemize}
\item[A1.] For any $k \geq 1$, given $k$ different elements
$\theta_{1}, \ldots, \theta_{k} \in \Theta$.
If we have $\alpha_{\eta}^{(i)}$ such that for almost all $x$
\begin{eqnarray}
\sum \limits_{l=0}^{r}{\sum \limits_{|\eta|=l}{\sum \limits_{i=1}^{k}{\alpha_{\eta}^{(i)}\dfrac{\partial^{|\eta|}{f}}{\partial{\theta^{\eta}}}(x|\theta_{i})}}}=0 \nonumber
\end{eqnarray}
then $\alpha_{\eta}^{(i)}=0$ for all $1 \leq i \leq k$ and $|\eta| \leq r$.
\end{itemize}
\end{definition}
Many commonly used families of density functions satisfy the first order identifiability condition, 
including location-scale Gaussian distributions and location-scale Student's t-distributions. Technically speaking, strong identifiability conditions are useful in providing the guarantee that we have some sort of lower bounds of Hellinger distance between mixing densities in terms of Wasserstein metric between mixing measures. For example, if $f$ is identifiable in the first order, we have the following inequality~\cite{Ho-Nguyen-EJS-16}
\begin{eqnarray}
h(p_{G},p_{G_{0}}) \gtrsim W_{1}(G,G_{0}) \label{eq:lower_bound_first_order_identifiability}
\end{eqnarray}
for any $G \in \mathcal{E}_{k_{0}}$. It implies that for any estimation method that yields the convergence rate $n^{-1/2}$ for density $p_{G_{0}}$ under the Hellinger distance, the induced rate of convergence for the mixing measure $G_{0}$ is $n^{-1/2}$ under $W_{1}$ distance.


\section{Posterior contraction under well-specified regimes}
\label{section:Posterior Contraction:well-specified}
In this section, we assume that the mixture model is well-specified, i.e., the data are i.i.d. samples from the mixture density $p_{G_0}$, where mixing measure $G_0$ has $k_0$ support points in compact parameter space $\Theta \subset \mathbb{R}^d$. Within this section, we assume further that the true but unknown number of components $k_0$ is finite. 
A Bayesian modeler places a prior distribution $\Pi$ on a suitable subspace of $\overline{\Gcal}(\Theta)$. Then, the posterior distribution over $G$ is given by:
\begin{eqnarray}
\label{Bayes}
\Pi(G \in B \bigr|X_1,\dots,X_n) =\frac{\int_B \prod_{i=1}^n p_G(X_i) \mathrm{d}\Pi(G)}{\int_{\overline{\Gcal}(\Theta)} \prod_{i=1}^n p_G(X_i) \mathrm{d}\Pi(G)}
\end{eqnarray}
We are interested in the posterior contraction behavior of $G$ toward $G_0$, in addition to recovering the true number of mixture components $k_0$.

\comment{
First we shall briefly review some related work, setting stage for our first main results. Next, in Section~\ref{Well-specified MTM contraction}, we show that by using a well-known prior specification on the mixing measure, namely the mixture of finite mixture (MFM) prior, optimal posterior contraction rate for the model parameters can be established. In Section~\ref{MTM Procedure} we turn to the well-known Dirichlet process mixtures and related models, and proposed a simple post-processing algorithm, namely \textit{merge-truncate-merge} 
(MTM) algorithm. We will establish the posterior consistency of the number of components and other model parameters.
}

\comment{
\subsection{Posterior contraction of mixture models: Previous works}
\label{section: posterior contraction-previous}
In this section, we provide key literature review for the posterior contraction of mixture models under the well-specified parameter space regime. For density estimation, Ghosal and van der Vaart~\cite{Ghosal-vanderVaart-01} 
demonstrated that the posterior for location-scale Gaussian mixture 
densities $p_{G}$ converges to the truth density $p_{G_{0}}$ at the rate $n^{-1/2}$ for 
Dirichlet process priors, relative to the Hellinger metric. On the 
other hand, for parameter estimation, Nguyen~\cite{Nguyen-13} obtained posterior contraction 
rates of mixing measures $G$ to the true mixing measure $G_{0}$ using a 
general class of priors under both finite and infinite mixtures. Later, Ho and Nguyen~\cite{Ho-Nguyen-EJS-16} 
characterized two inequalities for the connection between finite mixture 
densities and the corresponding mixing measures under various degrees 
of identifiability to establish the convergence rates of mixing 
measures. In particular, the class of exact fitted mixture families satisfy 
$h(p_G,p_{G'}) \gtrsim W_1(G,G')$ when the underlying kernel $f$ is 
identifiable in the first order. Additionally, the second 
order identifiable families satisfy the 
bound $h(p_G,p_{G'}) \gtrsim W_2^2(G,G')$ when the mixtures are over-fitted. 
They also verified that the location-scale Gaussian kernel is 
identifiable in the first order. 
}

\subsection{Prior results}
The customary prior specification for a finite mixture is to use a Dirichlet distribution on the mixing weights and another standard prior distribution on the atoms of the mixing measure.
Let $H$ be a distribution with full support on $\Theta$. Thus, for a mixture of $k$ components, the full Bayesian mixture model specification takes the form:
\begin{eqnarray} \label{eq:Exact_fit_model}
\vec{p} = (p_{1}, \ldots, p_{k})  & \sim & \text{Dirichlet}_{k}(\gamma/k,\ldots,\gamma/k), \nonumber \\
\theta_{1}, \ldots, \theta_{k} & \overset{iid} \sim & H, \nonumber \\
X_1,\ldots, X_n \; | \; G = \sum_{i=1}^{k} p_i \delta_{\theta_i} & \stackrel{iid}{\sim} & p_G.
\end{eqnarray}

Suppose for a moment that $k_0$ is known, we can set $k=k_0$ in the above model specification. Thus we would be in an \textit{exact-fitted} setting. Provided that $f$ satisfies both first-order identifiability condition and the uniform Lipschitz continuity condition, $H$ is approximately uniform on $\Theta$, then according to~\cite{Nguyen-13,Ho-Nguyen-EJS-16} it can be established that as $n$ tends to infinity,
\begin{eqnarray}
\label{eq:posterior_results_exact_fit}
\Pi\biggr(G \in {\Ecal_{k_0}}(\Theta): W_1(G,G_0) \gtrsim (\log n/n)^{1/2} \biggr| X_1, \ldots, X_n\biggr) \overset{p_{G_0}} \longrightarrow 0.
\end{eqnarray} 
The $(\log n/n)^{1/2}$ rate of posterior contraction is optimal up to a logarithmic term. 

When $k_0$ is unknown, there may be a number of ways for the modeler to proceed. Suppose that an upper bound of $k_0$ is given, say $k_0 < \overline{k}$. Then by setting $k=\overline{k}$ in the above model specification, we have a Bayesian \textit{overfitted} mixture model. Provided that $f$ satisfies the second-order identifability condition and the uniform Lipschitz continuity condition, $H$ is again approximately uniform distribution on $\Theta$, then it can be established that~\cite{Nguyen-13,Ho-Nguyen-EJS-16}:
\begin{eqnarray}
\label{eq:posterior_results_overfit}
\Pi\biggr(G \in {\Ocal_{\overline{k}}}(\Theta): W_2(G,G_0) \gtrsim (\log n/n)^{1/4} \biggr| X_1, \ldots, X_n\biggr) \overset{p_{G_0}} \longrightarrow 0.
\end{eqnarray} 
This result does not provide any guarantee about whether the true number of mixture components $k_0$ can be recovered. The rate (upper bound) $(\log n/n)^{1/4}$ under $W_2$ metric implies that under the posterior distribution the redundant mixing weights of $G$ contracts toward zero at the rate $(\log n/n)^{1/2}$, but the posterior contraction to each of the $k_0$ atoms of $G_0$ occurs at the rate $(\log n/n)^{1/4}$ only. 

Interestingly, it can be shown by Rousseau and Mengersen~\cite{Rousseau-Mengersen-11} that with a more judicious choice of prior distribution on the mixing weights, one can achieve a near-optimal posterior contraction behavior. Specifically, they continued to employ the Dirichlet prior, but they required the Dirichlet's hyperparameters set to be sufficiently small: $\gamma/k \leq d/2$ in \eqref{eq:Exact_fit_model} 
where $k=\overline{k}$, $d$ is the dimension of the parameter space $\Theta$. Then, under some conditions on kernel $f$ approximately comparable to the second-order identifiability and the uniform Lipschitz continuity condition defined in the previous section, they showed that for any $\epsilon > 0$, as $n$ tends to infinity
\begin{eqnarray}
\label{RM}
\Pi\biggr( \exists  I \subset \{1,\ldots,k\}, |I| = k-k_0 \ \text{s.t.} \ \sum_{i \in I} p_i < n^{-1/2+ \epsilon} \biggr| X_1,\ldots,X_n \biggr)  \overset{p_{G_0}} \longrightarrow 1.
\end{eqnarray}

\comment{
The above model is well-known as an approximation of Dirichlet Process 
mixture model~\cite{ishwaran-zarepour}.
If the true number of components $k_{0}$ is known, i.e., the exact-
fitted setting of mixture models and $k = k_{0}$ in~\eqref{eq:Exact_fit_model}, and $\Pi$ is the prior induced on the 
parameters by Equation~\eqref{eq:Exact_fit_model}, it immediately 
follows from the results in~\cite{Ho-Nguyen-EJS-16} that 
\begin{eqnarray}
\label{eq:posterior_results_exact_fit}
\Pi\biggr(G \in {\Ecal_{k_0}}(\Theta): W_r(G,G_0) \gtrsim \log(n)^{1/2} n^{-1/2} \biggr| X_1, \ldots, X_n\biggr) \overset{p_{G_0}} \longrightarrow 0
\end{eqnarray} 
for a general first order identifiable family of distributions.
}

\comment{
However, in practice, $k_0$ is mostly unknown; therefore, even though 
the results under exact-fitted mixtures are elegant, they are not 
applicable in real applications. A more reasonable and common 
assumption is the knowledge of an upper bound for $k_{0}$, namely, we 
can find some known number of components $k > k_{0}$. That scenario is 
widely regarded as the over-fitted mixture setting. In their
paper, Rousseau and Mengersen \cite{Rousseau-Mengersen-11} discussed the 
estimation of the number of components for over-fitted mixtures. They 
showed that under certain regularity conditions on the kernel, if  $
\gamma/k \leq d/2$ in \eqref{eq:Exact_fit_model} where $d$ is the 
dimension of the parameter space $\Theta$, we find that
\begin{eqnarray}
\label{RM}
\Pi\biggr( \text{there exists} \text{ I }=\{j_1,\dots,j_{k_u-k_0}\} \ \text{such that} \ \sum_{i \in \text{ I } }p_i < n^{-1/2+ \epsilon} \biggr| X_1,\dots,X_n \biggr)  \overset{p_{G_0}} \longrightarrow 1.
\end{eqnarray}
}

For a more precise statement along with the complete list of sufficient conditions leading to claim~\eqref{RM}, we refer the reader to the original theorem of~\cite{Rousseau-Mengersen-11}.
Although their theorem is concerned with only the behavior of the redundant mixing weights $p_i$, where $i \in I$, which vanish at a near-optimal rate $n^{-1/2+\epsilon}$, it can be deduced from their proof that the posterior contraction for the true atoms of $G_0$ occurs at this near-optimal rate as well.~\cite{Rousseau-Mengersen-11} also showed that this performance may not hold if the Dirichlet's hyperparameters are set to be sufficiently large. Along this line, concerning the recovery of the number of mixture components $k_0$,~\cite{Chambaz-Rousseau-order-08} 
demonstrated the convergence of the posterior mode of the number of components to the true number of components $k_0$ at a rate $n^{-\rho}$, 
where $\rho$ depends on $\overline{k}-k_0$, the number of redundant components forced upon by our model specification.

\comment{Even though these results 
provide an encouragement for the use of over-fitted framework for 
model~\eqref{eq:Exact_fit_model} under mixtures, they require the 
knowledge of an upper bound for the number of components. There exists 
a large number of scenarios, for example, in topic modeling~\cite{Blei-et-al} 
where the true number of topics can be fairly large (see for example~\cite{Yurochkin-Guha-Nguyen-17} and the references therein). As a 
consequence, imposing an upper bound for the true number of components 
would not be effective under that setup. 
}

\subsection{Optimal posterior contraction via a parametric Bayesian mixture}
\label{Well-specified MTM contraction}

\comment{
A natural alternative choice of priors for density estimation of 
mixture models has been the Dirichlet process prior which places 
positive mass on infinite number of components. Because of suitable 
properties such as exchangeability and effectiveness in absence of 
knowledge of true number of components, the Dirichlet process prior has 
been a very popular tool in Bayesian nonparametrics as can be seen in~\cite{Antoniak-74,MacEachern-Muller-98}. 
Good rates for posterior of mixture densities with this prior, as shown 
in Ghosal and van der Vaart~\cite{Ghosal-vanderVaart-01}, also 
strengthen the argument in favor of it. However, Dirichlet Process 
mixtures are known to be inconsistent for the number of components in 
the frequentist sense, as shown by Miller and Harrison \cite{Miller-2014}. 
That inconsistency gives rise to an open problem in the literature 
about the effective Bayesian framework to determine the true number of 
components.}

We will show that optimal posterior contraction rates for mixture model parameters can be achieved by a natural Bayesian extension on the prior specification, even when the upper bound on the number of mixture component $k$ is unknown. The modeling idea is simple and truly Bayesian in spirit: since $k_0$ is unknown, let $K$ be a natural-valued random variable representing the number of mixture components. We endow $K$ with a suitable prior distribution $q_K$ on the positive integers. Conditioning on $K=k$, for each $k$, the model is specified as before:
\begin{eqnarray} \label{eq:MFM_model}
K & \sim & q_K, \\
\vec{p} = (p_{1}, \ldots, p_{k}) | K=k  & \sim & \text{Dirichlet}_{k}(\gamma/k,\ldots,\gamma/k), 
 \nonumber \\
\theta_{1}, \ldots, \theta_{k}\; | \;K=k & \overset{iid} \sim & H, \nonumber \\
X_1,\ldots,X_n\; | \; G = \sum_{i=1}^{k} p_i \delta_{\theta_i} & \stackrel{iid}{\sim} & p_G \; .
\end{eqnarray}

This prior specification is called \textit{mixture of finite mixtures} (MFM) model~\cite{Green-Richardson-97, Stephen_2000, Miller-2016}. 
%
In the sequel we show that the application of the MFM prior leads to the optimal posterior contraction rates for the model parameters. Interestingly, such guarantees can be established under very mild conditions on the kernel density $f$: only the uniform Lipschitz continuity and the first-order identifiability conditions will be required. The first-order identifiability condition is the minimal condition for which the optimal posterior contraction rate can be established, since this condition is also necessary for exact-fitted mixture models to receive the $n^{-1/2}$ posterior contraction rate. We proceed to state such conditions.

\begin{itemize}
\item[(P.1)] The parameter space $\Theta$ is compact, while
kernel density $f$ is first-order identifiable 
and admits the uniform Lipschitz property up to the first order.
\item[(P.2)] The base distribution $H$ is absolutely continuous with respect to the Lebesgue measure $\mu$ on $\mathbb{R}^{d}$ and admits a density function $g(\cdot)$.
Additionally, $H$ is approximately uniform, i.e., $\min_{\theta \in \Theta} g(\theta) > c_0 > 0$.
\item[(P.3)] There exists $\epsilon_{0}>0$ such that 
${\displaystyle \int (p_{G_{0}}(x))^{2}/p_{G}(x)d\mu(x) 
\leq M(\epsilon_{0}})$ as long as $W_{1}(G,G_{0}) 
\leq \epsilon_{0}$ for any $G \in \Ocal_{k_{0}}$ 
where $M(\epsilon_{0})$ depends only on $\epsilon_{0}$, $G_{0}$, and $\Theta$.
\item[(P.4)] The prior $q_{K}$ places positive mass on the set of natural numbers, i.e., $q_{K}(k) > 0$ for all $k \in \mathbb{N}$. 
\end{itemize}
\begin{theorem} \label{theorem:convergence_rate_well_specified}  
Under assumptions (P.1), (P.2), (P.3), and (P.4) on MFM, we have that
\begin{itemize}
\item[(a)] $\Pi(K= k_0|X_1,\ldots,X_n) \to 1$ a.s. under $P_{G_0}$.
\item[(b)] Moreover, \begin{eqnarray}
\Pi\biggr(G \in \overline{\mathcal{G}}(\Theta): W_{1}(G,G_{0}) 
		\lesssim (\log n/n)^{1/2}\biggr|
		X_{1},\ldots,X_{n}\biggr) \to 1 \nonumber
\end{eqnarray}
in $P_{G_{0}}$-probability. 

\end{itemize}
\end{theorem}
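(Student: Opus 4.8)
The plan is to leverage the standard Ghosal--Ghosh--van der Vaart machinery for posterior contraction in density estimation, combined with the first-order inverse bound $h(p_G, p_{G_0}) \gtrsim W_1(G,G_0)$ from~\eqref{eq:lower_bound_first_order_identifiability}, but the key new difficulty is that the MFM prior ranges over $\overline{\Gcal}(\Theta)$ with varying $k$, so one must first establish part~(a)---concentration of the posterior on $K = k_0$---before the clean exact-fitted inverse bound can be applied. I would organize the argument in three stages.

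\textbf{Stage 1: Hellinger contraction of the mixture density.} First I would show that $\Pi\bigl(h(p_G, p_{G_0}) \gtrsim (\log n/n)^{1/2} \,\big|\, X_1,\ldots,X_n\bigr) \to 0$ in $P_{G_0}$-probability. This follows from the general posterior contraction theorem (Ghosal--Ghosh--van der Vaart) once we verify (i) a prior thickness / Kullback--Leibler condition: $\Pi\bigl(G: K(p_{G_0}, p_G) \lesssim \epsilon_n^2, \; K_2(p_{G_0}, p_G) \lesssim \epsilon_n^2\bigr) \gtrsim \exp(-c n \epsilon_n^2)$ with $\epsilon_n \asymp (\log n/n)^{1/2}$, and (ii) an entropy / sieve condition with exponentially small prior mass on the complement of the sieve. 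For (i), I restrict attention to the event $K = k_0$ (which has positive prior mass by (P.4)), place mass near the true atoms and weights, and use the uniform Lipschitz property together with (P.3) (which controls the second-moment ratio $\int p_{G_0}^2/p_G$ and hence converts a Hellinger neighborhood into a KL neighborhood via the standard lemma of Wong--Shen / Ghosal--van der Vaart). For (ii), the sieve is $\Ocal_{k_n}(\Theta)$ for slowly growing $k_n$ (e.g.\ $k_n \asymp n\epsilon_n^2/\log n$), whose entropy is controlled because $\Theta$ is compact and the kernel is Lipschitz, while $q_K$'s tail gives exponentially small mass outside.

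\textbf{Stage 2: Posterior concentration on $K = k_0$ (part (a)).} This is where I expect the main obstacle. The upper direction---that $\Pi(K < k_0 \mid X_{1:n}) \to 0$---follows because any mixing measure with fewer than $k_0$ atoms is bounded away from $p_{G_0}$ in Hellinger distance (identifiability), contradicting Stage 1. The delicate direction is $\Pi(K > k_0 \mid X_{1:n}) \to 0$: overfitted mixtures can approximate $p_{G_0}$ arbitrarily well in Hellinger, so one cannot argue by a likelihood gap alone. Instead I would use a prior-mass comparison: condition on $K = k$ for $k > k_0$ versus $K = k_0$, and show the marginal likelihood ratio $\frac{\int p_G(X_{1:n}) \, d\Pi(G \mid K=k)}{\int p_G(X_{1:n})\, d\Pi(G\mid K=k_0)} \to 0$ a.s. The mechanism is that fitting with extra components forces the posterior (given $K=k$) to either split a true atom or assign vanishing weight to redundant atoms; the small Dirichlet parameter $\gamma/k$ and the first-order identifiability make the effective prior mass on the $\epsilon$-Hellinger-neighborhood of $p_{G_0}$ within $\Ecal_k$ of smaller order (a polynomial-in-$n$ penalty) than that within $\Ecal_{k_0}$, which, combined with the fixed geometric decay $q_K(k)$, yields summability over $k$ and the a.s. statement via Borel--Cantelli. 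This is essentially a Bayes-factor consistency argument in the spirit of~\cite{Chambaz-Rousseau-order-08} adapted to the Wasserstein-geometry-aware bounds of~\cite{Ho-Nguyen-EJS-16}.

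\textbf{Stage 3: From Hellinger to Wasserstein (part (b)).} On the event $K = k_0$, which by part~(a) carries asymptotically full posterior mass, the posterior is supported on $\Ocal_{k_0}(\Theta)$, so the first-order inverse inequality~\eqref{eq:lower_bound_first_order_identifiability}, $h(p_G, p_{G_0}) \gtrsim W_1(G, G_0)$, applies. Combining this with the Hellinger contraction from Stage~1 gives
\[
\Pi\bigl(W_1(G,G_0) \gtrsim (\log n/n)^{1/2} \,\big|\, X_1,\ldots,X_n\bigr) \le \Pi(K \ne k_0 \mid X_{1:n}) + \Pi\bigl(h(p_G,p_{G_0}) \gtrsim c(\log n/n)^{1/2} \mid X_{1:n}\bigr) \to 0,
\]
which is the claimed rate. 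The upgrade from convergence in probability to the almost-sure statement in~(a) requires the martingale/Borel--Cantelli refinement already used in Stage~2; for~(b) the in-probability statement is what is asserted, so no extra work is needed there beyond assembling the two pieces.
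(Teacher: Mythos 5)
Your final assembly (Stage 3) matches the paper's: decompose the posterior probability of $\{W_1(G,G_0)\gtrsim \epsilon_n\}$ into the contribution from $\{K\neq k_0\}$ plus the contribution from $\Ocal_{k_0}(\Theta)$, and on the latter invert Hellinger contraction through the first-order bound~\eqref{eq:lower_bound_first_order_identifiability}. But the route you take to get there has two concrete gaps.

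First, your Stage 1 asks for Hellinger contraction of the density over the \emph{entire} MFM prior, with a sieve $\Ocal_{k_n}(\Theta)$ and the claim that ``$q_K$'s tail gives exponentially small mass outside.'' Assumption (P.4) imposes no tail decay whatsoever on $q_K$ --- it only requires $q_K(k)>0$ for all $k$ --- so $\Pi(K>k_n)$ need not be exponentially small and the sieve-complement condition of the Ghosal--Ghosh--van der Vaart scheme cannot be verified from the stated hypotheses. The paper avoids this entirely by reversing the order of your stages: it first proves part (a), and only then needs contraction on the fixed stratum $\Ocal_{k_0}(\Theta)$, where the sieve is taken to be the whole stratum ($\Pi(\overline{\Gcal}\setminus\Gcal_n)=0$) and the entropy is finite-dimensional.

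Second, and more importantly, your Stage 2 builds an elaborate Bayes-factor / marginal-likelihood comparison to rule out $K>k_0$, invoking a polynomial prior-mass penalty for redundant components driven by the small Dirichlet hyperparameter. That mechanism (Rousseau--Mengersen, Chambaz--Rousseau) requires second-order-type regularity and constraints on $\gamma/k$, neither of which is assumed here; under first-order identifiability and arbitrary $\gamma$ it is not clear the marginal likelihood ratio argument closes. The paper's Step 1 is instead a one-line appeal to Doob's consistency theorem: because the MFM prior puts positive mass on every $k$ and the model is identifiable, $\Pi(K=k_0\mid X_{1},\ldots,X_n)\to 1$ almost surely under $P_{G_0}$, with no likelihood-gap or prior-mass-penalty analysis at all. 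This also delivers the almost-sure statement in part (a) directly, which your Borel--Cantelli refinement is trying to recover by hand. If you restructure so that Doob's theorem comes first and all quantitative work is confined to $\Ocal_{k_0}(\Theta)$ (where the paper verifies the prior-thickness bound $\Pi(B_K(\epsilon_n,M))\gtrsim \epsilon_n^{2(c_H+\gamma')}$ and applies the Wasserstein-adapted contraction theorem of Nguyen), both gaps disappear.
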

The proof of Theorem~\ref{theorem:convergence_rate_well_specified} is 
deferred to Section~\ref{proof:theorem:convergence_rate_well_specified}. 
We make several remarks regarding the conditions required in the theorem. It is worth stating up front that these conditions are almost minimal in order for the optimal posterior contraction to be guaranteed, and are substantially weaker than previous works (as discussed above). Assumption (P.1) is crucial in establishing that the 
Hellinger distance $h(p_{G},p_{G_{0}})\geq C_0 W_1(G,G_0)$ where $C_0$ is some positive constant depending only on $G_{0}$ and $\Theta$. Assumption (P.2) and (P.4) are standard conditions on the support of the prior so that posterior consistency can be guaranteed for any unknown $G_0$ with unknown number of support atoms residing on $\Theta$. Finally, the role of (P.3) is to help control the growing rate of KL neighborhood, which is central in the analysis of posterior convergence rate of mixing measures. This assumption is held for various choices of kernel $f$, including location families and location-scale families. Therefore, the assumptions (P.1), (P.2),(P.3) and (P.4) are fairly general and satisfied by most common choice of kernel densities.

\paragraph{Further remarks} Theorem~\ref{theorem:convergence_rate_well_specified} provides a positive endorsement for employing the MFM prior when the number of mixture components is unknown, but is otherwise believed to be finite and an important quantity of inferential interest. The papers of~\cite{Green-Richardson-97,Miller-2016} discuss additional favorable properties of this class of models.
However, when the true number of mixture components is large, posterior inference with the MFM may still be inefficient in practice. This is because much of the computational effort needs to be expended for the model selection phase, so that the number of mixture components can be reliably ascertained. Only then does the fast asymptotic rate of parameter estimation come meaningfully into effect.

\comment{
Finally, the strength of the MFM scheme also lies in the fact that contraction 
properties hold for any general distribution $q_{K}$ and $H$. Note that,
the contraction result in Theorem \ref{theorem:convergence_rate_well_specified} 
makes use of the fact that the parameter space $\Theta$ is compact. 
Choosing the appropriate $\Theta$, therefore, is based greatly on 
prior knowledge of the data. However, it is possible that the true mixing measure does not have all its atoms in the 
specified parameter space $\Theta$. Under this scenario of model misspecification, posterior contraction to 
the true parameters cannot be guaranteed, of course. We will investigate this setting in  Section~\ref{section:Posterior Contraction:mis-specified}.
}


\subsection{A posteriori processing for BNP mixtures}
\label{MTM Procedure}

Instead of placing a prior distribution explicitly on the number of mixture components when this quantity is unknown, another predominant approach is to place a Bayesian nonparametric prior on the mixing measure $G$, resulting in infinite mixture models. Bayesian nonparametric models such as Dirichlet process mixtures and the variants have remarkably extended the reach of mixture modeling into a vast array of applications, especially those areas where the number of mixture components in the modeling is very large and difficult to fathom, or when it is a quantity of only tangential interest. For instance, in topic modeling applications of web-based text corpora, one may be interested in the most "popular" topics, the number of topics is totally meaningless~\cite{Blei-etal-03,Teh-etal-06,Nguyen-admixture,Yurochkin-Guha-Nguyen-17}. DP mixtures and variants can also serve as an asymptotically optimal device for estimating the population density, under standard conditions on the true density's smoothness, see, e.g.,~\cite{Ghosal-vanderVaart-01,Ghosal-vanderVaart-07b,Shen-etal-13,Scricciolo_2014}. 

Since a nonparametric Bayesian prior such as the Dirichlet process places zero probability on mixing measures with finite number of supporting atoms, the Dirichlet process mixture's posterior is inconsistent on the number of mixture components, provided the true number of mixture components is finite~\cite{Miller-2014}. It is well known in practice that Dirichlet process mixtures tend to produce many small extraneous components around the "true" clusters, making them challenging to use to draw conclusion about the true number of mixture components when this becomes a quantity of interest~\cite{MacEachern-Muller,Green-Richardson-clustering}. In this section we describe a simple posteriori processing algorithm that consistently estimates the number of components for any general Bayesian prior, even without the exact knowledge of its structure as long as the posterior for that prior contracts at some known rate to the true $G_0$.

Our starting point is the availability of a mixing measure sample $G$ that is drawn from the posterior distribution $\Pi(G | X_1,\ldots, X_n)$, where $X_1,\ldots, X_n$ are i.i.d. samples of the mixing density $p_{G_0}$. Under certain conditions on the kernel density $f$, it can be established that for some Wasserstein metric $W_r$, as $n\rightarrow \infty$
\begin{eqnarray}
\label{eq:posterior_results}
\Pi\biggr(G \in \overline{\Gcal}(\Theta): W_r(G,G_0) \leq \delta \omega_n \biggr| X_1, \ldots, X_n\biggr) \overset{p_{G_0}} \longrightarrow 1
 \end{eqnarray} 
for \emph{all} constant $\delta > 0$, while $\omega_n = o(1)$ is a vanishing rate. Thus, $\omega_n$ can be taken to be (slightly) slower than actual rate of posterior contraction of the mixing measure. 
Concrete examples of the posterior contraction rates in infinite and (overfitted) finite mixtures are given in~\cite{Nguyen-13,Gao-vdV-2016,Ho-Nguyen-EJS-16}.

The posterior processing algorithm operates on an instance of mixing measure $G$, by suitably merging and truncating atoms that provide the support for $G$. The only inputs to the algorithm, which we call \textit{Merge-Truncate-Merge} (MTM) algorithm is $G$, in addition to the upper bound of posterior contraction rate $\omega_n$, and a tuning parameter $c>0$. The tuning parameter $c$ is useful in practice, as we shall explain, but in theory the algorithm "works" for any constant $c>0$. Thus, the method is almost "automatic" as it does not require any additional knowledge about the kernel density $f$ or the space of support $\Theta$ for the atoms. It is also simple and fast. We shall show that the outcome of the algorithm is a consistent estimate of both the number of mixing components and the mixing measure. The latter admits a posterior contraction rate's upper bound $\omega_n$ as well.

The detailed pseudocode of MTM algorithm is summarized in Algorithm~\ref{algo:merge-truncate-merge}. At a high level, it consists of two main stages. The first stage involves a probabilistic procedure for merging atoms that may be clustered near one another. The second stage involves a deterministic procedure for truncating extraneous atoms and merging them suitably with the remaining ones in a systematic way. The driving force of the algorithm lies in the asymptotic bound on the Wasserstein distance, i.e.,
$W_r(G,G_0) \leq c\omega_n$ with high probability. When $c\omega_n$ is sufficiently small, there may be many atoms that concentrate around each of the supporting atoms of $G_0$. Although $G_0$ is not known, such clustering atoms may be merged into one, by our first stage of probabilistic merging scheme. The second stage (truncate-merge) is also necessary in order to obtain a consistent estimate of $k_0$, because there remain distant atoms which carry a relatively small amount of mass. They will need to be suitably truncated and merged with the other more heavily supported atoms. In other words, our method can be viewed as a formal procedure of the common practices employed by numerous practitioners.
\comment{
in this section 
essentially consists of three steps. Step $1$ merges atoms close to 
one another using a probabilistic index assigning scheme, followed by a merge scheme. The index assigning scheme ensures that, with high probability the atoms of $G$ are merged to those atoms of $G$ which are close to atoms of $G_0$ themselves, rather than to outlying atoms of $G$. Step $2$ eliminates the merged atoms which have 
small weights. At this point, all atoms are sufficiently far apart 
from one another and have mass above a certain threshold. However, 
there still exist certain atoms which transport mass to a common 
atom of $G_0$ according to the optimal transportation plan. A 
consistent estimation scheme for the number of components needs to 
eliminate such redundancies in estimating the number of components 
of $G_0$ by its empirical counterpart. Step $3$ takes care of that 
by merging atoms which have low cost of mass transfer between them. 
}

\setcounter{algorithm}{0}
\begin{algorithm}
\caption{Merge-Truncate-Merge Algorithm}
\label{algo:merge-truncate-merge}
\begin{algorithmic}[1]
\algsetup{linenosize=\small}
\REQUIRE Posterior sample 
$G=\sum_{i } p_{i}\delta_{\theta_{i}}$ from~\eqref{eq:posterior_results}, rate $\omega_n$, constant $c$.
\ENSURE Discrete measure $\secmer$ and its number of supporting atoms $\tilde{k}$. 

\COMMENT{\textbf {Stage 1: Merge procedure:}}
\STATE Reorder atoms $\{\theta_1,\theta_2,\dots\}$ by simple random sampling without replacement with corresponding weights $\{p_1,p_2,\dots\}$. 
\begin{itemize}
\item[] let $\tau_1, \tau_2,\dots$ denote the new indices, and set $\mathcal{E} = \{\tau_j \}_{j}$ as the existing set of atoms.
\end{itemize}

\STATE Sequentially for each index $\tau_{j} \in \mathcal{E}$, if there exists an index $\tau_{i} < \tau_{j}$ such that 
$\| \theta_{\tau_i} - \theta_{\tau_j} \| \leq \omega_n$, then:
\begin{itemize}
        \item [] update $p_{\tau_i} = p_{\tau_i} + p_{\tau_j}$, and remove $\tau_j$ from $\mathcal{E}$.
    \end{itemize}
\STATE Collect $\firstmer = \sum_{j: \ \tau_j \in \mathcal{E}} p_{\tau_j} \delta_{\theta_{\tau_j}}$.
\begin{itemize}
\item [] write $G'$ as $\sum_{i = 1}^{k} q_{i}\delta_{\phi_i}$ so that $q_{1} \geq q_{2} \geq \dots$.
\end{itemize}

\item[]

\COMMENT{\textbf{Stage 2: Truncate-Merge procedure:}}
\STATE Set $\mathcal{A}=\{i : q_i > (c\omega_n)^r\}$, 
$\mathcal{N}=\{i : q_i \leq ( c\omega_n)^r\}$.

\STATE For each index $i \ \in \mathcal{A}$, if there is $j \in \mathcal{A}$ such that $j < i$ and $q_i \| \phi_i - \phi_j\|^{r} \leq (c\omega_n)^r$, then 
\begin{itemize}
    \item [] remove $i$ from $\mathcal{A}$ and add it to $\mathcal{N}$.
\end{itemize}

\STATE For each $i \in \mathcal{N}$, find atom $\phi_j$ among $j \in \mathcal{A}$ that is nearest to $\phi_i$
\begin{itemize}
\item [] update $q_j = q_j + q_i$.
\end{itemize}
\STATE Return $\secmer = \sum_{j \in \mathcal{A}} {q}_j \delta_{\phi_j}$ and $\tilde{k}= |\mathcal{A}|$.
\end{algorithmic}
\end{algorithm}

\comment{
As discussed before, line $1$ to line $5$ in MTM algorithm corresponds 
to the merge procedure and a special instance of the probabilistic 
scheme discussed in Lemma~\ref{lemma:merge_algorithm_1} and its proof. 

Figures~\ref{fig:initial},~\ref{fig:merge_1},~\ref{fig:truncate} and~\ref{fig:merge_2} 
show the different stages in the application of MTM algorithm~\ref{algo:merge-truncate-merge}. 
For Figures~\ref{fig:merge_1},~\ref{fig:truncate} and~\ref{fig:merge_2}, 
the green dots denote the atoms in the set of "remaining atoms" at each stage, with weights proportional to their sizes. 
The red dots denote the atoms for the true mixing measure. The black 
circles denote balls of radius $\omega_n$ around each of the "remaining atoms". 
The blue circles denote balls of radius $\frac{\omega_n}{4k_0}$ around 
the atoms of $G_0$. The clusters are labelled in descending order of 
the weights of the atoms, in $\firstmer$ in the MTM algorithm, which form their centers.

Figure~\ref{fig:initial} shows the mixing measure corresponding to the posterior sample $G$. 
In this case, we assume $G$ is mixing measure with uniform weights on 
its atoms, which are shown by the dots in the figure. The circles in 
Figure~\ref{fig:initial} provide a relative comparison to Figure~\ref{fig:merge_1} 
which form the output of Step 6 of algorithm~\ref{algo:merge-truncate-merge}. 

Consider the SRSWOR  scheme in Step 2 of Algorithm~\ref{algo:merge-truncate-merge}. The scheme randomly permutes the indices of atoms according to their weights. In other words, an atom with larger weight has a higher chance of being assigned a lower index value by the permutation employed. In contrast to the SRSWOR scheme, if we use a deterministic scheme for labeling, any outlier atom of $G$ (which is more than $\omega_n$-distance away from any atom of $G_0$) might have a lower index value than an atom of $G$ which is close to an atom of $G_0$. As a result, the merging scheme might assign a very high weight to the outlier atom even though it is not in an $\omega_n$-ball around an atom of $G_0$. When, $\Theta$ is high-dimensional this might produce a large number (more than $1$) of merged atoms around any atom of $G_0$, with each having a high weight. Thus we might miscount the total number of true atoms by the scheme that follows. On the other hand, since the atoms of $G$ are converging to the atoms of $G_0$, it is expected that any atom of $G_0$ will have a high proportion of atoms of $G$ near it. So with a higher probability we will be able to assign a lower index value to an atom of $G$ which is close to an atom of $G_0$. The merging scheme can then ensure the total number of merged atoms of $G$ around any atom of $G_0$ does not get too large, even in high dimensional scenarios. Step 6 of Algorithm~\ref{algo:merge-truncate-merge} produces a mixing measure with the merging scheme following the SRSWOR scheme. Note that as $\Theta$ is a compact subspace, $\firstmer$ always has finitely many atoms.

At this stage, "the set of remaining atoms" contains the truth, A, B 
and C. It also contains atoms of smaller masses in E,F and G. Atom D 
corresponds to an atom of $\firstmer$ which has mass larger than the 
threshold $\omega_n^r$, and contracts towards A at a rate of $O(\omega_n)$. 
Figure~\ref{fig:truncate} is representative of "the set of remaining 
atoms" after Step 8 of Algorithm~\ref{algo:merge-truncate-merge}. At 
this stage, all the atoms of $\firstmer$ which have small masses have 
been removed. Now, since D has smaller mass than A,B and C, it comes in 
a chronologically lower ordering according to Step 6 of Algorithm~\ref{algo:merge-truncate-merge}. 
Step 9 to Step 11 identify this non-atom by means of thresholding the 
cost of mass transfer for this atom to the other "remaining atoms". Figure~\ref{fig:merge_2} 
provides the outcome of Step 11 of Algorithm~\ref{algo:merge-truncate-merge}. Step 12 to Step 15 of Algorithm~\ref{algo:merge-truncate-merge} 
now add back the masses corresponding to the non-atoms to the atoms 
identified.
}

\comment{
Figure~\ref{fig:initial} shows the mixing measure corresponding to the starting posterior sample $G$, with size of each atom proportional to its weight in $G$. The clusters are labeled in chronological order of indices of their centers. Figure \ref{fig:merge_1} shows the result of steps 1-10 of Algorithm~\ref{algo:merge-truncate-merge} applied to $G$. The red dots represent the atoms of $G_0$. In this case $G_0$ comprises of 3 atoms with equal weights. Since there are more atoms in $B$ than in $C$, with higher probability the center of $B$ appears chronologically before the center of $C$ (in the SRSWOR scheme) and therefore the masses corresponding to atoms of $G$ in the intersection of $B$ and $C$ are all assigned to $B$. The SRSWOR scheme simply produces a permutation of the indices of $G$. The 
indices having larger weights (according to the weight vector) 
 have a higher probability of being assigned a lower index according to this permutation. 
In other words, steps 1-10 in Algorithm~\ref{algo:merge-truncate-merge} randomly 
label atoms of $G$ and then sequentially does the following updates for 
each atom of $G$. If there exists an atom of lower index in the 
existing set, within an $\omega_n$-radius of the concerned atom, 
the procedure adds its mass to the atom of lower index value and 
removes the concerned atom from the existing set of atoms. 
Based on that procedure, $\firstmer$ has fewer number of components than $G$. 
Note that for $\delta, \omega_n$ sufficiently small condition (i),(ii) and (iii) are all satisfied. 

The probabilistic merge procedure in Steps 1-10 of Algorithm~\ref{algo:merge-truncate-merge}
creates a new mixing measure by combining all the close atoms of a 
given sample from posterior distribution in equation~\eqref{eq:posterior_results}. 
However, it is possible that there are still atoms with small masses in the new mixing measure.
The truncation scheme in Step 12 of Algorithm~\ref{algo:merge-truncate-merge}
essentially removes all components which have small masses below a 
diminishing threshold from the list of current atoms. These removed
atoms are regarded as "non-atoms". Then, we add the masses of non-
atoms to those of atoms closest to them in Euclidean distance. Figure \ref{fig:truncate} shows the scenario after applying merge and truncate to mixing measure $G$. Notice that circles E,F and G have been removed from Figure \ref{fig:merge_1} since the mass contained in them was below the threshold, and the masses corresponding to G and F have been added to C while the mass in E has been been to B. The atom in D still remains since it has mass larger than the set threshold.

In order to consistently estimate the true number of mixture components $k_{0}$, it is necessary to identify the remaining extra components which have mass decreasing to $0$ at a rate slightly greater than the diminishing threshold $\omega_n^r$, but also lie outside an $\omega_n$ neighborhood of the true atoms. Steps 13-16 account for that issue. The result of applying this procedure  is shown in Figure~\ref{fig:merge_2}. Note that even 
though atom D in Figure~\ref{fig:truncate} has larger mass than 
the truncation threshold, the final merge step comprising steps 13-16 of Algorithm~\ref{algo:merge-truncate-merge} assigns its mass to atom A. The green dots represent the atoms of the post-processed posterior sample. The red dots denote the atoms of truth $G_0$. 
The larger circles have radius $\omega_n$, while the smaller blue circles have radius $\delta\omega_n$.}

We proceed to present the theoretical guarantee for the outcome of Algorithm \ref{algo:merge-truncate-merge}.

\begin{theorem}
\label{theorem: merge-truncate-merge consistency}
Let $G$ be a posterior sample from posterior distribution of any Bayesian procedure, namely, $\Pi(\cdot | X_1,\ldots,X_n)$ according to which the upper bound~\eqref{eq:posterior_results} holds for all $\delta>0$. 
Let $\secmer$ and $\tilde{k}$ be the outcome of Algorithm~\ref{algo:merge-truncate-merge} applied to $G$, for an arbitrary constant $c>0$. 
Then the following hold as $n \rightarrow \infty$.
\begin{enumerate}
\item[(a)] $\Pi(\tilde{k} = k_0 |X_{1},\ldots,X_{n}) 
\to 1$ in $P_{G_0}$-probability.
\item[(b)]  For all $ \delta>0$,
      $\Pi\biggr( G \in \overline{\Gcal}(\Theta): W_r(\tilde{G},G_0) \leq \delta \omega_n \biggr| X_1, \ldots, X_n\biggr)  \longrightarrow 1$ in  $P_{G_0}$-probability.
\end{enumerate}
\end{theorem}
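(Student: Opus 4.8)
The plan is to reduce everything to a single high-probability event on which the input $G$ is already \emph{much} closer to $G_0$ than $\omega_n$, and then to run the algorithm on that event, with the Step-1 randomization used only to prevent overcounting near a true atom. Since \eqref{eq:posterior_results} holds for \emph{every} constant $\delta>0$, a standard diagonal extraction produces a deterministic sequence $\delta_n\downarrow 0$ with $\Pi(\mathcal{B}_n\mid X_1,\ldots,X_n)\to 1$ in $P_{G_0}$-probability, where $\mathcal{B}_n=\{G:W_r(G,G_0)\le\delta_n\omega_n\}$. Since $\tilde k$ and $\widetilde G$ are measurable functions of $G$ together with the Step-1 reordering, it then suffices to bound, uniformly over $G\in\mathcal{B}_n$, the Step-1-randomization probabilities of $\{\tilde k\ne k_0\}$ and of $\{W_r(\widetilde G,G_0)>\delta'\omega_n\}$ for each fixed $\delta'>0$; these combine with the posterior mass of $\mathcal{B}_n$ to yield (a) and (b).

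On $\mathcal{B}_n$ I would first record the geometry forced by $W_r(G,G_0)\le\delta_n\omega_n$. With $\pi$ an optimal coupling, $r_0=\tfrac12\min_{j\ne j'}\|\theta^0_j-\theta^0_{j'}\|$, and $D_i$ the distance of an atom $\theta_i$ of $G$ to the nearest $\theta^0_j$, the bound $\sum_{i,j}q_{ij}\|\theta_i-\theta^0_j\|^r\le(\delta_n\omega_n)^r$ gives at once: (i) $\sum_i p_iD_i^r\le(\delta_n\omega_n)^r$; (ii) the $G$-mass outside $\bigcup_j B(\theta^0_j,t)$ is $\le(\delta_n\omega_n/t)^r$ for every $t>0$; and (iii) writing $A_j=B(\theta^0_j,\sqrt{\delta_n}\,\omega_n)$ and $S_j=B(\theta^0_j,r_0)$, that $G(A_j)\ge p^0_j-\delta_n^{r/2}$ and $G(S_j)\le p^0_j+(\delta_n\omega_n/r_0)^r$ for $n$ large. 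Next I would analyze Step~1: by the elementary identity that under weighted sampling without replacement the probability a given member of a set $T$ precedes all others of $T$ equals its weight divided by $\sum_T$, the first atom of $S_j$ in the Step-1 order lies in $A_j$ with probability $G(A_j)/G(S_j)\ge 1-O(\delta_n^{r/2})$. Let $\mathcal{R}_n$ be the event that this holds for all $j$ simultaneously; then $\Pr(\mathcal{R}_n)\ge 1-O(k_0\delta_n^{r/2})\to 1$ uniformly over $G\in\mathcal{B}_n$. This is exactly the point of the randomization: a deterministic (worst-case) order could let an atom near the boundary of $S_j$ be processed first and thereby spawn a second heavy survivor near $\theta^0_j$.

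On $\mathcal{R}_n$ (and $n$ large) I would trace the algorithm. Let $\theta^{(j)}\in A_j$ be the first atom of $S_j$. It survives the Merge stage (every earlier atom lies outside $S_j$, hence more than $\omega_n$ away) and absorbs all of $B(\theta^0_j,\omega_n-\sqrt{\delta_n}\omega_n)$, so its accumulated weight is $\ge p^0_j-\delta_n^{r/2}$. Any other Merge-survivor within $2\omega_n$ of $\theta^0_j$ has a higher index than $\theta^{(j)}$, hence lies more than $\omega_n$ from $\theta^{(j)}$, hence more than $\omega_n(1-\sqrt{\delta_n})$ from $\theta^0_j$, so by (i) its weight is $\le(2\delta_n)^r$; any survivor more than $2\omega_n$ from every $\theta^0_j$ has weight $\le(c\omega_n)^r$ by (i), so it never enters $\mathcal{A}$. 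In Stage~2: each $\theta^{(j)}$ has weight bounded below by a positive constant and is $\ge 2r_0$ from the others, so $q\,\|\cdot-\theta^{(j')}\|^r$ is bounded below by a positive constant for every competitor and $\theta^{(j)}$ is never removed; each remaining (``spurious'') survivor in $\mathcal{A}$, having weight $\le(2\delta_n)^r$ and lying within $3\omega_n$ of the corresponding $\theta^{(j)}$, satisfies $q\,\|\cdot-\theta^{(j)}\|^r\le 6^r\delta_n^r\omega_n^r\le(c\omega_n)^r$, so it is moved to $\mathcal{N}$. Hence the final $\mathcal{A}$ is exactly $\{\theta^{(1)},\dots,\theta^{(k_0)}\}$ and $\tilde k=k_0$, which is (a). This bookkeeping --- in particular, checking that Stage~2's intra-$\mathcal{A}$ truncation clears \emph{every} spurious survivor without touching the $\theta^{(j)}$'s --- is where I expect the main work.

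For (b) I would push the coupling forward. Let $T$ map each atom $\theta_i$ of $G$ to the element of the final $\mathcal{A}$ holding its weight at termination (the absorbing survivor if that survivor is in $\mathcal{A}$, else the nearest $\mathcal{A}$-atom to it); then $T_\#G=\widetilde G$. For $\theta_i\in A_j$ one checks $\theta_i$ is absorbed into $\theta^{(j)}\in\mathcal{A}$, so $\|T(\theta_i)-\theta_i\|\le 2\sqrt{\delta_n}\,\omega_n$; for every other atom the absorbing survivor is within $\omega_n$ of $\theta_i$ and the nearest $\mathcal{A}$-atom to it is within $D_i+3\omega_n$ of $\theta_i$, so $\|T(\theta_i)-\theta_i\|\le D_i+3\omega_n\le(1+3/\sqrt{\delta_n})D_i$ since then $D_i\ge\sqrt{\delta_n}\omega_n$. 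Combining with (i) and $\sum_i p_i\le 1$,
\[
\int\|T(\theta_i)-\theta_i\|^r\,dG \ \le\ (2\sqrt{\delta_n}\,\omega_n)^r+\bigl(4/\sqrt{\delta_n}\bigr)^r(\delta_n\omega_n)^r\ \le\ 2\cdot 4^r\,\delta_n^{r/2}\,\omega_n^r ,
\]
and pushing $\pi$ forward through $T\times\mathrm{id}$ gives a coupling of $\widetilde G$ and $G_0$, whence $W_r(\widetilde G,G_0)^r\le 2^{r-1}\bigl(\int\|T(\theta_i)-\theta_i\|^r\,dG+W_r(G,G_0)^r\bigr)\le C_r\,\delta_n^{r/2}\omega_n^r$ for a constant $C_r$ depending only on $r$. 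Since $\delta_n\to 0$, this is $\le(\delta'\omega_n)^r$ for $n$ large, giving (b). The subtle point that makes this work --- and the reason the rate is preserved rather than merely kept $O(\omega_n)$ --- is the gain of the factor $\delta_n^{r/2}$: the $G$-mass that $T$ displaces by an amount of order $\omega_n$ (the mass within $O(\omega_n)$ of, but not $\sqrt{\delta_n}\omega_n$-close to, a true atom) is itself only $O(\delta_n^{r/2})$, while more distant mass is controlled directly by $\sum_i p_iD_i^r\le W_r(G,G_0)^r$.
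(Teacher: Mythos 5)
Your overall strategy is the same as the paper's: use the weighted-sampling-without-replacement fact to guarantee, with probability $1-O(\delta^{r/2})$ over the Stage-1 randomization, that each true atom acquires a unique nearby merge-survivor carrying essentially its full mass; show Stage 2 then retains exactly these $k_0$ survivors; and push the optimal coupling through the algorithm to preserve the rate, letting $\delta\to 0$ (your diagonal sequence $\delta_n$) at the end. This mirrors the paper's Lemma~\ref{lemma:merge_algorithm_1}, Lemma~\ref{lemma:k_0 identification}, and the coupling argument in Section~\ref{ssub:proof::MTM consistency}. However, there is one genuine gap, in your treatment of merge-survivors lying more than $2\omega_n$ from every $\theta_j^0$. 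You claim their accumulated weight is $\le (c\omega_n)^r$ ``by (i), so it never enters $\mathcal{A}$.'' What (i) actually yields is the following: every original atom absorbed by such a survivor is at distance $>\omega_n$ from all true atoms, so the accumulated weight is at most $(\delta_n\omega_n)^r/\omega_n^r=\delta_n^r$. But $\delta_n$ is produced by a diagonal extraction from~\eqref{eq:posterior_results} and may tend to zero arbitrarily slowly, so there is no reason for $\delta_n^r\le(c\omega_n)^r$; indeed $\delta_n\gg\omega_n$ is the typical situation. Hence such a survivor can pass the line-4 threshold and sit in $\mathcal{A}$, and your line-5 analysis — which only removes spurious survivors ``lying within $3\omega_n$ of the corresponding $\theta^{(j)}$'' — does not dispose of it, so your argument as written does not rule out $\tilde k>k_0$.

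The repair is exactly the paper's Lemma~\ref{lemma:k_0 identification}(b): for any survivor $s$ with accumulated weight $q_s$, the optimal coupling between $G'$ and $G_0$ must transport all of $q_s$ to the true atoms, so $q_s\min_i\|\phi_s-\theta_i^0\|^r\le W_r^r(G',G_0)\le\bigl((k_0+2)\sqrt{\delta_n}\,\omega_n\bigr)^r$; since the nearest $\theta^{(j)}$ is within $\sqrt{\delta_n}\omega_n$ of the nearest true atom and $\min_i\|\phi_s-\theta_i^0\|>2\omega_n$, this gives $q_s\|\phi_s-\theta^{(j)}\|^r\lesssim(\sqrt{\delta_n}\,\omega_n)^r\le(c\omega_n)^r$ for $n$ large, so line 5 does remove $s$. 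Note this bound on the product (rather than on $q_s$ alone) is the robust quantity here, precisely because $\delta_n$ and $\omega_n$ are unrelated. A secondary, milder point: several of your weight bounds (e.g.\ the $(2\delta_n)^r$ bound for near-spurious survivors) silently pass from the original weight of the surviving atom to its accumulated weight; this only follows once you have established that all of $B(\theta_j^0,(1-\sqrt{\delta_n})\omega_n)$ is absorbed by $\theta^{(j)}$ rather than by a competing earlier survivor, an assumption about which earlier atom receives the merged mass that the paper also makes implicitly, so I do not count it against you. Your part (b) argument via the transport map $T$, with the $\delta_n^{r/2}$ gain coming from the fact that mass displaced by $O(\omega_n)$ has total measure $O(\delta_n^{r/2})$, is a clean packaging of the paper's coupling construction and is correct once Stage 2 is fixed as above.
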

We add several comments concerning this theorem.
\begin{itemize}
    \item [(i)] The proof of this theorem is deferred to Section~\ref{ssub:proof::MTM consistency}, where we clarify carefully the roles played by each step of the MTM algorithm.
    \item  [(ii)] Although it is beyond the scope of this paper to study the practical viability of the MTM algorithm, for interested readers we present a brief illustration of the algorithm via simulations in Section~\ref{section:simulation}.
    \item [(iii)] In practice, one may not have a mixing measure $G$ sampled from the posterior $\Pi(\cdot | X_1,\ldots, X_n)$ but a sample from $G$ itself, say $F_n$. Then one can apply the MTM algorithm to $F_n$ instead. Assume that $F_n$ is sufficiently close to $G$, in the sense that $W_r(F_n,G) \lesssim W_r(G,G_0)$, it is straightforward to extend the above theorem to cover this scenario.
    \end{itemize}
    
\paragraph{Further remarks}  At this point, one may look forward to some guidance regarding the modeling choices of parametrics versus nonparametrics. Even in the tight arena of Bayesian mixture modeling, the jury may still be out. The results in this section seems to provide a stronger theoretical support for the former, when it comes to the efficiency of parameter estimation and the corresponding model interpretation. 

However, as we will see in the next section, when the mixture model is misspecified, the fast posterior contraction rate offered by the use of the MFM prior is no longer valid. On the other hand, Bayesian nonparametric models are more versatile in adapting to complex forms of population densities. In many modern applications it is not meaningful to estimate the number of mixing components, only the most "significant" ones in a sense suitably defined. Perhaps a more meaningful question concerning a Bayesian nonparametric mixture model is whether it is capable of learning selected  mixture components in an efficient way.

\comment{
Part (b) of Theorem~\ref{theorem: merge-truncate-merge consistency} implies in particular that for any monotonically increasing sequence $\{\delta_n\}_n$ 
\begin{eqnarray}
\Pi\biggr(G \in \overline{\Gcal}(\Theta): W_r(\tilde{G},G_0) \geq \delta_n \omega_n \biggr| X_1, \ldots, X_n\biggr) \overset{p_{G_0}} \longrightarrow 0
\end{eqnarray}
holds. The detailed proof of Theorem~\ref{theorem: merge-truncate-merge consistency} 
is deferred to Section~\ref{ssub:proof::MTM consistency}.

Theorem~\ref{theorem: merge-truncate-merge consistency} talks about consistency of the MTM algorithm when $G$ is a posterior sample. In theory a posterior sample, $G$ from a Dirichlet distribution has infinitely many atoms with $\Pi$-posterior probability $1$ a.s. $p_{G_0}$. However, in practice, we only have a sample $G_n$ which is an approximation to the posterior sample, $G$, and has a maximum of $n$-atoms. The following corollary to Theorem~\ref{theorem: merge-truncate-merge consistency} describes the consistency result for an approximate sample $G_n$.

\begin{corollary}
Let $F_n \sim \Pi^{n}(\cdot|X_1,\ldots,X_n)$ be an approximate posterior sample, having atoms in $\Theta$, corresponding to $G_n \sim \Pi(\cdot|X_1,\ldots,X_n)$, such that,
\begin{eqnarray}
\Pi \oplus \Pi^{n}\left(\frac{W_r(F_n,G_n)}{W_r(G_n,G_0)} > \epsilon \biggr|X_1,\ldots,X_n\right)  \overset{p_{G_0}} \longrightarrow 0,\nonumber
\end{eqnarray}
where, $\Pi \oplus \Pi^{n}\left(\cdot|X_1,\ldots,X_n \right)$ is the joint distribution of $(G_n,F_n)$.
Also, assume  $ \Pi(\cdot|X_1,\ldots,X_n)$ satisfies the conditions in Equation~\eqref{eq:posterior_results} for all $\delta>0$. Denote $\tilde{F}_n$ and $\tilde{k}_n$ as the output of MTM procedure on input $F_n$ for arbitrary constant $c$. Then
\begin{enumerate}
\item[(a)] $\Pi^{n} \biggr(\tilde{k}_n \neq k_0 \biggr|X_{1},\ldots,X_{n}\biggr) 
\to 0$ in $p_{G_0}$-probability.
\item[(b)] For all $ \delta>0$,
\begin{eqnarray}
      \Pi^{n}\biggr( F_n \in \overline{\Gcal}(\Theta): W_r(\tilde{F}_n,G_0) \geq \delta \omega_n \biggr| X_1, \ldots, X_n\biggr)   \nonumber.
\end{eqnarray}  
 
 \end{enumerate}
\end{corollary}
\begin{proof}
From the condition given,  
\begin{eqnarray}
\Pi \oplus \Pi^{n}\left( \frac{W_r(F_n,G_n)}{ W_r(G_n,G_0)} > \epsilon,\ W_r(G_n,G_0) \geq \delta \omega_n\biggr|X_1,\ldots,X_n\right)  \overset{p_{G_0}} \longrightarrow 0, \nonumber
\end{eqnarray}
for all $\epsilon,\delta>0$. This implies that 
\begin{eqnarray}
\Pi^{n}\biggr(F_n \in \overline{\Gcal}(\Theta): W_r(F_n,G_0) \geq (1+\epsilon)\delta \omega_n \biggr| X_1, \ldots, X_n\biggr) \overset{p_{G_0}} \longrightarrow 0,\nonumber
 \end{eqnarray} 
for all $\epsilon,\delta>0$, from where we arrive at the conclusion. using the results in Theorem~\ref{theorem: merge-truncate-merge consistency}.
\end{proof}
The MTM algorithm provides a useful post-processing step to consistently 
estimate the number of components $k_{0}$ for any prior which ensures 
contraction to the truth. Another usefulness of the algorithm is 
that it retains the same contraction rate as the original scheme. 
The generality of the assumptions for the application of the MTM algorithm 
makes it a useful correction to posterior sampling schemes. 
In general, any choice of prior which satisfies Equation~\eqref{eq:posterior_results} 
is sufficient to obtain contraction results stated in 
Theorem \ref{theorem: merge-truncate-merge consistency}. In particular, 
for the Dirichlet Process Prior, a form of Equation~\eqref{eq:posterior_results} 
is satisfied for infinite location mixtures of super smooth and ordinary 
smooth families~\cite{Fengnan_2016,Nguyen-13}. However, the rates 
obtained therein are worst case rates. 
The parameter contraction rates for the specific setting considered 
might be faster, and knowledge of that will ensure more efficient 
functioning of the MTM algorithm. Even though the Dirichlet Process 
Prior has good contraction rates for mixture distributions, optimal 
contraction rates for the corresponding mixing measures cannot be 
guaranteed. The MTM algorithm would gain from prior schemes which 
enable optimal contraction rates of the mixing measure. In that regard, 
the MFM prior has optimal contraction properties. However, the MFM 
prior itself provides an efficient estimate for the number of 
components, in case the true number of components is indeed finite. We 
note here that MTM algorithm possibly provides very little correction for 
the MFM prior itself. Moreover, the MFM prior also appears to be more 
efficient than the combination of Dirichlet Process mixture models (DPMM) followed by a post-processing 
MTM algorithm. The usefulness of the MTM algorithm is not as a 
substitute for the MFM prior, but as an addition to DPMM which is not 
consistent in identifying the number of components. Even 
though the MFM prior performs visibly better than the DPMM prior when 
the true number of components $k_0< \infty$~\cite{Miller-2016}, 
the performance of MFM is questionable when $k_{0} = \infty$ since 
it places $0$ mass on true mixing measure with infinitely many 
components. In that regard, DPMM could prove more useful. A possible 
future direction pertaining to this thought process could be to explore 
if the MTM algorithm can identify if the true number of components is 
indeed finite. However, that question is beyond the scope of this paper 
and we leave it for future work.

Section~\ref{section:simulation} discusses the practical implementation of the MTM algorithm. In particular, we show that $\delta$ can be allowed to vary with $n$, and this can give us more practical approaches to estimate the true number of components.
}



\section{Posterior contraction under model misspecification}
\label{section:Posterior Contraction:mis-specified}
In this section, we study the posterior contraction behavior of the mixing measure under the realistic scenarios of model misspecification. There are several ways a mixture model can be misspecified, due to the misspecification of the kernel density function $f$, or the support of the mixing measure $G$, or both. 
From here on, we shall assume that the data population follows a mixture distribution composed of unknown kernel density $f_0$ and unknown mixing measure $G_0$ --- thus, in this section the true density shall be denoted by $p_{G_0,f_0}$ to highlight the possibility of misspecification. 

To avoid heavy subscripting, we continue to use $p_{G}$ instead of $p_{G,f}$ to represent the density function of the mixture model that we operate on. The kernel density $f$ is selected by the modeler. Additionally, $G$ is endowed with a suitable prior $\Pi$ on the space of mixing measures with support belonging to compact parameter space $\Theta$. By Bayes rule (Eq.~\eqref{Bayes}) one obtains the posterior distribution $\Pi(G|X_1,\ldots, X_n)$, where the $n$-i.i.d. sample $X_1,\ldots, X_n$ are generated by $p_{G_0,f_0}$.  It is possible that $f\neq f_0$. It is also possible that the support of $G_0$ does not reside within $\Theta$. In practice, the statistical modeler would hope that the kernel choice of $f$ is not too different from the true but unknown $f_0$. Otherwise, it would be unclear how one can interpret the parameters that represent the mixing measure $G$.
Our goal is to investigate the posterior contraction of $\Pi(G|X_1,\ldots,X_n)$ in such situations, as sample size $n$ tends to infinity. The theory is applicable for a broad class of prior specification on the mixing measures on $\Theta$, including the MFM prior and a nonparametric Bayesian prior such as the Dirichlet process.

A fundamental quantity that arises in the theory of Bayesian misspecification for density estimation is the minimizer of the Kullback-Leibler (KL) distance from the true population density to a density function residing in the support of the induced prior on the space of densities $p_{G}$, which we shall assume to exist (cf.~\cite{Kleijn-2006}). Moreover, assume that the KL minimizer can be expressed as a mixture density $p_{G_*}$, where $G_*$ is a probability measure on $\Theta$. We may write
\begin{eqnarray}
G_{*} \in \mathop{\arg \min} \limits_{G \in \Pcal(\Theta)}
		{K(p_{G_0,f_0}, p_{G})}. \label{eq:KL_minimum}
\end{eqnarray}
We will see in the sequel that the existence of the KL minimizer $p_{G^*}$ entails its uniqueness. In general, however, $G_{*}$ may be non-unique. Thus, define
\begin{eqnarray}
\Mcal^{*}: = \left\{G_{*} \in \Pcal (\Theta): \
		G_{*} \in \mathop{\arg \min} \limits_{G \in 
		\Pcal(\Theta)}{K(p_{G_0,f_0}, p_{G})}\right\}. \nonumber
\end{eqnarray}

It is challenging to characterize the set $\Mcal^*$ in general. However, a very useful technical property can be shown as follows:
\begin{lemma} \label{lemma:misspecified_optimal}
For any $G \in \Pcal(\Theta)$ and $G_{*} \in \Mcal^{*}$, 
it holds that 
${\displaystyle \int \dfrac{p_{G}(x)}{p_{G_{*}}(x)}
		p_{G_0,f_0}(x) \mathrm{d}x \leq 1}$.
\end{lemma}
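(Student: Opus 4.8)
The plan is to run a standard first-variation argument, exploiting the convexity of $\Pcal(\Theta)$ and the linearity of the map $G \mapsto p_{G}$, and to make the one delicate passage to the limit rigorous by an elementary monotonicity trick rather than by dominated convergence. Fix $G \in \Pcal(\Theta)$ and $G_{*} \in \Mcal^{*}$. For $\epsilon \in (0,1)$ I would set $G_{\epsilon} := (1-\epsilon)G_{*} + \epsilon G \in \Pcal(\Theta)$, so that $p_{G_{\epsilon}} = (1-\epsilon)p_{G_{*}} + \epsilon p_{G}$. Finiteness of $K(p_{G_0,f_0}, p_{G_{*}})$ (it is the assumed-finite minimal value), together with $p_{G_{\epsilon}} \ge (1-\epsilon)p_{G_{*}}$, which gives $K(p_{G_0,f_0}, p_{G_{\epsilon}}) \le K(p_{G_0,f_0}, p_{G_{*}}) - \log(1-\epsilon) < \infty$, allows me to subtract $K(p_{G_0,f_0},p_{G_{*}})$ from the minimality inequality $K(p_{G_0,f_0},p_{G_{\epsilon}}) \ge K(p_{G_0,f_0},p_{G_{*}})$ and divide by $\epsilon$, yielding
\begin{eqnarray}
\frac{1}{\epsilon}\int \log \frac{p_{G_{*}}(x)}{p_{G_{\epsilon}}(x)}\, p_{G_0,f_0}(x)\, \mathrm{d}x \ \ge\ 0, \qquad \epsilon \in (0,1). \nonumber
\end{eqnarray}
Since $K(p_{G_0,f_0}, p_{G_{*}}) < \infty$ forces $p_{G_{*}} > 0$ $p_{G_0,f_0}$-almost everywhere, the ratio $r(x) := p_{G}(x)/p_{G_{*}}(x)$ is well defined a.e., and the integrand above equals $h_{\epsilon}(x) := -\tfrac{1}{\epsilon}\log\bigl(1 + \epsilon(r(x)-1)\bigr)$.

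The next step is to record three elementary facts about the scalar map $\epsilon \mapsto h_{\epsilon}$ for a fixed value $r \ge 0$: (i) it is nondecreasing on $(0,1)$, since $\partial_{\epsilon} h_{\epsilon} = -\epsilon^{-2} N(\epsilon(r-1))$ where $N(s) := \tfrac{s}{1+s} - \log(1+s) \le 0$ for every $s > -1$ (as $N(0)=0$ and $N'(s) = -s/(1+s)^{2}$); (ii) $h_{\epsilon} \downarrow 1 - r$ as $\epsilon \downarrow 0$; and (iii) $h_{\epsilon} \le h_{1/2} = -2\log\tfrac{1+r}{2} \le 2\log 2$ whenever $\epsilon \le \tfrac12$. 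These are all one-variable calculus.

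Finally I would let $\epsilon \downarrow 0$ by monotone convergence: by (iii) the functions $2\log 2 - h_{\epsilon}$ are nonnegative, by (i)--(ii) they increase pointwise to $2\log 2 - 1 + r$ as $\epsilon \downarrow 0$, and the constant $2\log 2$ is integrable against the probability density $p_{G_0,f_0}$, so
\begin{eqnarray}
\lim_{\epsilon \downarrow 0}\int h_{\epsilon}(x)\, p_{G_0,f_0}(x)\, \mathrm{d}x \ =\ 1 - \int r(x)\, p_{G_0,f_0}(x)\, \mathrm{d}x, \nonumber
\end{eqnarray}
an identity valid a priori in $[-\infty, 1]$ (the right-hand side being $-\infty$ if $\int r\, p_{G_0,f_0}\,\mathrm{d}x = \infty$). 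Since each integral on the left is $\ge 0$, the limit is $\ge 0$, so in fact $\int r\, p_{G_0,f_0}\,\mathrm{d}x$ is finite and $\int \tfrac{p_{G}(x)}{p_{G_{*}}(x)}\, p_{G_0,f_0}(x)\, \mathrm{d}x \le 1$, which is the assertion.

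I expect the only genuine obstacle to be this last interchange of limit and integral. The naive route --- differentiating $\epsilon \mapsto K(p_{G_0,f_0}, p_{G_{\epsilon}})$ at $\epsilon = 0$ --- would require a dominating function for the difference quotients, which is not available for an arbitrary $G$ because $r = p_{G}/p_{G_{*}}$ need not be $p_{G_0,f_0}$-integrable (indeed, establishing that integrability is part of what the lemma delivers). Using instead the monotonicity of $h_{\epsilon}$ in $\epsilon$ together with the crude but uniform upper bound $h_{\epsilon} \le 2\log 2$ reduces everything to monotone convergence, which needs no integrability input; the remaining steps are routine.
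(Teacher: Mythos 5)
Your argument is correct and is essentially the same convexity/first-variation argument that the paper relies on: the paper omits the proof, deferring to Lemma~2.3 of Kleijn and van der Vaart (2006), and your perturbation $G_{\epsilon}=(1-\epsilon)G_{*}+\epsilon G$ followed by a monotone-convergence passage $\epsilon\downarrow 0$ is precisely that argument. Your explicit verification of the monotonicity of $\epsilon\mapsto h_{\epsilon}$ and the uniform bound $h_{\epsilon}\le 2\log 2$ cleanly handles the limit interchange without needing integrability of $p_{G}/p_{G_{*}}$ in advance, so no gaps remain.
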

By exploiting the fact that the class of mixture densities is a convex set, the proof of this lemma is similar to that of Lemma 2.3 of~\cite{Kleijn-2006}, so it is omitted.
This leads quickly to the following fact.
\begin{lemma} \label{lemma:misspecified_optimal_unique}
For any two elements $G_{1,*}, G_{2,*} \in \Mcal^{*}$, $p_{G_{1,*}}(x)=p_{G_{2,*}}(x)$ for almost all $x \in \mathcal{X}$.
\end{lemma}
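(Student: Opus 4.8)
The plan is to deduce the statement directly from Lemma~\ref{lemma:misspecified_optimal} by applying it twice, once for each ordering of the pair $(G_{1,*},G_{2,*})$. Since both $G_{1,*}$ and $G_{2,*}$ lie in $\Mcal^{*}\subset\Pcal(\Theta)$, Lemma~\ref{lemma:misspecified_optimal} (with $G=G_{1,*}$, $G_{*}=G_{2,*}$) gives $\int \tfrac{p_{G_{1,*}}(x)}{p_{G_{2,*}}(x)}\,p_{G_0,f_0}(x)\,\mathrm{d}x\le 1$, and (with $G=G_{2,*}$, $G_{*}=G_{1,*}$) gives $\int \tfrac{p_{G_{2,*}}(x)}{p_{G_{1,*}}(x)}\,p_{G_0,f_0}(x)\,\mathrm{d}x\le 1$. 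These ratios are well defined $p_{G_0,f_0}$-almost everywhere: since $G_{1,*},G_{2,*}$ minimize the KL divergence and the minimal value is finite (implicit in the assumed existence of the KL minimizer, and needed already for Lemma~\ref{lemma:misspecified_optimal} to be non-vacuous), both $p_{G_{1,*}}$ and $p_{G_{2,*}}$ are strictly positive on the support of $p_{G_0,f_0}$.

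Adding the two inequalities yields $\int \big(\tfrac{p_{G_{1,*}}(x)}{p_{G_{2,*}}(x)}+\tfrac{p_{G_{2,*}}(x)}{p_{G_{1,*}}(x)}\big)p_{G_0,f_0}(x)\,\mathrm{d}x\le 2$. On the other hand, the elementary inequality $a/b+b/a\ge 2$ for $a,b>0$, which holds with equality if and only if $a=b$, combined with $\int p_{G_0,f_0}(x)\,\mathrm{d}x=1$, gives the reverse bound $\int \big(\tfrac{p_{G_{1,*}}(x)}{p_{G_{2,*}}(x)}+\tfrac{p_{G_{2,*}}(x)}{p_{G_{1,*}}(x)}\big)p_{G_0,f_0}(x)\,\mathrm{d}x\ge 2$. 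Hence equality holds, and since the integrand pointwise dominates $2\,p_{G_0,f_0}$, it must equal $2\,p_{G_0,f_0}$ for $p_{G_0,f_0}$-almost every $x$; by the equality case above this forces $p_{G_{1,*}}(x)=p_{G_{2,*}}(x)$ for $p_{G_0,f_0}$-almost every $x$.

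The only genuinely delicate point is the meaning of ``almost all $x\in\mathcal{X}$'': the argument controls the two densities only on the support of $p_{G_0,f_0}$, which is all the KL functional sees. I would therefore either state the conclusion modulo $p_{G_0,f_0}$-null sets, or invoke the (very mild, and satisfied by the kernel families treated later, e.g.\ Gaussian and Laplace) fact that $p_{G_0,f_0}>0$ $\mu$-almost everywhere, under which ``$p_{G_0,f_0}$-a.e.'' and ``$\mu$-a.e.'' coincide. As a cross-check, the same conclusion follows without Lemma~\ref{lemma:misspecified_optimal} from strict concavity of $\log$: the averaged measure $\overline{G}=\tfrac12(G_{1,*}+G_{2,*})\in\Pcal(\Theta)$ satisfies $p_{\overline{G}}=\tfrac12(p_{G_{1,*}}+p_{G_{2,*}})$ and $K(p_{G_0,f_0},p_{\overline{G}})\le\tfrac12 K(p_{G_0,f_0},p_{G_{1,*}})+\tfrac12 K(p_{G_0,f_0},p_{G_{2,*}})$; since $\overline{G}$ is itself a competitor in~\eqref{eq:KL_minimum}, this inequality must be an equality, which again pins down $p_{G_{1,*}}=p_{G_{2,*}}$ $p_{G_0,f_0}$-a.e. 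I do not anticipate any other obstacle; the proof is short.
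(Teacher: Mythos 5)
Your proposal is correct and matches the paper's own argument: apply Lemma~\ref{lemma:misspecified_optimal} in both directions, combine with the pointwise inequality $a/b+b/a\ge 2$ (the paper attributes this to "Cauchy inequality"), and conclude from the equality case. Your extra remarks on the $p_{G_0,f_0}$-a.e.\ versus $\mu$-a.e.\ distinction and the convexity cross-check are sensible but not needed.
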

In other words, the mixture density $p_{G_*}$ is uniquely identifiable. Under a standard identifiability condition of the kernel $f$, which is satisfied by the examples considered in this section, it follows that $G_*$ is unique. Due to the model misspecification, in general $G_* \neq G_0$. The best we can hope for is that the posterior distribution of the mixing measure $G$ contracts toward $G_*$ as $n$ tends to infinity.
\comment{
The proof of Lemma~\ref{lemma:misspecified_optimal_unique} is 
deferred to Section~\ref{subsection:proof_lemma_misspecified_optimal} 
in Appendix A. A consequence of Lemma~\ref{lemma:misspecified_optimal_unique} 
leads to the partition of $\Mcal^{*}$ into the union of 
$\Mcal_{k}= \left\{G_{*} \in \Mcal^{*}: 
\ G_{*} \ \text{has} \ k \ \text{atoms}\right\}$. 
We denote $k_{*} : = k_{*}(\Mcal^{*})$ the minimum number $k \in [1,\infty]$ 
such that $\Mcal_{k}$ is non-empty. 
If $k_{*} < \infty$, it implies that $\Mcal_{k_{*}}$ 
will have exactly one element $G_{*}$ as long as $f$ is identifiable. 
Additionally, $\Mcal_{k}$ is non-empty for all $k_{*}<k<\infty$ 
while $\Mcal_{\infty}$ may contain various elements. When $k_{*} = \infty$, 
it is clear that $\Mcal_{k} = \emptyset$ for all $k<\infty$. 
The set $\Mcal_{k_{*}}$ may contain various elements $G_{*}$ 
with infinite number of elements. For the simplicity of the argument later, 
we will consider only location family of densities $f$ under the 
infinite setting of $k_{*}$. 
Under that assumption, $\Mcal_{k_{*}}$ will have a unique element $G_{*}$. 
}
The goal of the remaining of this section is to study the posterior contraction behavior of the (misspecified) mixing measure $G$ towards the unique $G_*$.

Following the theoretical framework of~\cite{Kleijn-2006} and~\cite{Nguyen-13}, the posterior contraction behavior of the mixing measure $G$ can be obtained by studying the relationship of a weighted version of Hellinger distance and corresponding Wasserstein distances between $G$ and the limiting point $G_*$. In particular, for a fixed pair of mixture densities $p_{G_0,f_0}$ and $p_{G_*}$, the weighted Hellinger $\overline{h}$ between two mixture densities is defined as follows~\cite{Kleijn-2006}.
\begin{definition} \label{definition:weighted_Hellinger}
For $G_{1}, G_{2} \in \Pcal(\Theta)$, 
\begin{eqnarray}
\hba^{2}(p_{G_{1}},p_{G_{2}}) 
		: = \dfrac{1}{2} \int \left(\sqrt{p_{G_{1}}(x)}-\sqrt{p_{G_{2}}(x)}
		\right)^{2}\dfrac{p_{G_{0},f_0}(x)}{p_{G_{*}}(x)}dx. \nonumber
\end{eqnarray}
\end{definition}
It is clear that when $G_{*}=G_{0}$ and $f=f_0$, the weighted Hellinger distance reduces to the standard Hellinger distance. In general they are different due to misspecification.
According to Lemma~\ref{lemma:misspecified_optimal}, we have 
$\hba(p_{G_{1}},p_{G_{2}}) \leq 1$ for all $G_{1}, G_{2} \in \Pcal(\Theta)$. 

\paragraph{Choices of prior on mixing measures}
As in the previous section, we work with two representative priors on the mixing measure: the MFM prior and the Dirichlet process prior. Both prior choices may contribute to the model misspecification, if the true mixing measure $G_0$ lies outside of the support of the prior distribution. 

Recall the MFM prior specification given in Eq.~\eqref{eq:MFM_model}. We also need a stronger condition on $q_{K}$:
\begin{itemize}
\item[(P.4')] The prior 
distribution $q_{K}$ on the number of components satisfies
$q_k \gtrsim k^{-\alpha_0}$ for some $\alpha_0 >1$.
\end{itemize}
Note that the assumption with prior on the number of components $q_{K}$ is mild and satisfied by many distributions, such as Poisson distribution. 
In order to obtain posterior contraction rates, one needs to make sure the prior places sufficient mass on the (unknown) limiting point of interest. For the MFM prior, such a condition is guaranteed by the following lemma.

\begin{lemma}
\label{lemma:Prior_mass_MFM}
Let $\Pi$ denote the prior for generating $G$ based on MFM~\eqref{eq:MFM_model}, where $H$ admits condition (P.2) and $q_K$ admits (P.4'). Fix $r\geq 1$. Then the following holds, for any $G_* \in \Pcal(\Theta)$
\begin{eqnarray}
\label{eq:Prior_mass_MFM}
& & \Pi \left( W_r^r(G,G_*) \leq (2^r +1)\epsilon^r \right) \nonumber \\
		& & \hspace{ 6 em} \gtrsim  \frac{\gamma \Gamma(\gamma) D!q_D}{D} 
		\left(c_0 \left(\frac{\epsilon}
		{\text{Diam}(\Theta)}\right)^{d} \right)^{D} \left( \frac{1}{D} 
		\left(\frac{\epsilon}{\text{Diam}(\Theta)}\right)^r
		\right)^{\gamma(D-1)/D}
\end{eqnarray}
for all $\epsilon$ sufficiently small so that $D(\epsilon, \Theta, \|.\|)>\gamma$. Here, $D = D(\epsilon, \Theta, \|.\|)$ and $q_D$ stand for the maximal $\epsilon$-packing number for $\Theta$ under $\|.\|$ norm and the prior weight $\Pi(K = D)$, respectively. 
\end{lemma}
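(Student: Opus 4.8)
The plan is to produce an explicit event of not-too-small prior probability on which $W_r^r(G,G_*)\le(2^r+1)\epsilon^r$ holds deterministically, and then to factor its probability into a ``number of components'' piece, an ``atom locations'' piece, and a ``mixing weights'' piece. To set it up, fix $\epsilon$ small enough that $D:=D(\epsilon,\Theta,\|\cdot\|)>\gamma$ and let $\{\phi_1,\dots,\phi_D\}$ be a maximal $\epsilon$-packing of $\Theta$; by maximality the balls $B(\phi_j,\epsilon)$ cover $\Theta$, so there is a measurable partition $\Theta=\bigsqcup_{j=1}^{D}S_j$ with $S_j\subseteq B(\phi_j,\epsilon)$, and, the $\phi_j$ being $\epsilon$-separated, the balls $B(\phi_j,\epsilon/2)$ are pairwise disjoint. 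Put $w_j:=G_*(S_j)$, so $(w_1,\dots,w_D)$ is a probability vector. Working conditionally on $K=D$, let $E$ be the event that (i) some bijection $\sigma$ places $\theta_i\in B(\phi_{\sigma(i)},\epsilon/2)$ for every $i$, and (ii) the weights reindexed through $\sigma$ obey $\sum_{j}|p_{\sigma^{-1}(j)}-w_j|\le 2(\epsilon/\text{Diam}(\Theta))^{r}$.

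\emph{Transport bound on $E$.} On $E$ one builds a coupling between $G$ and $G_*$ cell by cell: inside cell $j$, transport $\min(\cdot,w_j)$ worth of $G$-mass, which lies in $B(\phi_j,\epsilon/2)$, onto $G_*$ restricted to $S_j\subseteq B(\phi_j,\epsilon)$ at unit cost at most $(3\epsilon/2)^{r}\le 2^{r}\epsilon^{r}$, and then move the residual mass, whose total equals $\tfrac12\|\vec p-\vec w\|_{1}\le(\epsilon/\text{Diam}(\Theta))^{r}$, arbitrarily, at unit cost at most $\text{Diam}(\Theta)^{r}$. Summing the two contributions gives $W_r^r(G,G_*)\le 2^{r}\epsilon^{r}+\epsilon^{r}=(2^{r}+1)\epsilon^{r}$, so $\Pi\bigl(W_r^r(G,G_*)\le(2^{r}+1)\epsilon^{r}\bigr)\ge q_D\,\Pi(E\mid K=D)$.

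\emph{Factoring $\Pi(E\mid K=D)$.} Under $K=D$ the atoms $(\theta_i)$, which are i.i.d.\ $H$, and the weights $\vec p\sim\text{Dirichlet}_D(\gamma/D,\dots,\gamma/D)$ are independent, and both families are exchangeable; since the $\epsilon/2$-balls are disjoint, the $D!$ choices of matching $\sigma$ index pairwise disjoint sub-events of equal probability, whence $\Pi(E\mid K=D)\ge D!\Bigl(\prod_{j}H\bigl(B(\phi_j,\epsilon/2)\bigr)\Bigr)\cdot\text{Dirichlet}_D(\gamma/D,\dots,\gamma/D)\bigl(\{\vec p:\sum_{j}|p_j-w_j|\le 2(\epsilon/\text{Diam}(\Theta))^{r}\}\bigr)$. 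Condition (P.2) gives $H\bigl(B(\phi_j,\epsilon/2)\bigr)\gtrsim c_0(\epsilon/\text{Diam}(\Theta))^{d}$ (comparing the volume of a radius-$\asymp\epsilon$ ball sitting inside $\Theta$ with $\text{Diam}(\Theta)^{d}$), so $\prod_{j}H\bigl(B(\phi_j,\epsilon/2)\bigr)\gtrsim\bigl(c_0(\epsilon/\text{Diam}(\Theta))^{d}\bigr)^{D}$.

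\emph{The Dirichlet small-ball factor, and the main obstacle.} What remains --- and this is the crux --- is a lower bound for the probability that a symmetric Dirichlet vector with the small parameter $\gamma/D<1$ is $\ell_1$-close to the target $\vec w$. Here one exploits that the density $\propto\prod_{j}p_j^{\gamma/D-1}$ explodes along the faces of the simplex, so that forcing all but one coordinate to carry only an $\asymp\tfrac1D(\epsilon/\text{Diam}(\Theta))^{r}$ share of the $\ell_1$-budget is cheap; through the Dirichlet aggregation property, the Beta marginal $\sum_{j\ne j^{\ast}}p_j\sim\text{Beta}\bigl(\gamma(D-1)/D,\,\gamma/D\bigr)$, and the normalising constant $\Gamma(\gamma)/\Gamma(\gamma/D)^{D}$ together with $\Gamma(\gamma/D)\asymp D/\gamma$, one obtains a lower bound of the stated shape $\gtrsim\tfrac{\gamma\Gamma(\gamma)}{D}\bigl(\tfrac1D(\epsilon/\text{Diam}(\Theta))^{r}\bigr)^{\gamma(D-1)/D}$. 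Multiplying this by $q_D$, by $D!$, and by the atom factor produces the asserted inequality. The genuinely delicate point is this Dirichlet estimate: making it hold uniformly in the target $\vec w$ --- especially when many of the $w_j$ are tiny or vanish --- and carrying the Gamma-function constants precisely enough to land exactly on the exponent $\gamma(D-1)/D$ and the prefactor $\gamma\Gamma(\gamma)/D$; by contrast the covering argument and the combinatorial counting of matchings are routine.
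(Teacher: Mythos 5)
Your overall architecture coincides with the paper's: build a partition $\Theta=\sqcup_j S_j$ from a maximal $\epsilon$-packing, show that on an explicit event (one atom per cell, weights $\ell_1$-close to the cell masses $w_j=G_*(S_j)$) one has $W_r^r(G,G_*)\le(2\epsilon)^r+\mathrm{Diam}(\Theta)^r\sum_j|G(S_j)-G_*(S_j)|\le(2^r+1)\epsilon^r$, and then factor the prior probability of that event as $q_D\times D!\times(\text{atom-location factor})\times(\text{Dirichlet factor})$. Your transport bound and the $H$-factor $\bigl(c_0(\epsilon/\mathrm{Diam}(\Theta))^d\bigr)^D$ are fine and match the paper.

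The genuine gap is exactly where you flag it: the Dirichlet small-ball lower bound is never actually established, and the route you sketch would not deliver it. The target quantity is the probability that a $\mathrm{Dirichlet}_D(\gamma/D,\dots,\gamma/D)$ vector lies in the product of intervals $\{|p_j-w_j|\le A/D\ \forall j\}$ with $A=(\epsilon/\mathrm{Diam}(\Theta))^r$, uniformly over arbitrary targets $\vec w$. Invoking the aggregation property to get the Beta marginal $\sum_{j\ne j^*}p_j\sim\mathrm{Beta}(\gamma(D-1)/D,\gamma/D)$ only controls one linear functional of $\vec p$; it cannot by itself bound the joint box event, and it does not explain why the exponent is $\gamma(D-1)/D$ (which arises as $D-1$ copies of a per-coordinate exponent $\gamma/D$, not as a single aggregated Beta parameter). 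The paper closes this step by direct integration of the Dirichlet density over the box: it discards the last factor using $(1-\sum_{i<D}p_i)^{\gamma/D-1}\ge1$ --- this is precisely where the hypothesis $D>\gamma$ (so $\gamma/D<1$) is used, a point absent from your argument --- and then bounds each of the $D-1$ remaining one-dimensional integrals $\int p_i^{\gamma/D-1}\,dp_i$ over an interval of length $\asymp A/D$ below by $\tfrac{1}{\gamma/D}(A/D)^{\gamma/D}$, finishing with the normalizing constant $\Gamma(\gamma)/\Gamma(\gamma/D)^D$ and the elementary inequality $\alpha\Gamma(\alpha)<1$ for $0<\alpha<1$ to produce the prefactor $\gamma\Gamma(\gamma)D!q_D/D$. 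Without some such explicit computation your proof establishes only the qualitative shape of the bound, not the stated inequality.
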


The proof of Lemma~\ref{lemma:Prior_mass_MFM} is provided in Section~\ref{subsection:proof_lemma_prior_mass_MFM}. Alternatively, for a Dirichlet process prior, $G$ is distributed a priori according to a Dirichlet measure with concentration parameter $\gamma > 0$ and base measure $H$ satisfying condition (P.2). An analogous concentration bound for such a prior is given in Lemma 5 of~\cite{Nguyen-13}.

It is somewhat interesting to note that the difference in the choices of prior under misspecification does not affect the posterior contraction bounds that we can establish. In particular, as we have seen for the definition, $G_*$ does not depend on a specific choice of prior distribution (only its support). Due to misspecification, $G_*$ may have infinite support, even if the true $G_0$ has a finite number of support points. When $G_*$ has infinite support, the posterior contraction toward $G_*$ becomes considerably slower compared to the well-specified setting. In addition to the structure of $G_*$, we will see in the sequel that the modeler's specific choice of kernel density $f$ proves to be especially impactful on the rate of posterior contraction.

\comment{
\subsection{Infinite $k_{*}$}
For this section, we extend the finite number of components assumption to the following condition on $G_0$. 
\begin{itemize}
\item[(MI.0)] Assume that $G_0 \in \overline{\mathcal{G}}(\Theta_0)$ for some bounded $\Theta_0 \subset \mathbb{R}^{d}$.
\end{itemize}
The condition (MI.0) stated above is fairly general. It allows for the number of true components to be infinite. Additionally, the condition automatically holds for $G_0$ with finite number of components. without loss of generality, we continue to use $k_0$ to denote the number of components of the truth, $G_0$. $k_0=\infty$ is assumed when $G_0$ has infinite number of components. 

Moreover, we consider the setting of misspecified parameter space of MFM 
when $k_{*} = \infty$. In order to understand the posterior convergence 
rate of $G_{*} : = \sum_{i = 1}^{\infty} p_i^* \delta_{\theta_i^*}$ under infinite $k_{*}$, we also specifically 
assume a few specific yet representative 
multivariate location structures on kernel $f$ throughout this 
subsection. In particular, we consider $f$ to be multivariate 
location Gaussian or Laplace distribution in 
Section~\ref{subsubec:Gaussian} and~\ref{subsubec:Laplace}. These 
choices of location kernel $f$ have distinct tail behavior, 
namely, the location Gaussian distribution is supersmooth while 
the location Laplace distribution is ordinary smooth. Due to that 
fundamental distinction, the posterior contraction of mixing 
measure $G_{*}$ are different under these cases of $f$. 
Furthermore, these particular choices of $f$ were widely 
considered in the literature to 
deal with infinite 
mixtures~\cite{Fengnan_2016,Nguyen-13, Scricciolo_2014}. 
Finally, under these specific instances of the location kernel $f$, 
we can guarantee the unique existence of $G_{*} \in \Mcal_{\infty}$.
}

\subsection{Gaussian location mixtures}
\label{subsubec:Gaussian}
Consider a class of kernel densities that belong to the supersmooth 
location family of density functions.
A particular example that we focus on in this section is a class of Gaussian 
distributions with some fixed covariance matrix $\Sigma$. More precisely, 
$f$ has the following form:
\begin{eqnarray}
\left\{f(\cdot|\theta), \theta \in \Theta \subset \mathbb{R}^d: 
		f(x|\theta) :=   \frac{\exp(-(x-\theta)^{\top}
		\Sigma^{-1}(x-\theta)/2)}{|2 \pi \Sigma|^{- 1/2}}\right\}, \label{eqn:location_Gaussian_formulation}
\end{eqnarray}
where $|\cdot|$ stands for matrix determinant. Note that, Gaussian kernel is perhaps the most popular choice in mixture modeling. 

With the Gaussian 
location kernel, it is possible to obtain a lower bound on the Hellinger 
distance between the mixture densities in terms of the Wasserstein distance 
between corresponding mixing measures~\cite{Nguyen-13}. 
More useful in the misspecified setting is a key lower bound for the weighted Hellinger distance in terms of the Wasserstein metric, which is given as follows. We shall require a technical condition relating $f$ to the true $f_0$ and $G_0$:

\begin{itemize}
\item[(P.5)] The support of $G_0$, namely, $\textrm{supp} (G_0)$ is a bounded subset of $\mathbb{R}^d$. Moreover, there are some constants $C_0,C_1, \alpha >0$ such that for any $R > 0$,
\[\sup_{x \in \mathbb{R}^d, \theta \in \Theta, \theta_0 \in \textrm{supp}(G_0)} \frac{f(x|\theta)}{f_0(x|\theta_0)} \mathbbm{1}_{\|x\|_2 \leq R} \leq C_1 \exp (C_0 R^\alpha).\]
\end{itemize}

\begin{proposition} 
\label{proposition:lower_bound_weighted_Hellinger_Wasserstein_infinite_Gaussian}
Let $f$ be a Gaussian kernel given by~\eqref{eqn:location_Gaussian_formulation}, $\Theta$ a bounded subset of $\mathbb{R}^d$. Moreover, assume that $f$ satisfies condition (P.5) for $\alpha\leq2$. 
Then, there exists $\epsilon_0 > 0$ depending on $\Theta$ and $\Sigma$, such that for any $G,G' \in \Pcal(\Theta)$, whenever $\hba(p_{G},p_{G'}) \leq \epsilon_0$, the following inequality holds
\begin{eqnarray}
\hba(p_{G},p_{G'}) 
		\geq C \exp\biggr(-(1+  8\lambda_{\max}(\lambda_{\min}^{-1}+C_0))/
		W_{2}^{2}(G,G')\biggr). \nonumber
\end{eqnarray} 
Here, $\lambda_{\max}$ and $\lambda_{\min}$ are respectively the maximum and minimum eigenvalue of $\Sigma$. $C$ is a constant depending on the parameter space $\Theta$, the dimension $d$, the covariance matrix $\Sigma$, $G_0$ and $C_1$ in condition (P.5). 
\end{proposition}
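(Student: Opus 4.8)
The plan is to bound the weighted Hellinger distance below by a \emph{truncated ordinary} Hellinger distance, paying for the truncation through condition (P.5), then to relate the truncated ordinary Hellinger distance to the full one via Gaussian tail estimates, invoke the (known) Hellinger--Wasserstein lower bound for Gaussian location mixtures, and finally optimize the truncation radius. First I would localize: fix a radius $R>0$ to be chosen later and restrict the integral defining $\hba^2(p_G,p_{G'})$ to $\{\|x\|_2\le R\}$. On that ball, (P.5) gives $f(x|\theta)\le C_1\exp(C_0R^\alpha)f_0(x|\theta_0)$ for every $\theta\in\Theta$ and $\theta_0\in\textrm{supp}(G_0)$; integrating the left side against $G_*$ (the right side not depending on $\theta$) and then against $G_0$ yields $p_{G_*}(x)\le C_1\exp(C_0R^\alpha)\,p_{G_0,f_0}(x)$, so the weight $p_{G_0,f_0}(x)/p_{G_*}(x)$ is at least $C_1^{-1}\exp(-C_0R^\alpha)$ on $\{\|x\|_2\le R\}$. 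Hence
\[
\hba^2(p_G,p_{G'})\ \ge\ \frac{1}{2C_1\exp(C_0R^\alpha)}\int_{\|x\|_2\le R}\bigl(\sqrt{p_G(x)}-\sqrt{p_{G'}(x)}\bigr)^2\,\mathrm d\mu(x).
\]

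Next I would remove the truncation. Using $(\sqrt a-\sqrt b)^2\le a+b$ together with the fact that $p_G$ and $p_{G'}$ are mixtures of $N(\cdot,\Sigma)$ densities with mixing measures supported on the bounded set $\Theta$, a standard Gaussian tail bound gives $\int_{\|x\|_2>R}(p_G+p_{G'})\,\mathrm d\mu\le c_1\exp(-c_2R^2/\lambda_{\max})$, with $c_1,c_2$ depending only on $d$ and $\textrm{Diam}(\Theta)$; thus the truncated integral above is at least $2h^2(p_G,p_{G'})-c_1\exp(-c_2R^2/\lambda_{\max})$. I would then invoke the ordinary Hellinger lower bound for supersmooth (Gaussian) location mixtures on a bounded parameter space: there are constants $C',c'>0$ depending on $\Theta$, $d$, $\Sigma$ (with $c'$ of order $\lambda_{\max}/\lambda_{\min}$, coming from inverting the rapidly decaying Gaussian characteristic function) such that $h(p_G,p_{G'})\ge C'\exp(-c'/W_2^2(G,G'))$; this is exactly the inequality available from~\cite{Nguyen-13}. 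If sharper control of the constant is required, one reproduces its proof by a Fourier argument: bound $\sup_{\|\omega\|\le T}|\widehat{p_G}(\omega)-\widehat{p_{G'}}(\omega)|$ below using a band-limited approximation of $W_2$-test functions on $\Theta$, and use $\widehat{p_G}-\widehat{p_{G'}}=(\widehat G-\widehat{G'})e^{-\omega^\top\Sigma\omega/2}$ together with a cutoff $T\asymp 1/W_2$.

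The final step is to choose $R$. Taking $R\asymp\sqrt{\lambda_{\max}}/W_2(G,G')$ with a large enough absolute multiple makes $c_1\exp(-c_2R^2/\lambda_{\max})\le h^2(p_G,p_{G'})$, so the truncated integral is at least $h^2(p_G,p_{G'})\ge C'^2\exp(-2c'/W_2^2)$; meanwhile, since $\alpha\le 2$ and (in the range of $W_2$ we will have reduced to) $R\ge 1$, we have $\exp(-C_0R^\alpha)\ge\exp(-C_0R^2)=\exp(-\kappa C_0\lambda_{\max}/W_2^2)$ for an absolute constant $\kappa$. Combining the three displays and taking a square root gives $\hba(p_G,p_{G'})\ge C\exp\bigl(-(c'+\tfrac12\kappa C_0\lambda_{\max})/W_2^2\bigr)$, and tracking the absolute constants in $R$, in $c_2$, and in $c'$ produces the stated exponent $1+8\lambda_{\max}(\lambda_{\min}^{-1}+C_0)$, with prefactor $C=C'/\sqrt{2C_1}$ depending only on $\Theta$, $d$, $\Sigma$, $G_0$ and $C_1$. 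It remains to dispose of the regime where $W_2(G,G')$ is not small: for any fixed $\delta_0>0$, restricting again to a fixed ball $\{\|x\|_2\le R_0\}$ (where (P.5) bounds the weight below), weak-$*$ compactness of $\Pcal(\Theta)$ and identifiability of the Gaussian kernel yield a constant $\epsilon_1(\delta_0)>0$ with $\hba(p_G,p_{G'})\ge\epsilon_1(\delta_0)$ whenever $W_2(G,G')\ge\delta_0$; choosing $\epsilon_0=\epsilon_0(\Theta,\Sigma)$ below this constant for a $\delta_0$ calibrated to $\Sigma$ confines the hypothesis $\hba\le\epsilon_0$ to the small-$W_2$ regime already handled (and there $R\ge\sqrt{\lambda_{\max}}\,\text{const}/\delta_0\ge 1$, as used above).

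I expect the main obstacle to be the ordinary deconvolution inequality $h(p_G,p_{G'})\gtrsim\exp(-c'/W_2^2)$ with the correct $\Sigma$-dependence: this is where supersmoothness of the Gaussian kernel forces a logarithmic (rather than polynomial) relationship, and where the constant $\lambda_{\max}\lambda_{\min}^{-1}$ in the exponent is generated; citing~\cite{Nguyen-13} shifts the difficulty to tracking absolute constants. The secondary, more clerical, difficulty is the balancing of the truncation radius: (P.5) charges $\exp(C_0R^\alpha)$ for enlarging $R$ while the Gaussian tail rewards it by $\exp(-c_2R^2/\lambda_{\max})$, and the hypothesis $\alpha\le 2$ is precisely what guarantees the reward dominates, so that the final exponent remains of order $1/W_2^2$ rather than degrading.
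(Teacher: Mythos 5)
Your proposal is correct, and it runs on the same engine as the paper's argument: truncate at a radius $R$, use (P.5) to control the weight $p_{G_0,f_0}/p_{G_*}$ on the ball $\|x\|_2\le R$, bound the contribution off the ball by a Gaussian tail estimate, invoke the supersmooth deconvolution inequality of Theorem 2 in~\cite{Nguyen-13}, and balance by taking $R\asymp 1/W_2(G,G')$ --- which is exactly where the hypothesis $\alpha\le 2$ is spent in both proofs. The differences are in execution but are worth recording. The paper works on the total-variation scale: it upper-bounds the ordinary $V(p_G,p_{G'})$ by $\norm{p_{G_*}/p_{G_0,f_0}\,\mathbbm{1}_{S_R}}_\infty\,\vba(p_G,p_{G'})$ plus the tail term, parametrizes $R$ by the value of $\vba$ (so the smallness of $\hba$ automatically forces $R$ large and no separate large-$W_2$ case arises), and converts to $\hba$ only at the end via $\vba\lesssim\hba$; to bound the reciprocal weight it factors through $p_{G_0,f}$, which costs an extra Gaussian-ratio factor $\exp(\lambda_{\min}^{-1}R^2)$ --- the sole source of the $\lambda_{\min}^{-1}$ in the stated exponent. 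You instead lower-bound the weight directly by integrating (P.5) against $G_*$ and then $G_0$, obtaining $p_{G_*}\le C_1e^{C_0R^\alpha}p_{G_0,f_0}$ on the ball with no $\lambda_{\min}$ penalty, so your exponent comes out of the form $c'+O(C_0\lambda_{\max})$; this is \emph{smaller} than the stated $1+8\lambda_{\max}(\lambda_{\min}^{-1}+C_0)$ and hence implies the proposition a fortiori, so your remark that constant-tracking reproduces the stated exponent is an over-claim, but in the harmless direction. The price of parametrizing $R$ by $W_2$ rather than by the weighted distance is the extra compactness-plus-identifiability step you supply for the regime $W_2(G,G')\ge\delta_0$; that step is standard and sound (lower semicontinuity of $\hba$ under weak convergence together with injectivity of Gaussian convolution on $\Pcal(\overline{\Theta})$), and the paper sidesteps it only because its inversion of $R\mapsto\vba$ lets the hypothesis $\hba\le\epsilon_0$ do that work automatically.
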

The proof of  Proposition~\ref{proposition:lower_bound_weighted_Hellinger_Wasserstein_infinite_Gaussian} 
is provided in Section~\ref{subsection:proof_proposition_lower_bound_hellinger_Wasserstein _infinite_Gaussian}. 
\comment{
Note that, Proposition~\ref{proposition:lower_bound_weighted_Hellinger_Wasserstein_infinite_Gaussian} 
holds generally for absolutely continuous 
probability measures $G$ and $G'$ that have 
their support on a bounded set $\Theta \subset \mathbb{R}^d$ 
and are sufficiently close in weighted 
Hellinger distance. For our purpose, however, 
we make use of the special case of $G,G' \in \mathcal{G}(\Theta)$ 
to prove the following posterior convergence 
rate of $G$ when $f$ is a location family of 
Gaussian mixture distributions. }
We are ready to prove the first main result of this section.

\begin{theorem} \label{theorem:posterior_rate_Gaussian}
Assume that $f$ satisfies condition specified in Prop.~\ref{proposition:lower_bound_weighted_Hellinger_Wasserstein_infinite_Gaussian}, and
$\Pi$ is an MFM prior on $\Pcal(\Theta)$ specified in Lemma~\ref{lemma:Prior_mass_MFM}. Then, as $n$ tends to infinity,
\begin{eqnarray}
\Pi\biggr(G \in \overline{\mathcal{G}}(\Theta) : W_2(G,G_*) 
		\lesssim \left(\frac{\log \log n}{\log n}\right)^{1/2} \biggr| 
		X_1, \ldots, X_n\biggr) \to 1 \nonumber
\end{eqnarray}
in $p_{G_0,f_0}$-probability.
\end{theorem}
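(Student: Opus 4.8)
The plan is to split the argument into two stages: first obtain a posterior contraction rate $\epsilon_n$ for the mixture \emph{density} $p_G$ toward the Kullback--Leibler projection $p_{G_*}$ under the weighted Hellinger semimetric $\hba$ of Definition~\ref{definition:weighted_Hellinger}, and then convert this density rate into a Wasserstein rate for the mixing measure $G$ via the exponential lower bound of Proposition~\ref{proposition:lower_bound_weighted_Hellinger_Wasserstein_infinite_Gaussian}.

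For the first stage I would invoke the posterior contraction theory for misspecified models of Kleijn and van der Vaart~\cite{Kleijn-2006}, in the form adapted to mixing measures in~\cite{Nguyen-13}. Since the family $\{p_G : G \in \Pcal(\Theta)\}$ is convex and, by Lemmas~\ref{lemma:misspecified_optimal} and~\ref{lemma:misspecified_optimal_unique}, $p_{G_*}$ is the unique Kullback--Leibler projection of $p_{G_0,f_0}$ onto it, the standard programme reduces to three checks relative to $\hba$: (i) a prior-mass lower bound $\Pi(B_n) \gtrsim \exp(-c n\epsilon_n^2)$ on the misspecified KL-type neighborhood
\[
B_n = \Bigl\{G \in \Pcal(\Theta):\ -\int p_{G_0,f_0}\log\tfrac{p_G}{p_{G_*}}\,\mathrm{d}\mu \le \epsilon_n^2,\ \int p_{G_0,f_0}\bigl(\log\tfrac{p_G}{p_{G_*}}\bigr)^2\mathrm{d}\mu \le \epsilon_n^2 \Bigr\};
\]
(ii) an entropy bound $\log D(\epsilon_n, \Pcal(\Theta), \hba) \lesssim n\epsilon_n^2$; and (iii) control of the prior mass outside a sieve, which is trivial if the sieve is taken to be all of $\Pcal(\Theta)$ and otherwise follows from the tail $\sum_{k > k_n} q_k \lesssim k_n^{-(\alpha_0-1)}$ of condition (P.4') with a mildly growing $k_n$. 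For (ii) I would use the analyticity of Gaussian location mixtures on the compact $\Theta$: the metric entropy of $\{p_G\}$ in a truncated supremum norm grows only polynomially in $\log(1/\epsilon)$, and $\hba$ is dominated by such a norm once one restricts to $\{\|x\|\le R_n\}$ with $R_n \asymp \sqrt{\log(1/\epsilon_n)}$ --- the residual tail being negligible because $\alpha \le 2$ in (P.5) makes the bound $C_1\exp(C_0 R^\alpha)$ on $f/f_0$ subordinate to the Gaussian tail at scale $R_n$. For (i) I would approximate $p_{G_*}$ (whose mixing measure may have infinite support) by a finite Gaussian mixture $p_{G_N}$ with $N \asymp \log(1/\epsilon_n)/\log\log(1/\epsilon_n)$ atoms, chosen by a Gauss-quadrature-type moment-matching construction so that the approximation error in a truncated supremum norm is of order $\exp(-cN\log N)$, faster than any polynomial in $1/N$; condition (P.5) then bounds the misspecification weight $p_{G_0,f_0}/p_{G_*}$ by $\epsilon_n^{-C_0}$ on $\{\|x\|\le R_n\}$, which places $G_N$, and any configuration sufficiently close to it, inside $B_n$; finally Lemma~\ref{lemma:Prior_mass_MFM} lower-bounds the prior probability of an $\eta$-neighborhood of such a configuration, and balancing its cost (roughly $\exp(-c\,N\,d\log(1/\epsilon_n))$, after accounting for $q_N \gtrsim N^{-\alpha_0}$ and the $O(1)$ exponent on the weight factor in Lemma~\ref{lemma:Prior_mass_MFM}) against $\exp(-n\epsilon_n^2)$ produces an admissible rate $\epsilon_n$ with $\log(1/\epsilon_n) \gtrsim \log n/\log\log n$, which is all that is needed downstream.

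For the second stage, the first stage yields $\Pi\bigl(\hba(p_G,p_{G_*}) > M\epsilon_n \mid X_1,\ldots,X_n\bigr) \to 0$ in $p_{G_0,f_0}$-probability for a suitable constant $M$. On the complementary event, $M\epsilon_n \le \epsilon_0$ for $n$ large, so Proposition~\ref{proposition:lower_bound_weighted_Hellinger_Wasserstein_infinite_Gaussian} applies and gives
\[
M\epsilon_n \ \ge\ \hba(p_G,p_{G_*}) \ \ge\ C\exp\!\Bigl(-\bigl(1 + 8\lambda_{\max}(\lambda_{\min}^{-1} + C_0)\bigr)\big/ W_2^2(G,G_*)\Bigr),
\]
hence $W_2^2(G,G_*) \le \bigl(1 + 8\lambda_{\max}(\lambda_{\min}^{-1}+C_0)\bigr)\big/\log\bigl(C/(M\epsilon_n)\bigr) \lesssim 1/\log(1/\epsilon_n) \lesssim \log\log n/\log n$. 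Taking square roots and using $\overline{\Gcal}(\Theta) \subset \Pcal(\Theta)$ gives the claimed contraction at rate $(\log\log n/\log n)^{1/2}$ in $W_2$.

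The main obstacle is step (i). One must at once decide how many Gaussian components are needed to drive the \emph{weighted} KL discrepancy in $B_n$ below $\epsilon_n^2$, pay the exponentially small prior-mass price for placing and weighting those atoms under the MFM prior through Lemma~\ref{lemma:Prior_mass_MFM}, and absorb the misspecification weight $p_{G_0,f_0}/p_{G_*}$ using only (P.5) with $\alpha \le 2$. The tension between wanting few components (to keep the prior-mass cost small) and needing the approximation to hold uniformly on an $R_n \asymp \sqrt{\log(1/\epsilon_n)}$-ball (because of the weight) is exactly what limits the density rate and, through the exponential lower bound of Proposition~\ref{proposition:lower_bound_weighted_Hellinger_Wasserstein_infinite_Gaussian}, forces the very slow $(\log\log n/\log n)^{1/2}$ Wasserstein rate; by contrast the entropy check and the deterministic inversion in the second stage are routine given the cited machinery.
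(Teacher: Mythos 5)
Your two-stage skeleton coincides with the paper's: first establish posterior contraction of $p_G$ toward $p_{G_*}$ at some rate $\epsilon_n$ under the weighted Hellinger distance, using the misspecified Kleijn--van der Vaart machinery (here Theorem~\ref{theorem:posterior_contraction_rate_misspecified_convex}, the convex-sieve version, since $\Pcal(\Theta)$ induces a convex class of densities), and then invert the exponential lower bound of Proposition~\ref{proposition:lower_bound_weighted_Hellinger_Wasserstein_infinite_Gaussian} to obtain $W_2^2(G,G_*) \lesssim 1/\log(1/\epsilon_n) \lesssim \log\log n/\log n$; your second stage is verbatim the paper's Step 3. Where you genuinely diverge is the prior-mass condition. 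The paper's route is much blunter: by the first-order integral Lipschitz property (Lemma~\ref{lemma:upper_bound_weighted_Hellinger_Wasserstein}) and Lemma 8.1 of Kleijn--van der Vaart, the generalized KL neighborhood of Eq.~\eqref{eq:KL_neighborhood_misspecified} contains an entire Wasserstein ball $\{W_2(G,G_*)\lesssim \epsilon_n\}$, whose prior mass is then lower-bounded wholesale by Lemma~\ref{lemma:Prior_mass_MFM} --- effectively placing one atom in each of the $D(\epsilon_n)\asymp \epsilon_n^{-d}$ cells of a packing of $\Theta$, at a cost $\exp(-C\epsilon_n^{-d}\log(1/\epsilon_n))$ that balances against $n\epsilon_n^2$ at the polynomial rate $\epsilon_n \asymp (\log n/n)^{1/(d+2)}$. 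Your quadrature/moment-matching approximation of $G_*$ by $N\asymp \log(1/\epsilon_n)/\log\log(1/\epsilon_n)$ atoms is the sharper device from the adaptive density-estimation literature; it would buy a near-parametric density rate, but that gain is entirely erased by the logarithmic inversion of stage two, so it is machinery this theorem does not need (and, relatedly, your closing claim that the density rate is what ``forces'' the slow Wasserstein rate is slightly off --- the inversion alone forces it for any polynomial $\epsilon_n$). Two cautions if you keep your route: Lemma~\ref{lemma:Prior_mass_MFM} as stated bounds the prior mass of a $W_r$-ball via a partition of all of $\Theta$ into $D(\epsilon)$ cells, so it does not directly deliver the $\exp(-cNd\log(1/\epsilon_n))$ mass of a neighborhood of a specific $N$-atom configuration --- you would need to write out that (routine) MFM computation separately; and your neighborhood $B_n$ should carry the additive $+\epsilon$ slack appearing in Eq.~\eqref{eq:KL_neighborhood_misspecified}, which the paper needs because the $L_1$ term in Kleijn's lemma is only linear in the Wasserstein distance --- your direct truncated sup-norm control of the weighted KL sidesteps this, but only if you carry it through explicitly.
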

The proof of Theorem~\ref{theorem:posterior_rate_Gaussian} is given in Section~\ref{ssub:proof_theorem_posterior_rate_Gaussian}. The same posterior contraction behavior holds if we replace MFM prior by the Dirichlet Process prior with no change in the proof, except that Lemma 5 of~\cite{Nguyen-13} is used in place of Lemma~\ref{lemma:Prior_mass_MFM}. 

\subsection{Laplace location mixtures}
\label{subsubec:Laplace}
Next, we consider a class of multivariate Laplace kernel, a representative in the family of ordinary smooth density functions. It was shown by~\cite{Nguyen-13} that under a Dirichlet process location mixture with a Laplace kernel, assume the model is well-specified, the posterior contraction rate of mixing measures to $G_0$ is of order $n^{-\gamma}$ for some constant $\gamma > 0$. Under the current misspecification setting, we will be able to derive contraction rates toward $G_*$ in 
the order of $n^{-\gamma'}$ for some constant $\gamma'$ dependent on $\gamma$.
The density of location Laplace distributions is given by :
\begin{eqnarray}
\label{eqn:location_Laplace_formulation}
f(x| \theta) 
		= \frac{2}{\lambda(2\pi)^{d/2}} \dfrac{K_{(d/2)-1} 
		\left(\sqrt{2/\lambda}\sqrt{ (x-\theta)^{\top} \Sigma^{-1} 
		(x-\theta)} \right)}{\left(\sqrt{\lambda/2}\sqrt{(x-\theta)^{\top}
		 \Sigma^{-1}(x-\theta)}\right)^{(d/2)-1}},
\end{eqnarray}
where $\Sigma$ and $\lambda>0$ are respectively fixed covariance matrix 
and scale parameter such that $|\Sigma | = 1$. Here, $K_{v}$ is a Bessel 
function of the second kind of order $v$. As discussed in~\cite{Laplace-Eltoft-2006}, 
$K_m(x) \sim \sqrt{\frac{\pi}{2x}} \exp(-x)$ as $|x| \rightarrow \infty$. 
Therefore, there exists $\tilde{R}$ such that as long as 
$\|x-\mu\| > \tilde{R}$,  we have
\begin{eqnarray}
f(x| \theta) 
		\asymp \dfrac{\exp\left(- \sqrt{\frac{2}{\lambda}} \|x-\theta\|
		_{\Sigma^{-1}}\right)}{(\|x-\theta\|_{\Sigma^{-1}})^{(d - 1)/2}}, \nonumber
\end{eqnarray}
where we use the shorthand notation 
$\|y\|_{\Sigma^{-1}}= \sqrt{y^{\top}\Sigma^{-1}y}$. To ease the ensuing presentation, we denote 
\begin{align*}
\tau(\alpha) : = \frac{\sqrt{2/(\lambda\lambda_{\max})}}{\left(\sqrt{2/(\lambda\lambda_{\min})}+ 
\sqrt{2/(\lambda\lambda_{\max})} +C_0\right )^{1/\alpha}}.
\end{align*}
The following proposition provides a key lower bound of weighted 
Hellinger distance in terms of the Wasserstein metric.

\begin{proposition}
\label{proposition:lower_bound_weighted_Hellinger_Wasserstein_infinite_Laplace}
Let $f$ be a Laplace kernel given by~\eqref{eqn:location_Laplace_formulation} 
for fixed $\Sigma$ and $\lambda$ such that $|\Sigma | = 1$. Moreover, $f$ satisfies condition (P.5) for some $\alpha \geq 1$.
Then, there exists $\epsilon_0>0$ depending on $\Theta$, $\lambda$ 
and $\Sigma$, such that for any $G,G' \in \Pcal(\Theta)$, whenever $\hba(p_{G},p_{G'}) \leq \epsilon_0$, 
the following inequality holds
\comment{
\begin{eqnarray}
 \exp\left( - \tau(\alpha)\log\left( \frac{1}{\hba(p_G,p_{G'})}\right)^{1/\alpha}\right) \geq C \frac{W_2^{2/m}(G,G')}{{\log(1/W_2(G,G'))^{d\alpha/2}}}
		\nonumber
\end{eqnarray}
}
\begin{eqnarray*}
\left(\log \frac{1}{\hba(p_G,p_{G'})} 
		\right)^{d/(2\alpha)} \exp\left( - \tau(\alpha)\left( \log \frac{1}{\hba(p_G,p_{G'})}\right)^{1/\alpha}\right) \geq  C {W_2^{2/m}(G,G')}  .
\end{eqnarray*}
for any positive constant $m < 4/ (4 + 5d)$. Here, $\lambda_{\max}$ and 
$\lambda_{\min}$ are respectively the maximum and minimum 
eigenvalue of $\Sigma$. The constant $C$ depends on the parameter space $\Theta$, the dimension $d$, the covariance matrix $\Sigma$, the scale parameter $\lambda$, $G_0$ and $C_1$ in (P.5).
\end{proposition}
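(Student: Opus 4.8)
The plan is to mirror the route taken for the Gaussian case in Proposition~\ref{proposition:lower_bound_weighted_Hellinger_Wasserstein_infinite_Gaussian}, with the supersmooth Fourier decay of the Gaussian kernel replaced by the ordinary-smooth (polynomial) decay of the Laplace kernel \eqref{eqn:location_Laplace_formulation}; this replaces the exponential-in-$1/W_2^2$ bound by a polynomial-in-$W_2$ bound, up to logarithmic corrections. The first step is to pass from the weighted Hellinger distance to a weighted total variation distance. Writing $|p_G-p_{G'}| = |\sqrt{p_G}-\sqrt{p_{G'}}|\,(\sqrt{p_G}+\sqrt{p_{G'}})$ and applying Cauchy--Schwarz with weight $p_{G_0,f_0}/p_{G_*}$, together with Lemma~\ref{lemma:misspecified_optimal} (which gives $\int (p_G+p_{G'})\,p_{G_0,f_0}/p_{G_*}\,\mathrm{d}x \le 2$), yields
\[
\int |p_G(x)-p_{G'}(x)|\,\frac{p_{G_0,f_0}(x)}{p_{G_*}(x)}\,\mathrm{d}x \;\le\; 2\sqrt 2\,\hba(p_G,p_{G'}).
\]
Next, for a radius $R>0$, condition (P.5) bounds $f(x|\theta) \le C_1\exp(C_0R^\alpha)\inf_{\theta_0\in\mathrm{supp}(G_0)}f_0(x|\theta_0) \le C_1\exp(C_0R^\alpha)\,p_{G_0,f_0}(x)$ for all $\theta\in\Theta$ and $\|x\|_2\le R$; integrating against $G_*\in\Pcal(\Theta)$ gives $p_{G_*}(x)\le C_1\exp(C_0R^\alpha)\,p_{G_0,f_0}(x)$ there, so the weight is bounded below by $C_1^{-1}\exp(-C_0R^\alpha)$ on $\{\|x\|_2\le R\}$. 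Combining,
\[
\int_{\|x\|_2\le R}|p_G(x)-p_{G'}(x)|\,\mathrm{d}x \;\lesssim\; \exp(C_0R^\alpha)\,\hba(p_G,p_{G'}).
\]

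The next step lower-bounds the left-hand side by a power of $W_2(G,G')$. The tails are harmless: since $\Theta$ is bounded and $f(x|\theta)\asymp \exp(-\sqrt{2/\lambda}\,\|x-\theta\|_{\Sigma^{-1}})/\|x-\theta\|_{\Sigma^{-1}}^{(d-1)/2}$, one has $\int_{\|x\|_2>R}(p_G+p_{G'})\,\mathrm{d}x \lesssim \exp(-c\,R)$ for $R$ exceeding a fixed multiple of $\text{Diam}(\Theta)$, with $c$ proportional to $\sqrt{2/(\lambda\lambda_{\max})}$; hence $\int_{\|x\|_2\le R}|p_G-p_{G'}|\,\mathrm{d}x \ge 2V(p_G,p_{G'})-C\exp(-cR)$. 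The crux is a multivariate deconvolution inequality: since for a location family $p_G-p_{G'}=f\ast(G-G')$ we have $\widehat{p_G-p_{G'}}=\hat f\cdot\widehat{(G-G')}$ with $|\hat f(t)|\gtrsim (1+\|t\|_2^2)^{-\beta}$ for a $\beta=\beta(d,\lambda,\Sigma)$ (ordinary smoothness), a Plancherel argument relating the low-frequency mass of $\widehat{(G-G')}$ to $W_2(G,G')$ --- using that both measures are supported on the bounded set $\Theta$ --- followed by an $L_2$-to-$L_1$ interpolation produces $V(p_G,p_{G'})\gtrsim W_2^{2/m}(G,G')/(\log(1/W_2(G,G')))^{d/2}$ for any $m<4/(4+5d)$. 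This is essentially the ordinary-smooth estimate of Nguyen~\cite{Nguyen-13}, re-derived so that the conclusion reads in terms of $W_2$ rather than $W_2^2$, which is what forces the constraint $m<4/(4+5d)$ and the logarithmic factor.

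Finally I would optimize over $R$. Chaining the bounds gives
\[
\hba(p_G,p_{G'}) \;\gtrsim\; \exp(-C_0R^\alpha)\Bigl(W_2^{2/m}(G,G')/(\log(1/W_2(G,G')))^{d/2}-C\exp(-cR)\Bigr),
\]
and choosing $R$ just large enough that the tail term is at most half of the deconvolution term --- so $R$ is a constant multiple of $\log(1/W_2(G,G'))$, and, solving back, $R\asymp(\log(1/\hba(p_G,p_{G'})))^{1/\alpha}$ since $\alpha\ge1$ makes the $R^\alpha$ term dominate --- leaves a relation between $\hba(p_G,p_{G'})$ and $W_2(G,G')$ which, after isolating $W_2$, is exactly the stated inequality. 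The constant $\tau(\alpha)$ is assembled in this balancing step from the three competing rates: the (P.5) exponent $C_0$, the Laplace tail rate $\sqrt{2/(\lambda\lambda_{\max})}$, and the kernel-value lower-bound rate $\sqrt{2/(\lambda\lambda_{\min})}$ that arises when one lower-bounds $p_G$ and $p_{G'}$ on the ball; the factor $(\log(1/\hba(p_G,p_{G'})))^{d/(2\alpha)}$ is the deconvolution step's $(\log(1/W_2))^{d/2}$ after substituting $\log(1/W_2)\asymp(\log(1/\hba(p_G,p_{G'})))^{1/\alpha}$. The hypothesis $\hba(p_G,p_{G'})\le\epsilon_0$ is used throughout to keep $W_2(G,G')$ and $R$ in the asymptotic regime where the Bessel asymptotics, the tail estimate, and the deconvolution inequality are all valid, with all constants depending only on $\Theta,d,\Sigma,\lambda,G_0$ and $C_1$.

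I expect the multivariate ordinary-smooth deconvolution estimate to be the principal obstacle: it is the only genuinely new analytic ingredient (the rest being bookkeeping around Lemma~\ref{lemma:misspecified_optimal} and condition (P.5)), it is where the unusual exponent constraint $m<4/(4+5d)$ and the logarithmic factor originate, and it must be made uniform over all $G,G'\in\Pcal(\Theta)$ with careful tracking of the dependence on the dimension $d$ and on $\Sigma,\lambda$.
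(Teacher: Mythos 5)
Your proposal follows essentially the same route as the paper: truncate at radius $R$, convert the weighted Hellinger distance to a weighted total variation and control the weight $p_{G_*}/p_{G_0,f_0}$ on the ball $S_R$ via (P.5), bound the tail integral using the Laplace decay, invoke the ordinary-smooth inequality $W_2^2(G,G')\le V(p_G,p_{G'})^m$ for $m<4/(4+5d)$, and balance $R$. Two local differences are worth recording. First, your bound $p_{G_*}(x)\le C_1\exp(C_0R^\alpha)\,p_{G_0,f_0}(x)$ on $S_R$ --- obtained directly from (P.5) together with $\inf_{\theta_0}f_0(x|\theta_0)\le p_{G_0,f_0}(x)$ --- is simpler and sharper than the paper's, which passes through the intermediate mixture $p_{G_0,f}$ and thereby picks up the extra $\sqrt{2/(\lambda\lambda_{\min})}$ contribution appearing in $\tau(\alpha)$; your version yields a larger exponent constant and hence implies the stated inequality a fortiori. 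Second, the ``principal obstacle'' you identify need not be re-derived: the paper simply cites Theorem 2, part (1), of \cite{Nguyen-13}, which gives $V(p_G,p_{G'})\ge W_2^{2/m}(G,G')$ with no logarithmic correction; the constraint $m<4/(4+5d)$ comes from that theorem, and the prefactor $\left(\log(1/\hba)\right)^{d/(2\alpha)}$ in the proposition originates not from the deconvolution step but from the polynomial factors $R^{(d-1)/2}$ in the Laplace tail and weight bounds, which become $\left(\log(1/\hba)\right)^{(d-1)/(2\alpha)}\le\left(\log(1/\hba)\right)^{d/(2\alpha)}$ after substituting the optimal $R\asymp\left(\log(1/\hba)\right)^{1/\alpha}$. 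With that citation in place your argument closes; there is no genuine gap.
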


\comment{Here is what I have, for $\alpha > 0$
\begin{eqnarray*}
\left(\log \frac{1}{\hba(p_G,p_{G'})} 
		\right)^{d/(2\alpha)} \exp\left( - \tau(\alpha)\left( \log \frac{1}{\hba(p_G,p_{G'})}\right)^{1/\alpha}\right) \gtrsim  {W_2^{2/m}(G,G')} .
\end{eqnarray*}}

The proof of Proposition~\ref{proposition:lower_bound_weighted_Hellinger_Wasserstein_infinite_Laplace} 
is provided in Section~\ref{subsection:proof_proposition_lower_bound_hellinger_Wasserstein _infinite_Laplace}. Given the above result, the posterior contraction rate for mixing 
measures $G$ in the location family of Laplace mixture 
distributions can be obtained from the following result:
\begin{theorem} \label{theorem:posterior_rate_Laplace}
Assume that $f$ is given by equation~\eqref{eqn:location_Laplace_formulation} 
for fixed $\Sigma$ and $\lambda$ such that $|\Sigma | = 1$. Additionally, assume that $f$ satisfies condition specified in Prop.~\ref{proposition:lower_bound_weighted_Hellinger_Wasserstein_infinite_Laplace}, and $\Pi$ an MFM prior on $\Pcal(\Theta)$ specified in Lemma~\ref{lemma:Prior_mass_MFM}. Then, as $n$ tends to infinity,
\begin{eqnarray}
\Pi\biggr(G \in \overline{\mathcal{G}}(\Theta) : W_2(G,G_*) 
		\lesssim \exp\left( - \frac{m \tau(\alpha)}{2} \biggr (\frac{\log n-\log\log n}{2(d+2)} \biggr )^{1/\alpha}\right)
		\biggr| X_1, \ldots, X_n\biggr) \to 1 \nonumber
\end{eqnarray}
in $p_{G_0,f_0}$-probability for any positive constant $m < 4/(4+5d)$.
\end{theorem}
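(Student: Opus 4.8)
The plan is to mirror the two-stage argument behind Theorem~\ref{theorem:posterior_rate_Gaussian}: first establish a posterior contraction rate $\epsilon_n$ for the mixture density $p_G$ toward the (unique) KL-projection $p_{G_*}$, measured in the weighted Hellinger metric $\hba$, and then transfer this to a Wasserstein rate for $G$ by inverting the inequality of Proposition~\ref{proposition:lower_bound_weighted_Hellinger_Wasserstein_infinite_Laplace}.

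For the first stage I would invoke the misspecified-model posterior contraction framework of~\cite{Kleijn-2006}, combined with the mixing-measure techniques of~\cite{Nguyen-13}, for which three ingredients are needed. (i) \emph{Tests.} Because the class of mixture densities is convex and Lemma~\ref{lemma:misspecified_optimal} gives $\int (p_G/p_{G_*})\,p_{G_0,f_0}\,\mathrm{d}\mu \le 1$, the metric $\hba$ plays the role of a genuine testing distance, so exponentially powerful tests of $p_{G_*}$ against $\hba$-separated alternatives exist on any sieve of controlled entropy. (ii) \emph{Sieves and entropy.} Take sieves $\Ocal_{k_n}(\Theta)$ and bound $\log D(\epsilon,\Ocal_{k_n}(\Theta),\hba)$ by gridding $\Theta$ at scale $\epsilon$, using that $p_G$ is Lipschitz in $G$ under $W_1$ and that condition (P.5) controls the weight $p_{G_0,f_0}/p_{G_*}$ over the unbounded sample space; simultaneously the tail of $q_K$ makes the prior probability $\Pi(\Ocal_{k_n}(\Theta)^{c})$ exponentially small. (iii) \emph{Prior mass.} Approximate the possibly infinitely supported $G_*$ by a finitely supported $\overline{G}$ with $D(\epsilon,\Theta,\|\cdot\|)\asymp\epsilon^{-d}$ atoms and $W_2(\overline{G},G_*)\le\epsilon$; show that $W_2(G,\overline{G})\lesssim\epsilon$ forces the misspecified prior-mass neighbourhood conditions $-P_{G_0,f_0}\log(p_G/p_{G_*})\lesssim\epsilon^2$ and $P_{G_0,f_0}\big(\log(p_G/p_{G_*})\big)^2\lesssim\epsilon^2$ (splitting $\Xcal$ into a large ball, where (P.5) and a Taylor expansion of the Laplace kernel apply, and its complement, where the Laplace tails are integrated against the weight); then lower bound the prior of this $W_2$-ball by Lemma~\ref{lemma:Prior_mass_MFM}. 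Solving the standard rate equations (the slower of the prior-mass and entropy constraints) yields a weighted Hellinger rate $\epsilon_n$ with $\log(1/\epsilon_n)\asymp (\log n-\log\log n)/(2(d+2))$, i.e. $\Pi\big(\hba(p_G,p_{G_*})>M\epsilon_n\,\big|\,X_1,\ldots,X_n\big)\to 0$ in $p_{G_0,f_0}$-probability for large $M$.

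For the second stage, apply Proposition~\ref{proposition:lower_bound_weighted_Hellinger_Wasserstein_infinite_Laplace} with $G'=G_*$. On the event $\hba(p_G,p_{G_*})\le M\epsilon_n\le\epsilon_0$, which has posterior probability tending to $1$,
\[
W_2^{2/m}(G,G_*)\ \le\ C^{-1}\Big(\log\tfrac{1}{\hba(p_G,p_{G_*})}\Big)^{d/(2\alpha)}\exp\!\Big(-\tau(\alpha)\big(\log\tfrac{1}{\hba(p_G,p_{G_*})}\big)^{1/\alpha}\Big).
\]
Since $u\mapsto u^{d/(2\alpha)}e^{-\tau(\alpha)u^{1/\alpha}}$ is eventually decreasing and $\log(1/\hba(p_G,p_{G_*}))\ge\log(1/(M\epsilon_n))$, the right-hand side is bounded by its value at $u=\log(1/(M\epsilon_n))$; raising to the power $m/2$ and absorbing the lower-order polylogarithmic factors gives
\[
W_2(G,G_*)\ \lesssim\ \exp\!\Big(-\frac{m\tau(\alpha)}{2}\Big(\frac{\log n-\log\log n}{2(d+2)}\Big)^{1/\alpha}\Big),
\]
which is the claimed bound; the conclusion in $p_{G_0,f_0}$-probability follows. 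The argument is unchanged for a Dirichlet process prior, with Lemma~5 of~\cite{Nguyen-13} replacing Lemma~\ref{lemma:Prior_mass_MFM}.

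The hardest part, I expect, is the prior-mass / KL-neighbourhood step (iii): converting ``$G$ close to $G_*$ in $W_2$'' into control of the Kleijn neighbourhood of $p_{G_*}$ over the unbounded $\Xcal$ is delicate precisely because $\hba$ carries the weight $p_{G_0,f_0}/p_{G_*}$, so one must quantify how the Laplace tails and condition (P.5) interact, and it is the bookkeeping of the $\asymp\epsilon_n^{-d}$ approximating atoms through Lemma~\ref{lemma:Prior_mass_MFM}, together with the entropy of $\Ocal_{k_n}(\Theta)$ in the weighted metric, that pins down the exponent $1/(2(d+2))$ in the final rate. A secondary point is checking that $q_K$ — beyond the lower bound (P.4') — has light enough tails for $\Pi(\Ocal_{k_n}(\Theta)^{c})$ to be negligible, which holds for the standard choices such as the Poisson prior.
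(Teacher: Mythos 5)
Your proposal follows essentially the same route as the paper's (omitted) proof, which is declared to be ``analogous to the proof argument of Theorem~\ref{theorem:posterior_rate_Gaussian}'': establish the weighted-Hellinger/KL contraction rate $\epsilon_n \asymp (\log n/n)^{1/(d+2)}$ through Kleijn's misspecified framework (Theorem~\ref{theorem:posterior_contraction_rate_misspecified_convex}, using convexity of the class of mixture densities), Lemma~\ref{lemma:upper_bound_weighted_Hellinger_Wasserstein} for the KL-neighbourhood bound, and Lemma~\ref{lemma:Prior_mass_MFM} for prior mass, then invert Proposition~\ref{proposition:lower_bound_weighted_Hellinger_Wasserstein_infinite_Laplace} exactly as you describe, with the extra factor of $2$ in $2(d+2)$ serving to absorb the polylogarithmic prefactor $\bigl(\log(1/\hba)\bigr)^{d/(2\alpha)}$.

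The one point where you deviate, and where your argument as written would need repair, is the sieve. You take $\Gcal_n=\Ocal_{k_n}(\Theta)$ and then require $\Pi(\Gcal_n^{c})$ to be exponentially small, which forces an \emph{upper} bound on the tails of $q_K$; but condition (P.4$'$) supplies only a lower bound $q_k\gtrsim k^{-\alpha_0}$, so as stated your proof covers the theorem only under an additional, unstated tail hypothesis on $q_K$ (you flag this yourself for the Poisson case). The paper sidesteps this entirely: since $\Theta$ is compact, the full space $\overline{\Gcal}(\Theta)$ (indeed $\Pcal(\Theta)$) already has finite $W_2$-entropy at every scale by Lemma 4 of~\cite{Nguyen-13}, so one takes $\Gcal_n=\overline{\Gcal}(\Theta)$, the remainder condition \eqref{eq:posterior_contraction_rate_convex_second_condition} is vacuous, and convexity gives $\overline{M}\equiv 1$ for the tests. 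Replacing your sieve step with this choice removes the spurious dependence on the tails of $q_K$ and the rest of your argument goes through unchanged.
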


The proof of Theorem~\ref{theorem:posterior_rate_Laplace} is 
straightforward using the result in Proposition~\ref{proposition:lower_bound_weighted_Hellinger_Wasserstein_infinite_Laplace} 
and analogous to the proof argument of Theorem \ref{theorem:posterior_rate_Gaussian};  
therefore, it is omitted. Note that, identical to the Gaussian kernel case, a similar contraction behavior also holds for the Laplace kernel with the Dirichlet Process Prior. The proof can be obtained similar to the MFM prior by invoking Lemma 5 of~\cite{Nguyen-13} instead of Lemma~\ref{lemma:Prior_mass_MFM}.

\paragraph{Remarks} (i) It is worth noting that compared to the well-specified setting, the posterior contraction upper bound obtained for Gaussian location mixtures remains the same slow logarithmic rate $(\log\log n/\log n)^{1/2}$. For Laplace mixtures, when the truth $f_0$ satisfies condition (P.5) with  $\alpha \leq 1$, the posterior contraction upper bound obtained under misspecification remains a polynomial rate of the form $n^{-\gamma'}$ modulo a logarithmic term. Due to misspecification there is a loss of a constant factor in the exponent $\gamma'$, which is dependent on the shape of the kernel density as it is captured by the term $\tau(\alpha)$.  


(ii) Although Gaussian mixtures have proved to be an asymptotically optimal density estimation device under suitable and mild conditions (cf.~\cite{Ghosal-vanderVaart-07b}), the results obtained in this section suggest that it is not a suitable choice for mixture modeling under model misspecification, even if the true $G_0$ has finite number of support points, if the primary interest is in the quality of model parameter estimates. Mixtures of heavy-tailed and ordinary smooth kernel densities such as the Laplace prove to be more amenable to efficient parameter estimation. Thus, the modeler is advised to select for $f$, say, a Laplace kernel over a supersmooth kernel such as Gaussian kernel, provided that condition (P.5) is valid.

(iii) It is interesting to consider the scenario where the true kernel $f_0$ happens to be a Gaussian kernel: if we use the either a well-specified or a misspecified Gaussian kernel to fit the data, the posterior contraction bound is the extremely slow $(\log \log n/\log n)^{1/2}$ accordingly to Theorem~\ref{theorem:posterior_rate_Gaussian}. This rate may be too slow to be practical. If the statistician is too impatient get to the truth $G_0$, because sample size $n$ is not sufficiently large, he may well decide to select a Laplace kernel $f$ instead. Despite the intentional misspecification, he might be comforted by the fact that the posterior distribution of $G$ contracts at an exponentially faster rate to a $G_*$ given by Theorem~\ref{theorem:posterior_rate_Laplace} for $\alpha=2$. 

\subsection{When $G_*$ has finite support}  
\label{ssection:finite k}
The source of the deterioriation in the statistical efficiency of parameter estimation under model mispecification is ultimately due to the increased complexity of the limiting point $G_*$. Even if the true $G_0$ has a finite number of support points, this is not the case for $G_*$ in general. Unfortunately, it is very difficult to gain concrete information about $G_*$ both in practice and in theory, due to the lack of knowledge about the true $p_{G_0,f_0}$. When some precious information about $G_*$ is available, specifically, suppose that we happen to know $G_*$ has a bounded number of support points $k_*$ such that $k_* < \overline{k}$ for some known $\overline{k}$.
Then it is possible to devise a new prior specification on the mixing measure $G$ so that one can gain a considerably improved posterior contraction rate toward $G_*$. We will show that it is possible to obtain the contraction rate of the order $(\log n/n)^{1/4}$ under $W_2$ metric --- this is the same rate of posterior contraction one would get with overfitted mixtures in the well-specified regime.

In order to analyze the convergence rate of mixing measure 
under that setting of $k_{*}$, we introduce a relevant notion of 
integral Lipschitz property, which is a generalized form of the uniform 
Lipschitz property for the misspecification scenarios.

\begin{definition} \label{definition: integral_Lipschitz_first_order}
For any given $r \geq 1$, we say that the family of densities $f$ admits the 
\emph{ integral Lipschitz} property up to the order $r$ with 
respect to two mixing measures $G_0$ and $G_*$ , if $f$ as a function 
of $\theta$ is differentiable up to the order $r$ and its partial derivatives 
with respect to $\theta$ satisfy the following inequality
\begin{eqnarray}
\sum_{|\kappa| = r} \biggr |\biggr(\frac{\partial^{|\kappa|} f}{\partial \theta^\kappa}
		(x|\theta_1) - \frac{\partial^{|\kappa|} f}{\partial \theta^\kappa}
		(x|\theta_2) \biggr ) \gamma^\kappa\biggr |
		\leq C(x) \|\theta_1-\theta_2\|^\delta \|\gamma\|^{r} \nonumber
\end{eqnarray}
for any $\gamma \in \mathbb{R}^{d}$ and for some positive constants 
$\delta$ independent of $x$ and $\theta_{1}, \theta_{2} \in \Theta$. Here, $C(x)$ is some function such that ${\displaystyle \int C(x)\dfrac{p_{G_0,f_0}(x)}{p_{G_*}(x)} \mathrm{d}x} < \infty$.
\end{definition}
It is clear that when $f$ has  integral Lipschitz property up to the 
order $r$, for some $r \geq 1$, with respect to $G_{0}$ and $G_{*}$, 
then it will admit uniform Lipschitz property up to the order $r$. We can 
verify that the first order intergral Lipschitz property is satisfied 
by many popular kernels, including location-scale Gaussian distribution 
and location-scale Cauchy distribution. 

In the following we shall work with the MFM prior~\eqref{eq:MFM_model}. Moreover,
\begin{itemize}
    \item[(M.0)] $q_K$ places positive masses on 
$K \in \left\{1,\ldots,\overline{k}\right\}$ and $0$ mass 
elsewhere, where $\overline{k} \gg k_{*}$ is a fixed number. 
\end{itemize} 
Given that $k_{*}$ is finite, 
we obtain a key lower bound of weighted Hellinger 
distance in terms of the Wasserstein metric under strong identiability of $f$:
\begin{proposition} \label{proposition:weighted_Hellinger_Wasserstein_metric}
Assume that $f$ is second order identifiable and admits uniform 
integral Lipschitz property up to the second order. Then, for any 
$G \in \Ocal_{\overline{k}}$, the following inequality holds
\begin{eqnarray}
\hba(p_{G},p_{G_{*}}) 
		\gtrsim W_{2}^2(G,G_{*}). \nonumber
\end{eqnarray}
\end{proposition}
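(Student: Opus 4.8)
The plan is to adapt the overfitted‑mixture lower‑bound argument of~\cite{Ho-Nguyen-EJS-16} to the \emph{weighted} Hellinger distance $\hba$, with weight $w(x):=p_{G_0,f_0}(x)/p_{G_{*}}(x)$, proving the inequality separately in a local regime ($W_2(G,G_{*})$ small) and a global one, with essentially all the work in the former. The first step replaces $\hba$ by a weighted total variation: writing $|p_{G}-p_{G_{*}}|=|\sqrt{p_{G}}-\sqrt{p_{G_{*}}}|\,(\sqrt{p_{G}}+\sqrt{p_{G_{*}}})$ and applying Cauchy--Schwarz against $w\,\mathrm{d}x$,
\[
\int |p_{G}(x)-p_{G_{*}}(x)|\,w(x)\,\mathrm{d}x \le \sqrt{2}\,\hba(p_{G},p_{G_{*}})\left(\int (\sqrt{p_{G}}+\sqrt{p_{G_{*}}})^{2}\,w\,\mathrm{d}x\right)^{1/2};
\]
since $(\sqrt{p_{G}}+\sqrt{p_{G_{*}}})^{2}\le 2(p_{G}+p_{G_{*}})$, Lemma~\ref{lemma:misspecified_optimal} applied to $G$ and to $G_{*}$ bounds the last factor by $2$. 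So it suffices to show $\int |p_{G}-p_{G_{*}}|\,w\,\mathrm{d}x \gtrsim W_2^{2}(G,G_{*})$ for $G\in\Ocal_{\overline{k}}$.

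For the local bound I would argue by contradiction: if it fails, there is $G_{n}\in\Ocal_{\overline{k}}$ with $W_2(G_{n},G_{*})\to 0$ and $d_{n}^{-1}\int |p_{G_{n}}-p_{G_{*}}|\,w\,\mathrm{d}x\to 0$, where $d_{n}:=W_2^{2}(G_{n},G_{*})$. Grouping the atoms of $G_{n}$ by the Voronoi cells of the $k_{*}$ support points $\theta_{1}^{*},\dots,\theta_{k_{*}}^{*}$ of $G_{*}$ and writing the group‑$j$ atoms as $\theta_{j}^{*}+\Delta_{j,s}$ with weights $p_{j,s}$, a second‑order Taylor expansion of $f(x\,|\,\cdot)$ at each $\theta_{j}^{*}$ gives
\[
p_{G_{n}}(x)-p_{G_{*}}(x)=\sum_{j=1}^{k_{*}}\Big(a_{0,j}^{(n)} f(x|\theta_{j}^{*})+\big\langle a_{1,j}^{(n)},\nabla_{\theta} f(x|\theta_{j}^{*})\big\rangle+\big\langle a_{2,j}^{(n)},\nabla_{\theta}^{2} f(x|\theta_{j}^{*})\big\rangle\Big)+R_{n}(x),
\]
where $a_{0,j}^{(n)}$ is the group‑$j$ weight defect and $a_{1,j}^{(n)},a_{2,j}^{(n)}$ are the first and second moment deviations of the $\Delta_{j,s}$. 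I would then run the scale‑extraction argument of~\cite{Ho-Nguyen-EJS-16}: normalize the display by the quantity $E_{n}$ equal to the largest of the $|a_{0,j}^{(n)}|$, $\|a_{1,j}^{(n)}\|$, $\|a_{2,j}^{(n)}\|$ and the total mass of any atoms of $G_{n}$ staying away from $\mathrm{supp}(G_{*})$, which satisfies $d_{n}\lesssim E_{n}\lesssim d_{n}^{1/2}$, extract a subsequence along which the normalized coefficients converge to limits not all zero and the stray atoms' positions converge, and pass to the limit. Two ingredients are new relative to~\cite{Ho-Nguyen-EJS-16}: (i) the order‑$2$ integral Lipschitz property bounds $|R_{n}(x)|\le C(x)\big(\max_{j,s}\|\Delta_{j,s}\|\big)^{\delta}\sum_{j,s}p_{j,s}\|\Delta_{j,s}\|^{2}$ with $\int C(x)w(x)\,\mathrm{d}x<\infty$, and since $\sum_{j,s}p_{j,s}\|\Delta_{j,s}\|^{2}\le W_2^{2}(G_{n},G_{*})=d_{n}$ we get $\int |R_{n}|\,w\,\mathrm{d}x=o(d_{n})$, hence $E_{n}^{-1}\int|R_{n}|\,w\,\mathrm{d}x\to 0$; and (ii) $f$, $\nabla_{\theta}f$, $\nabla_{\theta}^{2}f$ evaluated at the $\theta_{j}^{*}\in\Theta$ are $w$‑integrable (from Lemma~\ref{lemma:misspecified_optimal} and the integral Lipschitz bound), so the limit may be taken in the weighted $L_1$ norm. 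Combined with $d_{n}^{-1}\int |p_{G_{n}}-p_{G_{*}}|\,w\,\mathrm{d}x\to 0$ this forces the limiting combination of $f$ and its first and second $\theta$‑derivatives at the $\theta_{j}^{*}$ (and at the stray atoms' limit positions, which are distinct from the $\theta_{j}^{*}$) to vanish $w$‑a.e., so second‑order identifiability kills all the limiting coefficients, contradicting their nontriviality. This produces $\epsilon_{0},c_{0}>0$ with $\hba(p_{G},p_{G_{*}})\ge c_{0}W_2^{2}(G,G_{*})$ whenever $W_2(G,G_{*})\le\epsilon_{0}$.

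The global bound is routine: for $G\in\Ocal_{\overline{k}}$ with $W_2(G,G_{*})>\epsilon_{0}$, since $W_2^{2}(G,G_{*})\le\text{Diam}(\Theta)^{2}$ it is enough to bound $\hba(p_{G},p_{G_{*}})$ below by a positive constant uniformly over this set; otherwise, compactness of $\Ocal_{\overline{k}}(\Theta)$ would produce $G'\ne G_{*}$ with $\hba(p_{G_{n}},p_{G_{*}})\to0$ and $G_{n}\to G'$, hence $p_{G'}=p_{G_{*}}$ a.e., contradicting identifiability of $f$. Combining the two regimes gives $\hba(p_{G},p_{G_{*}})\gtrsim W_2^{2}(G,G_{*})$ on $\Ocal_{\overline{k}}$. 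I expect the main obstacle to be the bookkeeping in the local step: the scale‑extraction relating $W_2^{2}(G_{n},G_{*})$ to the correct normalization of the Taylor coefficients (the interplay of the first‑ and second‑order terms in the overfitted regime, plus the treatment of low‑mass stray atoms) is intricate but is inherited essentially verbatim from~\cite{Ho-Nguyen-EJS-16}; what is genuinely new and delicate is that every integral is taken against the possibly heavy weight $w=p_{G_0,f_0}/p_{G_{*}}$, whose total mass may be infinite, so the usual uniform Lipschitz control of the remainder no longer helps --- it is precisely the integral Lipschitz hypothesis together with the bound $\int (p_{G}/p_{G_{*}})\,p_{G_0,f_0}\,\mathrm{d}x\le 1$ of Lemma~\ref{lemma:misspecified_optimal} that makes the passage to the limit legitimate, and verifying the corresponding weighted integrability of the low‑order terms is where care is needed.
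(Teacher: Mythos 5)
Your proposal matches the paper's proof in structure and substance: the same Cauchy--Schwarz reduction of $\hba$ to the weighted total variation $\vba$ using Lemma~\ref{lemma:misspecified_optimal}, the same local contradiction argument resting on the second-order identifiability machinery of Theorem 3.2 in~\cite{Ho-Nguyen-EJS-16}, and the same compactness argument for the global regime. The only divergence is in how the weight $p_{G_0,f_0}/p_{G_*}$ is handled in the local step: the paper applies Fatou's lemma to reduce to the pointwise statement that $\liminf_n |p_{G_n}(x)-p_{G_*}(x)|/W_2^2(G_n,G_*)=0$ for almost every $x$ (so the weight drops out before identifiability is invoked), whereas you carry the weight through the Taylor expansion in the weighted $L_1$ norm and use the integral Lipschitz hypothesis to control the remainder there --- a legitimate, if somewhat more laborious, route to the same conclusion.
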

The proof of Proposition~\ref{proposition:weighted_Hellinger_Wasserstein_metric} 
is in Section~\ref{subsection:proof_proposition_weighted_hellinger}. 
Before stating the final theorem of this section, we will 
need following assumptions:
\begin{itemize}
\item[(M.1)] The assumptions of Proposition~\ref{proposition:weighted_Hellinger_Wasserstein_metric} hold, i.e., 
$f$ is second order identifiable and admits uniform integral 
Lipschitz property up to the second order. 
\item[(M.2)] There exists $\epsilon_{0}>0$ such that 
${\displaystyle \int (p_{G_0,f_0}(x))p_{G_{*}}(x)/p_{G}(x)d\mu(x) 
\leq M^*(\epsilon_0)}$ whenever we have $W_{1}(G,G_{*}) \leq 
\epsilon_{0}$ for any $G \in \Ocal_{k_{*}}$ where 
$M^*(\epsilon_0)$ depends only on $\epsilon_{0}$, $G_{*}$, $G_{0}$, and $\Theta$.
\item[(M.3)]The parameter $\gamma$ in Dirichlet distribution in MFM 
satisfies $\gamma < \overline{k}$. Additionally, the base distribution $H$ satisfies Assumption (P.2).
\end{itemize}

\begin{theorem} \label{theorem:convergence_rate_misspecified}  
Assume $k_0<\infty$, and assumptions (M.0),(M.1),(M.2) and (M.3) hold. 
Then we have that,
\begin{eqnarray}
\Pi\biggr(G \in \overline{\mathcal{G}}(\Theta): W_{2}(G,G_{*}) 
		\lesssim (\log n/n)^{1/4} \biggr|X_{1},\ldots,X_{n}
		\biggr) \to 1 \nonumber
\end{eqnarray}
in $p_{G_0,f_0}$-probability. 
\end{theorem}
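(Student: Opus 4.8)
\emph{Proof strategy.} The plan is to run the misspecified posterior contraction machinery of Kleijn and van der Vaart~\cite{Kleijn-2006} in the mixing-measure formulation of Nguyen~\cite{Nguyen-13}: first establish a posterior contraction rate $\epsilon_n \asymp (\log n/n)^{1/2}$ with respect to the weighted Hellinger distance $\hba(p_G,p_{G_*})$ of Definition~\ref{definition:weighted_Hellinger}, then convert this to a $W_2$ rate for the mixing measure via Proposition~\ref{proposition:weighted_Hellinger_Wasserstein_metric}. Two features make the argument clean. First, by (M.0) the MFM prior, hence the entire posterior, is supported on $\Ocal_{\overline{k}}(\Theta)$, a compact finite-dimensional space (at most $\overline{k}$ atoms in $\Theta\subset\mathbb{R}^{d}$ together with a weight vector) on which Proposition~\ref{proposition:weighted_Hellinger_Wasserstein_metric} applies verbatim and no nontrivial sieve is required. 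Second, second-order identifiability (part of (M.1)) implies $f$ is identifiable, so by Lemma~\ref{lemma:misspecified_optimal_unique} the KL minimizer $p_{G_*}$ corresponds to a unique $G_*$, which by hypothesis has $k_* < \overline{k}$ atoms and hence lies in $\Ocal_{\overline{k}}(\Theta)$.

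\emph{Entropy and tests.} The $W_1$-packing number of $\Ocal_{\overline{k}}(\Theta)$ is at most $(\mathrm{Diam}(\Theta)/\rho)^{O(\overline{k}(d+1))}$. Combining the optimal coupling between $G$ and $G'$ with the first-order consequence of the integral Lipschitz property (M.1) — which, since $k_0<\infty$ and $\Theta$ is compact, guarantees $\int|f(x|\theta_1)-f(x|\theta_2)|\,\dfrac{p_{G_0,f_0}(x)}{p_{G_*}(x)}\,\mathrm{d}x \lesssim \|\theta_1-\theta_2\|$ — one gets $\hba^2(p_G,p_{G'}) \le \tfrac12\int|p_G-p_{G'}|\dfrac{p_{G_0,f_0}}{p_{G_*}} \lesssim W_1(G,G')$, so measures that are $\hba$-separated by $\epsilon$ are $W_1$-separated by $\gtrsim\epsilon^2$; consequently $\log D(\epsilon,\{G\in\Ocal_{\overline{k}}(\Theta):\hba(p_G,p_{G_*})\le 2\epsilon\},\hba) \lesssim \log(1/\epsilon)$. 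With $\epsilon_n^2 = M\log n/n$ this logarithmic entropy yields, via the standard Le Cam--Birgé test construction in the weighted Hellinger geometry used in~\cite{Nguyen-13}, exponentially powerful tests of $p_{G_*}$ against $\{\hba(p_G,p_{G_*})>\epsilon_n\}$, with $\Pi$ of the complement of the support being zero.

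\emph{Prior mass.} One needs $\Pi(B_n)\gtrsim \exp(-c\,n\epsilon_n^2) = n^{-cM}$, where $B_n=\{G: -P_0\log(p_G/p_{G_*})\le\epsilon_n^2,\ P_0(\log(p_G/p_{G_*}))^2\le\epsilon_n^2\}$ and $P_0=P_{G_0,f_0}$; by Lemma~\ref{lemma:misspecified_optimal}, $P_0(p_G/p_{G_*})\le1$, so these are genuinely nonnegative KL-type quantities measuring closeness to the projection $p_{G_*}$. The key step is to show a sufficiently small $W_1$-ball around $G_*$ inside $\Ocal_{k_*}(\Theta)$ is contained in $B_n$: for such $G$, the bound above gives $\hba(p_G,p_{G_*}) \lesssim W_1(G,G_*)^{1/2}$, and condition (M.2) (precisely, $\int p_{G_0,f_0}\,p_{G_*}/p_G \le M^*$ for $G\in\Ocal_{k_*}$ near $G_*$) upgrades this weighted-Hellinger smallness to the moment bounds $-P_0\log(p_G/p_{G_*})\lesssim \hba^2\log(\cdot)$ and $P_0(\log(p_G/p_{G_*}))^2\lesssim \hba^2(\log(\cdot))^2$ via the classical truncation lemma for log-likelihood-ratio moments. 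Thus $B_n$ contains a $W_1$-ball around $G_*$ of radius $\rho_n \asymp \epsilon_n^2/\mathrm{polylog}(n)$. Finally, conditioning on $\{K=k_*\}$ (positive prior mass by (M.0)), the $\mathrm{Dirichlet}(\gamma/k_*,\dots,\gamma/k_*)$ prior on the weights (with $\gamma<\overline{k}$ by (M.3)) and the $H^{\otimes k_*}$ prior on the atoms (with $H$ having density bounded below on $\Theta$ by (M.3)/(P.2)) place mass $\gtrsim \rho_n^{C(k_*,d,\gamma)} \gtrsim n^{-C'}$ on that ball by the usual atom-matching/weight-matching estimate; taking $M$ large enough gives the required bound. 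Note that Lemma~\ref{lemma:Prior_mass_MFM} cannot be quoted here, since under (M.0) its factor $q_D$ vanishes once the $\Theta$-packing number $D$ exceeds $\overline{k}$; the direct conditioning on $K=k_*$ is what is needed.

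\emph{Conclusion and main obstacle.} Feeding the entropy/testing and prior-mass estimates into the master contraction theorem of~\cite{Kleijn-2006} (in the mixing-measure form of~\cite{Nguyen-13}) gives $\Pi(\hba(p_G,p_{G_*}) \gtrsim (\log n/n)^{1/2}\mid X_1,\dots,X_n)\to0$ in $P_{G_0,f_0}$-probability. Since the posterior is supported on $\Ocal_{\overline{k}}(\Theta)$, Proposition~\ref{proposition:weighted_Hellinger_Wasserstein_metric} gives $W_2^2(G,G_*)\lesssim\hba(p_G,p_{G_*})$ there, so $W_2(G,G_*)\lesssim(\log n/n)^{1/4}$ on the event of high posterior probability, which is the claim. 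I expect the prior-mass step to be the main obstacle: controlling the log-likelihood-ratio moments $P_0(\log(p_{G_*}/p_G))^j$, $j=1,2$, against the possibly badly behaved true density $p_{G_0,f_0}$, uniformly over a $W_1$-ball around $G_*$ — this is exactly what conditions (M.1), (M.2) and $k_0<\infty$ are designed to enable, but it takes a careful truncation argument to bridge the $\chi^2$-type bound of (M.2) and the logarithmic moments defining $B_n$.
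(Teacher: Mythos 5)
Your proposal is correct and uses essentially the same ingredients as the paper's proof: the bound $\hba^{2}(p_G,p_{G_*})\lesssim W_1(G,G_*)$ from the integral Lipschitz property, Lemma 8.1 of Kleijn--van der Vaart together with (M.2) to turn a small $W_1$-ball around $G_*$ into a KL-type neighborhood with polynomial prior mass, the trivial sieve $\Gcal_n=\Ocal_{\overline{k}}(\Theta)$ forced by (M.0), and Proposition~\ref{proposition:weighted_Hellinger_Wasserstein_metric} to pass from weighted Hellinger to $W_2$. The only differences are organizational --- you first establish the intermediate rate $\hba(p_G,p_{G_*})\lesssim(\log n/n)^{1/2}$ and then invert, whereas the paper folds the inversion $\Psiba_{\Gcal_n}(r)\gtrsim r^{4}$ directly into the testing conditions of Theorem~\ref{theorem:posterior_contraction_rate_misspecified} --- and your observation that Lemma~\ref{lemma:Prior_mass_MFM} is inapplicable under (M.0), so that one should condition on $K=k_*$ for the prior-mass step, is a correct and in fact cleaner handling than the paper's appeal to $\Ecal_{\overline{k}}$.
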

The proof of Theorem~\ref{theorem:convergence_rate_misspecified} 
is deferred to Section~\ref{subs:proof_misspecified_finite_k }. 

\paragraph{Further remarks}
The above theorem raises a promising prospect for combating model misspecification, by having the modeler to fit the data to an \emph{underfitted} 
mixture model $p_{G}$. Unfortunately, this theorem does not address this scenario, under which the limiting mixing measure would correspond to the KL minimizer
\[G_{**} = \mathop{\arg \min} \limits_{G \in \Ocal_{\overline{k}}(\Theta)}
		{K(p_{G_0,f_0}, p_{G})}.\]
		for some $\overline{k} < \infty$, provided that this quantity exists (compare this with $G_*$ given in~\eqref{eq:KL_minimum}). Due to the lack of convexity of the class of mixture densities with bounded number of mixture components, the theory developed in this section (tracing back to the work of~\cite{Kleijn-2006}) is not applicable. Thus, posterior contraction behaviors in an underfitted mixture models remain an interesting open question.
\comment{
Here, 
we put a few remark with~\ref{theorem:convergence_rate_misspecified}. 
First, Assumption (M.1) is to ensure that we have the lower bound of 
weighted Hellinger distance between in terms of second order 
Wasserstein metric, i.e., 
$\bar{h}^2(p_{G},p_{G_{*}}) \geq Cr^4$ as long as the Wasserstein metric $W_{2}(G,G_{*}) \geq r$ for any $r>0$ 
where $C$ is some positive constant depending only on $G_{0}$ and $\Theta$. 
Second, the high level idea of assumption (M.2) is to control the 
growing rate of KL neighborhood around $p_{G_{*}}$, which plays a key 
role in the analysis of posterior convergence rate of mixing measures 
under misspecified setting (see Theorem~\ref{theorem:posterior_contraction_rate_misspecified} in Appendix B). 
We can validate that this assumption holds for 
various choices of kernel $f$, such as location Gaussian and location 
Laplace families. As a consequence, the assumptions (M.1) and (M.2) are 
mild and true for many popular distributions. Assumption (M.3) is more of a technical condition, and can always hold for large enough $\overline{k}$.
}
 

\section{Simulation studies}
\label{section:simulation}
\begin{figure}[t]
\centering
\begin{minipage}{.5\textwidth}
  \centering
  \includegraphics[width=.9\linewidth]{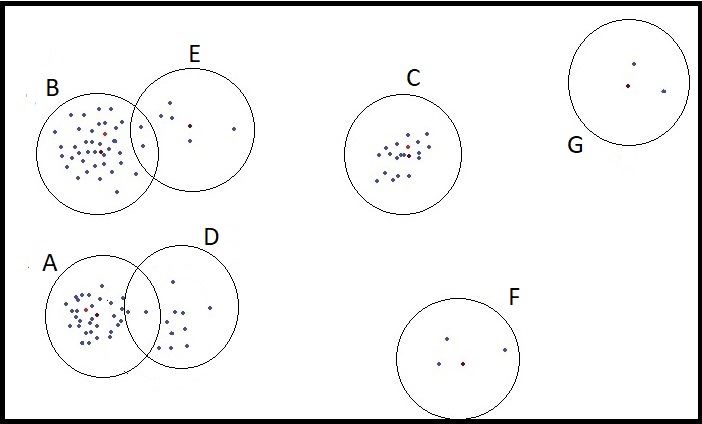}
  \captionof{figure}{Initial distribution $G$.}
  \label{fig:initial}
\end{minipage}%
\begin{minipage}{.5\textwidth}
  \centering
  \includegraphics[width=.9\linewidth]{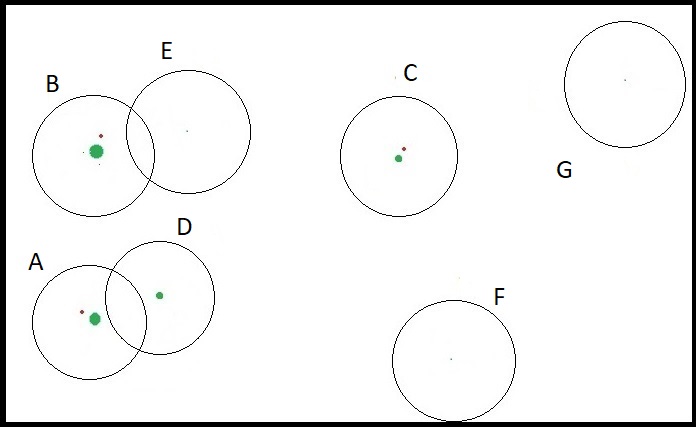}
  \captionof{figure}{After first stage-"merge".}
  \label{fig:merge_1}
\end{minipage}
\end{figure}

\begin{figure}[t]
\centering
\begin{minipage}{.5\textwidth}
  \centering
  \includegraphics[width=.9\linewidth]{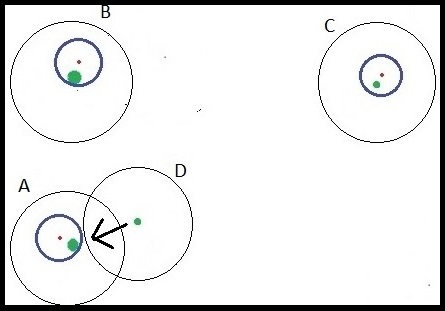}
  \captionof{figure}{After second stage-"truncation".}
  \label{fig:truncate}
\end{minipage}%
\begin{minipage}{.5\textwidth}
  \centering
  \includegraphics[width=.9\linewidth]{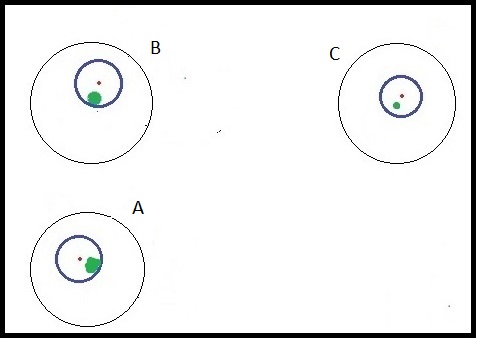}
  \captionof{figure}{After second stage-"merge".}
  \label{fig:merge_2}
\end{minipage}
\end{figure}

In this section we provide an illustration of the MTM algorithm's behavior via a simple simulation study.
Figures~\ref{fig:initial},~\ref{fig:merge_1},~\ref{fig:truncate} and~\ref{fig:merge_2} illustrate the different stages in the application of MTM algorithm~\ref{algo:merge-truncate-merge}. In each figure, green dots denote the atoms in the set of "remaining atoms" at each stage, with weights proportional to their sizes. 
Red dots denote the supporting atoms of the true mixing measure $G_0$. Black 
circles denote balls of radius $\omega_n$ around each of the "remaining atoms". Blue circles denote balls of radius $\frac{\omega_n}{4k_0}$ around 
the atoms of $G_0$. 

Starting with an input measure $G$ represented in Fig.~\ref{fig:initial}, the first stage of the algorithm (merge procedure, from line 1 to line 4) merges nearby atoms to produce $G'$, which is represented by Fig.~\ref{fig:merge_1}. 
There remains some atoms that carry very small mass, they are suitably truncated (via line 5 in the algorithm), and then merged accordingly (via line 6). Fig.~\ref{fig:truncate} and Fig.~\ref{fig:merge_2} represent the outcome after these two steps of the algorithm. Observe how the atoms in each of the blue circles are merged to produced a reasonably accurate estimate of the corresponding atom of $G_0$. The number of such circles gives the correct number of the supporting atoms of $G_0$.

\comment{

For Figures~\ref{fig:merge_1},~\ref{fig:truncate} and~\ref{fig:merge_2}, 
the green dots denote the atoms in the set of "remaining atoms" at each stage, with weights proportional to their sizes. 
The red dots denote the atoms for the true mixing measure. The black 
circles denote balls of radius $\omega_n$ around each of the "remaining atoms". 
The blue circles denote balls of radius $\frac{\omega_n}{4k_0}$ around 
the atoms of $G_0$. The clusters are labelled in descending order of 
the weights of the atoms, in $\firstmer$ in the MTM algorithm, which form their centers.

Figure~\ref{fig:initial} shows the mixing measure corresponding to the posterior sample $G$. 
In this case, we assume $G$ is mixing measure with uniform weights on 
its atoms, which are shown by the dots in the figure. The circles in 
Figure~\ref{fig:initial} provide a relative comparison to Figure~\ref{fig:merge_1} 
which form the output of Step 6 of algorithm~\ref{algo:merge-truncate-merge}.

Consider the SRSWOR  scheme in Step 2 of Algorithm~\ref{algo:merge-truncate-merge}. The scheme randomly permutes the indices of atoms according to their weights. In other words, an atom with larger weight has a higher chance of being assigned a lower index value by the permutation employed. In contrast to the SRSWOR scheme, if we use a deterministic scheme for labeling, any outlier atom of $G$ (which is more than $\omega_n$-distance away from any atom of $G_0$) might have a lower index value than an atom of $G$ which is close to an atom of $G_0$. As a result, the merging scheme might assign a very high weight to the outlier atom even though it is not in an $\omega_n$-ball around an atom of $G_0$. When, $\Theta$ is high-dimensional this might produce a large number (more than $1$) of merged atoms around any atom of $G_0$, with each having a high weight. Thus we might miscount the total number of true atoms by the scheme that follows. On the other hand, since the atoms of $G$ are converging to the atoms of $G_0$, it is expected that any atom of $G_0$ will have a high proportion of atoms of $G$ near it. So with a higher probability we will be able to assign a lower index value to an atom of $G$ which is close to an atom of $G_0$. The merging scheme can then ensure the total number of merged atoms of $G$ around any atom of $G_0$ does not get too large, even in high dimensional scenarios. Step 6 of Algorithm~\ref{algo:merge-truncate-merge} produces a mixing measure with the merging scheme following the SRSWOR scheme. Note that as $\Theta$ is a compact subspace, $\firstmer$ always has finitely many atoms.

At this stage, "the set of remaining atoms" contains the truth, A, B 
and C. It also contains atoms of smaller masses in E,F and G. Atom D 
corresponds to an atom of $\firstmer$ which has mass larger than the 
threshold $\omega_n^r$, and contracts towards A at a rate of $O(\omega_n)$. 
Figure~\ref{fig:truncate} is representative of "the set of remaining 
atoms" after Step 8 of Algorithm~\ref{algo:merge-truncate-merge}. At 
this stage, all the atoms of $\firstmer$ which have small masses have 
been removed. Now, since D has smaller mass than A,B and C, it comes in 
a chronologically lower ordering according to Step 6 of Algorithm~\ref{algo:merge-truncate-merge}. 
Step 9 to Step 11 identify this non-atom by means of thresholding the 
cost of mass transfer for this atom to the other "remaining atoms". Figure~\ref{fig:merge_2} 
provides the outcome of Step 11 of Algorithm~\ref{algo:merge-truncate-merge}. Step 12 to Step 15 of Algorithm~\ref{algo:merge-truncate-merge} 
now add back the masses corresponding to the non-atoms to the atoms 
identified.
}

Next, we illustrate the performance of the MTM algorithm as it is applied to the samples from a Dirichlet process mixture, given the data generated by mixtures of three location Gaussian distributions:
\begin{eqnarray}
p_{G_0}(\cdot) = \sum_{i=1}^3 p_i^0 \mathcal{N}(\cdot|\mu_i^0,\Sigma^0) \nonumber
\end{eqnarray}
where $\mathcal{N}(\cdot| \mu, \Sigma)$ is the Gaussian distribution 
with mean vector $\mu$ and covariance matrix $\Sigma$. For simulation purposes, we consider the following four different settings ($n$ is the sample size):
\begin{enumerate}
    \item Case A: $\mu_1^0=(0.8,0.8)$, $\mu_2^0=(0.8,-0.8)$, $\mu_3^0=(-0.8,0.8)$, $\Sigma^0= 0.05 I_3$, $n=500$. 
    \item Case B:  $\mu_1^0=(0.8,0.8)$, $\mu_2^0=(0.8,-0.8)$, $\mu_3^0=(-0.8,0.8)$, $\Sigma^0= 0.05 I_3$, $n=1500$.
    \item Case C: $\mu_1^0=(1.8,1.8)$, $\mu_2^0=(1.8,-1.8)$, $\mu_3^0=(-1.8,1.8)$, $\Sigma^0= 0.05 I_3$, $n=500$. 
    \item Case D: $\mu_1^0=(0.8,0.8)$, $\mu_2^0=(0.8,-0.8)$, $\mu_3^0=(-0.8,0.8)$, $\Sigma^0= 0.01 I_3$, $n=1500$. 
\end{enumerate}
Here, $I_3$ is the identity 
matrix of dimension $3$. 
Additionally, the weight vector for all these cases is chosen as $p^0=(p_1^0,p_2^0,p_3^0) = (0.4,0.3,0.3)$.

As mentioned above, a Dirichlet process prior with an uniform prior base measure $H$ in the region $[-6,6] \times[-6,6]$, along with concentration parameter $\alpha=1$. This choice of prior enables us to sample significantly larger numbers of components of the mixing measure than the true number of three components.

It is known that the contraction rate of mixing measures under location Gaussian DPMM is 
$\tilde{C}(\log(n)^{-1/2})$ with respect to the Wasserstein-$2$ norm, for some constant $\tilde{C}$ which depends on $\Sigma^0$(the covariance matrix), the location parameters $\mu_i^0$ and the weights $p_i^0$~\cite{Nguyen-13}. For our purpose, in order for $\omega_n$ to satisfy Equation~\eqref{eq:posterior_results}, 
we may choose any $\omega_n$ as long as $\frac{\omega_n} {\log(n)^{-1/2}} \to \infty$. We selected $\omega_n= 
\biggr(\frac{\log(\log(n))}{(\log(n))}\biggr) ^{1/2}$ for all our applications of the MTM algorithm.

The MTM algorithm is provably consistent (in the asymptotic sense) for all chosen constants $c> 0$. In practice for $n$ being fixed, the input $c$ to Algorithm~\ref{algo:merge-truncate-merge} should be chosen so that $\frac{\tilde{C}}{(\log(\log(n)))^{1/2}}\leq c$. Moreover, for finite $n$ it is not expected that the posterior probability for $k=k_0$ is close to 1. However, for identifying the number of components the posterior mode provides a reasonable estimate. In particular, $(1-\sum_{i=1}^3 \frac{c}{p_i^0})$ forms a useful lower bound on the posterior mass at the mode as identified in Equation~\eqref{eq:posterior mode mass}. To identify $k=k_0$ consistently using the posterior mode safely, one needs to choose $c< c_0$, with $c_0$ satisfying $(1-\sum_{i=1}^3 \frac{c_0}{p_i^0}) > 1/2$. The exact computation of the upper bound $c_0$ and the lower bound $\frac{\tilde{C}}{(\log(\log(n)))}$ for $c$ may be unrealistic but a reasonable estimate may be possible. Nonetheless, we simply considered a large range of $c$ and show there is a range where we can robustly identify the true number of components via the posterior mode. 

For the DP mixture's posterior computation, we make use of the non-conjugate split-merge sampler of Jain 
and Neal~\cite{Jain-Neal-07} with $(5,1,1,5)$ scheme, i.e., $5$ scans 
to reach the split launch state, $1$ split-merge move per iteration, 
$1$ Gibbs scan per iteration, and $5$ moves to reach the merge launch 
state. We run our experiments for two settings corresponding to sample sizes $500$ and $1500$.
The sampler had 2000 burn-in iterations followed by 18000 sample 
iterations (a total 20000), with each 10th iteration being counted. 
 
\begin{figure}[t]
\centering
\begin{minipage}{.45\textwidth}
  \centering
  \includegraphics[width=\linewidth,height=.9\linewidth]{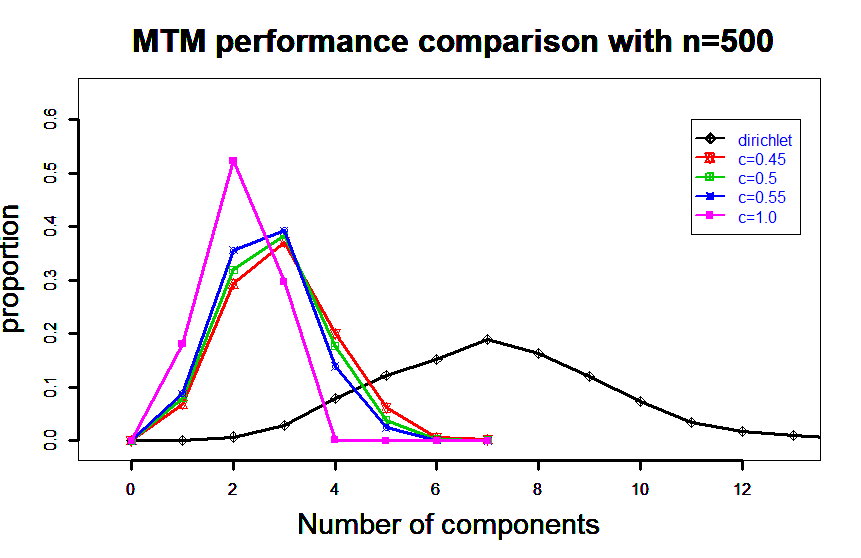}
  \captionof{figure}{Case A.}
  \label{fig:MTM-500}
\end{minipage}%
\begin{minipage}{.45\textwidth}
  \centering
  \includegraphics[width=\linewidth,height=.9\linewidth]{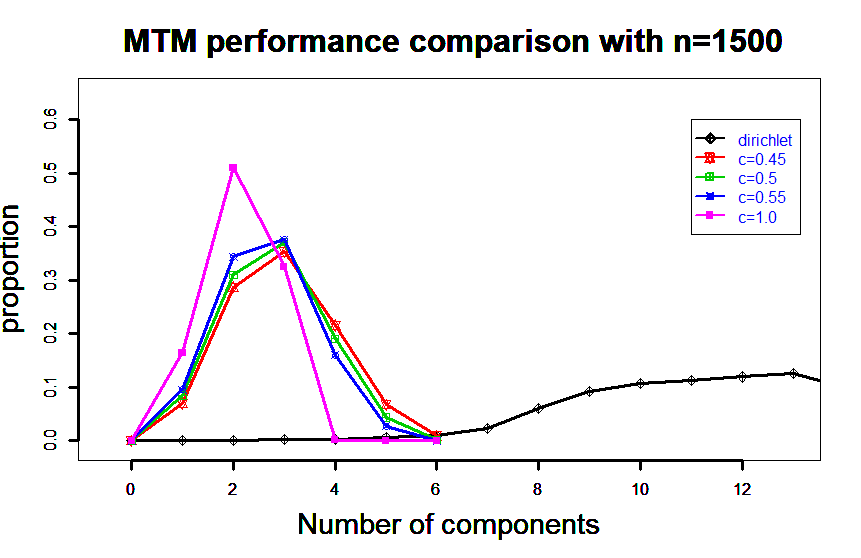}
  \captionof{figure}{Case B.}
  \label{fig:MTM-1500}
\end{minipage}
\end{figure}
 \begin{figure}[t]
\centering
\begin{minipage}{.45\textwidth}
  \centering
  \includegraphics[width=\linewidth,height=.9\linewidth]{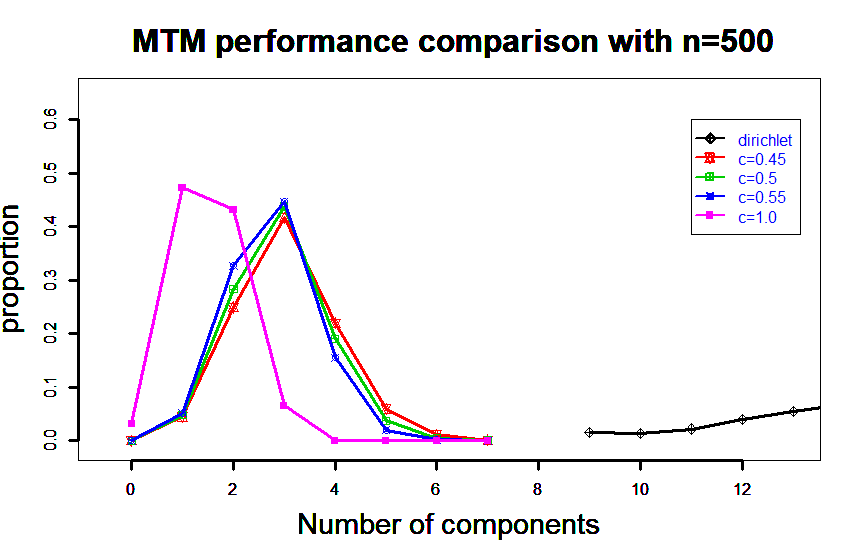}
  \captionof{figure}{Case C.}
  \label{fig:MTM-500-spaced}
\end{minipage}%
\begin{minipage}{.45\textwidth}
  \centering
  \includegraphics[width=\linewidth,height=.9\linewidth]{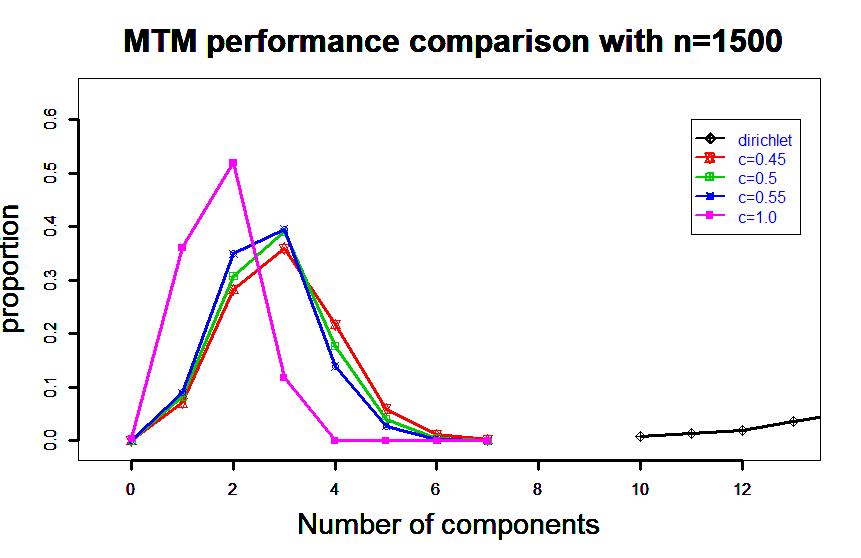}
  \captionof{figure}{Case D.}
  \label{fig:MTM-1500-small-variance}
\end{minipage}
\end{figure}

The experiments run for DP mixture-based sampler, followed by application of the MTM 
procedure for 4 different values of the tuning parameter $c$ in  
Algorithm~\ref{algo:merge-truncate-merge}, namely, for $c=0.45, 0.5, 0.55, 1.0$. 
The proportional frequencies are plotted in Figure~\ref{fig:MTM-500} 
and Figure~\ref{fig:MTM-1500} respectively, along with the 
proportional frequencies for DP mixture. The uniform base measure for the 
Dirichlet Process prior is chosen so as to enable easier creation of 
newer components in the split-merge scheme. As a consequence the DP mixture's posterior yields quite bad results as far as the number of mixture components is concerned. However, even under that case, we can 
recover the true number of components by considering the mode of the 
frequency distribution after an application of the MTM algorithm on the posterior samples, with 
appropriate constant $c$. It is expected, however, that a large choice 
of $c$ would underestimate the number of components. This is also
what is observed from the simulations, where the procedure breaks down when $c=1.0$. 

We perform the experiments under four different settings of data populations. In particular, figure~\ref{fig:MTM-500-spaced} consists of data generated from mixture of Gaussians with more widely spread location parameter values. In this case, it is expected that the convergence to the true number of components via Algorithm~\ref{algo:merge-truncate-merge} will be faster for the posterior mode, in comparison to the situation where the location parameters are closer together. This is indeed what is observed in our simulations. The value of the covariance matrix $\Sigma^0$, on the other hand does not seem to noticeably affect the results. This is again expected, since the prior support $[-6,6]\times[-6,6]$ is quite large in comparison to the eigenvalues of the covariance matrix chosen.




\section{Proofs of key results}
\label{section:proofs}

In this section, we provide proofs for several key results in the paper.
\subsection{Proof of Theorem~\ref{theorem:convergence_rate_well_specified}}
\label{proof:theorem:convergence_rate_well_specified}
The proof of the theorem consists of two key parts. First, we recall a general framework for establishing posterior contraction of mixing measures. Then we proceed to apply this framework to analyze the specific setting of the MFM model.
\subsubsection{General framework}
To establish convergence rates of mixing measures under the setting of 
MFM, we utilize the general framework of posterior contraction of mixing 
measures under well-specified setting from~\cite{Nguyen-13}. 
To state such results formally, we will need to introduce several 
key definitions in harmony with the notations in our paper. 
Let $G$ be endowed with a prior distribution $\Pi$ on a measure space of discrete probability measures in $\overline{\Gcal}(\Theta)$. Fix $G_0 \in \Pcal(\Theta)$.
For any set $\Scal \subset \overline{ \Gcal}(\Theta)$, 
we define the Hellinger information of the $W_{1}$ metric for subset $\Scal$ by the following function
\begin{align*}
\Psi_{ \Scal}(r) 
		: = \inf \limits_{G \in \Scal: \ W_{1}(G, G_{0}) \geq r/2}
		{h^{2}(p_{G}, p_{ G_{0}})}.
\end{align*}
Note that, the choice of first order Wasserstein metric in the 
above formulation is due to the lower bound of Hellinger distance 
between mixing densities in terms of first order Wasserstein 
distance between their corresponding mixing measures 
in~\eqref{eq:lower_bound_first_order_identifiability}. 
Now, for any mixing measure $G_{1} \in \overline{ \Gcal}(\Theta)$ 
and $r >0$, we define a Wasserstein ball centered at $G_{1}$ 
under $W_{1}$ metric as follows
\begin{eqnarray}
B_{W_{1}}(G_{1},r) 
		= \left\{G \in \overline{\Gcal}(\Theta): \ W_{1}(G,G_{1}) 
		\leq r \right\}. \nonumber
\end{eqnarray}
Furthermore, for any $M>0$, we define a Kullback-Leibler neighborhood of $G_{0}$ by
\begin{eqnarray}
B_{K}(\epsilon,M) 
		= \left\{G \in \overline{\Gcal}(\Theta): \  K(p_{G_{0}},p_{G}) 
		\leq \epsilon^{2}\log \left(\frac{M}{\epsilon} \right), 
		K_{2}(p_{G_{0}},p_{G}) \leq \epsilon^{2} 
		\left(\log \left(\frac{M}{\epsilon}\right)\right)^2
		\right\}. \nonumber
\end{eqnarray}

For the proof of Theorem~\ref{theorem:convergence_rate_well_specified}, we use a straightforward extension of Theorem 4 in~\cite{Nguyen-13}, adapted to the setting in this work.
\begin{theorem}\label{theorem:posterior_contraction_Nguyen}
Fix $G_{0} \in \overline{\Gcal}$. Assume the following:
\begin{itemize}
\item[(a)] The family of likelihood functions is finitely identifiable and satisfies 
$h(f(x|\theta_{i}),f(x|\theta_{j}')) 
\leq C_{1}\|\theta_{i}-\theta_{j}'\|^{\alpha}$ 
for any $\theta_{i}, \theta_{j}' \in \Theta$, 
for some constants $C_{1}>0$, $\alpha \geq 1$.
\item[(b)] There is a sequence of sets 
$\Gcal_{n} \subset \overline{\Gcal}$ for which 
\begin{eqnarray}
M(\Gcal_{n}, G, r) 
	= D \biggr(\dfrac{\Psi_{\Gcal_{n}}(r)^{1/2}}{2 \text{Diam}
	(\Theta)^{\alpha-1}\sqrt{C_{1}}}, \Gcal_{n} 
	\cap B_{W_{1}}(G,r/2), W_{1}\biggr). \nonumber
\end{eqnarray} 
\item[(c)] There is a sequence $\epsilon_{n} \to 0$ such that 
$n \epsilon_{n}^{2}$ is bounded away from 0 or 
tending to infinity, a constant $M>0$ sufficiently large, 
and a sequence $M_{n}$ such that
\begin{eqnarray}
&& \log D(\epsilon/2, \Gcal_{n} 
		\cap B_{W_{1}}(G_{0},2\epsilon) 
		\backslash B_{W_{1}}(G_{0},\epsilon), W_{1}) 
		+ 
		\sup \limits_{G \in \Gcal_{n}} 
		\log M(\Gcal_{n},G,r) \leq n\epsilon_{n}^{2}, 
		\ \forall \ \epsilon \geq \epsilon_{n}, 
		\label{eq:well_posterior_contraction_rate_first_condition}  \\
&& \dfrac{\Pi(\overline{\Gcal} 
		\backslash \Gcal_{n})}{\Pi(B_{K}(\epsilon_{n},M))} 
		= o\left(\exp \left(-2n\epsilon_{n}^{2} 
		\log \left(\frac{M}{\epsilon_n}\right) \right)\right)	
	\label{eq:well_posterior_contraction_rate_second_condition}  \\
&& \dfrac{\Pi(B_{W_{1}}(G_{0},2j\epsilon_{n}) 
		\backslash B_{W_{1}}(G_{0},j\epsilon_{n}))}
		{\Pi(B_{K}(\epsilon_{n},M))} \leq \exp\biggr(n
		\Psiba_{\Gcal_{n}}(j\epsilon_{n})/16\biggr), 
		\ \forall j \geq M_{n} \label{eq:well_posterior_contraction_rate_third_condition} \\
&& \exp\left(2n\epsilon_{n}^{2}\log 
		\left(\frac{M}{\epsilon_n}\right)\right)
		\sum \limits_{j \geq M_{n}}{\exp\left(-n\Psiba_{\Gcal_{n}}
		(j\epsilon_{n})/16\right)} \to 0. 
		\label{eq:well_posterior_contraction_rate_fourth_condition}
\end{eqnarray}
\end{itemize}
Then, we have that 
$\Pi(G \in \overline{\Gcal}: \ W_{1}(G,G_{0}) 
\geq M_{n}\epsilon_{n} | X_{1},\ldots,X_{n}) \to 0$ 
in $P_{G_{0}}$-probability.
\end{theorem}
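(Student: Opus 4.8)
The plan is to obtain the statement from the general posterior-contraction scheme of Ghosal, Ghosh and van der Vaart, specialized to mixing measures exactly as in Theorem~4 of~\cite{Nguyen-13}; the version above merely allows $G_0$ to be an arbitrary element of $\overline{\Gcal}(\Theta)$, which does not affect the argument. Write the posterior mass of the event of interest as
\[
\Pi\bigl(W_1(G,G_0)\ge M_n\epsilon_n \mid X_1,\ldots,X_n\bigr)
=\frac{\int_{\{W_1(G,G_0)\ge M_n\epsilon_n\}}\prod_{i=1}^n\frac{p_G(X_i)}{p_{G_0}(X_i)}\,\mathrm{d}\Pi(G)}{\int_{\overline{\Gcal}(\Theta)}\prod_{i=1}^n\frac{p_G(X_i)}{p_{G_0}(X_i)}\,\mathrm{d}\Pi(G)} .
\]
I would first lower-bound the denominator: the controls on $K(p_{G_0},p_G)$ and $K_2(p_{G_0},p_G)$ built into the definition of $B_K(\epsilon_n,M)$ feed a standard evidence bound (a Markov/second-moment argument, as in~\cite{Nguyen-13}), which gives, with $P_{G_0}$-probability tending to $1$,
\[
\int_{\overline{\Gcal}(\Theta)}\prod_{i=1}^n\frac{p_G(X_i)}{p_{G_0}(X_i)}\,\mathrm{d}\Pi(G)\ \ge\ \Pi\bigl(B_K(\epsilon_n,M)\bigr)\exp\bigl(-2n\epsilon_n^2\log(M/\epsilon_n)\bigr);
\]
the logarithmic factor $\log(M/\epsilon_n)$ is precisely the slack carried in $B_K$ to accommodate the polynomially small prior mass of Kullback--Leibler balls of mixing measures.

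To control the numerator, partition $\{W_1(G,G_0)\ge M_n\epsilon_n\}$ into its part in $\overline{\Gcal}(\Theta)\setminus\Gcal_n$ and the Wasserstein shells $S_j:=\{\,j\epsilon_n\le W_1(G,G_0)<2j\epsilon_n\,\}\cap\Gcal_n$, $j\ge M_n$. On $S_j$ the definition of the Hellinger information gives $h^2(p_G,p_{G_0})\ge\Psi_{\Gcal_n}(j\epsilon_n)$. Assumption~(a) lets one convert a $W_1$-net of $S_j$ into a Hellinger-net of $\{p_G:G\in S_j\}$ at the resolution implicit in $M(\Gcal_n,G,r)$, so $S_j$ is covered by at most $N_j:=\sup_{G\in\Gcal_n}M(\Gcal_n,G,2j\epsilon_n)$ $W_1$-balls, each of which maps into a Hellinger ball of radius $\tfrac12\Psi_{\Gcal_n}(j\epsilon_n)^{1/2}$. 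On each such ball a Le Cam--Birg\'e test exists whose type-I and type-II errors are both at most $\exp(-c\,n\,\Psi_{\Gcal_n}(j\epsilon_n))$; let $\phi_n$ be the maximum of all these tests over $j\ge M_n$. Condition~\eqref{eq:well_posterior_contraction_rate_first_condition} guarantees $\log\sum_{j\ge M_n}N_j\lesssim n\epsilon_n^2$, hence $P_{G_0}\phi_n\to0$.

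It remains to assemble these pieces on $\{\phi_n=0\}$ intersected with the favorable denominator event. The contribution of $\overline{\Gcal}(\Theta)\setminus\Gcal_n$ to the numerator has $P_{G_0}$-mean at most $\Pi(\overline{\Gcal}(\Theta)\setminus\Gcal_n)$, which by~\eqref{eq:well_posterior_contraction_rate_second_condition} is negligible against the denominator lower bound. The contribution of $S_j$ has $P_{G_0}$-mean at most $\Pi(S_j)\exp(-c\,n\,\Psi_{\Gcal_n}(j\epsilon_n))$; dividing by the denominator lower bound, using~\eqref{eq:well_posterior_contraction_rate_third_condition} to bound $\Pi(S_j)/\Pi(B_K(\epsilon_n,M))$ by $\exp(n\Psiba_{\Gcal_n}(j\epsilon_n)/16)$, and then~\eqref{eq:well_posterior_contraction_rate_fourth_condition} to sum $\exp(2n\epsilon_n^2\log(M/\epsilon_n))\sum_{j\ge M_n}\exp(-n\Psiba_{\Gcal_n}(j\epsilon_n)/16)\to0$, shows that the ratio above tends to $0$ in $P_{G_0}$-probability. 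Combined with $P_{G_0}(\phi_n=1)\to0$, this proves the theorem. The hard part is not any single estimate but the joint calibration of $\epsilon_n$, the sieve $\Gcal_n$, the cutoff $M_n$ and the covering scales so that conditions~\eqref{eq:well_posterior_contraction_rate_first_condition}--\eqref{eq:well_posterior_contraction_rate_fourth_condition} hold at once: the Hellinger information $\Psi_{\Gcal_n}$ (possibly as tame as $r^2$ but, for supersmooth kernels, exponentially small in $1/r$) simultaneously governs the power of the tests, the resolution of the local entropy, and --- through $\Psiba_{\Gcal_n}$ --- the summability of the final series, and the $\log(M/\epsilon_n)$ bookkeeping must be kept consistent between the denominator bound and conditions~\eqref{eq:well_posterior_contraction_rate_second_condition} and~\eqref{eq:well_posterior_contraction_rate_fourth_condition}; everything else is the standard template, as a straightforward extension of~\cite[Theorem~4]{Nguyen-13}.
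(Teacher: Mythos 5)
Your proposal is correct and follows essentially the same route the paper relies on: the paper does not reprove this statement but imports it as a straightforward extension of Theorem~4 of~\cite{Nguyen-13}, and the argument it has in mind (and replicates in Appendix~B for the misspecified analogue) is exactly your decomposition into the evidence lower bound over $B_K(\epsilon_n,M)$, the sieve remainder, and Wasserstein shells tested via local Hellinger entropy. The only cosmetic imprecision is attributing $P_{G_0}\phi_n\to 0$ to condition~\eqref{eq:well_posterior_contraction_rate_first_condition} alone, whereas the summability in condition~\eqref{eq:well_posterior_contraction_rate_fourth_condition} is also needed there; since you invoke it in the final assembly anyway, this does not affect correctness.
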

\subsubsection{Posterior contraction under MFM}
Now, we apply the above result to establish the convergence rate of mixing measure under a well-specified MFM model. The constant $M$ for part (c) of Theorem~\ref{theorem:posterior_contraction_Nguyen} is chosen later. Also, let $\epsilon_{n} := \overline{M}(\log n/ n)^{1/2}$ where $\overline{M}$ 
is a sufficiently large constant that will be chosen later. Note that it is enough to show,
$\Pi\biggr(G \in\mathcal{G}(\Theta): W_{1}(G,G_{0}) 
		\gtrsim \dfrac{(\log n)^{1/2}}{n^{1/2}}\biggr|
		X_{1},\ldots,X_{n}\biggr) \to 0$, since $\Pi(G \in \overline{\mathcal{G}}(\Theta)\setminus\mathcal{G}(\Theta) |
		X_{1},\ldots,X_{n}) = 0$.

With $\epsilon_n$ chosen as above, we denote
$A_{n} := \Pi(G \in \mathcal{G}(\Theta) : W_1(G,G_0)\gtrsim \epsilon_{n}| 
X_{1},\ldots,X_{n})$. It is clear that
\begin{eqnarray}
A_{n} 
		& = & \sum \limits_{k=1}^{\infty}\Pi(G \in \Ocal_{k}(\Theta) : W_1(G,G_0) 
		\gtrsim \epsilon_{n}| X_{1},\ldots,X_{n})\Pi(K=k|X_{1},\ldots,X_{n}) \nonumber \\
		& \leq & \Pi(G \in \Ocal_{k_{0}}(\Theta): W_1(G,G_0) 
		\gtrsim \epsilon_{n}| X_{1},\ldots,X_{n})
		+ \Pi(K \neq k_{0}|X_{1},\ldots,X_{n}). \nonumber
\end{eqnarray}
Now, we divide our proof into the following key steps
\paragraph{Step 1:} $\Pi(K=k_{0}|X_{1},\ldots,X_{n}) \to 1$ a.s. $P_{G_{0}}$. 
As the model is identifiable, this result is the direct 
application of Doob's consistency theorem~\cite{Doob-1948}. 
\paragraph{Step 2:} $P_{G_{0}}\biggr(\Pi(G \in \Ocal_{k_{0}}(\Theta): 
W_1(G,G_0) \gtrsim \epsilon_{n}| X_{1},\ldots,X_{n})\biggr) \to 0$ 
as $n \to \infty$. The proof of this result is the application of 
Theorem~\ref{theorem:posterior_contraction_Nguyen}. In fact, as we 
focus on the posterior contraction of $G_{0}$ from $G \in 
\Ocal_{k_{0}}(\Theta)$, we denote the prior on $G \in \Ocal_{k_{0}}(\Theta)$ to be 
$\Pi = H \times Q$ where $Q \overset{d}{=} \text{Dir}(\gamma/
k_{0},\ldots,\gamma/k_{0})$. Now, we claim that
\begin{eqnarray}
\epsilon^{c_{H}} 
		& \lesssim H(\|\theta_{i}-\theta_{i}^{0}\| \leq \epsilon, 
		\ i=1,\ldots,k_{0}) \lesssim \epsilon^{c_{H}} \nonumber \\
		\epsilon^{\gamma'} & \lesssim Q(|p_{i}-p_{i}^{0}| \leq 
		\epsilon, \ i=1,\ldots,k_{0})  \label{eqn:bound_prior_distributions}
\end{eqnarray}
where $c_{H}>0$ and $\gamma'>0$ are some positive constants and 
$\epsilon$ is sufficiently small. Assume that claim~\eqref{eqn:bound_prior_distributions} 
is given at the moment. To facilitate the discussion, we further 
divide Step 2 into two small steps. 
\paragraph{Step 2.1:} To obtain the bound for 
$\Pi(B_{K}(\epsilon_{n},M))$, we utilize the result from~\cite{Wong-Shen-95} 
to bound KL divergence and squared KL divergence. In particular, 
from Theorem 5 of~\cite{Wong-Shen-95}, if $p$ and $q$ are two 
densities such that $2h^2(p,q) \leq \epsilon^2$ and 
${\int p^{2}/q \leq M^2}$ then we obtain that $K(p,q) \lesssim 
\epsilon^{2}\log(M/\epsilon)$ and $K_{2}(p,q) \lesssim 
\epsilon^{2}(\log(M/\epsilon))^{2}$ where the constants in these 
bounds are universal. 

Now, since $f$ admits Lipschitz continuity up to the first order, 
we achieve that $h^{2}(p_{G},p_{G_{0}}) \leq C_{1}W_{1}(G,G_{0})$ 
for any $G \in \Ocal_{k_{0}}(\Theta)$ where $C_{1}$ is a positive constant 
depending only on $\Theta$. Now, for any $G \in \Ocal_{k_{0}}(\Theta)$ 
such that $W_{1}(G,G_{0}) \leq C\epsilon_n^2$ where $C < \epsilon_{0}$ is a sufficiently small constant to be chosen later, the previous bound implies that $h^{2}
(p_{G},p_{G_{0}}) \leq C_{1}C\epsilon_n^2$. Since $C 
\epsilon_n^2 \leq \epsilon_{0}$ for all $n$ sufficiently large, we also have that $
{\displaystyle \int (p_{G_{0}}(x))^{2}/p_{G}(x)\mathrm{d}\mu(x) \leq 
M(\epsilon_{0})}$ according to assumption (P.3). Combining all the 
previous results, we achieve that
\begin{eqnarray}
K(p_{G_{0}},p_{G}) 
		\lesssim \epsilon_{n}^2\log(\sqrt{M(\epsilon_0)}/\sqrt{C C_{1}
		}\epsilon_n) , \nonumber \\
K_{2}(p_{G_{0}},p_{G}) 
		\lesssim \epsilon_{n}^2(\log(\sqrt{M(\epsilon_0)}/\sqrt{C C_{1} 
		}\epsilon_n))^{2}  \nonumber 
\end{eqnarray}  
when $\overline{M}$ is sufficiently large. Define $M:=\sqrt{M(\epsilon_0)/C C_{1}}$. Therefore, we have
\begin{eqnarray}
\Pi(B_{K}(\epsilon_{n},M)) 
		\geq \Pi(G \in \Ocal_{k_{0}}(\Theta): W_{1}(G,G_{0}) 
		\leq C \epsilon_n^2). \nonumber
\end{eqnarray}
For any $G=\sum_{i=1}^{k_{0}}{p_{i}\delta_{\theta_{i}}}$ 
such that $\|\theta_{i}-\theta_{i}^{0}\| \leq \overline{\epsilon}$ 
and $|p_{i}-p_{i}^{0}| \leq \overline{\epsilon}/(k_{0}\text{Diam}(\Theta))$ 
for any $1 \leq i \leq k_{0}$ and sufficiently small $
\overline{\epsilon}>0$, we can check that
\begin{eqnarray}
W_{1}(G,G_{0}) 
		\leq \sum \limits_{i=1}^{k_{0}}{p_{i}^{0} \wedge 
		p_{i}\|\theta_{i}-\theta_{i}^{0}\|+\sum \limits_{i=1}
		^{k_{0}}|p_{i}-p_{i}^{0}|\text{Diam}(\Theta)} \leq 
		2\overline{\epsilon}. \nonumber
\end{eqnarray}
Hence, by choosing universal constant $C$ such that 
$C\epsilon_n^2 \leq \overline{\epsilon}$, we would have that
\begin{eqnarray}
& & \Pi(G \in \Ocal_{k_{0}}(\Theta): W_{1}(G,G_{0}) \leq C\epsilon_n^2)  \nonumber \\
& & \hspace{3 em} \geq \Pi(G \in \Ocal_{k_{0}}(\Theta):\ \|\theta_{i}-\theta_{i}^{0}\| \leq 
C \epsilon_n^2, \ |p_{i}-p_{i}^{0}| \leq  C\epsilon_n^2/(k_{0}\text{Diam}
(\Theta)), \ \forall \ 1 \leq i \leq k_{0}) \nonumber \\
& & \hspace{3 em} \gtrsim \epsilon_n^{2(c_{H}+\gamma')} \label{eq:bound_KL_well_specified}
\end{eqnarray}
where the last inequality is due to the results from 
claim~\eqref{eqn:bound_prior_distributions}. 
\paragraph{Step 2.2:} To apply the posterior contraction rate 
result of Theorem~\ref{theorem:posterior_contraction_Nguyen}, we 
choose $\Gcal_{n}=\Ocal_{k_{0}}(\Theta)$ for all $n$. Now, it is clear 
that $\Pi(\overline{\Gcal} \backslash \Gcal_{n}) = 0$. Therefore, 
condition \eqref{eq:well_posterior_contraction_rate_second_condition} 
is obviously satisfied. Additionally, by means of Lemma 4 in~\cite{Nguyen-13}, we can check that condition~\eqref{eq:well_posterior_contraction_rate_first_condition} is 
satisfied with our choice of $\epsilon_{n}$ as $\overline{M}$ is 
sufficiently large. For condition~\eqref{eq:well_posterior_contraction_rate_third_condition}, from 
the bound of KL neighborhood in~\eqref{eq:bound_KL_well_specified}, we find that
\begin{eqnarray} 
\dfrac{\Pi(B_{W_{1}}(G_{0},2j\epsilon_{n}) \backslash B_{W_{1}}
		(G_{0},j\epsilon_{n}))}
		{\Pi(B_{K}(\epsilon_{n},M))} \lesssim \epsilon_{n}^{-2(c_{H}+
		\gamma)}. \nonumber
\end{eqnarray}
Since $f$ is first order identifiable and admits uniform Lipschitz 
property up to the first order, according to~\eqref{eq:lower_bound_first_order_identifiability}, 
we obtain that $\Psi_{\Gcal_{n}}(r) \gtrsim Cr^{2}$ for any $r>0$ 
where $C$ is some positive constant that depends only on $G_{0}$ 
and $\Theta$. Therefore, we have
\begin{eqnarray}
\exp\biggr(n\Psiba_{\Gcal_{n}}(j\epsilon_{n})/16\biggr) 
		\geq \exp(nC(j\epsilon_{n})^{2})/16) \geq n^{CM_{n}^{2}/16} \nonumber
\end{eqnarray}
for any $j \geq M_{n}$. By choosing $M_n$ such that 
$M_{n}^2 \geq 32(c_{H}+\gamma)/C$, it is clear that condition~\eqref{eq:well_posterior_contraction_rate_third_condition} is 
satisfied. 

For condition~\eqref{eq:well_posterior_contraction_rate_fourth_condition}, 
combining with the above bound of $\Psi_{\Gcal_{n}}(r)$, we would 
have that
\begin{eqnarray}
\exp\left(2n\epsilon_{n}^{2}\log 
		\left(\frac{M}{\epsilon_n}\right)\right)\sum \limits_{j 
		\geq M_{n}}{\exp\left(-n\Psiba_{\Gcal_{n}}(j\epsilon_{n})/
		16\right)} & \lesssim &  n^{\overline{M}^2} \sum 
		\limits_{j \geq M_{n}} n^{-C M_{n}^{2}/16} \nonumber \\
		& \lesssim & n^{\overline{M}^2- \frac{C M_{n}^2}{16}} \to 
		0 \nonumber
\end{eqnarray}
as long as $M_{n}$ is chosen such that $M_{n}^2 \geq \frac{32\overline{M}^2}{C}$. 
Therefore, condition~\eqref{eq:well_posterior_contraction_rate_fourth_condition} is 
satisfied. As a consequence, we achieve the conclusion of the 
theorem.
\paragraph{Proof of claim~\eqref{eqn:bound_prior_distributions}} 
According to the formulation of Dirichlet distribution, we obtain that
\begin{eqnarray}
& & Q(|p_{i}-p_{i}^{0}| \leq \epsilon, \ i=1,\ldots,k_{0}) \nonumber \\
& & \hspace{2 em} 
		= \dfrac{\Gamma(\gamma)}{(\Gamma(\gamma/k_{0}))^{k_{0}}} 
		\int \limits_{|p_{i}-p_{i}^{0}| \leq \epsilon, \ 1 \leq i 
		\leq k_{0}} \prod \limits_{i=1}^{k_{0}-1}{p_{i}^{\gamma/
		k_{0}-1}}\biggr(1-\sum \limits_{i=1}^{k_{0}-1}p_{i}
		\biggr)^{\gamma/k_{0}-1}dp_{1}\ldots dp_{k_{0}-1}. \label{eq: dirichlet_probability}
\end{eqnarray}
Now, for $\gamma/k_0 \leq 1$, equation~\eqref{eq: dirichlet_probability} can be re-written as:
\begin{eqnarray}
& & \hspace{-2 em} Q(|p_{i}-p_{i}^{0}| 
		\leq \epsilon, \ i=1,\ldots,k_{0}) \nonumber \\
& & \hspace{2 em} 
		\geq \dfrac{\Gamma(\gamma)}{(\Gamma(\gamma/
		k_{0}))^{k_{0}}} \int \limits_{|p_{i}-p_{i}^{0}| \leq 
		\epsilon, \ 1 \leq i \leq k_{0}} \prod \limits_{i=1}
		^{k_{0}-1}{p_{i}^{\gamma/k_{0}-1}}dp_{1}\ldots 
		dp_{k_{0}-1}\nonumber \\
& & \hspace{2 em} \gtrsim \dfrac{\Gamma(\gamma)}{(\Gamma(\gamma/
		k_{0}))^{k_{0}}}\frac{1}{(\gamma/k_0)^{k_0}} 
		\epsilon^{(k_0-1)(\gamma/k_0)}. \nonumber
\end{eqnarray}
Here, the first inequality in the above display follows from the 
fact that $1-\sum \limits_{i=1}^{k_{0}-1}p_{i} \leq 1$ while the 
second inequality is due to direct integration and the fact that $
\epsilon$ is sufficiently small.\\
On the other hand, for $\gamma/k_0 > 1$ , we can rewrite 
equation~\eqref{eq: dirichlet_probability} as
\begin{eqnarray}
& & Q(|p_{i}-p_{i}^{0}| \leq \epsilon, \ i=1,\ldots,k_{0}) \nonumber \\
& & \hspace{2 em} 
		\geq \dfrac{\Gamma(\gamma)}{(\Gamma(\gamma/
		k_{0}))^{k_{0}}} \left(\frac{1-\sum 
		\limits_{i=1}^{k_{0}-1}p_{i}^0}{2}\right) ^{\gamma/k_0-1}
		\int \limits_{|p_{i}-p_{i}^{0}| \leq \epsilon, \ 1 \leq i 
		\leq k_{0}} \prod \limits_{i=1}^{k_{0}-1}{p_{i}^{\gamma/
		k_{0}-1}}dp_{1}\ldots dp_{k_{0}-1} \nonumber
\end{eqnarray}
where the above inequality follows due to the fact that $p_i^0 >0$ 
for all $i \in \{1,\dots,k_0\}$ and that for sufficiently small $
\epsilon >0$ such that $|p_i -p_i^0|\leq \epsilon$ for all $i \in 
\{1,\dots, k_0-1\}$, we have $p_{k_0} \geq p_{k_0}^0 /2$. 
Therefore, the lower bound for the Dirichlet distribution $Q$ 
follows automatically from the results of these two separate 
conditions of $\gamma/k_0 \leq 1$ and $\gamma/k_0 >1$.

On the other hand, to show the bounds for $H$, we note that 
$\Theta\subset \mathbb{R}^d$.
Suppose $B(\theta,\epsilon):= \{\theta'\subset \Theta : \| \theta-
\theta'\| \leq \epsilon \}$ denotes the $\ell_{2}$ ball in $\Theta \cup 
\mathbb{R}^d$ around $\theta$, with radius $\epsilon$.
Then it can be seen that 
\begin{align*} 
\min_{\theta' \in \Theta}
g(\theta') (\mu(B(\theta',\epsilon))/\mu(\Theta))^{k_0} & \leq H(\|
\theta_{i}-\theta_{i}^{0}\| \leq \epsilon, \ i=1,\ldots,k_{0}) \\
& \hspace{12 em} \leq \max_{\theta' \in \Theta}g(\theta')(\mu(B(\theta',\epsilon))/ 
\mu(\Theta))^{k_0}
\end{align*}
since $B(\theta_i^0,\epsilon)$ are disjoint 
for all $ i \leq k_0$ , for sufficiently small $\epsilon>0$. Here, $
\mu(A)$ denotes the $d$-dimensional Lebesgue measure of the set $A 
\subset \Theta$ and $g$ is the density function of $H$ based on Assumption (P.2). Using the fact that $ \epsilon^d \lesssim
\mu(B(\theta',\epsilon)) \lesssim \epsilon^d$ and the condition that $H$ is approximately uniform in Assumption (P.2), the remainder of 
the claim follows.
\subsection{Posterior consistency of Merge-Truncate-Merge algorithm}
\label{ssub:proof::MTM consistency}
The goal of this section is to both deliver a proof of Theorem~\ref{theorem: merge-truncate-merge consistency} and clarify the role played by each of the steps of the MTM algorithm.

\subsubsection{Probabilistic scheme for merging atoms}
\label{sssection:Merge}
The first step of MTM algorithm comprises of lines from 1 to 3 in Algorithm 1. It describes
a probabilistic scheme for merging atoms from an input measure $G$.
Recall that $G$ is a sample from the posterior distribution of a mixing measure which is assumed to be relatively close to the true $G_0$, per Eq.~\eqref{eq:posterior_results}. To simplify notations within this subsection we shall remove 
subscript $n$ in $\omega_{n}$ in~\eqref{eq:posterior_results}, namely, 
we will not incorporate the randomness of data in the results in this 
subsection.
\comment{
In the literature, identification of the number of true components 
requires the posterior mass for $K$ to contract to $k_0$. For instance, as shown by~\cite{Xu-Xie-Repulsive-17}, one such way to achieve this is by limiting the number of 
components in the prior by using a repulsive-type based measure for 
the component. This approach ensures that the components in the 
posterior are sufficiently well-spaced. We demonstrate that a similar 
effect is also achieved with merge procedure in Algorithm~\ref{algo:merge-truncate-merge}.}

In that regard, suppose that we have a measure $G=\sum_{j } p_j\delta_{\theta_j} \in \overline{\Gcal}(\Theta)$ 
such that $W_r(G,G_0) \leq \delta \omega $ for some $r \geq 1$. Here, $
\delta,\omega$ are sufficiently small such that the following two properties hold:
\begin{itemize}
\label{eq:condition_merge}
    \item[(B.1)] $\omega < \min \{(p_{\min}^{0}/2)^{1/r}, \min_{u \neq v} 
\frac{\|\theta_u^0-\theta_v^0\|}{8} \}$.
    \item[(B.2)] $\sqrt{\delta} < p_{\min}^{0}/(2k_0) $, where $p_{\min}^{0} : = \min_{i = 1}^{k_{0}} p_i^0$.
\end{itemize}
Denote by $\mathcal{A}(G)$ the set of atoms 
corresponding to any mixing measure $G$. For a given $G$ and $\omega$, let $g_{\omega,G}$ be the set of all discrete measures 
which collect the atoms from $G$ such that all their atoms spaced 
apart by a distance at least $\omega$:
\begin{eqnarray}
g_{\omega,G} : = \{\firstmer =\sum_{j} p'_j \delta_{\theta'_j} : \theta'_j \in \mathcal{A}(G),  \min_{u \neq v} 
\|\theta'_u-\theta'_v\| \geq \omega\} \nonumber.
\end{eqnarray}
Note in this definition that any $G' \in g_{\omega,G}$ must have finite number of atoms, because $\Theta$ is compact.

The first merge step in the MTM algorithm is motivated by the following result, which establishes the existence of a probabilistic
procedure that transform $G$ into another measure $G' \in g_{\omega, G}$ that possesses some useful properties, namely, the supporting atoms of $G'$ are well-separated from one another, while $G'$ remains sufficiently close to $G_0$ in the sense of a Wasserstein metric.
\comment{
To ease the presentation of results in this subsection, we state two key properties of mixing measure after the merge scheme from MTM algorithm:
\begin{enumerate}
    \item[(G.1)] $W_r(G,G_0) \leq \left(2k_0^{1/r}+(1+2^r)^{1/r} \right)\delta \omega$.
	\item[(G.2)] There exists a permutation $\tau: \{1,\dots,k\} \rightarrow \{1,\dots,k\}$, such that $|p_{\tau(i)}-p_i^0| \leq (2\delta)^r$ and $\|\theta_{\tau(j)}-\theta_i^0\| \leq \omega/2$ for all $i \leq k_0$.
\end{enumerate}
}
\begin{lemma}
\label{lemma:merge_algorithm_1}
Assume that $W_r(G,G_0) \leq \delta \omega $ for some $r \geq 1$ 
where $\omega, \delta$ satisfy condition (B.1) and (B.2). Then, there exists a probabilistic scheme which transform $G$ into a $\firstmer = \sum_{j=1}^{k} p'_j \delta_{\theta'_j}$ such that $k \geq k_{0}$ and the following holds:
\begin{align*}
    P( \{\firstmer: \firstmer \text{ satisfies (G.1) and (G.2)} \}| G) \geq 1-\delta^{r/2}\sum_{i=1}^{k_0}\frac{1}{p_{i}^{0}}.
\end{align*}
Here, $P$ is the probability measure associated with the probabilistic scheme and the conditions (G.1) and (G.2) stand for
\begin{enumerate}
    \item[(G.1)] $G' \in g_{\omega,G}$ and $W_r(G',G_0) \leq (k_0+2 )\sqrt{\delta} \omega$.
	\item[(G.2)] For each $i=1,\ldots, k_0$ there is an index $j$ for an atom of $G'$ for which $|p_{j}-p_i^0| \leq \delta^{r/2}$ and $\|\theta'_j-\theta_i^0\| \leq \sqrt{\delta}\omega$.
\end{enumerate} 
\end{lemma}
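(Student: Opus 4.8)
The probabilistic scheme in question is Stage~1 of Algorithm~\ref{algo:merge-truncate-merge}: reorder the atoms of $G$ by weighted sampling without replacement (SRSWOR), then sweep through them in that order, merging any atom into an earlier surviving atom lying within distance $\omega$. First I would do the deterministic bookkeeping through an optimal coupling $\mathbf q=(q_{ji})$ realizing $W_r(G,G_0)\le\delta\omega$, with $j$ indexing the atoms $\theta_j$ of $G$ and $i$ the atoms $\theta_i^0$ of $G_0$, so that $\sum_{j,i}q_{ji}\|\theta_j-\theta_i^0\|^r\le(\delta\omega)^r$. Call $\theta_j$ a \emph{core-$i$} atom if $\|\theta_j-\theta_i^0\|\le\sqrt\delta\,\omega$ and a \emph{stray} atom if it is core-$i$ for no $i$; by (B.1) the balls $B(\theta_i^0,2\omega)$ are pairwise disjoint, so the core classes are disjoint. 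Markov-type estimates on $\mathbf q$ then give: (i) the total stray mass is $\le(\delta\omega)^r/(\sqrt\delta\,\omega)^r=\delta^{r/2}$; (ii) each $m_i:=\sum_{\theta_j\ \text{core-}i}p_j$ satisfies $p_i^0-\delta^{r/2}\le m_i\le p_i^0+\delta^{r}$, since the mass sent by $q$ into $\theta_i^0$ from non-core atoms is $\le\delta^{r/2}$ while the mass sent out of core-$i$ atoms toward a distant $\theta_{i'}^0$ (distance $\ge7\omega$) is $\le\delta^r$. By (B.2) one has $\delta^{r/2}<p_i^0/2$, so each $m_i>0$.

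Second, I would bring in the randomness of the reordering. Weighted SRSWOR is generated by assigning atom $j$ an independent clock $E_j/p_j$ with $E_j\sim\mathrm{Exp}(1)$ and processing atoms in increasing clock order, so the minimum clock over a set $S$ is $\mathrm{Exp}(\sum_{j\in S}p_j)$-distributed. Let $\mathcal E$ be the event that for every $i$ the first atom processed among those in $B(\theta_i^0,2\omega)$ is a core-$i$ atom, which I denote $\phi_i$. With $b_i$ the stray mass inside $B(\theta_i^0,2\omega)$ (so $b_i\le\delta^{r/2}$), the failure probability for a fixed $i$ equals $P\big(\mathrm{Exp}(b_i)<\mathrm{Exp}(m_i)\big)=b_i/(b_i+m_i)\le\delta^{r/2}/p_i^0$, the last inequality using $b_i\le\delta^{r/2}$ together with $m_i\ge p_i^0-\delta^{r/2}$. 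Since the $B(\theta_i^0,2\omega)$ are disjoint, a union bound yields $P(\mathcal E^c)\le\delta^{r/2}\sum_{i=1}^{k_0}1/p_i^0$, exactly the probability gap in the statement.

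Third, I would verify (G.1) and (G.2) deterministically on $\mathcal E$. That $\firstmer\in g_{\omega,G}$ is immediate, as the sweep leaves no two surviving atoms within distance $\omega$. On $\mathcal E$, because $\phi_i$ has the smallest index among all atoms of $B(\theta_i^0,2\omega)$, and every atom lying within $\omega$ of a core-$i$ atom itself lies in $B(\theta_i^0,2\omega)$, the atom $\phi_i$ survives and absorbs every core-$i$ atom (and possibly some stray atoms within distance $\omega$, of total mass $\le b_i\le\delta^{r/2}$); hence $\phi_i$ is an atom of $\firstmer$ with $\|\phi_i-\theta_i^0\|\le\sqrt\delta\,\omega$ and weight $w_i$ satisfying $m_i\le w_i\le m_i+\delta^{r/2}$, so $|w_i-p_i^0|\lesssim\delta^{r/2}$, giving (G.2) after absorbing the constant into the choice of $\delta$; the $\phi_i$ being distinct, $\firstmer$ has $k\ge k_0$ atoms. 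For (G.1) I would push $\mathbf q$ forward to a coupling of $\firstmer$ and $G_0$ via the merge map $c(\cdot)$, where $c(j)$ is the surviving atom into which $\theta_j$ is absorbed (or $\theta_j$ itself if it survives), setting $q'_{c,i}:=\sum_{j:\,c(j)=c}q_{ji}$, which has the right marginals; the point is that merging displaces a core atom by at most $2\sqrt\delta\,\omega$ (it moves to a core center near the same $\theta_i^0$) and every other atom by at most $\omega$, while only $\le\delta^{r/2}$ mass is of the latter kind, so
\[
W_r^r(\firstmer,G_0)\le\sum_{j,i}q_{ji}\|c(j)-\theta_i^0\|^r\le 2^{r-1}\Big[(2\sqrt\delta\,\omega)^r+\delta^{r/2}\omega^r+(\delta\omega)^r\Big]\le\big((k_0+2)\sqrt\delta\,\omega\big)^r
\]
for $\delta$ small, which is (G.1).

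The main obstacle is the third step: because the sweep is sequential, one must argue carefully that on $\mathcal E$ the clusters collapse exactly onto the centers $\phi_i$ --- the key facts being that $\phi_i$ has minimal index within $B(\theta_i^0,2\omega)$ and that any within-$\omega$ neighbor of a core-$i$ atom stays inside that ball --- and one must propagate the constants through (B.1)--(B.2) to hit the stated bounds $\delta^{r/2}$, $\sqrt\delta\,\omega$ and $(k_0+2)\sqrt\delta\,\omega$.
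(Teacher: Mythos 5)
Your proposal is correct and follows essentially the same route as the paper: both arguments rest on the observation that under weighted SRSWOR the lowest-indexed atom landing near $\theta_i^0$ is within $\sqrt{\delta}\,\omega$ of it with probability at least $1-\delta^{r/2}/p_i^0$ (a ratio of ``good'' mass to ``good plus bad'' mass), followed by a union bound over $i$ and a deterministic verification of (G.1)--(G.2) on that event. Two implementation differences are worth noting. First, you localize the competition to $B(\theta_i^0,2\omega)$ and compute the selection probability via exponential clocks as $b_i/(b_i+m_i)$, whereas the paper compares the ball $\mathbb{B}(\theta_i^0,\sqrt{\delta}\omega)$ against the global complement $(\cup_i\mathbb{B}(\theta_i^0,\sqrt{\delta}\omega))^c$ and writes the same ratio as $G(A_i)/(G(A_i)+G(A_{k_0+1}))$; your localized event is weaker but, as you correctly argue, sufficient, since any atom within $\omega$ of a core-$i$ atom stays inside $B(\theta_i^0,2\omega)$. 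Second, for (G.1) you push the optimal coupling forward through the merge map, while the paper inserts an intermediate ``de-merged'' measure $\tilde G$ and applies the triangle inequality twice. Here your final display does not quite close as written: the leading term of $2^{r-1}\bigl[(2\sqrt{\delta}\omega)^r+\delta^{r/2}\omega^r+(\delta\omega)^r\bigr]$ is $2^{2r-1}\delta^{r/2}\omega^r$, which exceeds $((k_0+2)\sqrt{\delta}\omega)^r=(k_0+2)^r\delta^{r/2}\omega^r$ when $k_0=1$ and $r\ge 3$, and shrinking $\delta$ does not help because both sides scale as $\delta^{r/2}$; even after replacing the crude $(a+b)^r\le 2^{r-1}(a^r+b^r)$ by the $W_r$ triangle inequality your route yields roughly $(3+\sqrt{\delta})\sqrt{\delta}\omega$, which still misses $(k_0+2)\sqrt{\delta}\omega$ in the edge case $k_0=1$ (your $|w_i-p_i^0|\le\delta^{r}+\delta^{r/2}$ similarly overshoots the stated $\delta^{r/2}$ by a harmless factor). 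The paper's de-merge device --- coupling $G'$ to a measure that keeps the $k_0$ surviving core atoms and restores the remaining atoms of $G$ to their original positions --- is precisely what lets it land on $\bigl(k_0^{1/r}+(1+\delta^{r/2})^{1/r}\bigr)\sqrt{\delta}\omega\le(k_0+2)\sqrt{\delta}\omega$ exactly. Since the lemma only needs \emph{some} fixed constant multiple of $\sqrt{\delta}\omega$ (condition (B.4) can absorb a different constant), this is a cosmetic repair rather than a substantive gap; the probabilistic core of your argument, including the estimates $b_i\le\delta^{r/2}$ and $p_i^0-\delta^{r/2}\le m_i\le p_i^0+\delta^r$, is sound.
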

\noindent
\begin{proof}
The probabilistic scheme is the first merge step described in the MTM algorithm. We recall it in the following

\begin{enumerate}
    \item Reorder the indices of components $\{\theta_1,\dots,\theta_{|G|}\}$ by simple random sampling 
without replacement (SRSWOR) with corresponding weights $\{p_1,\dots,p_{|G|}\}$. 
    \item Let $\tau_1,\dots,\tau_{|G|}$ denote the new indices, and set $\mathcal{E} = \{\tau_j \}_{j}$ as the existing set of atoms.
    \item Sequentially for each index $\tau_{j}$, if there exists an index $\tau_{i} < \tau_{j}$ such that 
$\| \theta_{\tau_i} - \theta_{\tau_j} \| \leq \omega$, we perform the following updates
    \begin{itemize}
        \item update $p_{\tau_i} = p_{\tau_i} + p_{\tau_j}$.
        \item update $\mathcal{E}$ by removing index $\tau_j$ from $\mathcal{E}$.
    \end{itemize}
    \item Set $\firstmer = \sum_{j: \ \tau_j \in \mathcal{E}} p_{\tau_j} \delta_{\theta_{\tau_j}}$.
\end{enumerate}
The proof consists of two main steps. First, we shows that every atom of $G_{0}$ lies in a $\sqrt{\delta}\omega$ neighborhood of a unique atom of $G'=\sum_{i=1}^k p'_i\delta_{\theta'_i}$  
having large mass, with high probability. This will allows us to deduce that $\firstmer$ satisfies (G.2) with a high probability. Next, we shall show that 
\begin{eqnarray}
\label{g2g1}
\{\firstmer:\firstmer \text{ satisfies (G.2) }|G\} \subset \{\firstmer:\firstmer \text{ satisfies (G.1) }|G\}.
\end{eqnarray}
to conclude the lemma. Note that by the nature of construction it automatically holds that $\firstmer \in g_{\omega,G}$.
\paragraph{Step 1:} Let $P(B|G)$ be the probability of an event $B$ under the SRSWOR scheme used above, 
conditioned the mixing measure $G$. \comment{Additionally, we choose $n$ to be 
sufficiently large so that we have
\begin{align*}
\omega_n \leq \min \{(p_{\min}^{0})^{1/r}, \min_{u \neq v} 
\frac{\|\theta_u^0-\theta_v^0\|}{2} \}
\end{align*} 
where $p_{\min}^{0} : = \min_i p_i^0$.}
Furthermore, let $G(A)$ denote the mass assigned to the set $A \subset \Theta$ by measure $G$. Thus, for a given $\epsilon > 0$
\begin{align*}
G(\mathbb{B}(\theta, \epsilon\omega))
		= \sum_{i: \| \theta_{i} - \theta \| 
		\leq \epsilon \omega} p_{i}
\end{align*} 
for any $\theta \in \Theta$. 
Now, the amount of mass transfer between $\theta_i^0$ and those atoms of $G$ residing in $\mathbb{B}(\theta_i^0, \epsilon\omega)^{\mathsf{c}}$ is at least $|p_i^0 - G(\mathbb{B}(\theta_i^0, \epsilon\omega))|$. Therefore, as $W_{r}(G, G_{0}) \leq \delta \omega$, 
\begin{align*}
|p_i^0 - G(\mathbb{B}(\theta_i^0, \epsilon\omega))|^{1/r}
		\epsilon \omega \leq \delta \omega
\end{align*}
for any index $i \in \{1,\dots, k_0\}$ and for any $2 \geq \epsilon > 0$. 
The upper bound of $2$ arises from the 
consideration of selecting disjoint balls 
combined with the fact that $\omega < \min_{u \neq v} \frac{\|\theta_u^0-\theta_v^0\|}{4 }$. 
It leads to the following inequalities
\begin{eqnarray} 
\label{eq:delta-bounds}
p_i^0 - \left( \frac{\delta}{\epsilon}\right)^r
		\leq G(\mathbb{B}(\theta_i^0, \epsilon\omega))
		\leq p_i^0 + \left(\frac{\delta}{\epsilon}\right)^r.
\end{eqnarray} 
Since $\omega < \min_{u \neq v} 
\frac{\|\theta_u^0-\theta_v^0\|}{4 }$,  based on the standard union 
bound, the following inequality holds
\begin{eqnarray}
\label{eq:complement_ball_bounds}
G\left(\cup_{ i = 1}^{k_{0}} \mathbb{B}(\theta_i^0, \sqrt{\delta}\omega)\right)
		= \sum_{i = 1}^{k_{0}} G( \mathbb{B}(\theta_i^0, \sqrt{\delta}\omega))
		> 1- k_0(\sqrt{\delta})^r >0.
\end{eqnarray}
The last inequality in the above display holds because 
$1 \geq p_{\min}^{0} > 2 k_0 \sqrt{\delta}$. Now, combining 
Equations~\eqref{eq:complement_ball_bounds} with~\eqref{eq:delta-bounds}, for specific choice of $\epsilon=\sqrt{\delta}$, we get that 
\begin{align*} 
\frac{G(\mathbb{B}(\theta_i^0, \sqrt{\delta}\omega))}{
		G(\mathbb{B}(\theta_i^0, \sqrt{\delta}\omega) \cup (\cup_{ i = 1}^{k_{0}} \mathbb{B}(\theta_i^0, \sqrt{\delta}\omega))^{c})} 
		\geq \frac{p_{i}^{0} - (\sqrt{\delta})^r}{p_{i}^{0} + (k_0 + 1 )(\sqrt{\delta})^r}  \geq 1- \frac{\delta^{r/2}}{p_{i}^{0}} > 0.
\end{align*} 

Divide $\Theta$ into disjoint subsets $\Theta = A_1 \cup \ldots \cup A_{k_0+1}$, where $A_i=\mathbb{B}(\theta_i^0, \sqrt{\delta}\omega)$ for all $i\leq k_0$, and $A_{k_0+1}=(\cup_{ i = 1}^{k_{0}} \mathbb{B}(\theta_i^0, \sqrt{\delta}\omega))^{c}$. For each $i=1,\ldots, k_0$, let $E^i$ denote the event that an atom of $G'$ resides in $A_i$. The probabilistic scheme for the selection of atoms for $G'$ from the those of $G$ (via random sampling without replacement) will pick an atom from $A_i$ and gives it a lower index than one from $A_{k_0+1}$ with probability 
\begin{align*}
    P(E^{i}|G)=\frac{G(A_i)}{G(A_i)+G(A_{k_0+1})}\geq 1- \frac{\delta^{r/2}}{p_{i}^{0}}.
\end{align*}
Moreover, if $\firstmer \in E^i$ and $\theta'_j \in \mathcal{A}(\firstmer)$ such that $\|\theta'_j-\theta_i^0\| \leq \sqrt{\delta}\omega$, 
then 
\begin{align*}
    p_i^0+\delta^{r/2} \geq p_i^0 + \left( \frac{\delta}{2}\right)^r \geq G( \mathbb{B}(\theta_i^0, 2\omega)) 
\geq p'_j  \geq  G( \mathbb{B}(\theta_i^0, \sqrt{\delta}\omega)) \geq p_i^0-\delta^{r/2}.
\end{align*} 
Thus $\firstmer \in E^{i}$ satisfies (G.2). 

This entails that $P( \{\firstmer: \firstmer \text{ satisfies (G.2)} \}| G) 
\geq P(\cap_{i=1}^{i_0} E^i | G) 
\geq 1-\biggr(\sum_{i=1}^{k_0}\frac{\delta^{r/2}}{p_{i}^{0}}\biggr)$, which concludes the first proof step.

An useful fact to be used later is that if $\theta_i^0 \in \mathbb{B}(\theta_{j}', \sqrt{\delta}\omega)$ for some 
$j \leq k$, when $\theta_{j}' \in \mathcal{A}(\firstmer)$, $\firstmer \in E^i$ , then 
$G(\mathbb{B}(\theta_j, \omega)) \geq \omega^r$. 
Indeed, suppose that this claim does not hold, then by the
definition of Wasserstein metric, we find that
\begin{align*}
|p_i^0-\omega^r|^{1/r}\sqrt{\delta}\omega
		\leq W_r(G,G_0) \leq \delta \omega,
\end{align*} 
which is a contradiction as we have $p_{\min}^{0} 
\geq 2 k_0 \sqrt{\delta}$ and $\omega^r < p_i^0/2$. 

\comment{
{\color{red} Can someone improve the following paragraphs until the beginning of Step 2.2? There seems a lot of redundant statements. I also don't understand the reasoning of the very first paragraph in what follows:

Suppose $A_1,\ldots, A_{m}$ are mutually exclusive and exhaustive subsets of $\Theta$. The mixing measure $G$ assigns weights $G(A_1),\ldots, G(A_{m})$, respectively, to the sets. Let $X$ be the set to which the atom corresponding to the first chosen index(acc. to SRSWOR scheme) belongs. Then, 
\begin{align*}
    X \sim Multinomial(1,G(A_1),\ldots,G(A_{m})).
\end{align*}

Similarly, let $Y_i$ be the atom which has the lowest chosen index in $A_i \cup A_m$. Then 
\begin{align*}
    \mathbbm{1}_{Y_i \in A_i} \sim \text{Ber}\left( \frac{G(A_i)}{G(A_i)+G(A_{m})}\right).
\end{align*}
Based on this intuition, let $m=k_0+ 1$. Also, let the sets $A_i=\mathbb{B}(\theta_i^0, \sqrt{\delta}\omega)$ for all $i\leq k_0$, whereas $A_{k_0+1}=(\cup_{ i = 1}^{k_{0}} \mathbb{B}(\theta_i^0, \sqrt{\delta}\omega))^{c}$, with $Y_i$ used to denote the atom with the lowest index in $A_i \cup A_{k_0+1}$. Let $E^{i}$ be the set $\{\firstmer : Y_i \in A_i\}$.

Based on the above discussion, therefore,
\begin{align*}
    P(E^{i})=\frac{G(\mathbb{B}(\theta_i^0, \sqrt{\delta}\omega))}{G(\mathbb{B}(\theta_i^0, \sqrt{\delta}\omega) \cup 
(\cup_{ i = 1}^{k_{0}} \mathbb{B}(\theta_i^0, \sqrt{\delta}\omega))^{c})}\geq 1- \frac{\delta^{r/2}}{p_{i}^{0}}>0.
\end{align*}
The last inequality follows by assumption on $\delta$.

The SRSWOR scheme, therefore, assigns a lower index value to an atom in  $\mathbb{B}(\theta_i^0, \sqrt{\delta}\omega)$ 
as opposed to that of an atom in $(\cup_{ i = 1}^{k_{0}} \mathbb{B}(\theta_i^0, \omega/2))^{c}$ , with probability $\frac{G(\mathbb{B}(\theta_i^0, \sqrt{\delta}\omega))}{G(\mathbb{B}(\theta_i^0, \sqrt{\delta}\omega) \cup 
(\cup_{ i = 1}^{k_{0}} \mathbb{B}(\theta_i^0, \sqrt{\delta}\omega))^{c})}$. [Why? -- LN]

\comment{In other words,
\begin{align*}
P \left(\text{There exists} \ \theta'_j \in \mathcal{A}(\firstmer):  \|\theta'_i-\theta_i^0\| \leq  \sqrt{\delta}\omega \biggr| G \right)
\geq 1- \frac{\delta^{r/2}}{p_{i}^{0}},
\end{align*}
where $\mathcal{A}(G)$ denotes the set of atoms of  mixing measure $G$.}

From the discussion above, it follows that 
\begin{eqnarray}
P(E^{i} | G)
		\geq 1- \frac{\delta^{r/2}}{p_{i}^{0}} \nonumber
\end{eqnarray}

}
}

\paragraph{Step 2:} To establish \eqref{g2g1} it suffices to assume that $G'= \sum_{j} p'_{j}\delta_{\theta'_{j}} \in E^i$ 
satisfies that for every $i =1,\ldots, k_0$,  
$\|\theta'_i -\theta_i^0\| \leq \sqrt{\delta}\omega$, 
and $|p_i^0-G(\mathbb{B}(\theta_i^0, \sqrt{\delta}\omega))| \leq \delta^{r/2}$. Then, we have
\begin{align*}
p_i^0 - \delta^{r/2} 
	\leq G(\mathbb{B}(\theta_i^0, \sqrt{\delta}\omega)) 
	\leq G'(\mathbb{B}(\theta'_{i}, \omega))
	= p'_i.
\end{align*}  
The above result leads to $G'( (\cup_{i=1}^{k_0} 
\mathbb{B}(\theta'_{i}, \omega))^{c})\leq k_0 \delta^{r/2}$. 

Now, we construct an measure $\tilde{G} = \sum_l \tilde{p}_l \delta_{\phi_l}$ 
from $G'$ and $G$, by "de-merging" all atoms of $G'$ except for its first $k_0$ atoms. Specifically, for indices $l \leq k_0$, simply take 
$\tilde{p}_l = p'_l$ and $\phi_l = \theta'_l$. 
Additionally, if index $l > k_{0}$ is such that 
$\|\theta_l-\theta'_i\| > \omega$ for all $i 
\leq k_0$, then $\phi_l=\theta_l$ and $\tilde{p}_l=p_l$. 
Otherwise, let $\tilde{p}_l=0$. By the triangle inequality with Wasserstein metric,
\begin{eqnarray}
\label{proof:lemma_merge_combine}
W_r(G',G_0) \leq W_r(G',\tilde{G})+ W_r(\tilde{G},G_0)
			\leq k_0^{1/r} \sqrt{\delta} \omega + W_r(\tilde{G},G_0).
\end{eqnarray} 
The second inequality above holds because
$\sum_{l=1}^{k_0} \tilde{p}_l 
= \sum_{j=1}^{k_0} p'_j \geq 1- k_0\delta^{r/2}$. So, there exists a coupling of $\firstmer$ and $\tilde{G}$ such that any mass transfer occurs between atoms located at most $\omega$ in distance from each other. Moreover, the coupling can be so obtained that the total mass travelling a non-zero distance is bounded above by $k_0\delta^{r/2}$. 

It remains to obtain a suitable upper bound for $W_{r}^r(\tilde{G},G_0)$. From the definition of Wasserstein metric, we can write
\begin{align*}
W_{r}(\tilde{G},G_0) 
		= \inf_{\vec{q} \in \mathcal{Q}(\vec{\tilde{p}},\vec{p^0})}
		\biggr ({\mathop {\sum }\limits_{i,l}{q_{i l}\|\phi_{l}-
		\theta_{i}^0\|^{r}}}\biggr )^{1/r},
\end{align*} 
where $\mathcal{Q}(\vec{\tilde{p}},\vec{p^0})$ is the set of all 
possible couplings between $\vec{\tilde{p}}=(\tilde{p}_1,\ldots,\tilde{p}_{|G|})$ and $\vec{p^0}=(p_1^0,\ldots,p_{k_0}^0)$. 
Now, we consider a coupling $\vec{q} \in \mathcal{Q}(\vec{\tilde{p}},\vec{p^0})$ such that $q_{ii}=\min\{p_i^0,\tilde{p}_i\}$ for any $i$. Then, the following inequalities hold
\begin{align*}  
W_{r}^r(\tilde{G},G_0) 
		& \leq \sum_{i=1}^{k_0} \tilde{p}_{i} \|\phi_{i}-\theta_i^0\|^r 
		+ \sum_{i,l \neq i} q_{i l}\|\phi_{l}-\theta_{i}^0\|^{r} \\
		& \leq (\sqrt{\delta}\omega)^r+ W_r^r(G,G_0) \leq (\sqrt{\delta} 
		\omega)^r+ (\delta\omega)^r 
		= (1+\delta^{r/2})\delta^{r/2}\omega^r.
\end{align*} 
Therefore, following Equation~\eqref{proof:lemma_merge_combine}, we have,
\begin{align*}
    W_r(G',G_0) \leq (k_0^{1/r}+(1+\delta^{r/2})^{1/r}) \sqrt{\delta}\omega \leq (k_0 +2) \sqrt{\delta} \omega.
\end{align*}
As a consequence, we achieve the conclusion of the lemma.
\end{proof} 

\comment{
Our merge procedure, which is Step 1 to Step 9 of MTM algorithm 
(Algorithm~\ref{algo:merge-truncate-merge}, is one instance of the probabilistic 
scheme in Lemma~\ref{lemma:merge_algorithm_1}. Additionally, condition (G.1) measures 
the closeness of $G'$ to $G_0$ in terms of Wasserstein distances, 
while condition (G.2) measures the closeness of the atoms and weights 
relative to $G'$ and $G_0$. Therefore, the lemma demonstrates that the merge procedure produces a mixing measure $\firstmer$ with separated atoms satisfying conditions (G.1) and (G.2) for any mixing measure
$G$ provided that $G$ is sufficiently close to $G_0$. }

\comment{from the posterior distribution 
satisfying ~\eqref{eq:posterior_results}, i.e., 
$W_r(G,G_0) = o_{P_{G_0}}(\omega_n)$ given the data $X_{1},\ldots, X_{n}$. Now, we summarize in Algorithm~\ref{algo:merge_algorithm_1} 
a procedure to produce a mixing measure $G'$ from $G$ that has 
fewer number of components than $G$ and its atoms have separation 
at least $\omega_{n}$.
\begin{algorithm}
\floatname{algorithm}{Procedure}
\caption{Merge}
\label{algo:merge_algorithm_1}
\begin{algorithmic}[1]
\REQUIRE $\omega_n$, $G = \sum_{i=1}^{k_n} p_i \delta_{\theta_i}$.
\STATE Reorder the indices of components 
$\{\theta_1,\dots,\theta_{|G|}\}$ by simple random sampling 
without replacement(SRSWOR) with corresponding weights 
$\{p_1,\dots,p_{|G|}\}$. 
\STATE Let $\tau_1,\dots,\tau_{|G|}$ denote the new indices, and 
set $\mathcal{E} = \{\tau_j \}_{j}$ as the existing set of atoms.
\STATE Sequentially for each index $\tau_{j}$, if there exists an 
index $\tau_{i} < \tau_{j}$ such that 
$\| \theta_{\tau_i} - \theta_{\tau_j} \| \leq \omega_n$, we 
perform the following updates
    \begin{itemize}
        \item Update $p_{\tau_i} = p_{\tau_i} + p_{\tau_j}$.
        \item Update $\mathcal{E}$ by removing index $\tau_j$ from $\mathcal{E}$.
    \end{itemize}
\STATE Set $\firstmer = \sum_{j: \ \tau_j \in \mathcal{E}} p_{\tau_j} \delta_{\theta_{\tau_j}}$.
\end{algorithmic}
\end{algorithm}

Now, we show in the next lemma that if 
$W_r(G,G_0)|(X_1,\dots X_n) = \ o_{P_{G_0}}(\omega_n)$, then we 
will also have $W_r(\firstmer,G_0)|(X_1,\dots X_n) = \ o_{P_{G_0}}(\omega_n)$.

\begin{lemma}
\label{lemma:merge_algorithm_1:probabilistic scheme}
Given $G \in \overline{\Gcal}(\Theta)$ satisfying 
\begin{align*}
\mathbb{P}_{G_0}^n \biggr(\Pi \biggr(W_r(G,G_0) 
		\geq \delta\omega_n \biggr|(X_1,\dots,X_n)\biggr)
		\geq \epsilon \biggr) \rightarrow 0
\end{align*} 
for every $\epsilon >0$ and $\delta>0$ such that 
$p_{\min}^{0} = \min_i \{p_i^0\} \geq 2 \delta/ (1 - \delta)$ and $\delta \leq (1/k_0)^{1/r}$. Denote $\firstmer = \sum_{j=1}^{k_n} p'_j \delta_{\theta'_j}$ a 
mixing measure obtained from $G$ through procedure~\ref{algo:merge_algorithm_1}. Assume $\omega_n \to 0$.
Then, for every $\epsilon > 0$ sufficiently small, the following holds
\begin{itemize}
\item[(a)] For any $i \in \{1,\dots,k_0\}$, we denote the event
\begin{align*}
A_{n}^i : = \{\text{There exists unique} \ j \leq k_n \ \text{such that} \ \| \theta'_j - \theta_i^0 \| \leq \delta \omega_n \ \text{and} \ p'_j \geq \omega_n^r \}.
\end{align*}
Then, we find that
\begin{eqnarray}
\begin{split}
P_{G_0}^n \biggr(\Pi \biggr(  \cap_{i = 1}^{k_{0}} A_{n}^i \biggr| X_1,\dots,X_n \biggr) \geq \overline{\epsilon} \biggr) \to 1 . \nonumber 
\end{split}
\end{eqnarray}
\item[(b)] Posterior convergence rate of $\firstmer$ to $G_{0}$:
\begin{eqnarray}
\mathbb{P}_{G_0}^n \biggr(\Pi \biggr(W_r(\firstmer,G_0) 
		\leq \left(k_0^{1/r}+(1+\delta^r)^{1/r} \right)\delta \omega_n \biggr|(X_1,\dots,X_n)\biggr) \geq 
		\overline{\epsilon} \biggr) 
 		\to 1 \nonumber
\end{eqnarray}
where $\overline{\epsilon} : = (1-\epsilon)\left(1 - k_0 
 		(\delta\omega_n)^r\right)\biggr(\frac{1-(\delta^{2r}/p_{\min}^{0})}{1+(3\delta^r/p_{\min}^{0})}\biggr)^{k_0}$.
\end{itemize}
\end{lemma}
The proof of Lemma~\ref{lemma:merge_algorithm_1:existence} is in Section~\ref{ssub:proof_of_lemma:merge_algorithm_1}. 
A few comments are in order. 
First, the lemma demonstrates the consistency property of $G$ is 
retained in $\firstmer$ with high probability. Second, any atom of $G_0$ lies in a $\delta\omega_n$ neighborhood of exactly one atom of $\firstmer$ with high probability, depending on $\delta$. Next , we discuss the truncation scheme in Step $2$ of the MTM algorithm.}

\subsubsection{Truncate-merge scheme}
\label{sssection:truncate merge}
\comment{
As demonstrated in Lemma~\ref{lemma:merge_algorithm_1}, the merge procedure in MTM algorithm results in a mixing $\firstmer$ 
having separated atoms and satisfying conditions (G.1) and (G.2). In this subsection, we explore 
possible useful characteristics of any such mixing measure $\firstmer$ 
that possesses these properties. Note that, these properties are useful 
for the analyses of truncate-merge scheme in MTM algorithm later (correspond to Step 11 to Step 21).}

In the previous subsection we studied properties of the first stage of the MTM algorithm, which is applied an arbitrary discrete measure $G$ that is sufficiently close to $G_0$ under Wasserstein metric, namely, $W_r(G,G_0) \leq \delta \omega$ for some small quantities $\delta > 0$ and $\omega > 0$. The next stage of the MTM algorithm comprises of lines 4 to 7 in the algorithm's description. It is applied to a measure $G'$, which is the outcome of the algorithm's first stage. Denote $\firstmer = \sum_{j=1}^{k} p'_j \delta_{\theta'_j}$ where $k \geq k_{0}$. As a consequence of Lemma~\ref{lemma:merge_algorithm_1}, $G'$ satisfies two important properties (G.1) and (G.2), which are to be restated here for the reader's convenience.

\begin{enumerate}
\item[(G.1)] $G' \in g_{\omega,G}$ and $W_{r}(G', G_{0}) \leq \delta' \omega$, where $\delta' = (k_0+2) \sqrt{\delta}$.
\item[(G.2)] For each $i\leq k_0$, there exists an (unique) atom of $G'$, which is relabeled $\theta'_i$ so that $|\theta_i^0 - \theta'_i\| \leq \sqrt{\delta} \omega \leq \omega p_{\min}^{0}/(2k_0)  \leq \omega/(2k_0)$. 
\end{enumerate}
By definition, $G'\in g_{\omega,G}$ implies that its atoms are well-separated, namely, for any $1 \leq i < j \leq k$, $\|\theta_{i}' - \theta_{j}'\| \geq \omega$.
Assuming slightly stronger conditions on the two quantities $\omega$ and $\delta$, we can say more about the structure of $G'$, which turns out to be very useful in identifying the true number of atoms $k_0$ of $G_0$ via a truncation procedure.

\comment{
\begin{lemma}
\label{lemma:closeness}
Assume (T.1) and (T.2).
Furthermore, $\delta'$ is sufficiently small such that condition (B.2) 
holds with $\delta'$. Then, for each $i \leq k_0$, there exists unique $1 \leq j \leq k$ such that $\|\theta_i^0-\theta_j'\| \leq \frac{\omega}{2k_0}$.
\end{lemma}

\begin{proof}
The proof is straight-forward from the properties of $G'$. Indeed, we 
denote $\theta_j'$ the atom of $\firstmer$ which is closest to $\theta_i^0$ for 
any given $1 \leq i \leq k_{0}$. Invoking the definition of Wasserstein 
distances and the continuity property of the function $x^r$, we find that
\begin{eqnarray}
 p_i^0 \|\theta_i^0-\theta_j'\|^r = p_i^0\liminf_{x\to\|\theta_i^0-\theta_j'\|} x^r  \leq W_r^r(G',G_0)\leq (\delta' \omega)^r. \nonumber
\end{eqnarray}
Given the condition (B.2) with $\delta'$, we achieve the desired bound 
with $\|\theta_i^0-\theta_j'\|$ in the conclusion. Furthermore, the 
uniqueness of $j$ follows from the fact that the atoms of $G'$ are 
separated by at least distance $\omega$. As a consequence, we achieve 
the conclusion of the lemma.
\end{proof}
The above lemma states that if the atoms of $G'$ are well-separated and 
$G'$ is sufficiently close to $G_0$, then for each atom of $G_0$, there 
exists a unique atom of $G'$ such that it is also sufficiently close to 
that atom of $G_{0}$. 
}

\begin{enumerate}
    \item[(B.3)] $\omega < \frac{7p_{\min}^{0}\min_{u \neq v}\|\theta_u^0-\theta_v^0\|}{16}$.
    \item[(B.4)] $\sqrt{\delta} < p_{\min}^{0}/(2k_0(k_0+2)) $, where $p_{\min}^{0} : = \min_{i = 1}^{k_{0}} p_i^0$.
\end{enumerate}

\begin{lemma}
\label{lemma:k_0 identification}
Suppose that $\omega$ and $\delta$ satisfy conditions (B.1), (B.3) and (B.4). Then for any $G'$ satisfying properties (G.1) and (G.2), the following hold.
\begin{enumerate}
    \item[(a)] For each $1 \leq i \neq j \leq k_0$, we obtain that
$
(p_j')^{1/r}\|\theta_i' - \theta_j' \| > \omega$.
    \item [(b)] For each $j > k_0$, we find that $
\min_{1 \leq i \leq k_0} (p_j')^{1/r}\|\theta_i' - \theta_j'\| \leq \omega$.
\end{enumerate}
\end{lemma}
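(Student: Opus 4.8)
The plan is to derive both (a) and (b) from the structural properties (G.1) and (G.2) of $G'$ together with the numerical constraints (B.1), (B.3), (B.4), keeping careful track of constants --- the thresholds in (B.1), (B.3), (B.4) are tuned precisely so that the estimates below close. Write $\delta' = (k_0+2)\sqrt\delta$, $D_0 = \min_{u\neq v}\|\theta_u^0-\theta_v^0\|$ and $p_{\min}^0 = \min_i p_i^0$. Recall that after the relabeling of $\theta_1',\ldots,\theta_{k_0}'$ in (G.2) one has $\|\theta_i'-\theta_i^0\| \le \sqrt\delta\,\omega \le \omega/(2k_0)$ and (in the form established in Lemma~\ref{lemma:merge_algorithm_1}) $|p_i'-p_i^0|\le\delta^{r/2}$ for $i\le k_0$, while (G.1) gives $W_r(G',G_0)\le\delta'\omega$ and pairwise separation $\|\theta_l'-\theta_{l'}'\|\ge\omega$ of the atoms of $G'$. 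Two preliminary consequences of (B.4): since $r\ge1$ and $\sqrt\delta<1$, $\delta^{r/2}\le\sqrt\delta< p_{\min}^0/(2k_0(k_0+2))\le p_{\min}^0/6$, so $p_i'\ge\tfrac12 p_{\min}^0$ for all $i\le k_0$; and $\delta'< p_{\min}^0/(2k_0)\le 1/(2k_0)$.

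For (a), fix $i\neq j$ in $\{1,\ldots,k_0\}$. The triangle inequality and the location bound in (G.2) give $\|\theta_i'-\theta_j'\|\ge D_0-2\sqrt\delta\,\omega\ge D_0-\omega$, and by (B.1) ($\omega<D_0/8$) this exceeds $\tfrac78 D_0$. Using the elementary bound $(p_j')^{1/r}\ge p_j'$ (valid as $p_j'\in[0,1]$, $r\ge1$) together with $p_j'\ge\tfrac12 p_{\min}^0$, I obtain $(p_j')^{1/r}\|\theta_i'-\theta_j'\|>\tfrac{7}{16}p_{\min}^0 D_0$, and by (B.3) the right-hand side is strictly larger than $\omega$. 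This proves (a).

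For (b), fix $j>k_0$. I would take an optimal coupling $\mathbf q=(q_{lm})$ between $G'$ and $G_0$, so that $\sum_{l,m}q_{lm}\|\theta_l'-\theta_m^0\|^r=W_r^r(G',G_0)\le(\delta'\omega)^r$. Since $p_j'=\sum_{m=1}^{k_0}q_{jm}$ is split among the $k_0$ atoms of $G_0$, the pigeonhole principle yields $i^*\le k_0$ with $q_{j,i^*}\ge p_j'/k_0$; isolating this term in the transport cost gives $p_j'\|\theta_j'-\theta_{i^*}^0\|^r\le k_0(\delta'\omega)^r$, i.e. $(p_j')^{1/r}\|\theta_j'-\theta_{i^*}^0\|\le k_0^{1/r}\delta'\omega$. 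Adding the bound $\|\theta_{i^*}^0-\theta_{i^*}'\|\le\omega/(2k_0)$ from (G.2) and using $(p_j')^{1/r}\le1$ yields $(p_j')^{1/r}\|\theta_j'-\theta_{i^*}'\|\le k_0^{1/r}\delta'\omega+\omega/(2k_0)$; since $k_0^{1/r}\le k_0$ and $\delta'<1/(2k_0)$ by (B.4), the factor $k_0^{1/r}\delta'+1/(2k_0)$ is strictly less than $1$, so $(p_j')^{1/r}\|\theta_j'-\theta_{i^*}'\|<\omega$ and a fortiori $\min_{1\le i\le k_0}(p_j')^{1/r}\|\theta_i'-\theta_j'\|\le\omega$.

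The triangle-inequality bookkeeping in (a) is routine; the step that requires the most thought is (b), where the control on $(p_j')^{1/r}\|\theta_j'-\theta_{i^*}'\|$ comes not from the geometry of the atoms but from the optimal transport plan --- one has to pull out a single term $q_{j,i^*}\ge p_j'/k_0$ of the transport cost. It is also worth noting that (a) genuinely relies on the weight estimate $|p_i'-p_i^0|\le\delta^{r/2}$ from (G.2): a lower bound on $p_i'$ inferred only from $W_r(G',G_0)\le\delta'\omega$ would be too weak (indeed essentially vacuous when $r=1$), because $\delta'$ and $\sqrt\delta$ are of the same order.
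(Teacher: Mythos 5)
Your proof is correct, and it reaches both conclusions by routes that differ from the paper's in instructive ways. For part (a), you import the weight estimate $|p_i'-p_i^0|\le\delta^{r/2}$ from the full form of (G.2) in Lemma~\ref{lemma:merge_algorithm_1}, whereas the restatement of (G.2) immediately preceding Lemma~\ref{lemma:k_0 identification} retains only the location bound; the paper instead re-derives $p_i'\ge p_i^0/2$ from (G.1) alone, noting that any discrepancy $|p_i^0-p_i'|$ forces that much mass to travel a distance at least $\omega-\|\theta_i'-\theta_i^0\|\ge\omega/2$ (by the $\omega$-separation of the atoms of $G'$), whence $|p_i^0-p_i'|\le(2\delta')^r$. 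Your shortcut is legitimate in the setting where the lemma is invoked (the output of Lemma~\ref{lemma:merge_algorithm_1} does carry the weight bound), but your closing remark overstates the dependence: the weight part of (G.2) is dispensable, because Wasserstein closeness \emph{combined with} the $\omega$-separation in (G.1) already recovers $p_i'\ge p_i^0/2$ --- it is only the Wasserstein bound in isolation that would be too weak. For part (b), the paper argues by contradiction: assuming $(p_j')^{1/r}\|\theta_i'-\theta_j'\|>\omega$ for all $i\le k_0$, it deduces $\|\theta_j'-\theta_i^0\|>\omega/\bigl(2(p_j')^{1/r}\bigr)$ for every $i$ and concludes that the transport cost emanating from $\theta_j'$ alone is at least $(\omega/2)^r>(\delta'\omega)^r$. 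Your direct pigeonhole argument, isolating a single cell $q_{j,i^*}\ge p_j'/k_0$ of the optimal coupling, yields the same conclusion with the constant $k_0^{1/r}\delta'+1/(2k_0)<1$, which closes under (B.4). Both versions are sound; yours is arguably more transparent and constructive (it exhibits the witnessing index $i^*$), while the paper's avoids the extra factor of $k_0$ in the estimate.
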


\begin{proof}
To show (a), note for any $i,j \leq k_0$
\begin{eqnarray}
\label{eq:lemma 3 theta}
\|\theta'_i -\theta'_j\| \geq  \|\theta_i^0-\theta_j^0\| - \|\theta'_i-\theta_i^0\| -\|\theta'_j-\theta_j^0\|  \geq \|\theta_i^0-\theta_j^0\| - \frac{\omega}{k_0} \geq \frac{7}{8}\|\theta_i^0-\theta_j^0\|
\end{eqnarray}
where the first inequality follows from triangle inequality, the second inequality is due to the hypothesis with $G'$, and the third inequality is due to (B.1).

By the definition of Wasserstein distances, for mass transport to be achieved between $\firstmer$ and $G_0$, an amount of mass at least $|p_i^0-p'_i| $ should be transported from atom $\theta_i^0$ of $G_0$ to an atom of $\firstmer$ other than $\theta'_i$. Hence, for any $i \leq k_0$,
$|p_i^0-p'_i|(\omega-\|\theta'_i-\theta_i^0\|)^r \leq W_r^r(\firstmer,G_0)\leq (\delta'\omega)^r$.
Invoking the hypothesis with $G'$, these inequalities lead to
$|p_i^0-p'_i|^{1/r}\leq 2\delta'$.
Combining with the condition $\sqrt{\delta} < \frac{p_i^0}{2k_0(k_0+2)}$, the above inequality leads to
$p'_i>p_i^0- \frac{(p_i^0)^2}{2k_0^2} \geq \frac{p_i^0}{2}$.
Combining this with Equation~\eqref{eq:lemma 3 theta} and (B.3) to conclude. 

Turning to part (b), 
suppose for some $j>k_0$, we have $
(p'_j)^{1/r}\|\theta'_i-\theta'_j\| > \omega$ for all $i\leq k_0$. Then
by triangle inequality and the properties of $G'$, we find that
\begin{align*}
\|\theta'_j-\theta_i^0\|+ \frac{\omega}{2k_0}
		\geq \|\theta'_j-\theta_i^0\| + \|\theta_{i}^{0} - \theta'_{i}\| 
		\geq \|\theta'_{i} - \theta'_{j}\| 
		> \omega/{p'_j}^{1/r}.
\end{align*}
Applying the triangle inequality again,
$\|\theta_i^0-\theta'_j\| \geq \|\theta'_j-\theta'_i\| - \|\theta_{i}^{0} - \theta'_{i}\| 
\geq \omega(1-\frac{1}{2k_0}) \geq \frac{\omega}{2}$.
Combining the two preceeding bounds, we get
$2\|\theta_j'-\theta_i^0\| > \omega/{p'_j}^{1/r}$ for all $i\leq k_0$.

Now, since $W_r(\firstmer,G_0) \leq \delta' \omega$, we can find a 
coupling $\vec{q} \in \mathcal{Q}(\vec{p'}, \vec{p}^{0})$ between 
$\vec{p'}=(p'_1,\ldots,p'_{k})$ and 
$\vec{p^0}=(p_1,\ldots,p_{k_0}^0)$ 
such that 
$\sum_{ij} q_{ij}\|\theta'_{j}-\theta_{i}^0\|^{r} \leq (\delta'\omega)^r$.
However, based on the previous inequalities, we have for the given index $j$ 
\begin{align*}
\sum_{i=1}^{k_0} q_{ij}  \|\theta'_{j}-\theta_{i}^0\|^{r} 
		> \sum_{i=1}^{k_0} q_{ij} \left(\frac{1}{2}\right)^{r} \omega^r/p'_j
		= \left(\frac{1}{2}\right)^r \omega^r,  
\end{align*} 
which is a contradiction as $\delta' < 1/2$ due to condition (B.4). This concludes the proof of the lemma.
\comment{
It is sufficient to demonstrate that $|\mathcal{A}| = k_{0}$ as 
the proof argument for $W_{r}(\secmer, G_{0}) \lesssim \delta \omega_{n}$ 
is similar to that from the proof of Lemma~\ref{lemma:truncation}. 
In that regard, we will first show that $i \in \mathcal{A}$ as 
long as $i \leq k_0$. Suppose that this claim does not hold, i.e., 
we can find some $i \leq k_0$ such that $i \not \in \mathcal{A}$. 
According to the definition of $\mathcal{A}$ in Algorithm~\ref{algo:merge_algorithm_2}, 
there exists index $j < i$ such that $p_i \|\theta_i-\theta_j\|
^{r}\leq \omega_n^r$. 
Based on assumption (b) and the conditions of the lemma, we have 
$p_i \geq p_i^0 - \delta^r$ and $\|\theta_i-\theta_j\| 
\geq \|\theta_i^0-\theta_j^0\| - 2\delta\omega_n$. 
Therefore, we have
\begin{align*}
(p_{i}^{0} - \delta^{r})(\|\theta_{i}^{0} - \theta_{j}^{0}\| 
	- 2\delta \omega_{n})^{r} \leq \omega_{n}^{r}.
\end{align*} 
The above inequality leads to
\begin{align*}
(\min_i p_i^0 -k_0\delta^r)(\min_{i \neq j} 
\|\theta_i^0-\theta_j^0\|-2 \delta \omega_n)^r < \omega_n^r,
\end{align*}
which is a contradiction to the assumption of the lemma. Hence, we 
have $i \in \mathcal{A}$ as long as $i \leq k_{0}$.

To achieve the conclusion that $\mathcal{A} = k_{0}$, it is 
sufficient to demonstrate that $i \not\in \mathcal{A}$ when 
$i >k_0$. Suppose that the previous statement does not hold, i.e., 
we can find $i > k_{0}$ such that $i \in \mathcal{A}$. It 
indicates that $p_i \|\theta_i-\theta_j\|^{r} \geq \omega_n^r$ for all $j \leq k_0$. 
Therefore, by means of triangle inequality and assumption (b), we find that
\begin{align*}
\|\theta_i-\theta_j^0\|+ \delta\omega_n 
		\geq \|\theta_i-\theta_j^0\| + \|\theta_{j}^{0} - \theta_{j}\| 
		\geq \|\theta_{i} - \theta_{j}\| 
		\geq \omega_n/p_i^{1/r} 
\end{align*}
for all $j \leq k_0$. Based on assumption (a) and (b), we have 
$\|\theta_i-\theta_j^0\| \geq (1-\delta) \omega_n$. Therefore, the 
following inequality holds
\begin{align*}
\omega_n/p_i^{1/r} 
		\leq \omega_n/p_i^{1/r}\|\theta_i-\theta_j^0\|+\delta \omega_n 
		\leq \frac{1}{1-\delta} \|\theta_i-\theta_j^0\|.
\end{align*} 
Now, as we have $W_r(G,G_0) \leq \delta \omega_n$, we can find a 
coupling $\vec{q} \in \mathcal{Q}(\vec{p}, \vec{p}^{0})$ between 
$\vec{p}=(p_1,\ldots,p_{k_n})$ and 
$\vec{p^0}=(p_1,\ldots,p_{k_0}^0)$ 
such that 
\begin{align*}
\biggr ({\mathop {\sum }\limits_{i,j}
		{q_{ij}\|\theta_{i}-\theta_{j}^0\|^{r}}}\biggr )
		^{1/r} \leq \delta\omega_n.
\end{align*} 
Then, it is clear from the above inequality that 
$\sum_{j} q_{ij} \|\theta_{i}-\theta_{j}^0\|^{r} 
\leq (2\delta\omega_n)^r$. However, based on the previous inequalities, we have 
\begin{align*}
\sum_{i} q_{ij}  \|\theta_{i}-\theta_{j}^0\|^{r} 
		\geq \sum_i q_{i,j} (1-\delta)^{r} \omega_n^r/p_i 
		\geq (1-\delta)^r \omega_n^r,  
\end{align*} 
which is a contradiction as $\delta < 1/2$.

We denote $\vec{p^0}=(p_1^0,\ldots,p_{k_0}^0)$ and 
$\vec{p'}=(p_1',\ldots,p_{k_n}')$ the weight vectors of $G_{0}$ 
and $G'$ respectively. Since $W_r(G',G_0) \leq \delta\omega_n$, we 
can find a coupling $\vec{q} \in \mathcal{Q}(\vec{p^0},\vec{p'})$ 
such that 
\begin{align*}
\biggr ({\mathop {\sum } \limits_{i,j}
		{q_{ij}\|\theta_{i}^0 - \theta_{j}'\|^{r}}}\biggr )
		^{1/r} \leq 2 \delta\omega_n.
\end{align*} 
We define the set $V_i : = \{\theta \in \Theta :  
\|\theta - \theta_i^0\| \leq \min_{u \neq v}\frac{\|\theta_u^0-
\theta_v^0\|}{2} - \delta \omega_n \}$.
Then, we obtain that
\begin{align*}
\biggr ({\mathop {\sum }\limits_{i,j: \theta'_j 
		\not \in V_i}{q_{ij}\|\theta_{i}^0-\theta_{j}'\|^{r}}}
		\biggr )^{1/r} \leq 2 \delta\omega_n.
\end{align*}
Furthermore, the following inequalities hold
\begin{align*}
\|\theta_j' - \theta_i^0\| 
		\geq \min_{u \neq v}\frac{\|\theta_u^0- \theta_v^0\|}{2} - \delta \omega_n 
		\geq \min_{u \neq v}\frac{\|\theta_u^0- \theta_v^0\|}{4}
\end{align*} 
for all $i \leq k_0$ and $j \not\in V_i$. Therefore, we find that
\begin{align*} 
\sum_{i,j : \theta'_j \not\in V_i} q_{ij} 
		\leq \frac{2^{r+2}}{\min_{u \neq v} \|\theta_u^0- \theta_v^0\|}
		\delta^r \omega_n^r.
\end{align*} 

Notice that if $j \in V_i$, Algorithm~\ref{algo:truncation_algorithm} 
assigns the mass corresponding 
to atom $j$ of $\firstmer$ to atom $i$ of 
$\trunc = \sum_{i=1}^{k_0} p_i'' \delta_{\theta_i}$. 
Since the sets $V_i$ are disjoint, this assignment 
is unique. Based on that observation, we can find 
$\vec{q}' \in \mathcal{Q}(\vec{p_0},\vec{p''})$ such that
\begin{align*} 
\sum_{i, j \neq i} {q_{ij} \|\theta_{i}^0 - \theta_{j}\|^{r}} 	
	& \leq (\text{Diam}(\Theta))^r \frac{2^{r+2}}{\min_{i,l} 
	\|\theta_i^0- \theta_l^0\|}\delta^r \omega_n^r, \\
\sum_i q_{ii} \|\theta_{i}^0-\theta_{i}\|^r 
	& \leq \delta^r \omega_n^r
\end{align*} 
where $\vec{p''}$ is the weight vector of $\trunc$. As a
consequence, the conclusion of the lemma follows.}
\end{proof} 

\comment{Lemma~\ref{lemma:k_0 identification} provides a method to identify the 
number of components of $G_0$ from the knowledge of a mixing meaure 
$G'$ which is a close approximation to $G_0$ and satisfies some 
specific properties. Using Lemma~\ref{lemma:merge_algorithm_1} and~\ref{lemma:k_0 identification}, we prove Theorem~\ref{theorem: merge-truncate-merge consistency} in the next subsection.}


\comment{
\begin{algorithm}
\floatname{algorithm}{Procedure}
\caption{Truncation}
\label{algo:truncation_algorithm}
\begin{algorithmic}[1]
\REQUIRE $\omega_n$, $\firstmer = \sum_{i=1}^{k_n} p_i \delta_{\theta_i}$ from merge 
procedure with a given posterior sample $G$ in procedure~\ref{algo:merge_algorithm_1}.
\STATE Set of probable atoms, $\mathcal{A} = \{1,\ldots,k_n\}$, and set of non-atoms $\mathcal{N} = \emptyset$.
\FOR{$i = 1,\ldots,k_n$}
\STATE Remove $i$ from $\mathcal{A}$ and add $i$ to $\mathcal{N}$ if $ p_i \leq \omega_n^r$.
\ENDFOR
\FOR{$i \in \mathcal{N}$}
\STATE Find $j \in \mathcal{A}$ such that 
$\| \theta_i- \theta_j \|= \argmin_{j' \in \mathcal{A}} \| \theta_i - \theta_j'\|$.
\STATE Set $p_j = p_j + p_i$.
\ENDFOR
\STATE Set $\trunc = \sum_{i \in \mathcal{A}}  
p_{i} \delta_{\theta_i}$.
\end{algorithmic}
\end{algorithm}
The truncation scheme in procedure~\ref{algo:truncation_algorithm} 
essentially removes all components which have small masses below a 
diminishing threshold from the list of current atoms. These removed
atoms are regarded as "non-atoms". Then, we add the masses of non-
atoms to those of atoms closest to them in Euclidean distance. Figure \ref{fig:truncate} shows the scenario after applying merge and truncate to mixing measure $G$. Notice that circles E,F and G have been removed from Figure \ref{fig:merge_1} since the mass contained in them was below the threshold, and the masses corresponding to G and F have been added to C while the mass in E has been been to B. The atom in D still remains since it has mass larger than the set threshold. The 
following lemma discusses the concentration properties of mixing 
measure induced by the truncation scheme.
\begin{lemma}
\label{lemma:truncation}
Given $G_0=\sum_i p_i^0\delta_{\theta_i^0}$, and a mixing measure $\firstmer = \sum_{j=1}^{k_n} p_j \delta_{\theta_j}$ 
satisfying $W_r(\firstmer,G_0) \leq \delta\omega_n$ 
for some positive constants $\delta$ and $\omega_n>0$ small enough such that 
$\min_{i,j} \| \theta_i^0 - \theta_j^0 \| \geq 4 \delta \omega_n $. Additionally, we assume the following conditions with $\firstmer$:
\begin{enumerate}
\item[(a)] $\firstmer$ is such that 
$\| \theta_i - \theta_j \| \geq \omega_n$ for all 
$i,j \in \{1,\ldots,k_n\}$.
\item[(b)] For each $i \in \{1,\ldots,k_0\}$, we have 
$\| \theta_i - \theta_i^0\| \leq \delta \omega_n$. 
\item[(c)] $p_i \geq \omega_n^r$ for all $i \in \{1,\ldots,k_0\}$ 
while $p_i \leq \omega_n^r$ for all $i \in \{k_0+1,\ldots,k_n\}$. 
\end{enumerate}
Let 
$\trunc$ be a mixing measure obtained from $\firstmer$ via 
truncation scheme in procedure~\ref{algo:truncation_algorithm}. 
Then, we obtain that
\begin{eqnarray}
W_r(\trunc,G_0) 
		\leq c \delta \omega_n. \nonumber
\end{eqnarray}
where $c$ is a universal constant depending only on 
$\Theta$ and $G_{0}$.
\end{lemma}
\subsubsection{Third stage - merge procedure}
\begin{algorithm}
\floatname{algorithm}{Procedure}
\caption{Merge}
\label{algo:merge_algorithm_2}
\begin{algorithmic}[1]
\REQUIRE $\omega_n$, $\trunc = \sum_{i=1}^{k_n} p_i \delta_{\theta_i}$ 
so that $p_1\geq p_2 \geq \ldots \geq p_{k_{n}}$.
\STATE Set of probable atoms, $\mathcal{A} = \{1,\ldots,k_n\}$, 
and set of non-atoms $\mathcal{N} = \emptyset$.
\FOR{$i = 1, \ldots, k_n$}
\STATE Let $\mathcal{A}_i = \{j \leq i : p_i 
\| \theta_i - \theta_j \|^{r}\leq \omega_n^r\}$.
\STATE If $\min_{j \in \mathcal{A}_{i}} j < i$, remove $i$ from 
$\mathcal{A}$ and add $i$ to $\mathcal{N}$.
\ENDFOR
\FOR{$i \in \mathcal{N}$}
\STATE Find $j \in \mathcal{A}$ such that 
$\| \theta_i- \theta_j \|= \argmin_{j' 
\in \mathcal{A}} \| \theta_i - \theta_j'\|$.
\STATE Set $p_j = p_j + p_i$.
\ENDFOR
\STATE Set $\secmer = \sum_{i \in \mathcal{A}}  
p_{i} \delta_{\theta_i }$.
\end{algorithmic}
\end{algorithm}
 The following lemma shows that the output of Procedure \ref{algo:merge_algorithm_2} retains the concentration properties of its input.
\begin{lemma}
\label{lemma:merge_algorithm_2}
Assume that $\delta < 1/2$ and $\omega_n$ are small enough so that $\min_{i \neq j} \|\theta_i^0-\theta_j^0\| 
\geq 4 \delta \omega_n $ and  $(\min_i p_i^0 -k_0\delta^r)(\min_{i \neq j} 
\|\theta_i^0-\theta_j^0\|-2 \delta \omega_n)^r \geq \omega_n^r$.
Let $\trunc = \sum_{i=1}^{k_n} p_i \delta_{\theta_i}$ so that $p_1\geq p_2 \geq \ldots \geq p_{k_{n}}$,  
$W_r(\trunc, G_0) \leq \delta \omega_n$, and satisfy the following conditions:
\begin{enumerate}
\item[(a)] $\|\theta_i-\theta_j\| \geq \omega_n$ for all 
$i \neq j \in \{1,\ldots,k_n\}$.
\item[(b)] For each $i \in \{1,\ldots,k_0\} , \ \ \|\theta_i-\theta_i^0\| 
\leq \delta \omega_n$, and $p_i \geq p_i^0 - \delta^r$. 
\item[(c)] $p_i \geq \omega_n^r$ for all $i \in \{1,\ldots,k_n\}$.
\end{enumerate}
Let mixing measure $\secmer$ be obtained from $\trunc$ via merge 
procedure while $\mathcal{A}$ denotes the set of probable atoms in 
Procedure~\ref{algo:merge_algorithm_2}. Then, $|\mathcal{A}|=k_0$ 
and $W_r(\secmer,G_0) \leq C \delta \omega_n $ where $C$ is a constant depending only on $\Theta$ and $G_{0}$.
\end{lemma}

}



\subsubsection{Proof of Theorem~\ref{theorem: merge-truncate-merge consistency}}
Now we are ready for the proof of this theorem. 
It suffices to prove for the case constant $c=1$.
\paragraph{Proof of part (a):} Recall that we are given a (random) mixing measure for which the following holds: for each fixed $\epsilon > 0$ and $\delta > 0$, as $n \rightarrow \infty$, there holds
\[P_{G_0}^n \biggr\{\Pi \biggr(W_r(G,G_0) 
\geq \delta\omega_n \big|(X_1,\dots,X_n)\biggr) \geq \epsilon \biggr\} \to 0.\]

Choose $\delta$ sufficiently small, and as $n$ gets large $\omega_n$ also becomes so small that all conditions (B.1--4) in the preceeding sections are satisfied.
Then, we can appeal to Lemma~\ref{lemma:merge_algorithm_1} to obtain that, measure $G'$ as produced in the probablistic merge stage of the MTM algorithm also admits a posterior contraction toward $G_0$, in the sense that as $n\rightarrow \infty$
\begin{eqnarray}
P_{G_0}^n \biggr \{\Pi \biggr(W_r(G',G_0) \leq (k_0+2)\sqrt{\delta} \omega_n \big|(X_1,\dots,X_n)\biggr)
 	\geq (1-\epsilon)\biggr(1-\sum_{i=1}^{k_0}\frac{\delta^{\frac{r}{2}}}{p_{i}^{0}}\biggr)\biggr \} \to 1. \nonumber
\end{eqnarray}
Since this holds for any $\delta > 0$, we deduce that the posterior probability
$\Pi \biggr(W_r(G',G_0) \leq (k_0+2)\sqrt{\delta} \omega_n \big|(X_1,\dots,X_n)\biggr) 
\rightarrow 1$ in $P_{G_0}$ probability.

Suppose that  $G'$ satisfies both conditions (G.1) and (G.2) (per Lemma~\ref{lemma:merge_algorithm_1}), then it can be verified that if the atoms of $\firstmer$ are arranged in descending order of their masses, then each  of the top $k_0$ atoms of $\firstmer$ lie in an $\frac{\omega_n}{2k_0}$- ball around an atom of $G_0$.
Specifically, using the representation $\firstmer=\sum_{i=1}^{k_n} q_i \delta_{\phi_i}$, where $q_1\geq \dots \geq q_{k_n}$, we have that
$\|\theta_i^0-\phi_i\| \leq \frac{\omega_n}{2k_0}$ and $|q_i-p_i^0| \geq \delta^{r/2}$ for all $i \in \{1,\ldots,k_0\}$.

Recall that $G'$ is fed into the second stage, the truncate-merge procedure, of the MTM algorithm. Note that $|q_i-p_i^0| \leq \delta^{r/2}$  implies 
$q_i>p_i^0-\delta^{r/2} >p_i^0/2 >\omega_n^r$ for $n$ sufficiently large. 
By Lemma ~\ref{lemma:k_0 identification} that for each $j > k_0$,  
$\min_{i\leq k_0} (q_j)^{1/r}\|\phi_i-\phi_j\| \leq \omega_n$, but for each $i,j \leq k_0, i \neq j$,
$(q_j)^{1/r}\|\phi_i-\phi_j\| \geq \omega_n$.
Following the definition of $\tilde{k} = |\mathcal{A}|$, we deduce that $\tilde{k} = k_0$.
The final step is to coat this guarantee with a probability statement, due to the fact that $G'$ is random given $G$,
\begin{eqnarray}
\label{eq:posterior mode mass}
P_{G_0}^n \biggr\{\Pi\biggr(\tilde{k} = k_0|X_{1},\ldots,X_{n}\biggr)  
		& \geq & (1-\epsilon)\left(1-\sum_{i=1}^{k_0}\frac{\delta^{r/2}}{p_{i}^{0}}\right)\biggr\} \longrightarrow 1. 
\end{eqnarray}
Let $\delta \rightarrow 0$ to conclude the proof of part (a).

\paragraph{Proof of part (b):} The proof boils down to showing that the reassignment of mass as in the second stage of the MTM Algorithm only increases the Wasserstein distance by a constant factor. Denote by $\vec{p^0}=(p_1^0,\ldots,p_{k_0}^0)$ and 
$\vec{q}=(q_1,\ldots,q_{k_n})$ the weight vectors of $G_{0}$ 
and $G'$ respectively. Suppose that $W_r(G',G_0) \leq (k_0+2)\sqrt{\delta} \omega_n$ as before. So
can find a coupling $\vec{f} \in \mathcal{Q}(\vec{p^0},\vec{q})$ 
such that 
\begin{align*}
\biggr ({\mathop {\sum } \limits_{i,j}
		{f_{ij}\|\theta_{i}^0 - \phi_j\|^{r}}}\biggr )
		^{1/r} \leq 2 (k_0+2)\sqrt{\delta} \omega_n.
\end{align*} 
Define the set $V_{i,n} : = \{\theta \in \Theta :  
\|\theta - \theta_i^0\| \leq \min_{u \neq v}\frac{\|\theta_u^0-
\theta_v^0\|}{2} - \frac{\omega_n}{2k_0} \}$. Furthermore, the following inequalities hold 
\begin{align*}
\|\phi_j - \theta_i^0\| 
		\geq \min_{u \neq v}\frac{\|\theta_u^0- \theta_v^0\|}{2} - \frac{\omega_n}{2k_0} 
		\geq \min_{u \neq v}\frac{\|\theta_u^0- \theta_v^0\|}{4}
\end{align*}
for all $i \leq k_0$ and $j \not\in V_{i,n}$, because $\omega_n$ satisfies assumption (B.1). Therefore, we find that
\begin{align*} 
\sum_{i,j : \phi_j \not\in V_{i,n}} f_{ij} 
		\leq 4\left(\frac{2(k_0+2)\sqrt{\delta} \omega_n}{\min_{u \neq v} \|\theta_u^0- \theta_v^0\|}\right)^r .
\end{align*} 
Notice that if $j \in V_{i,n}$, the second stage of the MTM Algorithm
assigns the mass corresponding 
to atom $j$ of $\firstmer$ to atom $i$ of 
$\secmer$. We can assume henceforth that $\secmer$ is such that $|\mathcal{A}(\secmer)|=k_0$ as a result of the proof of part (a) of this theorem.

Since the sets $V_{i,n}$ are disjoint, this assignment 
is unique. It follows that we can find 
$\vec{f}' \in \mathcal{Q}(\vec{p_0},\vec{q'})$ such that
\begin{eqnarray} 
\label{eq:proof:theorem 3.2 part b}
\sum_{i, j \neq i} {f'_{ij} \|\theta_{i}^0 - \phi_j\|^{r}} 	
	& \leq &  4\left(\frac{2\text{Diam}(\Theta)(k_0+2)\sqrt{\delta} \omega_n}{\min_{u \neq v} \|\theta_u^0- \theta_v^0\|}\right)^r, \nonumber \\
\sum_i f'_{ii} \|\theta_{i}^0-\phi_{i}\|^r 
	& \leq &  ((k_0+2)\sqrt{\delta} \omega_n)^r
\end{eqnarray} 
where $\vec{q'}=(q'_1,\ldots,q'_{k_0})$ is the weight vector of $\secmer$. 
The first inequality above follows, since $\|\phi_i-\theta_i^0\| \leq \|\phi_j-\theta_i^0\|$ for all pairs $i,j$ with $i \leq k_0$, with strict inequality for $i\neq j$. To obtain the conclusion for the second inequality in~\eqref{eq:proof:theorem 3.2 part b}, we note that 
$p_i^0 =\sum_jf_{ij}=\sum_jf'_{ij}$. Therefore, $f'_{ii}=\sum_j f_{ij} -\sum_{j \neq i} f'_{ij}$. Then, if $\phi_j$ is an atom of $\firstmer$, 
for any $j \neq i$, we have $\|\theta_{i}^0-\phi_{i}\| \leq \|\theta_{i}^0-\phi_{j}\|$. Hence, we find that
\begin{eqnarray}
\sum_i f'_{ii} \|\theta_{i}^0-\theta_{i}\|^r \leq \sum_{i,j} f_{ij} \|\theta_{i}^0-\phi_j\|^r \leq W_r^r(\firstmer,G_0)\leq ( (k_0+2)\sqrt{\delta} \omega_n)^r.\nonumber
\end{eqnarray}
By the nature of construction $\mathcal{A}(\secmer) \subset \mathcal{A}(\firstmer)$.
Using the two parts of Equation~\eqref{eq:proof:theorem 3.2 part b}, we obtain that 
\begin{center}
    $W_r(\secmer, G_0) \leq \left(1 +  4 \left(\frac{2\text{Diam}(\Theta)}{\min_{i,l} 
	\|\theta_i^0- \theta_l^0\|}\right)^{r}\right)^{1/r}((k_0+2)\sqrt{\delta}\omega_n)$.
\end{center}
The full probability statement is
 \begin{eqnarray}
 \label{eq:consistency maintainence}
 P_{G_0}^n \biggr\{\Pi \biggr(G \in \overline{\mathcal{G}}(\Theta): W_r(\secmer, G_0) \leq C\delta\omega_n \big|(X_1,\dots,X_n)\biggr)
 	\geq (1-\epsilon)\biggr(1-\sum_{i=1}^{k_0}\frac{\delta^{\frac{r}{2}}}{p_{i}^{0}}\biggr)\biggr \} \to 1,
 \end{eqnarray}
 where $C=\left(1 +  4 \left(\frac{2\text{Diam}(\Theta)}{\min_{i,l} 
	\|\theta_i^0- \theta_l^0\|}\right)^{r}\right)^{1/r}(k_0+2)$ is a constant dependent on $G_0$ and $\Theta$. 
Finally, letting $\delta \rightarrow 0$ we obtain the desired conclusion for part (b).

\comment{
Without loss of generality, we prove the 
theorem for $c=1$. We will first prove the 
consistency for number of components and then 
that of the mixing measures. Here on, fix $\delta_0$ such that $\sqrt{\delta_0} <\frac{p_{\min}^{0}}{2k_0(k_0+2)}$.

We assume that $\firstmer =  \sum_{i=1}^{k_n} \tilde{p}_i \delta_{\tilde{\theta}_i}$ 
 is an outcome of Step 10 in Algorithm~\ref{algo:merge-truncate-merge} 
 starting from $G$ with $W_r(G,G_0) \leq \delta\omega_n$. Notice that, $\firstmer$ is  random even for fixed input $G$, because of the SRSWOR scheme. $\firstmer$ can also be written as $\firstmer=\sum_{i=1}^{k_n} q_i \delta_{\phi_i}$, 
 where the probability vector $\vec{q}=(q_1,\dots,q_{k_n})$ satisfies $q_1\geq \dots \geq q_{k_n}$. Also assume $\secmer=\sum_{i=1}^{k'} q'_i \delta_{\phi_i}$ is the outcome of Step 21 in Algorithm~\ref{algo:merge-truncate-merge}. We show in part (a) that $k'=k_0$ with high $\Pi$-posterior probability.
 
 In the sequel, we use $\mathcal{A}(G)$ to denote the set of atoms of any mixing measure $G$.

\paragraph{Proof of part (a):}  $P_{G_0}^n \biggr(\Pi \biggr(W_r(G,G_0) 
\geq \delta\omega_n \big|(X_1,\dots,X_n)\biggr) \geq \epsilon \biggr) \to 0$ 
for all $\epsilon$, for each fixed $\delta<\delta_0$, without loss of generality. 
Note that condition (B.2) is satisfied for $\delta < \delta_0$.
 Since $\omega_n \to 0$ as $n \rightarrow \infty$, 
 it ensures that condition (B.1) and (B.3) in Section~\ref{sssection:truncate merge} is 
 satisfied for $n$ sufficiently large.

 Following Lemma~\ref{lemma:merge_algorithm_1}, we find that
\begin{eqnarray}
P_{G_0}^n \biggr(\Pi \biggr(W_r(G',G_0) \leq (k_0+2)\sqrt{\delta} \omega_n \big|(X_1,\dots,X_n)\biggr)
 	\geq \biggr(1-\epsilon\biggr)\biggr(1-\sum_{i=1}^{k_0}\frac{\delta^{\frac{r}{2}}}{p_{i}^{0}}\biggr)\biggr) \to 1. \nonumber
\end{eqnarray}
Now, to ease the clarity of proof argument, we divide it into several steps.
 \paragraph{Step 1.1:} First, we show that if $\firstmer$ satisfies $W_r(\firstmer,G_0) \leq (k_0+2)\sqrt{\delta}\omega_n$ 
 and if the atoms of $\firstmer$ are arranged 
 in descending order of their masses, then each
 of the top $k_0$ atoms of $\firstmer$ lie in an $\frac{\omega_n}{2k_0}$- ball around an atom of $G_0$.
 
 Using Lemma~\ref{lemma:merge_algorithm_1} and ~\ref{lemma:closeness}, $\firstmer$ such that  
 $\|\theta_i^0-\tilde{\theta}_{\tau(i)}\| \leq \frac{\omega_n}{2k_0}$ and $|p_i^0-\tilde{p}_{\tau(i)}|\leq \delta^{r/2} $ for all $i \leq k_0$,  
 for some permutation $\tau$ of $\{1,\dots,k_n\}$ can be obtained with high $\Pi-$posterior probability in $P_{G_0}$ probability. Suppose $\firstmer$ satisfies above properties. Then  
\begin{eqnarray}
\label{eq:proof:theorem 3.2 permutation }
\tilde{p}_{\eta(i)}>p_i^0-\delta^{r/2} >p_i^0/2,
\end{eqnarray} 
where, the last inequality follows since $\delta$ satisfies condition (B.2).

On the other hand, for $i>k_0$, $\tilde{p}_{\tau(i)} \leq k_0\delta^{r/2}
\leq k_0\sqrt{\delta} < p^0_{min}/2$ holds. 
This shows that $\tau(i)$ for $i\leq k_0$ form 
the leading indices for  $\firstmer$, when $\tilde{p}_i$ are arranged in descending order. 
The first inequality follows by summing up Equation~\eqref{eq:proof:theorem 3.2 permutation } 
for all $i \leq k_0$ and considering the complementary mass. 
The second inequality follows since $r>1$, and  $\delta \leq 1$ by assumption on $\delta$. 
This shows that, for each $i \leq k_0$, 
there exists $j=\tau(i) \leq k_0$, such that $|q_j-p_i^0| \leq \delta^{r/2}$, and $\|\theta_i^0-\phi_j\| \leq \frac{\omega_n}{2k_0}$.

\paragraph{Step 1.2:} 
Without loss of generality, for the remainder of the proof we assume $\tau(i)=i$. Now, we show that for $\firstmer$ as in Step 1.1 of this proof, 
then Steps 11-16 of Algorithm~\ref{algo:merge-truncate-merge} 
removes everything from $\mathcal{A}=\mathcal{A}(\tilde{G})$ (identified in Algorithm~\ref{algo:merge-truncate-merge}) 
except the top $k_0$ atoms (in descending order of mass) of $\firstmer$. 
This will show that the number of true atoms is
then identified correctly with high probability, 
and will conclude the proof by extension to asymptotic scenarios.
 
Since $\firstmer = \sum_i q_i\delta_{\phi_i}$ satisfies the conditions in statement of Lemma ~\ref{lemma:closeness} for $\delta'=(k_0+2)\sqrt{\delta}$ 
with $\Pi$-posterior probability $\geq \biggr(1-\epsilon\biggr)\biggr(1-\sum_{i=1}^{k_0}\frac{\delta^{\frac{r}{2}}}{p_{i}^{0}}\biggr)$ in $P_{G_0}$ probability, 
then we obtain using Lemma~\ref{lemma:closeness} and ~\ref{lemma:merge_algorithm_1} that 
\begin{eqnarray}
\label{theoremproof: part a 3.2}
P_{G_0}^n \biggr(\Pi \left(  \{\|\theta_i^0-\phi_i\| \leq \frac{\omega_n}{2k_0} , 
		\ |q_i-p_i^0| \leq \delta^{r/2}\} 
		\text{ for all } i \in \{1,\ldots,k_0\}| (X_1,\dots,X_n)
		\ \right) \nonumber \\
	\geq \biggr(1-\epsilon \bigg)\left(1-\sum_{i=1}^{k_0}\frac{\delta^{r/2}}{p_{i}^{0}}\right)\biggr) \longrightarrow 1 . 
\end{eqnarray}

Note that $ |q_i-p_i^0| \leq \delta^{r/2}$  implies, 
$q_i>p_i^0-\delta^{r/2} >p_i^0/2 >\omega_n^r$ for $n$ sufficiently large. 
Now, $W_r(G',G_0) \leq (k_0+2)\sqrt{\delta}, \ \|\theta_i^0-\phi_i\| \leq \frac{\omega_n}{2k_0}$ 
together imply by lemma ~\ref{lemma:k_0 identification} that for each $j > k_0$,  
\begin{eqnarray}
\min_{i\leq k_0} (q_j)^{1/r}\|\phi_i-\phi_j\| \leq \omega_n. \nonumber
\end{eqnarray}
Lemma ~\ref{lemma:k_0 identification} also implies that for each $i,j \leq k_0, i \neq j$,
\begin{eqnarray}
(q_j)^{1/r}\|\phi_i-\phi_j\| \geq \omega_n.\nonumber
\end{eqnarray}

Therefore, using Equation~\eqref{theoremproof: part a 3.2}, 
and following the definition of $\tilde{k}= |\mathcal{A}|$ in Step 21 of Algorithm~\ref{algo:merge-truncate-merge} we get,
\begin{eqnarray}
\label{eq:posterior mode mass}
P_{G_0}^n \biggr(\Pi\biggr(\tilde{k} = k_0|X_{1},\ldots,X_{n}\biggr)  
		& \geq & \biggr(1-\epsilon \bigg)\left(1-\sum_{i=1}^{k_0}\frac{\delta^{r/2}}{p_{i}^{0}}\right)\biggr) \longrightarrow 1. 
\end{eqnarray}
Since $\epsilon$ and $\delta< \delta_0$ can be chosen arbitrarily small, the conclusion of part (a) of the theorem follows.

\paragraph{Proof of part (b):} Similar to the proof of part (a), we also break the proof argument of part (b) into two steps.

\paragraph{Step 2.1 :} First, we show that the reassignment of mass as in Steps 17-21 of Algorithm~\ref{algo:merge-truncate-merge} only increases the Wasserstein distance by a constant factor. 
In that regard, assume $\firstmer=\sum_{i=1}^{k_n} q_i\delta_{\phi_i}$ is obtained so that 
\begin{center}
    $W_r(G',G_0)\leq (k_0+2)\sqrt{\delta} \omega_n,\  \|\theta_i^0-\phi_i\| \leq \frac{\omega_n}{2k_0} \text{ and } |q_i-p_i^0| \leq \delta^{r/2}$.
\end{center}
We denote $\vec{p^0}=(p_1^0,\ldots,p_{k_0}^0)$ and 
$\vec{q}=(q_1,\ldots,q_{k_n})$ the weight vectors of $G_{0}$ 
and $G'$ respectively. Since $W_r(G',G_0) \leq (k_0+2)\sqrt{\delta} \omega_n$, we 
can find a coupling $\vec{f} \in \mathcal{Q}(\vec{p^0},\vec{q})$ 
such that 
\begin{align*}
\biggr ({\mathop {\sum } \limits_{i,j}
		{f_{ij}\|\theta_{i}^0 - \phi_j\|^{r}}}\biggr )
		^{1/r} \leq 2 (k_0+2)\sqrt{\delta} \omega_n.
\end{align*}

Define the set $V_{i,n} : = \{\theta \in \Theta :  
\|\theta - \theta_i^0\| \leq \min_{u \neq v}\frac{\|\theta_u^0-
\theta_v^0\|}{2} - \frac{\omega_n}{2k_0} \}$. Furthermore, the following inequalities hold 
\begin{align*}
\|\phi_j - \theta_i^0\| 
		\geq \min_{u \neq v}\frac{\|\theta_u^0- \theta_v^0\|}{2} - \frac{\omega_n}{2k_0} 
		\geq \min_{u \neq v}\frac{\|\theta_u^0- \theta_v^0\|}{4}
\end{align*}
for all $i \leq k_0$ and $j \not\in V_{i,n}$, because $\omega_n$ satisfies assumption (B.1). Therefore, we find that
\begin{align*} 
\sum_{i,j : \phi_j \not\in V_{i,n}} f_{ij} 
		\leq 4\left(\frac{2(k_0+2)\sqrt{\delta} \omega_n}{\min_{u \neq v} \|\theta_u^0- \theta_v^0\|}\right)^r .
\end{align*} 
Notice that if $j \in V_{i,n}$, Steps 17-21 in Algorithm~\ref{algo:merge-truncate-merge} 
assigns the mass corresponding 
to atom $j$ of $\firstmer$ to atom $i$ of 
$\secmer$. We can assume henceforth that $\secmer$ is such that $|\mathcal{A}(\secmer)|=k_0$ as a result of the proof of part (a) of this theorem.

Since the sets $V_{i,n}$ are disjoint, this assignment 
is unique. Based on that observation, we can find 
$\vec{f}' \in \mathcal{Q}(\vec{p_0},\vec{q'})$ such that
\begin{eqnarray} 
\label{eq:proof:theorem 3.2 part b}
\sum_{i, j \neq i} {f'_{ij} \|\theta_{i}^0 - \phi_j\|^{r}} 	
	& \leq &  4\left(\frac{2\text{Diam}(\Theta)(k_0+2)\sqrt{\delta} \omega_n}{\min_{u \neq v} \|\theta_u^0- \theta_v^0\|}\right)^r, \nonumber \\
\sum_i f'_{ii} \|\theta_{i}^0-\phi_{i}\|^r 
	& \leq &  ((k_0+2)\sqrt{\delta} \omega_n)^r
\end{eqnarray} 
where $\vec{q'}-(q'_1,\ldots,q'_{k_0})$ is the weight vector of $\secmer$. 
The first inequality above follows, since $\|\phi_i-\theta_i^0\| \leq \|\phi_j-\theta_i^0\|$ for all pairs $i,j$ with $i \leq k_0$, with strict inequality for $i\neq j$. To obtain the conclusion for the second inequality in~\eqref{eq:proof:theorem 3.2 part b}, we note that 
$p_i^0 =\sum_jf_{ij}=\sum_jf'_{ij}$. Therefore, $f'_{ii}=\sum_j f_{ij} -\sum_{j \neq i} f'_{ij}$. Then, if $\phi_j$ is an atom of $\firstmer$, 
for any $j \neq i$, we have $\|\theta_{i}^0-\phi_{i}\| \leq \|\theta_{i}^0-\phi_{j}\|$. Hence, we find that
\begin{eqnarray}
\sum_i f'_{ii} \|\theta_{i}^0-\theta_{i}\|^r \leq \sum_{i,j} f_{ij} \|\theta_{i}^0-\phi_j\|^r \leq W_r^r(\firstmer,G_0)\leq ( (k_0+2)\sqrt{\delta} \omega_n)^r.\nonumber
\end{eqnarray}
By the nature of construction $\mathcal{A}(\secmer) \subset \mathcal{A}(\firstmer)$.
Using the two parts of Equation~\eqref{eq:proof:theorem 3.2 part b}, we can conclude that 
\begin{center}
    $W_r(\secmer, G_0) \leq \left(1 +  4 \left(\frac{2\text{Diam}(\Theta)}{\min_{i,l} 
	\|\theta_i^0- \theta_l^0\|}\right)^{r}\right)^{1/r}((k_0+2)\sqrt{\delta}\omega_n)$.
\end{center}

 Therefore, we obtain that 
 \begin{eqnarray}
 \label{eq:consistency maintainence}
 P_{G_0}^n \biggr(\Pi \biggr(G \in \overline{\mathcal{G}}(\Theta): W_r(\secmer, G_0) \leq C\delta\omega_n \big|(X_1,\dots,X_n)\biggr)
 	\geq \biggr(1-\epsilon\biggr)\biggr(1-\sum_{i=1}^{k_0}\frac{\delta^{\frac{r}{2}}}{p_{i}^{0}}\biggr)\biggr) \to 1,
 \end{eqnarray}
 where $C=\left(1 +  4 \left(\frac{2\text{Diam}(\Theta)}{\min_{i,l} 
	\|\theta_i^0- \theta_l^0\|}\right)^{r}\right)^{1/r}(k_0+2)$ is a constant dependent on $G_0$ and $\Theta$. 
\paragraph{Step 2.2:} Now, we show that since Equation~\eqref{eq:consistency maintainence} holds for all $\delta$ small enough, this therefore leads to the conclusion of the theorem. Note that $\{G \in \overline{\Gcal}(\Theta):W_r(\secmer, G_0) \leq C\delta_1\omega_n \} 
\subset \{G \in \overline{\Gcal}(\Theta):W_r(\secmer, G_0) \leq C\delta_2\omega_n \}$, if $\delta_1\leq \delta_2$. 
On the other hand, $\biggr(1-\sum_{i=1}^{k_0}\frac{\delta^{\frac{r}{2}}}{p_{i}^{0}}\biggr)$ 
is monotonically decreasing with $\delta$. Therefore, we can choose $\delta_{\epsilon}$ small enough so that
$\biggr(1-\epsilon\biggr)\biggr(1-\sum_{i=1}^{k_0}\frac{\delta_{\epsilon}^{\frac{r}{2}}}{p_{i}^{0}}\biggr) \geq 1- \epsilon/2$. 
Then, for all $\delta > 0$, following Equation~\eqref{eq:consistency maintainence},
 \begin{eqnarray}
 P_{G_0}^n \biggr(\Pi \biggr(G \in \overline{\Gcal}(\Theta):W_r(\secmer, G_0) \geq C\delta\omega_n \big|(X_1,\dots,X_n)\biggr)
 	\leq \epsilon/2\biggr) \to 0. \nonumber
 \end{eqnarray}
Since the above equation is satisfied for all arbitrary $\epsilon$, for all $\delta>0$, we get the conclusion of part (b) of the theorem.

}


\subsection{Proof of Lemma~\ref{lemma:Prior_mass_MFM}}
\label{subsection:proof_lemma_prior_mass_MFM}
To simplify the proof argument, we specifically assume that $G_{*}$ is a discrete mixture. The proof argument for other settings of $G_{*}$ is similar and is omitted. Now, we consider an $\epsilon > 0$ maximal packing set of parameter 
space $\Theta$. It leads to a $D-$partition $(S_1,\dots,S_D)$ of $
\Theta$ such that  $\text{Diam}(S_i) \leq  2\epsilon$ for all $1 
\leq i \leq D$. Choose $\epsilon$ to be sufficiently small such that $D>\gamma$.

For mixing measures $G=\sum_i p_i \delta_{\theta_i}$ and $G_{*} = \sum_{i=1}
^{\infty} p_{i}^{*}\delta_{\theta_{i}^{*}}$, we denote 
$G(S_i) := \sum_{i : \theta_i \in S_i} p_i$ and $G_*(S_i)=
\sum_{i:\theta_i^* \in S_i} p_i^*$. Invoking the detailed 
formulation of Wasserstein metric, we can check that
\begin{align*}
W_r^r(G,G_*) \leq (2\epsilon)^r 
		+ \text{Diam}^r(\Theta)\sum_{i=1}^D|G(S_i)-G_*(S_i)|.
\end{align*}
Equipped with the above inequality, the following inequality holds
\begin{align*}
\Pi(W_r^r(G,G_*) 
		\leq (2^r+1)\epsilon^r) \geq \Pi\left(\sum_{i=1}^D|
		G(S_i)-G_*(S_i)| 
		\leq (\epsilon/ \text{Diam}(\Theta))^r\right).
\end{align*}
For any positive constant $A$, we find that
\begin{align*}
\Pi \left(\sum_{i=1}^D|G(S_i)-G_*(S_i)| 
		\leq A \right) \geq q_D\Pi( B \cap \{|G(S_i)-G_*(S_i)| 
		\leq A/D, \text{for each } i\}| K=D )
\end{align*}
where $B$ stands for the event that each $S_i$ contains exactly 
one atom of $G$.

Governed by the above observations, by substituting 
$A = (\epsilon/ \text{Diam}(\Theta))^r$, we obtain that
\begin{eqnarray}
& & \Pi(W_r^r(G,G_*) \leq (2^r+1)\epsilon^r) \nonumber \\ 
& & \hspace{2 em} \gtrsim q_D \left(c_0 \left(\frac{\epsilon}
{\text{Diam}(\Theta)}\right)^{d} \right)^{D}  \underbrace{\Pi(\{|
G(S_i)-G_*(S_i)| \leq A/D, \text{for each } i\}|B \cap  \{K=D\} )}_{:= T}   \nonumber.
\end{eqnarray}

By means of Dirichlet probability model assumption on $\Delta_{D-1}$, we 
have the following evaluations with $T$
\begin{eqnarray}
T 
		& \gtrsim &  D!\frac{\Gamma(\gamma)}{\prod_{i=1}^D 
		\Gamma(\gamma/D)} \underset{\mathcal{U}} {\int} 
		\prod_{i=1}^{D-1}(G(S_i))^{(\gamma/D)-1}(1-\sum_{i=1}
		^{D-1}G(S_i))^{(\gamma/D)-1} \mathrm{d}(G(S_i)) \nonumber  \\
& \geq & D!\frac{\Gamma(\gamma)}{\prod_{i=1}^D \Gamma(\gamma/D)} 
		\prod_{i=1}^{D-1} \int_{\max(G_*(S_i)- (\epsilon/ 
		\text{Diam}(\Theta))^r/D , 0)}^{\min(G_*(S_i)+(\epsilon/ 
		\text{Diam}(\Theta))^r/D , 1)} (G(S_i))^{(\gamma/D)-1}  
		\mathrm{d}(G(S_i)) \nonumber \\
& \geq & D!\frac{\Gamma(\gamma) \gamma/D}{\prod_{i=1}^D (\gamma/D) 
		\Gamma(\gamma/D)} \left( \frac{1}{D} \left(\frac{\epsilon}
		{\text{Diam}(\Theta)}\right)^r\right)^{\gamma(D-1)/D} \nonumber
\end{eqnarray}
where $\mathcal{U} : = \Delta_{D-1} \cap |G(S_i)-G_*(S_i)| 
\leq (\epsilon/ \text{Diam}(\Theta))^r/D$. Here, the second 
inequality in the above display is due to the fact that $(1-
\sum_{i=1}^{D-1}G(S_i))^{(\gamma/D)-1}>1$ as $\gamma < D$. 
Invoking the basic inequality $\alpha \Gamma(\alpha) <1$ for $0<
\alpha <1$, we reach the conclusion of the lemma.
\subsection{Proof of Proposition~\ref{proposition:lower_bound_weighted_Hellinger_Wasserstein_infinite_Gaussian}}
\label{subsection:proof_proposition_lower_bound_hellinger_Wasserstein _infinite_Gaussian}
We denote a sphere of radius $R$ as $S_R := \{x \in \mathbb{R}^d : \|x\|_2 \leq R\}$ 
for any $R > 0$. Direct computations lead to
\begin{align}
2 V(p_G,p_{G'}) & = \int_{\mathbb{R}^d} |p_G(x)-p_{G'}(x)| \mathrm{d}\mu(x) \nonumber \\
& = \int_{S_R^{\mathsf{c}}}|p_G(x)-p_{G'}(x)| \mathrm{d}\mu(x) + 
\int_{S_R}|p_G(x)-p_{G'}(x)|\frac{p_{G_0,f_0}(x)}{p_{G_*}(x)} 
\frac{p_{G_*}(x)}{p_{G_0,f_0}(x)} \mathrm{d}\mu(x) \nonumber \\
& \leq \int_{S_R^{\mathsf{c}}} |p_G(x)-p_{G'}(x)| \mathrm{d}\mu(x) \nonumber \\
& \hspace{8 em} +
\norm{\frac{p_{G_*}(\cdot)}{p_{G_0,f_0}(\cdot)} \mathbbm{1}_{S_R}}
_{\infty} \int_{\mathbb{R}^d}|p_{G}(x)-p_{G'}(x)|\frac{p_{G_0,f_0}(x)}
{p_{G_*}(x)} \mathrm{d}\mu(x)\label{eq:weighted_total_variation_lower_bound}
\end{align}
where the last inequality is an application of Holder's inequality. Now, a direct evaluation yields that
\begin{eqnarray}
\int_{S_R^{\mathsf{c}}}|p_G(x)-p_{G'}(x)| \mathrm{d}\mu(x) 
		& \leq & 2\int_{S_R^{\mathsf{c}}}\max_{G} p_G(x) \mathrm{d}\mu(x) 
		\leq 2\int_{S_R^{\mathsf{c}}} \sup_{\theta} f(x|\theta) d
		\mu(x) \nonumber \\
& \leq & 2\int_{S_R^{\mathsf{c}}} \sup_{\theta} \frac{1}{|2\pi
		\Sigma|^{1/2}} \exp( - \|x - \theta\|_{2}^{2}/
		(2\lambda_{\text{max}})) \mathrm{d}\mu(x). \label{eq: upper_bound_total_variation_complement_ball}
\end{eqnarray}

The last inequality is due to the fact that $(x- \theta)^{\top}
\Sigma^{-1}(x - \theta) \geq \|x - \theta\|_{2}^{2}/
\lambda_{\text{max}}$ for all $x \in \mathbb{R}^{d}$ and $\theta 
\in \Theta$. 

We now assume that $d>2$ as the $d\leq 2$ case can be treated similarly. Since $\Theta \subset \mathbb{R}^d$ is bounded, we can find $r>0$ 
such that $\| \theta \|_2 < r $ for all $\theta \in \Theta$. Now, 
given $R > r$, for any fixed value of $x \in S_{R}^{c}$, we can 
check that $\inf_{\theta} \|x - \theta\|_{2}^{2} \geq (\|x\|_{2} - 
r)^{2}$. Therefore equation~\eqref{eq: upper_bound_total_variation_complement_ball} 
leads to 
\begin{eqnarray}
\int_{S_R^{\mathsf{c}}}|p_G(x)-p_{G'}(x)| \mathrm{d}\mu(x) & \leq & 2 
\int_{S_R^{\mathsf{c}}} \frac{1}{|2\pi\Sigma|^{1/2}} \exp( - (\|x
\|_{2} - r)^{2}/(2\lambda_{\text{max}})) \mathrm{d}\mu(x) \nonumber.
\end{eqnarray}
Invoking spherical coordinates by substituting $z= \|x\|_2$, we get
\begin{eqnarray}
\int_{S_R^{\mathsf{c}}}|p_G(x)-p_{G'}(x)| \mathrm{d}\mu(x) & \leq & 2 
\int_{z>R} \frac{z^{d-1}}{|2\pi\Sigma|^{1/2}} \exp( - (z - r)^{2}/
(2\lambda_{\text{max}})) dz. \nonumber
\end{eqnarray}
We denote $g_R(d) : = \int_{z>R} {z^{d}} \exp( - (z - r)^{2}) dz$. By 
integrating by parts with some basic algebraic manipulation, we find that
\begin{eqnarray}
\label{eq:tailbounds gaussian}
g_R(d-1) = (d/2) g_R(d-3) + (R^{d-2}/2)\exp(-(R-r)^2) + r 
g_R(d-2).
\end{eqnarray}
Observe that $(R^{d-2}/2)\exp(-(R-r)^2) \gtrsim (R^{s}/2)\exp(-(R-r)^2) $ for all $s \leq d-2$.
Also, $\int_x^{\infty} \exp(-t^2/2) \mathrm{d}t \leq \frac{1}{x}\exp(-x^2/2) \lesssim x^{d-2} \exp(-x^2/2)$ and $\int_x^{\infty} t\exp(-t^2/2) \mathrm{d}t \lesssim \exp(-x^2/2) \lesssim x^{d-2} \exp(-x^2/2)$ follows using standard arguments for gaussian tailbounds. Here we use the condition $d>2$. For $d\leq 2$, the tail probability can be directly bounded using standard gaussian tailbounds.

We can expand $g_R(s)$ recursively using Equation~\ref{eq:tailbounds gaussian} for all $s\leq d-2$ as well. Now, equipped with  equation~\eqref{eq:tailbounds gaussian}, and following the discussion in the previous paragraph, we can write 
\begin{eqnarray}
\int_{S_R^{\mathsf{c}}}|p_G(x)-p_{G'}(x)| \mathrm{d}\mu(x)  \lesssim  
R^{d-2}\exp(-(R-r)^2/2\lambda_{\max}) \label{eq: bound_v2}.
\end{eqnarray}

Now, we demonstrate that $\norm{\frac{p_{G_*}(\cdot)}{p_{G_0,f_0}
(\cdot)} \mathbbm{1}_{S_R}}_{\infty}$ is bounded above by $c_2 
\exp(\lambda_{\min}^{-1}R^2)$ for some positive constant $c_2$ 
depending only on $C_1, G_{0}$ and $\lambda_{\min}$. Recall that $G_0 =\sum_{i=1}^{k_0} 
p_i^{0} \delta_{\theta^{0}_i}$. Here $k_0$ can be allowed to be $\infty$. The analysis follows through similar to the finite $k_0$ case. 

The conditions on $p_{G_0,f_0}$ imply that 
\begin{align*}
\norm{\frac{p_{G_0,f}(\cdot)}{p_{G_0,f_0}(\cdot)}\mathbbm{1}_{S_R}}_{\infty} \leq \sup_{x \in \mathbb{R}^d, \theta \in \Theta, \theta_0 \in \textrm{supp}(G_0)} \frac{f(x|\theta)}{f_0(x|\theta_0)} \mathbbm{1}_{\|x\|_2 \leq R} \leq C_1 \exp (C_0 R^2).
\end{align*} 
Then, we have the following inequalities
\begin{eqnarray}
\norm{\frac{p_{G_*}(\cdot)}{p_{G_0,f_0}(\cdot)} \mathbbm{1}_{S_R}}_{\infty} & = & \norm{\frac{p_{G_*}(\cdot)}{p_{G_0,f}(\cdot)} \frac{p_{G_0,f}(\cdot)}{p_{G_0,f_0}(\cdot)}\mathbbm{1}_{S_R}}_{\infty} \nonumber \\ 
&\lesssim &\underset{x\in S_{R}, \ i \in \{1,\dots,k_0\}}{\sup} C_1 \exp (C_0 R^2)\exp
		\left(\frac{1}{2}(x-\theta^0_i)'\Sigma^{-1}(x-\theta^0_i)
		\right) \nonumber \\
& \leq &  C_1 \exp (C_0 R^2) \underset{x\in S_{R}}{\sup} \exp(\lambda_{\min}^{-1} (\|x
		\|^2 + \sup_i \|\theta_i^0\|^2)) \nonumber \\
		& \leq & c_2 \exp((\lambda_{\min}^{-1}+C_0)R^2). \label{eq: bound_v3}
\end{eqnarray} 
The bounds apply uniformly for all $R>r$. Therefore, when $R \geq 4r$, we can bound equation \eqref{eq:weighted_total_variation_lower_bound} according to the 
bounds in~\eqref{eq: bound_v2} and~\eqref{eq: bound_v3} as 
follows:
\begin{eqnarray*}
V(p_G,p_{G'}) 
		& \lesssim & \exp((\lambda_{\min}^{-1}+C_0)R^2) \vba(p_G,p_{G'}) 
		+  R^{d-2}\exp(- \lambda_{\max}^{-1}R^2/4)
\end{eqnarray*}
where $ \vba(p_G,p_{G'}) : = \int_{\mathbb{R}^d}|p_{G}(x)-p_{G'}(x)|\frac{p_{G_0,f_0}(x)}{p_{G_*}(x)} \mu(\mathrm{d}x) $ 
is the weighted variational distance. Now consider $R>0$ 
satisfying $R^{d-2}\leq \exp(-\lambda_{\max}^{-1}R^2/8)$. If $
\vba(p_G,p_{G'})= \exp(-(\lambda_{\min}^{-1}+ C_0+\lambda_{\max}^{-1} /8)R^2)$, we obtain that 
\begin{eqnarray*}
V(p_G,p_{G'}) & \lesssim & \exp(-\lambda_{\max}^{-1}R^2/8) = (\vba(p_G,p_{G'}))^{\frac{1}{1+  8\lambda_{\max}(\lambda_{\min}^{-1}+C_0)}}.  
\end{eqnarray*}
Note that, $\vba(p_G,p_{G'}) \lesssim \hba(p_G,p_{G'})$ by standard application of Holder's 
inequality. Also, since the kernel for 
location Gaussian mixtures is supersmooth~\cite{Fan_1990}, it follows from 
Theorem 2 in~\cite{Nguyen-13} that $V(p_{G},p_{G'}) \gtrsim \exp
\biggr(-1/W_{2}^{2}(G,G')\biggr) $. The proof of the proposition 
now follows from this fact, and the inequality connecting weighted 
Hellinger and variational distances.

\subsection{Proof of Proposition~\ref{proposition:lower_bound_weighted_Hellinger_Wasserstein_infinite_Laplace}}
\label{subsection:proof_proposition_lower_bound_hellinger_Wasserstein _infinite_Laplace}
We denote a sphere of radius $R$ as $S_R := \{x \in \mathbb{R}^d : \|x\|_2 \leq R\}$ 
for any $R > 0$. Assume  $\max_{\theta \in \Theta} \|\theta\|_2 \leq r$ and also $\sup_{i \leq k_0} \|\theta_i^0\|_2 
\leq r$.  We consider $R > 2r$ large enough such that, $\|\theta\|_2 \leq r $ and $\|x\|_2 \geq R$ implies
\begin{align*} 
C_{\text{lower}}\dfrac{\exp\left(-\sqrt{\frac{2}{\lambda}}\|x-\theta\|
_{\Sigma^{-1}}\right)}{(\|x-\theta\|_{\Sigma^{-1}})^{(d - 1)/2}} 
\leq f(x| \theta) \leq  C_{\text{upper}}\dfrac{\exp\left(-\sqrt{\frac{2}
{\lambda}}\|x-\theta\|_{\Sigma^{-1}}\right)}{(\|x-\theta\|
_{\Sigma^{-1}})^{(d - 1)/2}}.
\end{align*}
The above inequalities can always be achieved for $R$ large enough because of the asymptotic formulation of multivariate Laplace 
distributions.

Following equation~\eqref{eq:weighted_total_variation_lower_bound} in the 
proof of Theorem~\ref{proposition:lower_bound_weighted_Hellinger_Wasserstein_infinite_Gaussian}, 
we will prove the proposition by providing upper bounds for $
\norm{\frac{p_{G_*}(\cdot)}{p_{G_0,f_0}(\cdot)} \mathbbm{1}_{S_R}}
_{\infty}$ and 
$ \int_{S_R^{\mathsf{c}}}|p_G(x)-p_{G'}(x)| \mathrm{d}\mu(x)$. Because the 
Laplace density is bounded, $\norm{p_{G_*}(\cdot) \mathbbm{1}
_{S_R}}_{\infty}$ is bounded by a constant.  Similar to the proof of Proposition~\ref{proposition:lower_bound_weighted_Hellinger_Wasserstein_infinite_Gaussian}, we have,
\begin{eqnarray}
\norm{\frac{p_{G_*}(\cdot)}{p_{G_0,f_0}(\cdot)} \mathbbm{1}_{S_R}}_{\infty} 
		&\lesssim & C_1\exp(C_0 R^{\alpha})\max_{x, \theta : \|\theta\|_2 \leq r, \|x\|_2 
		\leq R} {\exp\left(\sqrt{\frac{2}{\lambda\lambda_{\min}}}\|x-\theta\|
		_{2}\right)}{(\|x-\theta\|_{2})^{(d - 1)/2}} \nonumber \\
&\lesssim &  \exp(C_0 R^{\alpha}){\exp\left(\sqrt{\frac{2}{\lambda\lambda_{\min}}}(R+r)\right)}{(R
		+r)^{(d - 1)/2}} \nonumber\\
&\lesssim & {\exp\left(\left(\sqrt{\frac{2}{\lambda\lambda_{\min}}} +C_0\right)R^{\alpha}\right)}{R^{(d - 1)/2}} \nonumber.
\end{eqnarray}
Now, in order to minimize $\int_{S_R^{\mathsf{c}}}|p_G(x)-p_{G'}(x)| \mathrm{d}\mu(x)$, observe that 
\begin{eqnarray}
\int_{S_R^{\mathsf{c}}}|p_G(x)-p_{G'}(x)| \mathrm{d}\mu(x) 
		& \lesssim & \int_{S_R^{\mathsf{c}}}\sup_{G} p_G(x) d
		\mu(x) \lesssim \int_{S_R^{\mathsf{c}}} \sup_{\theta} f(x|
		\theta) \mathrm{d}\mu(x) \nonumber \\
& \lesssim & \int_{S_R^{\mathsf{c}}} \sup_{\theta} \dfrac{\exp
		\left(-\sqrt{\frac{2}{\lambda}}\|x-\theta\|_{\Sigma^{-1}}
		\right)}{(\|x-\theta\|_{\Sigma^{-1}})^{(d - 1)/2}} \nonumber \\
& \lesssim & \int_{S_R^{\mathsf{c}}}  \frac{1}{(\|x\|_2 - r)^{(d - 1)/2}} \exp \left( -\sqrt{2\lambda_{\max}^{-1}}(\|x\|_2-r) \right) d
		\mu(x) \nonumber\\
& \lesssim & \int_{S_R^{\mathsf{c}}}  \frac{1}{(\|x\|_2 )^{(d - 1)/2}} \exp \left( -\sqrt{\frac{2}{\lambda\lambda_{\max}}} \|x\|_2 \right) \mathrm{d}\mu(x). \nonumber
\end{eqnarray}
Substituting $z=\|x\|_2$ in above equation, we get 
\begin{eqnarray}
\int_{S_R^{\mathsf{c}}}|p_G(x)-p_{G'}(x)| \mathrm{d}\mu(x) 
		\lesssim \int_{S_R^{\mathsf{c}}}  \frac{z^{d-1}}{z^{(d - 1)/2}} \exp \left( - z \sqrt{\frac{2}{\lambda\lambda_{\max}}} \right) dz . \nonumber
\end{eqnarray}
 Denote $g_R(s) := \int_{S_R^{\mathsf{c}}}  {z^s} \exp( -z) dz$. Then, we find that
 \begin{eqnarray}
 g_R(s) = R^s \exp(-R) + s g_R(s-1). \nonumber
 \end{eqnarray}
Invoking integration by parts with the above equality for $s=\frac{d-1}{2}$ leads to the following inequality 
 \begin{eqnarray}
\int_{S_R^{\mathsf{c}}}|p_G(x)-p_{G'}(x)| \mathrm{d}\mu(x) \lesssim R^{\frac{d-1}{2}}\exp\left( -\sqrt{\frac{2}{\lambda\lambda_{\max}}}R \right) . \nonumber
\end{eqnarray}
Since the above bounds apply for all $R$ large enough, following the approach with 
equation~\eqref{eq:weighted_total_variation_lower_bound} in the 
proof of Theorem~\ref{proposition:lower_bound_weighted_Hellinger_Wasserstein_infinite_Gaussian}, we can write
\begin{eqnarray}
V(p_G,p_{G'}) 
		& \lesssim &  {\exp\left( \left(\sqrt{\frac{2}{\lambda\lambda_{\min}}} +C_0\right)R^{\alpha}
		\right)}{R^{\frac{d-1}{2}}} \vba(p_G,p_{G'}) + 
		R^{\frac{d-1}{2}}\exp \left(-\sqrt{\frac{2}{\lambda\lambda_{\max}}}R \right)\nonumber.
\end{eqnarray}
Recall that $ \vba(p_G,p_{G'}) : = \int_{\mathbb{R}^d}|p_{G}(x)-p_{G'}(x)|\frac{p_{G_0}(x)}{p_{G_*}(x)} \mu(\mathrm{d}x) $ 
is the weighted variational distance. By setting $\vba(p_G,p_{G'})=\exp\left(-\left[\sqrt{\frac{2}{\lambda\lambda_{\min}}} + 
\sqrt{\frac{2}{\lambda\lambda_{\max}}} +C_0\right]R^{\alpha} \right)$, as $\alpha \geq 1$, we see that 
\begin{eqnarray}
V(p_G,p_{G'}) 
		& \lesssim &  \left(\log \frac{1}{\vba(p_G,p_{G'})} 
		\right)^{d/2\alpha} \exp\left( - \tau(\alpha) \left(\log \frac{1}{\vba(p_G,p_{G'})}\right)^{1/ \alpha}\right) \nonumber,
\end{eqnarray}
where $\tau(\alpha) = \sqrt{\frac{2}{\lambda\lambda_{\max}}}\bigg /\left[\sqrt{\frac{2}{\lambda\lambda_{\max}}} + 
\sqrt{\frac{2}{\lambda\lambda_{\min}}} +C_0\right]^{1/\alpha}$. Now, the 
location family of multivariate Laplace distributions pertains to 
the ordinary smooth likelihood families. Therefore, from part (1) 
of Theorem 2 in~\cite{Nguyen-13}, it follows that for any $m < 
4/(4+5d)$, $W_2^2(G,G') \leq V(p_G,p_{G'})^{m}$. We note in passing that improved rates for other choices of $W_r$ may be possible by utilizing techniques similar to~\cite{Gao-vdV-2016}.  Thus, by means of the inequality $\vba(p_G,p_{G'}) \lesssim \hba(p_G,p_{G'})$, the following inequality holds 
\begin{eqnarray*}
 \left(\log\frac{1}{\hba(p_G,p_{G'})} 
		\right)^{\frac{d}{2\alpha}} \exp\left( - \tau(\alpha) \left(\log \frac{1}{\hba(p_G,p_{G'})}\right)^{1/\alpha}\right) \gtrsim  {W_2^{2/m}(G,G')} .
\end{eqnarray*}
The result now follows by taking logarithms of both sides.

\subsubsection*{Acknowledgements}
This research is supported in part by grants NSF CAREER DMS-1351362, NSF CNS-1409303, a research
gift from Adobe Research and a Margaret and Herman Sokol Faculty Award.

\bibliography{Nhat,NPB,Nguyen,Aritra}
\newpage
\section{Appendix A: Weighted Hellinger and Wasserstein distance}
\label{section:appendix_A}
In this appendix, we will establish several useful bounds between 
weighted Hellinger distance and Wasserstein metric that are employed in the proofs for misspecified settings of Section~\ref{section:Posterior Contraction:mis-specified}. See the formal setup of $G_0,f_0$ and $G_*$ in the beginning of that section. 

First, we start 
with the following lemma regarding an upper bound of weighted 
Hellinger distance in terms of Wasserstein metric when the kernel 
$f$ satisfies first order integral Lipschitz condition. 
\begin{lemma} \label{lemma:upper_bound_weighted_Hellinger_Wasserstein}
Assume that the kernel $f$ is integral Lipschitz up to the 
first order. Then, for any mixing measure $G_1$ and $G_2$ in 
$\Pcal(\Theta)$, there exists a positive constant 
$\overline{C}(\Theta)$ depending only on $\Theta$ such that
\begin{eqnarray}
\hba^{2}(p_{G_{1}},p_{G_{2}}) 
		\leq \overline{C}(\Theta) W_1(G_1,G_2). \nonumber 
\end{eqnarray}
\end{lemma}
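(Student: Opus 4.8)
The plan is to bound the weighted Hellinger distance first by a weighted $L_1$ (variational) distance, and then to control that $L_1$ distance by transporting mass optimally from $G_1$ to $G_2$, using the first-order integral Lipschitz property to estimate the resulting pointwise change in the mixture density. First I would use the elementary inequality $(\sqrt{a}-\sqrt{b})^2 \le |a-b|$ valid for $a,b\ge 0$, applied with $a=p_{G_1}(x)$ and $b=p_{G_2}(x)$, to get
\begin{eqnarray}
\hba^2(p_{G_1},p_{G_2}) = \frac12\int \left(\sqrt{p_{G_1}(x)}-\sqrt{p_{G_2}(x)}\right)^2 \frac{p_{G_0,f_0}(x)}{p_{G_*}(x)}\mathrm{d}x \le \frac12 \int |p_{G_1}(x)-p_{G_2}(x)|\,\frac{p_{G_0,f_0}(x)}{p_{G_*}(x)}\mathrm{d}x. \nonumber
\end{eqnarray}
So it suffices to bound the weighted variational distance $\vba(p_{G_1},p_{G_2})$ by $\overline{C}(\Theta)\,W_1(G_1,G_2)$.

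Next I would fix an optimal coupling $\vec{q}\in\mathcal{Q}(\vec{p}^{\,1},\vec{p}^{\,2})$ achieving $W_1(G_1,G_2)$ (for discrete $G_1=\sum_i p_i^1\delta_{\theta_i^1}$, $G_2=\sum_j p_j^2\delta_{\theta_j^2}$), so that $p_{G_1}(x)-p_{G_2}(x) = \sum_{i,j} q_{ij}\bigl(f(x|\theta_i^1)-f(x|\theta_j^2)\bigr)$. Then
\begin{eqnarray}
\vba(p_{G_1},p_{G_2}) \le \frac12 \sum_{i,j} q_{ij} \int |f(x|\theta_i^1)-f(x|\theta_j^2)|\,\frac{p_{G_0,f_0}(x)}{p_{G_*}(x)}\mathrm{d}x. \nonumber
\end{eqnarray}
The key estimate is then a pointwise one: using a first-order Taylor expansion of $\theta\mapsto f(x|\theta)$ along the segment joining $\theta_i^1$ and $\theta_j^2$, together with the integral Lipschitz property (Definition~\ref{definition: integral_Lipschitz_first_order} with $r=1$), I would show $\int |f(x|\theta_i^1)-f(x|\theta_j^2)|\,\frac{p_{G_0,f_0}(x)}{p_{G_*}(x)}\mathrm{d}x \le \overline{C}(\Theta)\,\|\theta_i^1-\theta_j^2\|$. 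Concretely, write $f(x|\theta_i^1)-f(x|\theta_j^2) = \int_0^1 \sum_{|\kappa|=1}\frac{\partial f}{\partial\theta^\kappa}(x|\theta_j^2 + t(\theta_i^1-\theta_j^2))\,(\theta_i^1-\theta_j^2)^\kappa\,\mathrm{d}t$; the integral Lipschitz bound gives $|\sum_{|\kappa|=1}(\tfrac{\partial f}{\partial\theta^\kappa}(x|\vartheta_1)-\tfrac{\partial f}{\partial\theta^\kappa}(x|\vartheta_0))\gamma^\kappa| \le C(x)\|\vartheta_1-\vartheta_0\|^\delta\|\gamma\|$, and combining with a bound on the first derivatives at a fixed reference point (which is integrable against $p_{G_0,f_0}/p_{G_*}$ since the derivative of a fixed reference value plus the Lipschitz oscillation is, using compactness of $\Theta$), one gets $\int|f(x|\vartheta_1)-f(x|\vartheta_0)|\frac{p_{G_0,f_0}}{p_{G_*}}\mathrm{d}x \lesssim \|\vartheta_1-\vartheta_0\|$ with a constant depending only on $\Theta$ (via $\mathrm{Diam}(\Theta)$) and $\int C(x)\frac{p_{G_0,f_0}(x)}{p_{G_*}(x)}\mathrm{d}x < \infty$. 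Substituting back, $\vba(p_{G_1},p_{G_2}) \le \overline{C}(\Theta)\sum_{i,j}q_{ij}\|\theta_i^1-\theta_j^2\| = \overline{C}(\Theta)\,W_1(G_1,G_2)$, which is the claim (the extension to general $G_1,G_2\in\Pcal(\Theta)$ follows by the same coupling argument with sums replaced by integrals).

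The main obstacle I anticipate is justifying the integrability in the pointwise step: the integral Lipschitz condition controls the \emph{oscillation} of the first derivatives but to integrate $|f(x|\vartheta_1)-f(x|\vartheta_0)|$ against the weight $p_{G_0,f_0}/p_{G_*}$ one must also control the derivatives at a single base point $\theta_0 \in \Theta$, i.e.\ show $\int |\sum_{|\kappa|=1}\tfrac{\partial f}{\partial\theta^\kappa}(x|\theta_0)\gamma^\kappa|\frac{p_{G_0,f_0}(x)}{p_{G_*}(x)}\mathrm{d}x<\infty$; this should follow by comparing to the oscillation bound over all of $\Theta$ (which is bounded, so its diameter is finite) plus the fact that $\int f(x|\theta_0)\,\frac{p_{G_0,f_0}(x)}{p_{G_*}(x)}\mathrm{d}x \le 1$ by Lemma~\ref{lemma:misspecified_optimal}, but care is needed to keep the constant depending only on $\Theta$. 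A careful bookkeeping of which quantities the final constant $\overline{C}(\Theta)$ depends on — it should absorb $\mathrm{Diam}(\Theta)$ and the finite integral of $C(x)$ against $p_{G_0,f_0}/p_{G_*}$ — is the one delicate point; everything else is routine Taylor expansion and the triangle inequality for couplings.
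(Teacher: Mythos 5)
Your proposal follows essentially the same route as the paper's proof: bound $\hba^2$ by the weighted total variation via $(\sqrt{a}-\sqrt{b})^2\le|a-b|$, decompose $p_{G_1}-p_{G_2}$ along a coupling of the weight vectors, apply the first-order integral Lipschitz property to each term $\int|f(x|\theta_{i}^1)-f(x|\theta_{j}^2)|\frac{p_{G_0,f_0}}{p_{G_*}}\,\mathrm{d}x$, and take the infimum over couplings. Your elaboration of the Taylor-expansion step (and the observation that the constant really also absorbs $\int C(x)\frac{p_{G_0,f_0}}{p_{G_*}}\,\mathrm{d}x$, not just $\Theta$) is more explicit than the paper's one-line justification, but it is the same argument.
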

\begin{proof}
Denote the weighted total variation distance as follows
\begin{eqnarray}
\overline{V}(p_{G_{1}},p_{G_{2}}) 
		= \dfrac{1}{2} \int |p_{G_{1}}(x) - p_{G_{2}}(x)|
		\dfrac{p_{G_0,f_0}(x)}{p_{G_{*}}(x)}\mathrm{d}\mu(x) \nonumber
\end{eqnarray}
for any $G_{1}, G_{2} \in \Pcal(\Theta)$. 
It is clear that $\hba^{2}(p_{G_{1}},p_{G_{2}}) \leq \overline{V}
(p_{G_{1}},p_{G_{2}})$ for any $G_{1},G_{2} \in \Pcal(\Theta)$. 

For any coupling $\vec{q}$ of the weight vectors of 
$G_{1} = \sum \limits_{i=1}^{k_{1}}{p_{i,1}\delta_{\theta_{i,1}}}$ 
and $G_{2} = \sum \limits_{i=1}^{k_{2}}{p_{i,2}\delta_{\theta_{i,2}}}$, 
we can check via triangle inequality that
\begin{eqnarray}
\overline{V}(p_{G_{1}},p_{G_{2}}) 
		& \leq & \dfrac{1}{2}\int \sum \limits_{i,j} q_{ij}|f(x|
		\theta_{i,1})-f(x|\theta_{j,2})|\dfrac{p_{G_0,f_0}(x)}
		{p_{G_{*}}(x)}\mathrm{d}\mu(x) \nonumber \\
		& \leq & \overline{C}(\Theta)\sum \limits_{i,j} q_{ij}\|
		\theta_{i,1}-\theta_{j,2}\| \nonumber
\end{eqnarray}
where the existence of positive constant $\overline{C}(\Theta)$ in 
the second inequality is due to the first order integral 
Lipschitz property of $f$. The above result implies that
\begin{eqnarray}
\hba^{2}(p_{G_{1}},p_{G_{2}}) 
		\leq \overline{V}(p_{G_{1}},p_{G_{2}}) \leq \overline{C}
		(\Theta) W_{1}(G_{1},G_{2}) \nonumber
\end{eqnarray}
for any $G_{1}, G_{2} \in \Pcal(\Theta)$. We achieve the 
conclusion of the lemma.
\end{proof}
\subsection{Proof of Lemma~\ref{lemma:misspecified_optimal_unique}}
\label{subsection:proof_lemma_misspecified_optimal}
The proof is a straightforward application of Lemma \ref{lemma:misspecified_optimal}. In fact, from that lemma, we have
\begin{eqnarray}
2 \leq {\displaystyle \int \biggr(\dfrac{p_{G_{1,*}}(x)}{p_{G_{2,*}}(x)}+\dfrac{p_{G_{2,*}}(x)}{p_{G_{1,*}}(x)}\biggr) p_{G_0,f_0}(x) \textrm{d}\mu(x) \leq 2} \nonumber
\end{eqnarray}
where the first inequality is due to Cauchy inequality. The above inequality holds only when $p_{G_{1,*}}(x)=p_{G_{2,*}}(x)$ for almost all $x \in \mathcal{X}$, which concludes our lemma. 
\subsection{Proof of Proposition~\ref{proposition:weighted_Hellinger_Wasserstein_metric}}
\label{subsection:proof_proposition_weighted_hellinger}
Denote the weighted total variation distance as follows
\begin{eqnarray}
\overline{V}(p_{G_{1}},p_{G_{2}}) = \dfrac{1}{2} \int |p_{G_{1}}(x) - p_{G_{2}}(x)|\dfrac{p_{G_0,f_0}(x)}{p_{G_{*}}(x)}\mathrm{d}\mu(x) \nonumber
\end{eqnarray}
for any $G_{1}, G_{2} \in \Gcal$. Then, by means of Holder's inequality, we can 
verify that
\begin{eqnarray}
\overline{V}(p_{G_{1}},p_{G_{2}}) & \leq & \sqrt{2} \overline{h}(p_{G_{1}},p_{G_{2}}) \biggr(\int (\sqrt{p_{G_{1}}(x)} + \sqrt{p_{G_{2}}(x)})^{2}\dfrac{p_{G_0,f_0}(x)}{p_{G_{*}}(x)}\mathrm{d}\mu(x)\biggr)^{1/2} \nonumber \\
& \leq & 2\sqrt{2} \overline{h}(p_{G_{1}},p_{G_{2}})  \nonumber
\end{eqnarray}
where the last inequality is due to Lemma \ref{lemma:misspecified_optimal}. 
Therefore, to obtain the conclusion of the proposition, it is sufficient to 
demonstrate that 
\begin{eqnarray}
\inf \limits_{G \in \Ocal_{\overline{k}}} \overline{V}(p_{G},p_{G_{*}}) / 
W_{2}^{2}(G,G_{*}) > 0 \label{eq:proof_proposition_weighted_Hellinger_Wasserstein_first}
\end{eqnarray}
where $\overline{k} > k_{*}$.
Firstly, we will show that
\begin{eqnarray}
\lim \limits_{\epsilon \to 0} \inf \limits_{G_{*} 
		\in \Ocal_{\overline{k}}} \left\{\dfrac{\overline{V}
		(p_{G},p_{G_{*}})}{W_{2}^{2}(G,G_{*})}: W_{2}(G,G_{*}) 
		\leq \epsilon \right\} >0. \nonumber
\end{eqnarray}
Assume that the above inequality does not hold. It implies that there exists a sequence of $G_{n} \in \Ocal_{\overline{k}}(\Theta)$ such that $\overline{V}
(p_{G_{n}},p_{G_{*}})/W_{2}^{2}(G_{n},G_{*}) \to 0$ as $n \to \infty$. By means 
of Fatou's lemma, we have
\begin{eqnarray}
 0 & = & \mathop {\lim \inf} \limits_{n \to \infty} 
 \dfrac{\overline{V}(p_{G_{n}},p_{G_{*}})}{W_{2}^{2}(G_{n},G_{*})}  
 \geq  \dfrac{1}{2} \int \mathop {\lim \inf} \limits_{n \to 
 \infty} \dfrac{|p_{G_{n}}(x) -  p_{G_{*}}(x)|\dfrac{p_{G_0,f_0}(x)}
 {p_{G_{*}}(x)}}{W_{2}^{2}(G_{n},G_{*})}\mathrm{d}\mu(x). \nonumber
\end{eqnarray}
Hence, for almost surely $x \in \mathcal{X}$, we obtain that
\begin{eqnarray}
\mathop {\lim \inf} \limits_{n \to \infty} \dfrac{|p_{G_{n}}(x) -  p_{G_{*}}(x)|\dfrac{p_{G_0,f_0}(x)}{p_{G_{*}}(x)}}{W_{2}^{2}(G_{n},G_{*})} = 0. \nonumber
\end{eqnarray}
The above equality is equivalent to
\begin{align}
\mathop {\lim \inf} \limits_{n \to \infty} \dfrac{|p_{G_{n}}(x) -  p_{G_{*}}(x)|}{W_{2}^{2}(G_{n},G_{*})} = 0 \nonumber
\end{align}
for almost surely $x \in \mathcal{X}$. However, using the same argument as that of Theorem 3.2 in~\cite{Ho-Nguyen-EJS-16}, the above equality cannot hold due to 
the second order identifiability of $f$, which is a contradiction. Therefore, we can 
find positive constant $\epsilon_{0}>0$ such that as long as $W_{2}(G,G_{*}) 
\leq \epsilon_{0}$, we achieve that $\overline{V}(p_{G},p_{G_{*}}) \gtrsim 
W_{2}^{2}(G,G_{*})$. As a consequence, to obtain the conclusion of 
\eqref{eq:proof_proposition_weighted_Hellinger_Wasserstein_first}, we only 
need to verify that
\begin{eqnarray}
\inf \limits_{G \in \Ocal_{\overline{k}}: \ W_{2}(G,G_{*})>\epsilon_0} \overline{V}(p_{G},p_{G_{*}})/W_{2}^{2}(G,G_{*}) >0. \nonumber
\end{eqnarray}
Assume that the above result does not hold. It implies that we can find a 
sequence of $G_{n} \in \Ocal_{\overline{k}}$ such that $W_{2}(G_{n},G_{*}) >
\epsilon_{0}$ and $\overline{V}(p_{G_{n}},p_{G_{*}})/W_{2}^{2}(G_{n},G_{*}) 
\to 0$ as $n \to \infty$. Since $\Theta$ is a bounded subset of $\mathbb{R}
^{d}$, we can find a subsequence of $G_{n}$ such that $W_{1}(G_{n},
\overline{G}) \to 0$ for some $\overline{G} \in \Ocal_{\overline{k}}$ such that 
$W_{2}(\overline{G},G_{*}) \geq \epsilon_{0}$. From our hypothesis, we will 
have that $\overline{V}(p_{G_{n}},p_{G_{*}}) \to 0$. However, by virtue of 
Fatou's lemma, we obtain that
\begin{eqnarray}
0 = \mathop {\lim \inf} \limits_{n \to \infty} \overline{V}(p_{G_{n}},p_{G_{*}}) \geq \overline{V}(p_{\overline{G}},p_{G_{*}}). \nonumber
\end{eqnarray}
The above equation leads to $p_{\overline{G}}(x) = p_{G_{*}}(x)$ 
for almost surely $x \in \mathcal{X}$. Due to the identifiability 
of $f$, the previous equation leads to $\overline{G} \equiv G_{*}$, 
which is a contradiction to the assumption that $W_{2}
(\overline{G},G_{*}) \geq \epsilon_{0}$. We obtain the conclusion 
of the proposition.

\section{Appendix B: Posterior contraction under misspecification}
\label{section:appendix_B}
This appendix is devoted to the description of a general method for establishing posterior convergence rates of mixing measures under misspecified settings, extending the methods of~\cite{Kleijn-2006} and~\cite{Nguyen-13}. 
 Once the general method is fully developed we shall be ready to complete the proofs of the main theorems of Section~\ref{section:Posterior Contraction:mis-specified}, which are given in Section~\ref{section:appendix_C}. Recall the weighted Hellinger distance defined in~\eqref{definition:weighted_Hellinger}, which leads to the following definition. 
\begin{definition} \label{definition:hellinger_to_Wasserstein}
For any set $\Scal \subset \overline{\Gcal}$, define a real-valued function $\Psiba_{\Scal}: \mathbb{R} \to \mathbb{R}^{+}$ as follows
\begin{eqnarray}
\Psiba_{\Scal}(r)= \inf \limits_{G \in \Scal: \ 
		W_{2}(G,G_{*}) \geq r/2} \hba^{2}(p_{G},p_{G_{*}}) \nonumber
\end{eqnarray}
for any $r \in \mathbb{R}$.
\end{definition}
A key ingredient to establishing the posterior contraction bounds is through the existence of tests for subsets of parameters of interest. In the model misspecification setting, it is no longer appropriate to test any mixing measure $G$ against true measure $G_{0}$. Instead, following~\cite{Kleijn-2006}, it is appropriate to test any mixing measure $G$ against $G_{*}$, which is ultimately achieved by testing $\dfrac{p_{G_{0},f_0}}{p_{G_{*}}}p_{G}$ against $p_{G_{0},f_0}$.
This insight leads us to the following crucial result regarding 
the existence of test for discriminating $G_{*}$ against a closed 
Wasserstein metric ball centered at $G_{1}$ for any fixed pair of mixing measures $(G_{*},G_{1})$.
\begin{lemma} \label{lemma:existence_test_misspecified_kernel}
Consider $\Scal \in \overline{\Gcal}$ such that $G_{*} \in \Scal$. 
Given $G_{1} \in \Scal$ such that $W_{2}(G_{1},G_{*}) \geq r$ for some 
$r > 0$. Assume that either one of the following two sets of 
conditions holds:
\begin{itemize}
\item[(1)] $\mathcal{S}$ is a convex set, in which case, let 
$\overline{M}(\Scal,G_{1},r)  = 1$.
\item[(2)] $\mathcal{S}$ is a nonconvex set. In addition, $f$ has 
first order integral Lipschitz property. In this case, we 
define that
\begin{eqnarray}
\overline{M}(\Scal,G_{1},r) 
		= D\biggr( \dfrac{\Psiba_{\Scal}(r)}{8\overline{C}
		(\Theta)}, \Scal \cap B_{W_{2}}(G_{1},r/2), W_{2}\biggr). \nonumber
\end{eqnarray}
\end{itemize}
\noindent
Then, there exists tests $\phi_{n}$ such that
\begin{eqnarray}
P_{G_{0},f_0} \phi_{n} 
		& \leq & \overline{M}(\Scal,G_{1},r) \exp(- n 
		\Psiba_{\Scal}(r)/8), \label{eq:existence_test_equ_first} \\
\sup \limits_{G \in \Scal \cap B_{W_{2}}(G_{1},r/2)} \dfrac{P_{G_{0},f_0}}{P_{G_{*}}}P_{G} (1-\phi_{n}) 
		& \leq & \exp(- n \Psiba_{\Scal}(r)/8). \label{eq:existence_test_equ_second}
\end{eqnarray}
\end{lemma}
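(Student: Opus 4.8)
The plan is to adapt the classical Birgé--Le~Cam testing argument to the misspecified setting as in~\cite{Kleijn-2006,Nguyen-13}, by passing to the \emph{tilted} sub-probability densities $q_{0}:=p_{G_{0},f_0}$ and $q_{G}:=\dfrac{p_{G_{0},f_0}}{p_{G_{*}}}\,p_{G}$ for $G\in\Pcal(\Theta)$. By Lemma~\ref{lemma:misspecified_optimal}, $\int q_{G}\,\mathrm{d}\mu\le 1$, so each $q_{G}$ is a genuine sub-probability density, and the operator $\dfrac{P_{G_{0},f_0}}{P_{G_{*}}}P_{G}$ occurring in~\eqref{eq:existence_test_equ_second} is exactly integration against $q_{G}$. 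Two preliminary observations drive everything. First, if $G\in\Scal\cap B_{W_{2}}(G_{1},r/2)$ then the triangle inequality and $W_{2}(G_{1},G_{*})\ge r$ give $W_{2}(G,G_{*})\ge r/2$, so $\hba^{2}(p_{G},p_{G_{*}})\ge\Psiba_{\Scal}(r)$ by Definition~\ref{definition:hellinger_to_Wasserstein}. Second, one has the exact identity $h^{2}(q_{0},q_{G})=\hba^{2}(p_{G},p_{G_{*}})$, where $h^{2}(q_{0},q):=\frac{1}{2}\int(\sqrt{q_{0}}-\sqrt{q})^{2}\,\mathrm{d}\mu$; this follows by expanding both sides and using $\int q_{0}\,\mathrm{d}\mu=1$ together with $\sqrt{p_{G}p_{G_{*}}}\,\frac{p_{G_{0},f_0}}{p_{G_{*}}}=\sqrt{q_{0}q_{G}}$. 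Hence testing $p_{G_{0},f_0}$ against the tilted family becomes an honest Hellinger testing problem against $q_{0}$, with separation governed by $\Psiba_{\Scal}(r)$.

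For the convex case (1): since $G\mapsto p_{G}$ is affine and $W_{2}^{2}$ is jointly convex --- so $W_{2}$-balls, and a convex $\Scal$, are mapped by $G\mapsto q_{G}$ to convex sets of sub-densities --- the set $\mathcal{Q}:=\{q_{G}:G\in\Scal\cap B_{W_{2}}(G_{1},r/2)\}$ is convex, with $\inf_{q\in\mathcal{Q}}h^{2}(q_{0},q)\ge\Psiba_{\Scal}(r)$ by the two observations. I would then invoke the minimax test for a convex family against a fixed sub-probability --- the likelihood-ratio test against the Hellinger projection of $q_{0}$ onto $\mathcal{Q}$, in the sub-probability form of~\cite{Kleijn-2006} --- which, after the standard bookkeeping on the universal constant, yields a test $\phi_{n}$ with $P_{G_{0},f_0}\phi_{n}\le\exp(-n\Psiba_{\Scal}(r)/8)$ and $\sup_{G\in\Scal\cap B_{W_{2}}(G_{1},r/2)}\frac{P_{G_{0},f_0}}{P_{G_{*}}}P_{G}(1-\phi_{n})\le\exp(-n\Psiba_{\Scal}(r)/8)$; this is~\eqref{eq:existence_test_equ_first}--\eqref{eq:existence_test_equ_second} with $\overline{M}=1$.

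For the nonconvex case (2): the first-order integral Lipschitz property of $f$ makes Lemma~\ref{lemma:upper_bound_weighted_Hellinger_Wasserstein} applicable, giving $\hba^{2}(p_{G},p_{G'})\le\overline{C}(\Theta)W_{1}(G,G')\le\overline{C}(\Theta)W_{2}(G,G')$. Put $\eta:=\Psiba_{\Scal}(r)/(8\overline{C}(\Theta))$ and take a maximal $\eta$-packing of $\Scal\cap B_{W_{2}}(G_{1},r/2)$ --- which is also an $\eta$-cover --- with points $G_{1}',\dots,G_{N}'$ of this set, so $N=D(\eta,\Scal\cap B_{W_{2}}(G_{1},r/2),W_{2})=\overline{M}(\Scal,G_{1},r)$. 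For the $j$-th ball, $\mathcal{Q}_{j}:=\{q_{\widetilde{G}}:\widetilde{G}\in B_{W_{2}}(G_{j}',\eta)\}$ is convex (joint convexity of $W_{2}^{2}$ again), and for every $\widetilde{G}\in B_{W_{2}}(G_{j}',\eta)$ the triangle inequality for $\hba$ together with the previous display yield
\[
h(q_{0},q_{\widetilde{G}})=\hba(p_{\widetilde{G}},p_{G_{*}})\ge\hba(p_{G_{j}'},p_{G_{*}})-\sqrt{\overline{C}(\Theta)\eta}\ge\sqrt{\Psiba_{\Scal}(r)}-\sqrt{\Psiba_{\Scal}(r)/8}\ge\frac{1}{2}\sqrt{\Psiba_{\Scal}(r)},
\]
where the middle step uses $G_{j}'\in\Scal$ and $W_{2}(G_{j}',G_{*})\ge r/2$. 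Hence $\inf_{q\in\mathcal{Q}_{j}}h^{2}(q_{0},q)\ge\Psiba_{\Scal}(r)/4$, and applying the convex test of part (1) to each $\mathcal{Q}_{j}$ produces $\phi_{n,j}$ with both errors at most $\exp(-n\Psiba_{\Scal}(r)/8)$. Setting $\phi_{n}:=\max_{1\le j\le N}\phi_{n,j}$, a union bound gives $P_{G_{0},f_0}\phi_{n}\le N\exp(-n\Psiba_{\Scal}(r)/8)=\overline{M}(\Scal,G_{1},r)\exp(-n\Psiba_{\Scal}(r)/8)$, while the type-II bound passes to the maximum without loss because the cover exhausts $\Scal\cap B_{W_{2}}(G_{1},r/2)$.

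The main obstacle I expect is the careful handling of the sub-probability normalization of the $q_{G}$'s, so that the classical convex testing lemma --- usually stated for probability measures --- can be invoked with precisely the exponential rate claimed; this is the essential technical subtlety inherited from~\cite{Kleijn-2006}. A secondary point is the bookkeeping behind the choice $\eta\propto\Psiba_{\Scal}(r)$: it must be small enough that the packing-induced loss in the Hellinger projection distance costs only a bounded fraction (landing on the factor $8$), yet large enough that $N$ still matches the stated $\overline{M}(\Scal,G_{1},r)$.
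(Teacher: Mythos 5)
Your proposal is correct and follows essentially the same route as the paper: reduce to the tilted sub-densities $q_G=\frac{p_{G_0,f_0}}{p_{G_*}}p_G$ and invoke the Kleijn--van der Vaart convex-alternative test directly in case (1), and in case (2) take a maximal $W_2$-packing of $\Scal\cap B_{W_2}(G_1,r/2)$, use the integral-Lipschitz upper bound $\hba^2\lesssim W_1\le W_2$ plus the triangle inequality to keep each local alternative Hellinger-separated from $q_0$ by $\gtrsim\sqrt{\Psiba_\Scal(r)}/2$, and combine the local tests by a union bound. The only (immaterial) difference is that you take the local alternatives to be images of $W_2$-balls while the paper uses weighted-Hellinger balls $A_t$ around the packing points; both are convex and yield the same $\exp(-n\Psiba_\Scal(r)/8)$ bounds.
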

By means of the existence of tests in Lemma~\ref{lemma:existence_test_misspecified_kernel}, 
we have the following result regarding testing $G_{*}$ versus a 
complement of a closed Wasserstein ball.
\begin{lemma} \label{lemma:test_complement_ball}
Assume that all the conditions in Lemma~\ref{lemma:existence_test_misspecified_kernel} hold. 
Let $D(\epsilon)$ be a non-decreasing function such that , for 
some $\epsilon_{n} \geq 0$ and every $\epsilon>\epsilon_{n}$, 
\begin{eqnarray}
\sup \limits_{G \in \Scal} \overline{M}(\Scal,G,r) 
		D(\epsilon/2, \Scal \cap B_{W_{2}}(G_{*},2\epsilon) 
		\backslash B_{W_{2}}(G_{*},\epsilon), W_{2}) \leq 
		D(\epsilon). \nonumber
\end{eqnarray}
Then, for every $\epsilon>\epsilon_{n}$ there exist tests 
$\phi_{n}$ (depending on $\epsilon>0$) such that, for every $J \in 
\mathbb{N}$,
\begin{eqnarray}
P_{G_{0},f_0} \phi_{n} 
		& \leq & D(\epsilon) \sum \limits_{t=J}^{[\text{Diam}
		(\Theta)/\epsilon]}\exp(- n \Psiba_{\Scal}(t\epsilon)/8), \label{eq:test_complement_ball_first} \\
\sup \limits_{G \in \Scal: W_{2}(G,G_{*})> J\epsilon} 
		\dfrac{P_{G_{0},f_0}}{P_{G_{*}}}P_{G} (1-\phi_{n}) & \leq & 
		\exp(- n \Psiba_{\Scal}(J\epsilon)/8). \label{eq:test_complement_ball_second}
\end{eqnarray}
\end{lemma}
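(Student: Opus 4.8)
The plan is to adapt the classical ``peeling'' construction of tests from~\cite{Kleijn-2006,Nguyen-13} to the weighted-Hellinger geometry, obtaining $\phi_{n}$ as the maximum of tests supplied by Lemma~\ref{lemma:existence_test_misspecified_kernel} over a family of small $W_{2}$-balls. Fix $\epsilon>\epsilon_{n}$. Since $\Theta$ is bounded we have $W_{2}(G,G_{*})\le\text{Diam}(\Theta)$ for every $G\in\overline{\Gcal}$, so
\[
\{G\in\Scal:\ W_{2}(G,G_{*})>J\epsilon\}=\bigcup_{t=J}^{[\text{Diam}(\Theta)/\epsilon]}A_{t},\qquad A_{t}:=\{G\in\Scal:\ t\epsilon<W_{2}(G,G_{*})\le(t+1)\epsilon\},
\]
a finite union of annular shells. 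For $t\ge1$, $(t+1)\epsilon\le2t\epsilon$, hence $A_{t}\subset\Scal\cap\bigl(B_{W_{2}}(G_{*},2t\epsilon)\setminus B_{W_{2}}(G_{*},t\epsilon)\bigr)$. The first step is to cover each $A_{t}$ by $N_{t}$ closed $W_{2}$-balls of radius $t\epsilon/2$ with centres $G_{t,1},\dots,G_{t,N_{t}}\in A_{t}$, where $N_{t}=D\bigl(t\epsilon/2,\ \Scal\cap(B_{W_{2}}(G_{*},2t\epsilon)\setminus B_{W_{2}}(G_{*},t\epsilon)),\ W_{2}\bigr)$ is exactly the covering number entering the hypothesis at scale $t\epsilon$.

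Since each centre satisfies $W_{2}(G_{t,l},G_{*})>t\epsilon$, applying Lemma~\ref{lemma:existence_test_misspecified_kernel} with $G_{1}=G_{t,l}$ and $r=t\epsilon$ yields a test $\phi_{n,t,l}$ with $P_{G_{0},f_0}\phi_{n,t,l}\le\overline{M}(\Scal,G_{t,l},t\epsilon)\exp(-n\Psiba_{\Scal}(t\epsilon)/8)$ and $\sup_{G\in\Scal\cap B_{W_{2}}(G_{t,l},t\epsilon/2)}\frac{P_{G_{0},f_0}}{P_{G_{*}}}P_{G}(1-\phi_{n,t,l})\le\exp(-n\Psiba_{\Scal}(t\epsilon)/8)$. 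I would then set $\phi_{n}:=\max\phi_{n,t,l}$, the maximum over $t=J,\dots,[\text{Diam}(\Theta)/\epsilon]$ and $l=1,\dots,N_{t}$. A union bound gives
\[
P_{G_{0},f_0}\phi_{n}\le\sum_{t=J}^{[\text{Diam}(\Theta)/\epsilon]}\Bigl(\textstyle\sum_{l=1}^{N_{t}}\overline{M}(\Scal,G_{t,l},t\epsilon)\Bigr)e^{-n\Psiba_{\Scal}(t\epsilon)/8}\le D(\epsilon)\sum_{t=J}^{[\text{Diam}(\Theta)/\epsilon]}e^{-n\Psiba_{\Scal}(t\epsilon)/8},
\]
where the inner sum is at most $N_{t}\sup_{G\in\Scal}\overline{M}(\Scal,G,t\epsilon)$, which the hypothesis (applied at $t\epsilon>\epsilon_{n}$) together with the stated monotonicity of $D$ controls by $D(\epsilon)$; in the convex case $\overline{M}\equiv1$ and the estimate is immediate. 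For the second claim, any $G\in\Scal$ with $W_{2}(G,G_{*})>J\epsilon$ lies in some $A_{t}$ with $t\ge J$, hence in some ball $B_{W_{2}}(G_{t,l},t\epsilon/2)$, so
\[
\frac{P_{G_{0},f_0}}{P_{G_{*}}}P_{G}(1-\phi_{n})\le\frac{P_{G_{0},f_0}}{P_{G_{*}}}P_{G}(1-\phi_{n,t,l})\le e^{-n\Psiba_{\Scal}(t\epsilon)/8}\le e^{-n\Psiba_{\Scal}(J\epsilon)/8},
\]
the last step because $r\mapsto\Psiba_{\Scal}(r)$ is non-decreasing, its defining infimum being taken over a shrinking family as $r$ grows.

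The delicate point I expect to spend the most care on is the covering step and its matching to the hypothesized functional $D(\cdot)$: one must cover each shell $A_{t}$ by $W_{2}$-balls whose radius is at most half the distance of their centres from $G_{*}$, so that Lemma~\ref{lemma:existence_test_misspecified_kernel} can be invoked with a radius $r=t\epsilon$ that grows with $t$ --- this growing $r$ is exactly what makes the exponential rate $\Psiba_{\Scal}(t\epsilon)$ in the type~II error improve across shells --- and then verify that the product of covering number and worst-case $\overline{M}$ over $\Scal$ is dominated by $D(\epsilon)$ via the monotone bound in the hypothesis. In the convex regime this is vacuous since $\overline{M}\equiv1$; in the nonconvex regime $\overline{M}(\Scal,G,r)$ is a local $W_{2}$-entropy number, so the hypothesis is precisely the combined-entropy condition that is needed. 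The rest --- reducing the complement of the ball to a finite union of shells, and the union bound over tests --- is routine.
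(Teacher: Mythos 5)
Your proposal is correct and follows essentially the same argument as the paper's proof: peel the complement of the $J\epsilon$-ball into annuli indexed by $t$, take a maximal $t\epsilon/2$-packing (hence covering) of each, invoke Lemma~\ref{lemma:existence_test_misspecified_kernel} at each centre with $r=t\epsilon$, define $\phi_n$ as the maximum of the resulting tests, and finish with a union bound for the type~I error and the monotonicity of $\Psiba_{\Scal}$ for the type~II error. The only cosmetic difference is that you first cover width-$\epsilon$ shells sitting inside the annuli $B_{W_2}(G_*,2t\epsilon)\setminus B_{W_2}(G_*,t\epsilon)$ rather than packing those annuli directly; this changes nothing in the bounds.
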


For any $\epsilon>0, M>0$, we define a generalized Kullback-Leibler neighborhood of $G_{*}$ by
\begin{eqnarray}
B_{K}^{*}(\epsilon, G_{*},P_{G_{0},f_0},M) & : = & \biggr\{G \in \overline{\Gcal}: 
		\ -P_{G_{0},f_0} \log \dfrac{p_{G}}{p_{G_{*}}} \leq 
		\epsilon^{2}\log{M/\epsilon} +\epsilon, \nonumber \\
		& & \hspace{ 6 em} P_{G_{0},f_0}
		\biggr(\log \dfrac{p_{G}}{p_{G_{*}}}\biggr)^{2} \leq 
		\epsilon^{2} \left( \log(M/\epsilon)\right)^2 \biggr\}. \label{eq:KL_neighborhood_misspecified}
\end{eqnarray}
Invoking the results in Lemma~\ref{lemma:existence_test_misspecified_kernel} and Lemma~\ref{lemma:test_complement_ball}, we have the following theorem establishing posterior 
contraction rate for $G_{*}$. This theorem generalizes
Theorem 3 in~\cite{Nguyen-13} to the misspecified setting.
\begin{theorem} \label{theorem:posterior_contraction_rate_misspecified_convex}
Suppose that for a sequence of $\left\{\epsilon_{n}\right\}_{n \geq 1}$ 
that tends to a constant (or $0$) such that $n\epsilon_{n}^{2} \to 
\infty$, and constants $C,M>0$, and convex sets $\Gcal_{n} \subset 
\overline{\Gcal}$, we have
\begin{eqnarray}
\log D(\epsilon_{n},\Gcal_{n},W_{2}) \leq n \epsilon_{n}^2, \label{eq:posterior_contraction_rate_convex_first_condition} \\
\Pi(\overline{\Gcal} \backslash \Gcal_{n}) \leq \exp(-n(\epsilon_n^{2}\log(M/\epsilon_n) + \epsilon_n)(C+4)) , \label{eq:posterior_contraction_rate_convex_second_condition} \\
\Pi(B_{K}^{*}(\epsilon_{n}, G_{*},P_{G_{0},f_0},M)) \geq \exp(-n(\epsilon_n^{2}\log(M/\epsilon_n) + \epsilon_n) C), \label{eq:posterior_contraction_rate_convex_third_condition}
\end{eqnarray}
Additionally, $M_{n}$ is a sequence such that
\begin{eqnarray}
\Psiba_{\Gcal_{n}}(M_{n}\epsilon_{n}) \geq 8(\epsilon_n^{2}\log(M/\epsilon_n) + \epsilon_n)(C+4), \label{eq:posterior_contraction_rate_convex_fourth_condition} \\
\exp(2n(\epsilon_n^{2}\log(M/\epsilon_n) + \epsilon_n))\sum \limits_{j \geq M_{n}} \exp(-n\Psiba_{\Gcal_{n}}(j\epsilon_{n})/8) \to 0. \label{eq:posterior_contraction_rate_convex_fifth_condition}
\end{eqnarray}
Then, $\Pi(G \in \overline{\Gcal}: \ W_{2}(G,G_{*}) \geq M_{n}\epsilon_{n}|X_{1},\ldots,X_{n}) \to 0$ in $P_{G_{0},f_0}$-probability. 
\end{theorem}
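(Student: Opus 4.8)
The plan is to follow the classical three-step scheme for posterior contraction rates (prior mass, sieve complement, testing), in the variant adapted to misspecification by~\cite{Kleijn-2006}: instead of comparing likelihood ratios against the unavailable $p_{G_0,f_0}$, all integrals are taken relative to the reference density $p_{G_*}$. Writing $\xi_n := \epsilon_n^2\log(M/\epsilon_n)+\epsilon_n$, the posterior probability of the bad event $A_n := \{G\in\overline{\Gcal}: W_2(G,G_*)\geq M_n\epsilon_n\}$ equals
$$\Pi(A_n\mid X_1,\ldots,X_n)=\frac{\int_{A_n}\prod_{i=1}^n \frac{p_G(X_i)}{p_{G_*}(X_i)}\,d\Pi(G)}{\int_{\overline{\Gcal}}\prod_{i=1}^n \frac{p_G(X_i)}{p_{G_*}(X_i)}\,d\Pi(G)},$$
and it suffices to lower bound the denominator and upper bound the numerator with $P_{G_0,f_0}^n$-probability tending to one. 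The device that legitimizes the reference $p_{G_*}$ throughout is Lemma~\ref{lemma:misspecified_optimal}, i.e. $P_{G_0,f_0}(p_G/p_{G_*})\leq 1$, which is used repeatedly below via Fubini.

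For the denominator I would restrict the integral to the generalized Kullback--Leibler neighborhood $B_K^*(\epsilon_n,G_*,P_{G_0,f_0},M)$ of~\eqref{eq:KL_neighborhood_misspecified}, normalize the prior there, and apply the standard second-moment (Chebyshev) argument: on $B_K^*$ the mean of $\log(p_G/p_{G_*})$ under $P_{G_0,f_0}$ is at least $-\xi_n$ and its second moment is at most $\epsilon_n^2(\log(M/\epsilon_n))^2$, which forces $\int\prod_i\frac{p_G(X_i)}{p_{G_*}(X_i)}\,d\Pi(G)\geq \Pi(B_K^*)\,e^{-2n\xi_n}$ outside an event of probability $O(1/(n\xi_n))$. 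Combined with the prior mass bound~\eqref{eq:posterior_contraction_rate_convex_third_condition} and $n\epsilon_n^2\to\infty$, this gives a denominator at least $e^{-(C+2)n\xi_n}$ with probability tending to one.

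For the numerator I would split $A_n=(A_n\cap\Gcal_n^c)\cup(A_n\cap\Gcal_n)$. On $\Gcal_n^c$, Fubini together with $P_{G_0,f_0}(p_G/p_{G_*})\leq 1$ gives $E_{G_0,f_0}\big[\int_{\Gcal_n^c}\prod_i \frac{p_G(X_i)}{p_{G_*}(X_i)}\,d\Pi(G)\big]\leq \Pi(\Gcal_n^c)\leq e^{-(C+4)n\xi_n}$ by~\eqref{eq:posterior_contraction_rate_convex_second_condition}, so Markov's inequality bounds this integral by $e^{-(C+3)n\xi_n}$ with probability $\geq 1-e^{-n\xi_n}$; dividing by the denominator yields $\Pi(\Gcal_n^c\mid X_1,\ldots,X_n)\leq e^{-n\xi_n}\to 0$. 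For $A_n\cap\Gcal_n$ I would invoke Lemma~\ref{lemma:test_complement_ball} with $\Scal=\Gcal_n$ (convex, so the $\overline M$ factor equals $1$ by the convex case of Lemma~\ref{lemma:existence_test_misspecified_kernel}), $\epsilon=\epsilon_n$ and $J=M_n$, obtaining tests $\phi_n$ whose type-I error is at most $D(\epsilon_n)\sum_{t\geq M_n}e^{-n\Psiba_{\Gcal_n}(t\epsilon_n)/8}$; this tends to $0$ since the entropy bound~\eqref{eq:posterior_contraction_rate_convex_first_condition} gives $\log D(\epsilon_n)\lesssim n\epsilon_n^2\leq n\xi_n$ and~\eqref{eq:posterior_contraction_rate_convex_fifth_condition} kills the sum, while $\sup_{G\in\Gcal_n:\,W_2(G,G_*)>M_n\epsilon_n}\frac{P_{G_0,f_0}}{P_{G_*}}P_G(1-\phi_n)\leq e^{-n\Psiba_{\Gcal_n}(M_n\epsilon_n)/8}\leq e^{-(C+4)n\xi_n}$ by~\eqref{eq:posterior_contraction_rate_convex_fourth_condition}. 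Integrating the type-II bound against $\Pi$ and applying Markov again bounds the numerator over $A_n\cap\Gcal_n$ by $e^{-(C+3)n\xi_n}$ with probability tending to one. Assembling, $\Pi(A_n\mid X_1,\ldots,X_n)\leq \phi_n+(1-\phi_n)\Pi(A_n\cap\Gcal_n\mid X_1,\ldots,X_n)+\Pi(\Gcal_n^c\mid X_1,\ldots,X_n)\leq \phi_n+2e^{-n\xi_n}\to 0$ in $P_{G_0,f_0}$-probability.

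The routine parts are the Fubini/Markov manipulations and the constant bookkeeping: the exponents $C$, $C+2$, $C+3$, $C+4$ appearing in~\eqref{eq:posterior_contraction_rate_convex_second_condition}--\eqref{eq:posterior_contraction_rate_convex_fourth_condition} are calibrated precisely so that every ratio carries a spare factor $e^{-n\xi_n}$. The genuinely delicate ingredient --- the existence of a single test $\phi_n$ that simultaneously controls the type-I error and the type-II error over the entire far region $\{W_2(G,G_*)>M_n\epsilon_n\}$, obtained by slicing into Wasserstein annuli at radii $M_n\epsilon_n,(M_n+1)\epsilon_n,\dots$ and exploiting the weighted-Hellinger separation $\hba^2(p_G,p_{G_*})\geq\Psiba_{\Gcal_n}(\cdot)$ (which is non-decreasing in its argument) --- has already been isolated in Lemmas~\ref{lemma:existence_test_misspecified_kernel} and~\ref{lemma:test_complement_ball}. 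Hence the present proof is essentially an assembly of those pieces in the style of Theorem~3 of~\cite{Nguyen-13}; the only point demanding care beyond the well-specified case is consistently carrying the reference $p_{G_*}$ (rather than $p_{G_0,f_0}$) through the likelihood ratios, which is validated at each step by Lemma~\ref{lemma:misspecified_optimal}.
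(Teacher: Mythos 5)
Your proposal is correct and follows essentially the same route the paper takes: the paper does not write out a dedicated proof of this convex variant, but its proof of the companion non-convex result (Theorem~\ref{theorem:posterior_contraction_rate_misspecified} in Appendix~C) is exactly your scheme --- lower-bound the $p_{G_*}$-normalized denominator via the generalized KL neighborhood and Chebyshev, kill $\overline{\Gcal}\setminus\Gcal_n$ by Fubini plus Lemma~\ref{lemma:misspecified_optimal}, and test the far Wasserstein annuli via Lemmas~\ref{lemma:existence_test_misspecified_kernel} and~\ref{lemma:test_complement_ball}. The convex case is precisely the specialization in which the test-multiplicity factor $\overline{M}(\Scal,G_1,r)$ equals $1$, as you note, and your constant bookkeeping with $\xi_n=\epsilon_n^2\log(M/\epsilon_n)+\epsilon_n$ matches the calibration of conditions \eqref{eq:posterior_contraction_rate_convex_second_condition}--\eqref{eq:posterior_contraction_rate_convex_fourth_condition}.
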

The above theorem is particularly useful for establishing the 
convergence rate of $G_{*} \in \Pcal(\Theta)$ for which the suitable sieves $\Gcal_n$ of mixture densities are convex classes of functions. 
In the situation where $\Gcal_n$ are non-convex, we need the following
result, which is the generalization of Theorem 4 in~\cite{Nguyen-13}. 

\begin{theorem} \label{theorem:posterior_contraction_rate_misspecified}
Assume that $f$ admits the first order integral Lipschitz 
property. Additionally, there is a sequence $\epsilon_{n}$ with $
\epsilon_{n} \to 0$ such that $n\log(1/\epsilon_n)^{-2}$ is 
bounded away from 0, a sequence $M_{n}$, a constant $M>0$, and a sequence of sets $
\Gcal_{n} \subset \overline{\Gcal}$ such that the following 
conditions hold
\begin{eqnarray}
\log D(\epsilon/2, \Gcal_{n} \cap B_{W_{2}}(G_{*},2\epsilon) 
		\backslash B_{W_{2}}(G_{*},\epsilon), W_{2}) + \sup 	
		\limits_{G \in \Gcal_{n}} \log \overline{M}(\Gcal_{n},G,r) 
		\leq n\epsilon_{n}^2 \ \forall \ \epsilon \geq \epsilon_{n}, \label{eq:posterior_contraction_rate_first_condition} \\
\dfrac{\Pi(\overline{\Gcal} \backslash \Gcal_{n})}{\Pi(B_{K}^{*}(\epsilon_{n},G_{*},P_{G_{0},f_0},M))}=o\left(\exp \left(-2n \left( 
		\epsilon_{n}^{2}\log \left(\frac{M}{\epsilon_n}\right) + 
		\epsilon_n \right) \right)\right), \label{eq:posterior_contraction_rate_second_condition} \\
\dfrac{\Pi(B_{W_{2}}(G_{*},2j\epsilon_{n}) \backslash B_{W_{2}}
		(G_{*},j\epsilon_{n}))}
		{\Pi(B_{K}^{*}(\epsilon_{n},G_{*},P_{G_{0},f_0},M))} \leq \exp
		\biggr(n\Psiba_{\Gcal_{n}}(j\epsilon_{n})/16\biggr), \ 
		\forall j \geq M_{n} \label{eq:posterior_contraction_rate_third_condition} \\
\exp \left(2n \left( \epsilon_{n}^{2}\log \left(\frac{M}
		{\epsilon_n}\right) + \epsilon_n \right) \right)\sum 
		\limits_{j \geq M_{n}}{\exp\left(-n\Psiba_{\Gcal_{n}}(j
		\epsilon_{n})/16\right)} \to 0. \label{eq:posterior_contraction_rate_fourth_condition}
\end{eqnarray}
Then, we have that $\Pi(G \in \overline{\Gcal}: W_{2}(G,G_{*}) 
\geq M_{n}\epsilon_{n}|X_{1},\ldots,X_{n}) \to 0$ in $P_{G_{0},f_0}$-
probability.
\end{theorem}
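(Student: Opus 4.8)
The plan is to run the standard posterior-contraction scheme of Ghosal--Ghosh--van der Vaart, but carried out with the likelihood ratios normalized by the (mis-specified) limit density $p_{G_{*}}$ rather than by the truth $p_{G_{0},f_0}$, exactly as in~\cite{Kleijn-2006} and~\cite{Nguyen-13}. Writing $A_{n}=\{G:W_{2}(G,G_{*})\geq M_{n}\epsilon_{n}\}$, I would first record the identity
\[
\Pi(A_{n}\mid X_{1},\ldots,X_{n})=\frac{\int_{A_{n}}\prod_{i=1}^{n}\frac{p_{G}(X_{i})}{p_{G_{*}}(X_{i})}\,\mathrm{d}\Pi(G)}{\int\prod_{i=1}^{n}\frac{p_{G}(X_{i})}{p_{G_{*}}(X_{i})}\,\mathrm{d}\Pi(G)}=:\frac{N_{n}}{D_{n}},
\]
which is legitimate because multiplying numerator and denominator of~\eqref{Bayes} by $\prod_{i}p_{G_{*}}(X_{i})^{-1}$ changes nothing. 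The three ingredients already available are: Lemma~\ref{lemma:misspecified_optimal}, guaranteeing that each tilted measure $Q_{G}$ with density $\frac{p_{G_{0},f_0}}{p_{G_{*}}}p_{G}$ is a sub-probability measure ($Q_{G}(\mathbf{1})\leq 1$); Lemma~\ref{lemma:test_complement_ball}, supplying the tests that control $N_{n}$; and the generalized KL neighborhood $B_{K}^{*}$ of~\eqref{eq:KL_neighborhood_misspecified}, whose defining moment bounds control $D_{n}$. Throughout I abbreviate $\tilde\epsilon_{n}^{2}:=\epsilon_{n}^{2}\log(M/\epsilon_{n})+\epsilon_{n}$.

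Second, for the denominator I would prove that, with the constant $C$ taken from condition~\eqref{eq:posterior_contraction_rate_third_condition}, the event $\{D_{n}\geq \Pi(B_{K}^{*}(\epsilon_{n},G_{*},P_{G_{0},f_0},M))\,e^{-(1+C)n\tilde\epsilon_{n}^{2}}\}$ has $P_{G_{0},f_0}^{n}$-probability tending to one. Restricting the integral to $B_{K}^{*}$ and applying Jensen's inequality to the normalized prior there bounds $D_{n}$ below by $\Pi(B_{K}^{*})\exp(\sum_{i}\bar Z(X_{i}))$, where $\bar Z(x)$ is the $\Pi$-average over $B_{K}^{*}$ of $\log\frac{p_{G}(x)}{p_{G_{*}}(x)}$. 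The first defining inequality of $B_{K}^{*}$ gives $P_{G_{0},f_0}\bar Z\geq -\tilde\epsilon_{n}^{2}$, and the second, via Jensen, bounds $\mathrm{Var}(\bar Z)$ by $\epsilon_{n}^{2}(\log(M/\epsilon_{n}))^{2}$; Chebyshev's inequality then yields the claim, the hypothesis controlling $n(\log(1/\epsilon_{n}))^{-2}$ (which tends to infinity in all applications of the theorem) making the deviation probability vanish. This is precisely where the excess-KL interpretation $-P_{G_{0},f_0}\log\frac{p_{G}}{p_{G_{*}}}\geq 0$, furnished by $G_{*}$ being the KL minimizer, is silently used.

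Third, I would handle the numerator by invoking Lemma~\ref{lemma:test_complement_ball} with $\Scal=\Gcal_{n}$, $\epsilon=\epsilon_{n}$ and $J=M_{n}$ to obtain a test $\phi_{n}$, built shell by shell as in Lemma~\ref{lemma:existence_test_misspecified_kernel}, and then split $N_{n}=N_{n}^{(1)}+N_{n}^{(2)}$ according to $\Gcal_{n}$ and $\overline{\Gcal}\setminus\Gcal_{n}$. For the off-sieve piece, $P_{G_{0},f_0}^{n}N_{n}^{(2)}\leq \Pi(\overline{\Gcal}\setminus\Gcal_{n})$ because $Q_{G}^{n}(\mathbf{1})\leq 1$ (Lemma~\ref{lemma:misspecified_optimal}); dividing by the denominator bound and using condition~\eqref{eq:posterior_contraction_rate_second_condition} makes it vanish in probability. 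For the on-sieve piece I would use the key Fubini identity $P_{G_{0},f_0}^{n}\big[\prod_{i}\frac{p_{G}(X_{i})}{p_{G_{*}}(X_{i})}(1-\phi_{n})\big]=Q_{G}^{n}(1-\phi_{n})$ together with the weighted type-II bound of Lemma~\ref{lemma:test_complement_ball}, decomposing $A_{n}\cap\Gcal_{n}$ into Wasserstein shells indexed by $j\geq M_{n}$; since $\Psiba_{\Gcal_{n}}$ is nondecreasing in its argument (the infimum is over a shrinking set), the shell-wise test gives type-II error at most $e^{-n\Psiba_{\Gcal_{n}}(j\epsilon_{n})/8}$ on the shell at radius $\asymp j\epsilon_{n}$. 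Its prior mass is controlled by condition~\eqref{eq:posterior_contraction_rate_third_condition} as $\Pi(\mathrm{shell}_{j})\leq \Pi(B_{K}^{*})\,e^{n\Psiba_{\Gcal_{n}}(j\epsilon_{n})/16}$, so after dividing by the denominator the residual $e^{(1+C)n\tilde\epsilon_{n}^{2}}\sum_{j\geq M_{n}}e^{-n\Psiba_{\Gcal_{n}}(j\epsilon_{n})/16}$ is killed by condition~\eqref{eq:posterior_contraction_rate_fourth_condition}. Meanwhile the type-I error $P_{G_{0},f_0}^{n}\phi_{n}\leq D(\epsilon_{n})\sum_{j\geq M_{n}}e^{-n\Psiba_{\Gcal_{n}}(j\epsilon_{n})/8}$ tends to zero once condition~\eqref{eq:posterior_contraction_rate_first_condition} bounds the covering/multiplicity factor $D(\epsilon_{n})$ by $e^{n\epsilon_{n}^{2}}$ and condition~\eqref{eq:posterior_contraction_rate_fourth_condition} controls the series.

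Finally, I would assemble the pieces through $N_{n}/D_{n}\leq \phi_{n}+(1-\phi_{n})N_{n}/D_{n}$ on the good denominator event, show the $P_{G_{0},f_0}^{n}$-expectation of the right-hand side tends to zero, and conclude that $\Pi(A_{n}\mid X_{1},\ldots,X_{n})\to 0$ in $P_{G_{0},f_0}$-probability. Granting the two testing lemmas as black boxes, the main obstacle is the careful matching of exponential constants across the four hypotheses --- in particular ensuring the factor $e^{(1+C)n\tilde\epsilon_{n}^{2}}$ coming from the denominator is dominated by the $e^{2n\tilde\epsilon_{n}^{2}}$ budget built into conditions~\eqref{eq:posterior_contraction_rate_second_condition} and~\eqref{eq:posterior_contraction_rate_fourth_condition} (which forces $C\leq 1$). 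The genuinely new analytic content relative to the well-specified Theorem~\ref{theorem:posterior_contraction_Nguyen} is entirely absorbed into replacing the likelihood ratios by the tilted sub-probabilities $Q_{G}$ and the Hellinger information by its weighted analogue $\Psiba$.
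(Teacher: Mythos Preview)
Your proposal is correct and follows essentially the same route as the paper's proof: the paper likewise normalizes the Bayes ratio by $p_{G_*}$, invokes an auxiliary lemma (the Chebyshev/Jensen argument you describe, credited to Kleijn--van der Vaart and Ghosal--Ghosh--van der Vaart) to lower-bound the denominator on a high-probability event, uses Lemma~\ref{lemma:test_complement_ball} with $D(\epsilon_n)=e^{n\epsilon_n^2}$ for the tests, and handles the numerator via the same sieve/off-sieve split with Fubini and Lemma~\ref{lemma:misspecified_optimal} for the sub-probability bound, followed by the shell decomposition. Your observation about the constant $C$ is apt: the paper in fact takes $C=1$ (writing ``$C\geq 1$'') to match the $e^{2n\tilde\epsilon_n^2}$ budget, which is the same constraint you identified.
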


\section{Appendix C}
\label{section:appendix_C}
We are now ready to complete the proof of the main posterior contraction theorems stated in Section~\ref{section:Posterior Contraction:mis-specified}.


\subsection{Proof of Theorem~\ref{theorem:posterior_rate_Gaussian}}
\label{ssub:proof_theorem_posterior_rate_Gaussian}

Note that the MFM prior places full mass on discrete measure with finite support, it is enough to show $\Pi\biggr(G \in \Gcal(\Theta): W_{2}(G,G_*) 
		\gtrsim  \left(\frac{\log \log n}{\log n}\right)^{1/2}\biggr|
		X_{1},\ldots,X_{n}\biggr) \to 0$.

The proof of this result is a straightforward application of 
Proposition~\ref{proposition:lower_bound_weighted_Hellinger_Wasserstein_infinite_Gaussian}, Lemma~\ref{lemma:Prior_mass_MFM}, 
and Theorem~\ref{theorem:posterior_contraction_rate_misspecified_convex}; 
therefore, we will only provide a sketch of this proof. Similar to 
the proof of Theorem~\ref{theorem:convergence_rate_well_specified} (for the well-specified setting), we proceed by constructing a 
sequence $\epsilon_n$ and sieves $\Gcal_n$ that satisfy all the 
conditions specified in Theorem~\ref{theorem:posterior_contraction_rate_misspecified_convex}.

\paragraph{Step 1:}  First, we choose $\epsilon_n$ to satisfy condition~\eqref{eq:posterior_contraction_rate_convex_third_condition} 
in Theorem~\ref{theorem:posterior_contraction_rate_misspecified_convex}. 
To that effect, we proceed by making  use of the results from 
Lemma 8.1 in~\cite{Kleijn-2006}. 
In particular, from Lemma 8.1 of~\cite{Kleijn-2006}, as long as $P$ 
is a probability measure and $Q$ is a finite measure (with 
densities $p$ and $q$ respectively, with respect to Lebesgue 
measure on $\mathbb{R}^d$) such that $h(p,q) \leq \epsilon$ and $
\int p^{2}/q \leq M$, we obtain that 
\begin{eqnarray}
P\log(p/q) & \lesssim & \epsilon^{2}\log(M/\epsilon)+\|p-q\|_{1}, \nonumber \\
P(\log(p/q))^{2} & \lesssim & \epsilon^{2}(\log(M/\epsilon))^{2},  
\end{eqnarray} 
where the constants in these bounds are universal. For the purpose 
of our proof, we will choose $p = p_{G_{0},f_0}$ and 
$q = p_{G}p_{G_{0},f_0}/p_{G_{*}}$.

Since the Gaussian kernel satisfies the integral 
Lipschitz property up to the first order, by invoking the result of 
Lemma~\ref{lemma:upper_bound_weighted_Hellinger_Wasserstein}, we have
\begin{eqnarray}
\|p-q\|_1= \biggr\|\frac{p_{G_0,f_0}}{p_{G_*}}(p_G-p_{G_*}) \biggr\|_{1} \lesssim W_1(G,G_*) \leq W_2(G,G_*).
\end{eqnarray}
Additionally, for the Gaussian kernel $f$ given by~\eqref{eqn:location_Gaussian_formulation}, we find that 
\begin{eqnarray}
\overline{h}^2(p_{G},p_{G_{*}}) \leq C_{1}W_{2}^2(G,G^{*})
\end{eqnarray} 
for any $G \in \overline{\mathcal{G}}(\Theta)$. For Gaussian location mixtures as long as there exists 
$\epsilon_{0}>0$ such that $W_{2}(G,G_{*}) \leq \epsilon_{0}$, 
we also can check that $\int p_{G_0,f_0}(x)p_{G_{*}}(x)/p_{G}(x)\mathrm{d}\mu(x) \leq M^{*}(\epsilon_{0})$ for some positive constant $M^{*}(\epsilon_{0})$ depending only on $\epsilon_{0}, G_{0}$, and $\Theta$. Therefore, as long as $W_{2}(G,G_{*}) \leq \epsilon \leq \epsilon_0$, 
we have  for mixing measure $G$, 
\begin{eqnarray}
 - P_{G_0,f_0} \log(p_{G}/p_{G_{*}}) 
 		& \leq & \epsilon^2(\log(M/\epsilon))  + \epsilon, \nonumber \\
   P_{G_0,f_0} [\log(p_{G}/p_{G_{*}})]^2 
		& \leq & \epsilon^2(\log(M/\epsilon))^2 \nonumber,
\end{eqnarray} 
 where $M := M^{*}(\epsilon_0)$. The constants in the bounds are all universal.

Governed by this result, we can write 
\begin{eqnarray}
\Pi(B_{K}^{*}(\epsilon_{n},G_*,P_{G_{0},f_0},M)) 
		\geq \Pi(W_2(G,G_*)\lesssim \epsilon_n) \nonumber
\end{eqnarray} 
for any sequence $\epsilon_n \leq \epsilon_0$.
Now, the packing number $D=D(\epsilon_n)$ (with packing radius $\epsilon_n$) in Lemma~\ref{lemma:Prior_mass_MFM} 
satisfies $D(\epsilon_n) \asymp \left(\frac{\text{Diam}(\Theta)}
{\epsilon_n}\right)^d$. Following the result in Lemma~\ref{lemma:Prior_mass_MFM}, we have with $r=2$ that 
\begin{eqnarray}
\log(\Pi(B_{K}^{*}(\epsilon_{n},G_*,P_{G_{0},f_0}.M))) 
		& \gtrsim & D(\epsilon_n) (\log c_0 + \log(\epsilon_n/
		D(\epsilon_n))) \nonumber \\
		& & \hspace{ 2 em} + \log(\epsilon_n/
		D(\epsilon_n))( 1 + (1+(2/d))\gamma(D(\epsilon_n)-1)/
		D(\epsilon_n) ) \nonumber.
\end{eqnarray} 

With $\epsilon_n \asymp (\frac{\log n}{n})^{1/(d+2)} $, one can check that condition \eqref{eq:posterior_contraction_rate_convex_third_condition} and ~\eqref{eq:posterior_contraction_rate_convex_first_condition} hold. 

\paragraph{Step 2:} Note that condition \eqref{eq:posterior_contraction_rate_convex_second_condition} holds automatically since we take $\mathcal{G}_n=\overline{\mathcal{G}}$, while condition \eqref{eq:posterior_contraction_rate_convex_first_condition} follows from Lemma 4 in~\cite{Nguyen-13}. 

\paragraph{Step 3:} Next we will show condition \eqref{eq:posterior_contraction_rate_convex_fourth_condition} and condition \eqref{eq:posterior_contraction_rate_convex_fifth_condition} for some appropriate choice of $M_n$ for the $\epsilon_n$ considered in Step 1. 

Following proposition \ref{proposition:lower_bound_weighted_Hellinger_Wasserstein_infinite_Gaussian} 
we know that $\Psiba_{\Gcal_{n}}(r) \gtrsim \exp\biggr(-(1 + 
(2\lambda_{max}/\lambda_{min}))/r^{2}\biggr)$. Using this fact, we 
can check to see that $M_n$ such that $M_n \epsilon_n \approx 
\left(\frac{\log \log(n)}{\log n}\right)^{1/2}$ works, with $
\epsilon_n \approx (\frac{\log n}{n})^{1/(d+2)}$.
\subsection{Proof of Lemma~\ref{lemma:test_complement_ball}}
\label{subsection:proof_lemma_test_complement}
Consider a $ t\epsilon/2$ maximal-packing of the set 
$B_{W_{1}}(G_{*},2t\epsilon) \backslash B_{W_{1}}(G_{*},t\epsilon)$. 
Let $S_t$ be the corresponding set of $D(t\epsilon/2,\Scal \cap 
B_{W_{1}}(G_{*},2t\epsilon) \backslash B_{W_{1}}(G_{*},t\epsilon)$  
points obtained there in. Then as in Lemma~\ref{lemma:existence_test_misspecified_kernel} 
corresponding to  each point $G_1$ in $S_t$,  there exist $
\omega_{n,t}$ which satisfies~\eqref{eq:existence_test_equ_first} 
and~\eqref{eq:existence_test_equ_second}. 
Then taking  $\phi_n$ as the supremum over all these tests $
\omega_{n,t}$ over all points in $S_t$ , all $t\geq J$  we see 
that by the union bound,
\begin{eqnarray}
P_{G_0,f_0} \phi_n 
		& \leq & \sum_{t>J} \sum_{G_1 \in S_t}\overline{M}
		(\Scal,G_{1},t\epsilon) \exp(-n\Psiba_{\Scal}(t\epsilon)/8), \nonumber \\
		&\leq & D(\epsilon)\sum \limits_{t=J}^{[\text{Diam}
		(\Theta)/\epsilon]}\exp(- n \Psiba_{\Scal}(t\epsilon)/8) \nonumber \\
		\sup \limits_{G \in  \cup_{t \geq J }\{B_{W_{1}}(G_{*},2t
		\epsilon) \backslash B_{W_{1}}(G_{*},t\epsilon)\} } 
		\dfrac{P_{G_0,f_0}}{P_{G_{*}}}P_{G} (1-\phi_n) & \leq & \sup 
		\limits_{t \geq J}\exp(-n\Psiba_{\Scal}(t\epsilon)/8)  
		\leq \exp(-n\Psiba_{\Scal}(J\epsilon)/8). \nonumber
\end{eqnarray}
The last inequality follows from the fact that $\Psiba_{\Scal}
(\cdot)$ is an increasing function in its argument.

\subsection{Proof of Theorem~\ref{theorem:posterior_contraction_rate_misspecified}}
\label{subsection:proof_theorem_posterior_contraction_misspecified}
The following lemma is analogous to Lemma 7.1 in~\cite{Kleijn-2006} and Lemma 8.1 in~\cite{Ghosal-Ghosh-vanderVaart-00} and can be similarly proved.
\begin{lemma}
For every $M,\epsilon>0$, $C>0$, and probability measure $\Pi$ on $G$, we obtain that
\begin{eqnarray}
& & \hspace{- 1 em} P_{G_0,f_0}\biggr(\int \prod \limits_{i=1}^{n}{\dfrac{p_{G}(X_{i})}{p_{G_{*}}(X_{i})}d\Pi(G)} 
		\leq \Pi(B_{K}^{*}(\epsilon, G_{*},P_{G_{0},f_0},M))\exp(-
		(1+C)n(\epsilon^{2}\log{(M/\epsilon)} +\epsilon)\biggr) \nonumber \\
		& & \hspace{26 em} \leq \dfrac{\log^2(M/\epsilon)}{C^2n\left(1+ \epsilon\log(M/\epsilon)\right)^2}. \nonumber
\end{eqnarray}
\end{lemma}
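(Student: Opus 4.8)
The plan is to follow the classical ``evidence lower bound'' argument (as in Lemma 7.1 of~\cite{Kleijn-2006} and Lemma 8.1 of~\cite{Ghosal-Ghosh-vanderVaart-00}), adapted to the misspecified likelihood ratio $p_G/p_{G_*}$. Write $B := B_{K}^{*}(\epsilon, G_{*}, P_{G_0,f_0}, M)$; we may assume $\Pi(B)>0$, since otherwise the densities being strictly positive makes the integrand positive $P_{G_0,f_0}$-a.s.\ and the asserted event has probability zero. First I would restrict the integral to $B$ and normalize,
\[
\int \prod_{i=1}^n \dfrac{p_G(X_i)}{p_{G_*}(X_i)}\, d\Pi(G) \;\geq\; \Pi(B) \int_B \prod_{i=1}^n \dfrac{p_G(X_i)}{p_{G_*}(X_i)}\, \dfrac{d\Pi(G)}{\Pi(B)},
\]
and then apply Jensen's inequality to the concave function $\log$ with respect to the probability measure $d\Pi(G)/\Pi(B)$ on $B$. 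This reduces the claim to a lower bound, holding with high $P_{G_0,f_0}$-probability, for the random variable $Z := \sum_{i=1}^n W_i$, where $W_i := \int_B \log\!\big(p_G(X_i)/p_{G_*}(X_i)\big)\, d\Pi(G)/\Pi(B)$: indeed, on the event $\{Z > -(1+C)n(\epsilon^2\log(M/\epsilon)+\epsilon)\}$ the displayed lower bound together with Jensen forces $\int \prod_i (p_G/p_{G_*})\,d\Pi(G) > \Pi(B)\exp\!\big(-(1+C)n(\epsilon^2\log(M/\epsilon)+\epsilon)\big)$.

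Next I would compute the first two moments of $Z$ under $P_{G_0,f_0}$. Since $X_1,\dots,X_n$ are i.i.d.\ and $Z$ is a sum of the i.i.d.\ terms $W_i$, Fubini gives $\mathbb{E}_{G_0,f_0}[Z] = n\int_B P_{G_0,f_0}\log(p_G/p_{G_*})\, d\Pi(G)/\Pi(B)$, which by the first defining inequality of $B_K^*$ in~\eqref{eq:KL_neighborhood_misspecified} is at least $-n(\epsilon^2\log(M/\epsilon) + \epsilon)$. For the variance, $\mathrm{Var}_{G_0,f_0}(Z) = n\,\mathrm{Var}_{G_0,f_0}(W_1) \leq n\,\mathbb{E}_{G_0,f_0}[W_1^2]$; applying Jensen (convexity of $x\mapsto x^2$) inside the integral over $B$ and then Fubini gives $\mathbb{E}_{G_0,f_0}[W_1^2] \leq \int_B P_{G_0,f_0}\big(\log(p_G/p_{G_*})\big)^2\, d\Pi(G)/\Pi(B) \leq \epsilon^2(\log(M/\epsilon))^2$ by the second defining inequality of $B_K^*$. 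Hence $\mathrm{Var}_{G_0,f_0}(Z) \leq n\,\epsilon^2(\log(M/\epsilon))^2$.

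Finally I would invoke Chebyshev's inequality. Since $\mathbb{E}_{G_0,f_0}[Z] \geq -n(\epsilon^2\log(M/\epsilon) + \epsilon)$, the event $\{Z \leq -(1+C)n(\epsilon^2\log(M/\epsilon)+\epsilon)\}$ is contained in $\{Z - \mathbb{E}_{G_0,f_0}[Z] \leq -Cn(\epsilon^2\log(M/\epsilon)+\epsilon)\}$, whose probability Chebyshev bounds by
\[
\dfrac{n\,\epsilon^2(\log(M/\epsilon))^2}{C^2 n^2(\epsilon^2\log(M/\epsilon)+\epsilon)^2} \;=\; \dfrac{(\log(M/\epsilon))^2}{C^2 n\,(1 + \epsilon\log(M/\epsilon))^2},
\]
where I used $\epsilon^2\log(M/\epsilon) + \epsilon = \epsilon\,(1 + \epsilon\log(M/\epsilon))$ to simplify. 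Combining this with the reduction of the first paragraph yields exactly the stated bound. There is no real obstacle here; the only points needing a little care are the direction of Jensen's inequality at each use (so that both the mean bound and the final bound point the right way) and the Fubini interchanges, which are legitimate because the relevant integrands are $\Pi$-integrable on $B$ by the very definition of $B_K^*$.
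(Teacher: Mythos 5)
Your proof is correct and is exactly the standard argument the paper has in mind: the paper omits the proof, noting only that the lemma "is analogous to Lemma 7.1 in Kleijn--van der Vaart and Lemma 8.1 in Ghosal--Ghosh--van der Vaart and can be similarly proved," and that classical argument is precisely your restrict-to-$B_K^*$, Jensen, moment-bound, Chebyshev chain. The algebraic simplification $\epsilon^{2}\log(M/\epsilon)+\epsilon=\epsilon\,(1+\epsilon\log(M/\epsilon))$ recovers the stated constant exactly, so nothing is missing.
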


Equipped with this lemma we can now prove the theorem as follows.
Denote $A_{n}$ the event such that 
\begin{align*}
{\displaystyle \int \prod \limits_{i=1}^{n}{\dfrac{p_{G}(X_{i})}
{p_{G_{*}}(X_{i})}d\Pi(G)} \leq \Pi(B_{K}^{*}
(\epsilon,G_{*},P_{G_{0},f_0},M))\exp(-n(\epsilon^{2}\log(M/\epsilon) +
\epsilon)(1+C))}.
\end{align*}
The above result indicates that $P_{G_0,f_0} 
\mathbbm{1}_{A_{n}}  \leq (C^{2}n)^{-1}\log(M/\epsilon)^{2}$ for 
any $\epsilon>0$ and $C>0$.

For any sequence $\epsilon_{n}$,  we denote $\mathcal{U}_{n}=\left
\{G \in \overline{\Gcal}: W_{2}(G,G_{*}) \geq M_{n}\epsilon_{n}\right\}$ 
and $S_{n,j}=\left\{G \in \Gcal_{n}: \right. \\ \left. 
W_{2}(G,G_{*}) \in [j\epsilon_{n},(j+1)\epsilon_{n})\right\}$ for 
any $j \geq 1$. From the result of Lemma~\ref{lemma:existence_test_misspecified_kernel} 
and condition~\eqref{eq:posterior_contraction_rate_misspecified_first}, 
there exists a test $\phi_{n}$ such that inequality ~\eqref{eq:test_complement_ball_first} and~\eqref{eq:test_complement_ball_second} 
hold when $D(\epsilon_{n})=\exp(n\epsilon_{n}^{2})$. Now, we have
\begin{eqnarray}
& & \hspace{-2 em} P_{G_0,f_0} \Pi(G \in \mathcal{U}_{n}|X_{1},
		\ldots,X_{n}) \nonumber \\
& & \hspace{1 em} = P_{G_0,f_0}\phi_{n}\Pi(G \in \mathcal{U}_{n}|
		X_{1},\ldots,X_{n}) +P_{G_0,f_0}(1-\phi_{n})\mathbbm{1}_{A_{n}}
		\Pi(G \in \mathcal{U}_{n}|X_{1},\ldots,X_{n}) \nonumber \\
& & \hspace{1 em} + P_{G_0,f_0}(1-\phi_{n})\mathbbm{1}_{A_{n}^{c}}			\Pi(G \in \mathcal{U}_{n}|X_{1},\ldots,X_{n}) \nonumber \\
& & \hspace{1 em} \leq P_{G_0,f_0}\phi_{n}+P_{G_0,f_0}\mathbbm{1}_{A_{n}}
		+P_{G_0,f_0}(1-\phi_{n})\mathbbm{1}_{A_{n}^{c}}\Pi(G \in 
		\mathcal{U}_{n}|X_{1},\ldots,X_{n}). \label{eq:posterior_contraction_rate_misspecified_zero}
\end{eqnarray}
According to Lemma~\ref{lemma:test_complement_ball}, 
we have $P_{G_0,f_0}\phi_{n} \leq \exp(n\epsilon_{n}^{2})
\sum \limits_{j \geq M_{n}}{\exp\left(-n\Psiba_{\Gcal_{n}}(j
\epsilon_{n})/8\right)} \to 0$, 
which is due to condition~\eqref{eq:posterior_contraction_rate_fourth_condition}. 
Additionally, from the formation of $A_{n}$, we also obtain that 
$P_{G_0,f_0}\mathbbm{1}_{A_{n}} \leq (C^{2}n)^{-1}\log(M/\epsilon)^{2}$. 
If $n\log(1/\epsilon)^{-2}\to \infty$, it is clear that $P_{G_0,f_0}
\mathbbm{1}_{A_{n}} \to 0$ for any $C \geq 1$. If $n\log(1/
\epsilon)^{-2}$ does not tend to $\infty$ but is bounded away from 0, 
then we can choose $C>0$ large enough such that 
$P_{G_{0}}\mathbbm{1}_{A_{n}}$ is sufficiently close to 0. 
Therefore, the first two terms in~\eqref{eq:posterior_contraction_rate_misspecified_first} 
can always be made to vanish to 0. To achieve the conclusion of 
the theorem, it is sufficient to demonstrate that the third term 
in~\eqref{eq:posterior_contraction_rate_misspecified_zero} 
goes to 0. In fact, we have the following equation
\begin{eqnarray}
\Pi(G \in \mathcal{U}_{n}|X_{1},\ldots,X_{n}) 
		= \biggr(\int \limits_{\mathcal{U}_{n}}{\prod 
		\limits_{i=1}^{n}{\dfrac{p_{G}(X_{i})}{p_{G_{*}}(X_{i})}d
		\Pi(G)}}\biggr)/\biggr(\int {\prod \limits_{i=1}^{n}
		{\dfrac{p_{G}(X_{i})}{p_{G_{*}}(X_{i})}d\Pi(G)}}\biggr). \nonumber
\end{eqnarray}
From the formulation of $A_{n}$, we have
\begin{eqnarray}
\label{eq:posterior_contraction_rate_misspecified_first}
& & \hspace{-4em} P_{G_0,f_0}(1-\phi_{n})\mathbbm{1}_{A_{n}^{c}}	
		\Pi(G \in \mathcal{U}_{n}|X_{1},\ldots,X_{n}) \\
& & \hspace{-4em} \leq \biggr\{P_{G_0,f_0}(1-\phi_{n}) \biggr(\int 
		\limits_{\mathcal{U}_{n}}{\prod \limits_{i=1}^{n}
		{\dfrac{p_{G}(X_{i})}{p_{G_{*}}(X_{i})}d\Pi(G)}}\biggr)
		\biggr\} \nonumber \\
& & \hspace{ 4 em} \bigg/\biggr\{ \Pi(B_{K}^{*}(\epsilon, G_{*},P_{G_{0},f_0},M))
		\exp(-(1+C)n(\epsilon^{2}\log(M/\epsilon) +\epsilon)\biggr
		\}\nonumber. 
\end{eqnarray}
By means of Fubini's theorem, we obtain that
\begin{eqnarray}
\label{eq:posterior_contraction_rate_misspecified_second}
P_{G_0,f_0}(1-\phi_{n})\biggr(\int \limits_{\mathcal{U}_{n} 
		\cap \Gcal_{n}}{\prod \limits_{i=1}^{n}{\dfrac{p_{G}
		(X_{i})}{p_{G_{*}}(X_{i})}d\Pi(G)}}\biggr) & = &  \int
		\limits_{\mathcal{U}_{n} \cap \Gcal_{n}} \dfrac{P_{G_0,f_0}}
		{p_{G_{*}}}p_{G} (1-\phi_{n}) d\Pi(G)  \nonumber \\
& \leq & \sum \limits_{j \geq M_{n}}{\Pi(S_{n,j})\exp(-n
		\Psiba_{\Gcal_{n}}(j\epsilon)/8)} 
\end{eqnarray}
where the last inequality is due to inequality~\eqref{eq:test_complement_ball_second} and condition~\eqref{eq:posterior_contraction_rate_misspecified_first}. 
Furthermore, by means of Fubini's theorem
\begin{eqnarray}
\label{eq:posterior_contraction_rate_misspecified_third}
P_{G_0,f_0}(1-\phi_{n})\biggr(\int \limits_{\mathcal{U}_{n}
		 \backslash \Gcal_{n}}{\prod \limits_{i=1}^{n}
		 {\dfrac{p_{G}(X_{i})}{p_{G_{*}}(X_{i})}d\Pi(G)}}\biggr) & 
		 \leq & P_{G_0,f_0} \int \limits_{\mathcal{U}_{n} \backslash 
		 \Gcal_{n}}{\prod \limits_{i=1}^{n}{\dfrac{p_{G}(X_{i})}
		 {p_{G_{*}}(X_{i})}d\Pi(G)}} \nonumber \\
		 & = & \int \limits_{\mathcal{U}_{n} \backslash \Gcal_{n}}
		 \biggr(\prod \limits_{i=1}^{n} \int \dfrac{p_{G}(x_{i})}
		 {p_{G_{*}}(x_{i})}p_{G_0,f_0}(x_{i})dx_{i} \biggr) \Pi(G) \nonumber \\
		& \leq & \int \limits_{\mathcal{U}_{n} \backslash 
		\Gcal_{n}} \Pi(G) = \Pi(\mathcal{U}_{n} \backslash 
		\Gcal_{n}) \leq \Pi(\overline{\Gcal} \backslash \Gcal_{n}) 
\end{eqnarray}
where the second inequality in the above result is due to Lemma~\ref{lemma:misspecified_optimal}. 
By combining the results of~\eqref{eq:posterior_contraction_rate_misspecified_first},~\eqref{eq:posterior_contraction_rate_misspecified_second}, and~\eqref{eq:posterior_contraction_rate_misspecified_third}, we obtain
\begin{eqnarray}
& & P_{G_0,f_0}(1-\phi_{n})\mathbbm{1}_{A_{n}^{c}}\Pi(G \in 
		\mathcal{U}_{n}|X_{1},\ldots,X_{n}) \nonumber \\
& & \hspace{5 em} \leq  \dfrac{\sum \limits_{j \geq M_{n}}
		{\Pi(S_{n,j})\exp(-n\Psiba_{\Gcal_{n}}(j\epsilon)/8)}+
		\Pi(\overline{\Gcal} \backslash \Gcal_{n})}{ \Pi(B_{K}
		^{*}(\epsilon, G_{*},P_{G_{0},f_0},M))\exp(-(1+C)n(\epsilon^{2}
		\log(M/\epsilon) +\epsilon)} \nonumber \\
& & \hspace{5 em} \leq \exp((1+C)n(\epsilon^{2}\log(M/\epsilon) +
		\epsilon)\sum \limits_{j \geq M_{n}}{\exp\left(-n
		\Psiba_{\Gcal_{n}}(j\epsilon_{n})/16\right)}\nonumber \\ & 
		& \hspace{20 em} + o(\exp((C-1)n(\epsilon^{2}\log{M/
		\epsilon} +\epsilon)) \nonumber 
\end{eqnarray}
where the last inequality is due to condition~\eqref{eq:posterior_contraction_rate_misspecified_second} and~\eqref{eq:posterior_contraction_rate_misspecified_third}. 
If $n\log(1/\epsilon_n)^{-2} $ is bounded away from 0 by choosing 
$C\geq 1$, the right hand side term of the above display will go 
to 0 due to condition~\eqref{eq:posterior_contraction_rate_fourth_condition}. Therefore, 
we have $P_{G_0,f_0}(1-\phi_{n})\mathbbm{1}_{A_{n}^{c}}\Pi(G \in 
\mathcal{U}_{n}|X_{1},\ldots,X_{n}) \to 0$ as $n \to \infty$.

As a consequence, $P_{G_0,f_0} \Pi(G \in \mathcal{U}_{n}|X_{1
},\ldots,X_{n}) \to 0$.  We achieve the conclusion of the theorem.
\subsection{Proof of Lemma~\ref{lemma:existence_test_misspecified_kernel}}
For the setting (1) when $\mathcal{S}$ is convex, 
since $B_{W_{2}}(G_{1},r/2)$ is a convex set, we also have $
\mathcal{S} \cap B_{W_{2}}(G_{1},r/2)$ is a convex set. By means 
of the result of Theorem 6.1 in~\cite{Kleijn-2006}, there exist 
tests $\phi_{n}$ such that
\begin{eqnarray}
P_{G_0,f_0} \phi_{n} 
		& \leq & \left[1-\dfrac{1}{2}\inf \limits_{G \in 
		\mathcal{S} \cap B_{W_{2}}(G_{1},r/2)}\hba^{2}
		(p_{G},p_{G_{*}})\right]^{n}, \nonumber \\
		\sup \limits_{G \in \mathcal{S} \cap B_{W_{2}}(G_{1},r/2)} 
		\dfrac{P_{G_0,f_0}}{p_{G_{*}}}p_{G} (1-\phi_{n}) & \leq & 
		\left[1-\dfrac{1}{2}\inf \limits_{G \in \mathcal{S} \cap 
		B_{W_{2}}(G_{1},r/2)}\hba^{2}(p_{G},p_{G_{*}})\right]^{n}. \nonumber
\end{eqnarray} 
Due to inequality $(1-x)^{n} \leq \exp(-n x)$ for all $0<x<1$ 
and $n \geq 1$, the above inequalities become
\begin{eqnarray}
P_{G_0,f_0} \phi_{n} 
		& \leq & \exp\biggr(-\dfrac{n}{2}\inf \limits_{G \in 
		\mathcal{S} \cap B_{W_{2}}(G_{1},r/2)}\hba^{2}
		(p_{G},p_{G_{*}})\biggr), \nonumber \\
		\sup \limits_{G \in \mathcal{S} \cap B_{W_{2}}(G_{1},r/2)} 
		\dfrac{P_{G_0,f_0}}{p_{G_{*}}}p_{G} (1-\phi_{n}) & \leq & 
		\exp\biggr(-\dfrac{n}{2}\inf \limits_{G \in \mathcal{S} 
		\cap B_{W_{2}}(G_{1},r/2)}\hba^{2}(p_{G},p_{G_{*}})
		\biggr). \nonumber
\end{eqnarray}
Now, since $W_{2}(G_{1},G_{*}) = r$ and $W_{2}(G,G_{1}) \leq r/2$ 
as long as $G \in \mathcal{S} \cap B_{W_{2}}(G_{1},r/2)$, it 
implies that $W_{2}(G_{1},G_{*}) \geq r/2$. Therefore, according 
to Definition~\ref{definition:hellinger_to_Wasserstein}, we will 
obtain that
\begin{eqnarray}
\Psiba_{\Scal}(r)= \inf \limits_{G \in \Scal: \ 
		d_{W_{2}}(G,G_{*}) \geq r/2} \hba^{2}(p_{G},p_{G_{*}}) 
		\leq \inf \limits_{G \in \mathcal{S} \cap B_{W_{2}}
		(G_{1},r/2)} \hba^{2}(p_{G},p_{G_{*}}). \nonumber
\end{eqnarray}
With the above inequality, we reach the conclusion of part (1). 

Regarding part (2), we consider a maximal $c_{0}r$-packing of $
\Scal \cap B_{W_{2}}(G_{1},r/2)$ under $W_{2}$ metric. It gives us 
a set of $\overline{M}=\overline{M}(\Scal,G_{1},r) = D(c_{0}r,
\Scal \cap B_{W_{2}}(G_{1},r/2), W_{2})$ points 
$\widetilde{G}_{1},\ldots,\widetilde{G}_{\overline{M}}$ in $\Scal 
\cap B_{W_{2}}(G_{1},r/2)$. 

Now, for any $G \in \Scal \cap B_{W_{2}}(G_{1},r/2)$, we can find 
$t \in \left\{1,\ldots,\overline{M}\right\}$ such that $W_{1}(G,
\widetilde{G}_{t}) \leq c_{0}r$. Due to the triangle inequality, 
we achieve that
\begin{eqnarray}
\hba(p_{G},p_{G_{*}}) 
		& \geq & \hba(p_{G_{*}},p_{\widetilde{G}_{t}})-
		\hba(p_{G},p_{\widetilde{G}_{t}}) \nonumber \\
		& \geq & \biggr(\Psiba_{\Scal}(r)\biggr)^{1/2} - 
		\biggr(\overline{C}(\Theta)c_{0}r \biggr)^{1/2} \nonumber
\end{eqnarray}
where the second inequality is due to Definition~\ref{definition:hellinger_to_Wasserstein} 
and Lemma~\ref{lemma:upper_bound_weighted_Hellinger_Wasserstein}. 
By choosing the positive number $c_{0}=\Psiba_{\Scal}(r)/(4\overline{C}(\Theta)r)$, we obtain 
\begin{eqnarray}
\hba(p_{G},p_{\widetilde{G}_{t}}) \leq \Psiba_{\Scal}(r)/2 \leq 
\hba(p_{G_{*}},p_{\widetilde{G}_{t}})/2. \nonumber
\end{eqnarray}
It eventually leads to $\hba(p_{G},p_{G_{*}}) \geq 
\hba(p_{G_{*}},p_{\widetilde{G}_{t}})/2$. According to the result 
of Theorem 6.1 in~\cite{Kleijn-2006} and inequality $(1-x)^{n} 
\leq \exp(-n x)$ for all $0<x<1$ and $n \geq 1$, by denoting 
\begin{align*}
A_{t} : = \left\{G \in \overline{\Gcal}: \ 
\hba(p_{G},p_{\widetilde{G}_{t}}) \leq 
\hba(p_{G_{*}},p_{\widetilde{G}_{t}})/2 \right\},
\end{align*} 
there exists 
test $\psi_{n}^{(t)}$ such that
\begin{eqnarray}
P_{G_0,f_0} \psi_{n}^{(t)} 
		& \leq & \exp\biggr(-\dfrac{n}{2}\inf \limits_{G \in 
		A_{t}}\hba^{2}(p_{G},p_{G_{*}})\biggr), \nonumber \\
		\sup \limits_{G \in A_{t}} \dfrac{P_{G_0,f_0}}{p_{G_{*}}}
		p_{G} (1-\psi_{n}^{(t)}) & \leq & \exp\biggr(-\dfrac{n}{2}
		\inf \limits_{G \in A_{t}}\hba^{2}(p_{G},p_{G_{*}})\biggr). \nonumber
\end{eqnarray}
Since $\hba^{2}(p_{G},p_{G_{*}}) 
\geq \hba(p_{G_{*}},p_{\widetilde{G}_{t}})/2 \geq \Psiba_{\Scal}(r)/2$ 
for all $G \in A_{t}$, the above inequalities can be rewritten as
\begin{eqnarray}
P_{G_0,f_0} \psi_{n}^{(t)} 
		& \leq & \exp(-n\Psiba_{\Scal}(r)/8), \nonumber \\
		\sup \limits_{G \in A_{t}} \dfrac{P_{G_0,f_0}}{p_{G_{*}}}		
		p_{G} (1-\psi_{n}^{(t)}) 
		& \leq & \exp(-n\Psiba_{\Scal}(r)/8). \nonumber
\end{eqnarray}
By choosing $\phi_{n} = \max \limits_{1 \leq t 
\leq \overline{M}} \psi_{n}^{(t)}$, we quickly achieve that
\begin{eqnarray}
P_{G_0,f_0} \phi_{n} 
		& \leq & \overline{M}(\Scal,G_{1},r) \exp(-n\Psiba_{\Scal}	
		(r)/8), \nonumber \\
\sup \limits_{G \in \Scal \cap B_{W}(G_{1},r/2)} \dfrac{P_{G_0,f_0}}
		{p_{G_{*}}}p_{G} (1-\phi_{n}) 
		& \leq & \exp(-n\Psiba_{\Scal}(r)/8). \nonumber
\end{eqnarray}
As a consequence, we obtain the conclusion of the lemma.
\subsection{Proof of Theorem~\ref{theorem:convergence_rate_misspecified}}
\label{subs:proof_misspecified_finite_k }

Due to the assumption on prior $p_{K}$, it is sufficient to demonstrate that
\begin{eqnarray}
\Pi\biggr(G \in \mathcal{O}_{\overline{k}}(\Theta): 
		W_{2}(G,G_{*}) \gtrsim \dfrac{(\log n)^{1/4}}{n^{1/4}}|
		X_{1},\ldots,X_{n}\biggr) \to 0 \nonumber
\end{eqnarray}
in $P_{G_0,f_0}$- probability. 
We divide our proof for the above result into the following steps
\paragraph{Step 1:} To obtain the bound for 
$\Pi(B_{K}^{*}(\epsilon_{n},G_{*},P_{G_{0},f_0},M))$, we use Lemma 8.1 from~\cite{Kleijn-2006} 
to obtain a bound for weighted KL divergence and squared weighted 
KL divergence. Similar to the proof of Theorem~\ref{theorem:posterior_rate_Gaussian}, 
with the choice of $p=P_{G_0,f_0}$ and of finite measure 
$q=p_{G}p_{G_0,f_0}/p_{G_{*}}$, as long as 
$G \in \Ocal_{\overline{k}}$ such that $\hba(p_{G},p_{G_{*}}) \leq \epsilon$ and 
$\int p_{G_0,f_0}p_{G_{*}}/p_{G} \leq M$ then we obtain that
\begin{eqnarray}
&-P_{G_0,f_0} \log \dfrac{p_{G}}{p_{G_{*}}} 
		\lesssim \epsilon^{2}\log(M/\epsilon) + \epsilon \nonumber \\
& P_{G_0,f_0}\biggr(\log \dfrac{p_{G}}{p_{G_{*}}}\biggr)^{2} 
		\leq \epsilon^{2}(\log(M/\epsilon))^{2}. \nonumber
\end{eqnarray}
For the purpose of this proof we use $M=M^{*}(\epsilon_0)$, where $M^{*}(\epsilon_0)$ is as in condition (M.2) in Section~\ref{ssection:finite k}. Now, according to Lemma~\ref{lemma:upper_bound_weighted_Hellinger_Wasserstein}, 
as $f$ admits integral Lipschitz property up to the first 
order, we obtain that $\hba^{2}(p_{G},p_{G_{*}}) \leq \overline{C}W_{1}(G,G_{*})$ 
for any $G \in \Ocal_{\overline{k}}$ where $\overline{C}$ is a 
positive constant depending only on $\Theta$. Now, from the 
discussion in the above paragraph we have 
\begin{eqnarray}
\Pi(B_{K}^{*}(\epsilon_{n},G_*,P_{G_{0},f_0},M)) 
		& \geq & \Pi(G \in \Ocal_{\overline{k}}: W_{1}(G,G_{*}) 
		\leq \overline{C}\epsilon_n^2) \nonumber \\
& \gtrsim & \Pi(G \in \Ecal_{\overline{k}}: W_{1}(G,G_{*}) 
		\leq \overline{C}\epsilon_n^2) \nonumber \\
& \gtrsim & \epsilon_{n}^{2(c_{H}+\gamma)} \nonumber.
\end{eqnarray}
where the last inequality can be obtained similar to equation~\eqref{eq:bound_KL_well_specified} 
based on the assumption $\gamma<\overline{k}$. Now, we note that $
\Psiba_{\Gcal_n(r)} \gtrsim r^4$ , since $f$ is assumed to be 
second order identifiable and to satisfy the integral 
Lipschitz property of second order. Then, by choosing 
$\epsilon_n = n^{-1}$ and $M_n= \overline{A}\left(\frac{\log(n)}
{n^3}\right)^{1/4}$ for some sufficiently large $\overline{A}$, we 
can see that condition~\eqref{eq:posterior_contraction_rate_third_condition} is 
satisfied.
\paragraph{Step 2:} We choose the sieves 
$\Gcal_{n} = \Ocal_{\overline{k}}$ for all $n \geq 1$. With these 
choices, it is clear that $\Pi(\overline{\Gcal}\backslash 
\Gcal_{n})=0$. Therefore, condition~\eqref{eq:posterior_contraction_rate_second_condition} is satisfied. 

\paragraph{Step 3:}  For condition~\eqref{eq:posterior_contraction_rate_fourth_condition} 
to be satisfied,
\begin{eqnarray}
\exp\left(2n \left( \epsilon_{n}^{2}\log 
		\left(\frac{M}{\epsilon_n}\right) + \epsilon_n \right) 
		\right)\sum \limits_{j \geq M_{n}}{\exp\left(-n
		\Psiba_{\Gcal_{n}}(j\epsilon_{n})/16\right)} & \lesssim &  
		\sum \limits_{j \geq M_{n}} \exp({-\overline{C} {n}^{-3} 
		M_{n}^{4}/16}) \nonumber \\
& \lesssim & 2\exp({-\overline{C} {n}^{-3} M_{n}^{4}/16}) \nonumber \\
& \lesssim & 2 n^{- \frac{\overline{A}\overline{C}}{16}} \to 0. \nonumber
\end{eqnarray}

\end{document}